\SetMathAlphabet{\mathbf}{normal}{OML}{mdput}{b}{n}
\def\bos#1{{\mathbf{#1}}}
\newcommand{\f}[2]{\frac{#1}{#2}}
 \newcommand{\pa}[1]{\frac{\partial}{\partial{#1}}}
  \newcommand{\prt}[2]{\frac{\partial{#1}}{\partial{#2}}}
 \newcommand{\Sch}{\operatorname{Sch}}
  \newcommand{\Wey}{\operatorname{Wey}}
  \newcommand{\pr}{\operatorname{prim}}
  \newcommand{\frC}{\mathfrak{C}}
  \newcommand{\frL}{\mathfrak{L}}
  \newcommand{\frH}{\mathfrak{H}}
  \newcommand{\Des}{\operatorname{\mathfrak{Des}}}
  \newcommand{\nc}{\newcommand}
  \newcommand{\be}{\begin{eqnarray*}}
  \newcommand{\ee}{\end{eqnarray*}}
  \newcommand{\bea}{\begin{eqnarray}}
  \newcommand{\eea}{\end{eqnarray}}
   \nc{\bei}{\begin{itemize}}
   \nc{\eei}{\end{itemize}}
   \nc{\bee}{\begin{enumerate}}
   \nc{\eee}{\end{enumerate}}
   \nc{\bet}{\begin{theorem}}
   \nc{\eet}{\end{theorem}}
   \nc{\bed}{\begin{definition}}
   \nc{\eed}{\end{definition}}
   \nc{\bel}{\begin{lemma}}
   \nc{\eel}{\end{lemma}}
   \nc{\bep}{\begin{proposition}}
   \nc{\eep}{\end{proposition}}
   \nc{\bec}{\begin{corollary}}
   \nc{\eec}{\end{corollary}}
   \nc{\ber}{\begin{remark}}
   \nc{\eer}{\end{remark}}
   \nc{\beex}{\begin{example}}
   \nc{\eeex}{\end{example}}
   \nc{\bpm}{\begin{pmatrix}}
   \nc{\epm}{\end{pmatrix}}
   \nc{\bspm}{\left(\begin{smallmatrix}}
   \nc{\espm}{\end{smallmatrix}\right)}
\newcommand{\cA}{\mathcal{A}}
\newcommand{\cB}{\mathcal{B}}
\newcommand{\cC}{\mathscr{C}}
\newcommand{\cF}{\mathcal{F}}
\newcommand{\cG}{\mathcal{G}}
\newcommand{\cH}{\mathcal{H}}
\newcommand{\cI}{\mathcal{I}}
\newcommand{\cM}{\mathcal{M}}
\newcommand{\cN}{\mathcal{N}}
\newcommand{\cO}{\mathcal{O}}
\newcommand{\cP}{\mathcal{P}}
\newcommand{\cR}{\mathcal{R}}
\newcommand{\cS}{\mathcal{S}}
\newcommand{\cX}{\mathcal{X}}
\newcommand{\cZ}{\mathcal{Z}}
\newcommand{\bC}{\mathbb{C}}
\newcommand{\bH}{\mathbb{H}}
\newcommand{\bN}{\mathbb{N}}
\newcommand{\bQ}{\mathbb{Q}}
\newcommand{\bR}{\mathbb{R}}
\newcommand{\bZ}{\mathbb{Z}}
\newcommand{\BA}{\mathbf{A}}
\newcommand{\BL}{\mathbf{L}}
\newcommand{\BP}{\mathbf{P}}
\nc{\frf}{\mathfrak{f}} 
\newcommand{\frg}{\mathfrak g}
\nc{\frs}{\mathfrak{s}}  
\nc{\frt}{\mathfrak{t}} 
\nc{\fru}{\mathfrak{u}}
\nc{\lsl}{\mathfrak{sl}}
\nc{\lgl}{\mathfrak{gl}}
\nc{\upsi}{\underline{\psi}}
\nc{\uchi}{\underline{\chi}}
\DeclareMathOperator{\rk}{rk}
\DeclareMathOperator{\Hom}{Hom}
\DeclareMathOperator{\coker}{coker}
\DeclareMathOperator{\im}{im}
\DeclareMathOperator{\CH}{CH}
\DeclareMathOperator{\CT}{CT}
\DeclareMathOperator{\id}{id}
\DeclareMathOperator{\End}{End}
\newcommand{\lra}{\longrightarrow}    
\nc{\surjto}{\twoheadrightarrow}
\nc{\ts}{\times}
\nc{\ds}{\displaystyle}
\nc{\nd}{\noindent}  
\nc{\ud}{\underline}
\nc{\ov}{\overline}
\nc{\maplra}[1]{\buildrel #1 \over \lra}
\nc{\mapto}[1]{\buildrel #1 \over \to}
\nc{\setb}[1]{\{  #1\}}
 \nc{\cHom}{\mathcal{H}om}
\newcommand{\CC}{\mathbb{C}}
\newcommand{\GG}{\mathbb{G}}
\begin{document}

\nc{\cdruur}[8] {\begin{CD} 
#1 @>#2>> #3\\ 
@AA#4A @AA#5A\\ 
#6 @>#7>> #8 
\end{CD} }
\nc{\cdrddr}[8] {\begin{CD} 
#1 @>#2>> #3\\ 
@VV#4V @VV#5V\\ 
#6 @>#7>> #8 
\end{CD} }


\nc{\dia}[8]{\xymatrix{ 
&#1 \ar@{-}[ld]_{#2} \ar@{-}[rd]^{#3} \\
#4 \ar@{-}[rd]_{#6} & &#5 \ar@{-}[ld]^{#7}\\ 
&#8} }

\nc{\diam}[9]{\xymatrix{ 
&#1 \ar@{-}[ld]_{#2}  \ar@{-}[d]^{#3} \ar@{-}[rd]^{#4} \\
#5 \ar@{-}[rd]_{#8}     & #6 \ar@{-}[d]_{#9}      & #7   \ar@{-}[ld]^{2} \\
& \bQ} } 

\nc{\sumn}[2][n]{#2_{1} +#2_{2}+ \cdots + #2_{#1}}
\nc{\poly}[3][n]{#2_{#1}#3^{#1} +#2_{#1-1}#3^{#1-1}  \cdots + #2_{1} #3+ #2_0}
\nc{\dpoly}[3][n]{#1#2_{#1}#3^{#1-1} +(#1-1)#2_{#1-1}#3^{#1-1}  \cdots +2 #2_{2} #3+ #2_1}
\nc{\mpoly}[3][n]{#3^{#1} +#2_{#1-1}#3^{#1-1}  \cdots + #2_{1} #3+ #2_0}

\nc{\vpar}[4]{    \left \{ \begin{array}{cc} #1 & \textrm{if } #2, \\
&\\
#3 & \textrm{if } #4. 
\end{array}\right. }

\nc{\vparr}[4]{    \left \{ \begin{array}{cc} #1 & \textrm{if } #2, \\
&\\
#3 & \textrm{if } #4, 
\end{array}\right. }

\nc{\ary}[5]{#1: \left\{ \begin{array}{ll} #2 &\mapsto #3 \\ #4 &\mapsto #5 \end{array} \right.}
 \nc{\bedm}{\begin{displaymath}}
 \nc{\eedm}{\end{displaymath}}
 \nc{\art}{\hbox{\bf Art}^\Z}
 \nc{\bvx}{\bos{B\!\!V}_{\! \!X}}

\newcommand{\pmat}{\left(\begin{matrix}}   
\newcommand{\epmat}{\end{matrix}\right)}   
\newcommand{\psmat}{\left(\begin{smallmatrix}}    
\newcommand{\epsmat}{\end{smallmatrix}\right)}
\nc{\twotwo}[4]{\pmat #1 & #2 \\ #3 & #4 \epmat}
\nc{\thrthr}[9]{\pmat #1 & #2 & #3 \\ #4 & #5 & #6 \\ #7 & #8 & #9 \epmat}
\nc{\stwotwo}[4]{\psmat #1 & #2 \\ #3 & #4 \epsmat}
\nc{\sthrthr}[9]{\psmat #1 & #2 & #3 \\ #4 & #5 & #6 \\ #7 & #8 & #9 \epsmat}

\newcommand{\fixme}[1]{\footnote{#1}}


\def\eqalign#1{\null\,\vcenter{\openup\jot\m@th
\ialign{\strut\hfil$\displaystyle{##}$&$\displaystyle{{}##}$\hfil
\crcr#1\crcr}}\,}

\def\eqn#1#2{
\xdef #1{(\nsecsym\the\meqno)}
\global\advance\meqno by1
$$#2\eqno#1\eqlabeL#1
$$}


%
\def\CA{{\mathcal A}}
\def\CB{{\cal B}}
\def\CC{{\cal C}}
\def\CD{{\cal D}}
\def\CE{{\cal E}}
\def\CF{{\cal F}}
\def\CG{{\mathcal G}}
\def\CH{{\cal H}}
\def\CI{{\cal I}}
\def\CJ{{\cal J}}
\def\CK{{\cal K}}
\def\CL{{\cal L}}
\def\CM{{\cal M}}
\def\CN{{\mathcal N}}
\def\CO{{\cal O}} 
\def\CP{{\cal P}}
\def\CQ{{\cal Q}}
\def\CR{{\cal R}}
\def\CS{{\cal S}}
\def\CT{{\cal T}}
\def\CU{{\cal U}}
\def\CV{{\cal V}}
\def\CW{{\cal W}}
\def\CX{{\cal X}}
\def\CY{{\cal Y}}
\def\CZ{{\cal Z}}
%

\def\V{\mathbb{V}}
\def\E{\mathbb{E}}
\def\R{\mathbb{R}}
\def\C{\mathbb{C}}
\def\H{\mathbb{H}}
\def\Z{\mathbb{Z}}
\def\A{\mathbb{A}}
\def\T{\mathbb{T}}
\def\L{\mathbb{L}}
\def\D{\mathbb{D}}
\def\Q{\mathbb{Q}}


\def\mJ{\mathfrak{J}}
\def\mq{\mathfrak{q}}
\def\mQ{\mathfrak{Q}}
\def\mP{\mathfrak{P}}
\def\mp{\mathfrak{p}}
\def\mH{\mathfrak{H}}
\def\mh{\mathfrak{h}}
\def\ma{\mathfrak{a}}
\def\mA{\mathfrak{A}}
\def\mC{\mathfrak{C}}
\def\mc{\mathfrak{c}}
\def\ms{\mathfrak{s}}
\def\mS{\mathfrak{S}}
\def\mm{\mathfrak{m}}
\def\mM{\mathfrak{M}}
\def\mn{\mathfrak{n}}
\def\mN{\mathfrak{N}}
\def\mt{\mathfrak{t}}
\def\ml{\mathfrak{l}}
\def\mT{\mathfrak{T}}
\def\mL{\mathfrak{L}}
\def\mo{\mathfrak{o}}
\def\mg{\mathfrak{g}}
\def\mG{\mathfrak{G}}
\def\mf{\mathfrak{f}}
\def\mF{\mathfrak{F}}
\def\md{\mathfrak{d}}
\def\mD{\mathfrak{D}}
\def\mO{\mathfrak{O}}
\def\mk{\mathfrak{k}}
\def\mK{\mathfrak{K}}
\def\mR{\mathfrak{R}}
\def\sA{\mathscr{A}}
\def\sB{\mathscr{B}}
\def\sC{\mathscr{C}}
\def\sD{\mathscr{D}}
\def\sE{\mathscr{E}}
\def\sF{\mathscr{F}}
\def\sG{\mathscr{G}}
\def\sL{\mathscr{L}}
\def\sM{\mathscr{M}}
\def\sN{\mathscr{N}}
\def\sO{\mathscr{O}}
\def\sP{\mathscr{P}}
\def\sQ{\mathscr{Q}}
\def\sR{\mathscr{R}}
\def\sS{\mathscr{S}}
\def\sT{\mathscr{T}}
\def\sU{\mathscr{U}}
\def\sV{\mathscr{V}}
\def\sW{\mathscr{W}}
\def\sX{\mathscr{X}}
\def\sY{\mathscr{Y}}
\def\sY{\mathscr{Z}}



\def\rd{\partial}

\def\ppr{{\color{blue}\prime\prime\color{black}}}

\def\Ker{\hbox{Ker}\;}
\def\Im{\hbox{Im}\;}


\def\mod{\hbox{ }mod\hbox{ }}
\def\de{\mathfrak{Def}_{(\cA,\underline \ell^K)}}
\def\TM{T\!\!\cM}
\def\CTM{T^*\!\!\cM}

\def\wt{\hbox{\it wt}}
\def\ch{\hbox{\it ch}}
\def\gh{\hbox{\it gh}}


\catcode`\@=11 
\global\newcount\nsecno \global\nsecno=0
\global\newcount\meqno \global\meqno=1
\def\newsec#1{\global\advance\nsecno by1
\eqnres@t
\section{#1}}
\def\eqnres@t{\xdef\nsecsym{\the\nsecno.}\global\meqno=1}
\def\sequentialequations{\def\eqnres@t{\bigbreak}}\xdef\nsecsym{}

\def\draftmode{\message{ DRAFTMODE }

{\count255=\time\divide\count255 by 60 \xdef\hourmin{\number\count255}
\multiply\count255 by-60\advance\count255 by\time
\xdef\hourmin{\hourmin:\ifnum\count255<10 0\fi\the\count255}}}
\def\nolabels{\def\wrlabeL##1{}\def\eqlabeL##1{}\def\reflabeL##1{}}
\def\writelabels{\def\wrlabeL##1{\leavevmode\vadjust{\rlap{\smash%
{\line{{\escapechar=` \hfill\rlap{\tt\hskip.03in\string##1}}}}}}}%
\def\eqlabeL##1{{\escapechar-1\rlap{\tt\hskip.05in\string##1}}}%
\def\reflabeL##1{\noexpand\llap{\noexpand\sevenrm\string\string\string##1}
}}

\nolabels

\def\eqn#1#2{
\xdef #1{(\nsecsym\the\meqno)}
\global\advance\meqno by1
$$#2\eqno#1\eqlabeL#1
$$}

\def\eqalign#1{\null\,\vcenter{\openup\jot\m@th
\ialign{\strut\hfil$\displaystyle{##}$&$\displaystyle{{}##}$\hfil
\crcr#1\crcr}}\,}

\def\ket#1{\left|\bos{ #1}\right>}\vspace{.2in}
   \def\bra#1{\left<\bos{ #1}\right|}
\def\oket#1{\left.\bos{ #1}\right>}
\def\obra#1{\left<\bos{ #1}\right.}
\def\epv#1#2#3{\left<\bos{#1}\left|\bos{#2}\right|\bos{#3}\right>}
\def\qbvk#1#2{\bos{\left(\bos{#1},\bos{#2}\right)}}
\def\Hoch{{\tt Hoch}}
\def\rrd{\up{\rightarrow}{\rd}}
\def\lrd{\up{\leftarrow}{\rd}}
   \nc{\hr}{[\![\hbar]\!]}
\def\foot#1{\footnote{#1}}

\catcode`\@=12 

\def\fr#1#2{{\textstyle{#1\over#2}}}
\def\Fr#1#2{{#1\over#2}}
\nc{\bt}{\mathbf{t}}

\draftmode

\def\a{\alpha}
\def\b{\beta}
\def\c{\chi}
\def\d{\delta}  \def\D{\Delta}
\def\e{\varepsilon} \def\ep{\epsilon}
\def\f{\phi}  \def\F{\Phi}
\def\g{\gamma}  \def\G{\Gamma}
\def\k{\kappa}
\def\l{\lambda}  \def\La{\Lambda}
\def\m{\mu}
\def\n{\nu}
\def\r{\rho}
\def\vr{\varrho}
\def\o{\omega}  \def\O{\Omega}
\def\p{\psi}  \def\P{\Psi}
\def\s{\sigma}  \def\S{\Sigma}
\def\th{\theta}  \def\vt{\vartheta}
\def\t{\tau}
\def\w{\varphi}
\def\x{\xi}
\def\z{\zeta}
\def\U{\Upsilon}

%
%
%
%
%
%
%

\title{Enhanced Homotopy Theory for Period Integrals of Smooth Projective Hypersurfaces}

\author{Jae-Suk Park\inst{1,2}
\thanks{
\email{jaesuk@ibs.re.kr}.
The work of Jae-Suk Park was supported by the IBS (CA1305-01).
 }
\and Jeehoon Park\inst{2}
\thanks{\email{jeehoonpark@postech.ac.kr}.
The work of Jeehoon Park was partially supported by Basic Science Research Program through the National Research Foundation of Korea(NRF) funded by the Ministry of Education, Science and Technology(2013023108) and 
 was supported by Basic Science Research Program through the National Research Foundation of Korea(NRF) funded by the Ministry of Education(2013053914).
}
}

\institute{
Center for Geometry and Physics, Institute for Basic Science (IBS), 77 Cheongam-ro, Nam-gu, Pohang, Gyeongbuk, Korea 790-784
 \and 
 Department of Mathematics, Pohang University of Science and Technology (POSTECH), 77 Cheongam-ro, Nam-gu, Pohang, Gyeongbuk, Korea 790-784
  }

\date{}


\maketitle

\begin{abstract}
The goal of this paper is to reveal hidden structures on the singular cohomology 
and the Griffiths period integral of a smooth projective hypersurface
in terms of 
BV(Batalin-Vilkovisky) algebras and homotopy Lie theory (so called, $L_\infty$-homotopy theory).

Let $X_G$ be a smooth projective hypersurface in the complex projective space $\BP^n$ defined by 
a homogeneous polynomial $G(\ud x)$ of degree $d \geq 1$.
Let $\H=H^{n-1}_{\pr}(X_G, \C)$ be the middle dimensional primitive cohomology of $X_G$.
We explicitly construct a BV algebra $\bvx=(\cA_X,Q_X, K_X)$ such that its $0$-th cohomology
$H^0_{K_X}(\cA_X)$ is canonically isomorphic to $\H$.
We also equip $\bvx$ with a decreasing filtration and a bilinear pairing which
 realize the Hodge filtration and the cup product polarization on $\H$ under the canonical isomorphism.
Moreover, we lift $C_{[\g]}:\H \to \bC$ to a cochain map $\cC_\g:(\cA_X, K_X)  \to (\bC,0)$, where 
$C_{[\g]}$ is the Griffiths period integral given by $\omega \mapsto \int_\g \omega$ for $[\g]\in H_{n-1}(X_G,\bZ)$.

We use this enhanced homotopy structure on $\H$ to study
an extended formal deformation of $X_G$ and the correlation of its period integrals.
If $X_G$ is in a formal family of Calabi-Yau hypersurfaces $X_{G_{\ud T}}$, 
we provide an explicit formula and algorithm (based on a Gr\"obner basis) 
to compute the period matrix of $X_{G_{\ud T}}$ in
terms of the period matrix of $X_G$ and an $L_\infty$-morphism $\ud \kappa$ which enhances $C_{[\g]}$ and governs deformations of period matrices.

\end{abstract}

\tableofcontents



\newsec{Introduction}

The purpose of this paper is to attempt to establish certain  correspondences between the Griffiths period integrals of
smooth algebraic varieties and period integrals, defined in section \ref{section2}, attached to representations
of a finite dimensional Lie algebra  on a polynomial algebra.  
Such correspondences allow us to reveal  hidden BV(Batalin-Vilkovisky) algebra structures and $L_\infty$-homotopy structures in period integrals, leading
to their higher generalization. We work out the correspondence in detail when the variety
is a smooth projective hypersurface and
when the representation is the Schr\"odinger representation of the Heisenberg Lie algebra twisted by 
Dwork's polynomial associated with the hypersurface. The period integrals of this kind of example
 have been studied extensively by Griffiths in \cite{Gr69}. 
 We will enhance the Griffiths period integral into an $L_\infty$-morphism $\ud \kappa$ which governs  correlations and deformations of period matrices.

\subsection{Main theorems} \label{int1.1}

Let $n$ be a positive integer. Let $X_G$ be a smooth hypersurface in the complex projective $n$-space $\BP^n$ defined by a homogeneous polynomial $G(\underline x)=G(x_0, \cdots, x_n)$ of degree $d$ in $\bC[x_0, \cdots, x_n]$. 
Let $H_{n-1}(X_G, \bZ)_0$ be the subgroup of the singular homology group $H_{n-1}(X_G, \bZ)$ of $X_G$ of degree $n-1$, which consists of vanishing $(n-1)$-cycles, and let
$H_{\pr}^{n-1}(X_G,\bC)$ be the primitive part  of the middle dimensional cohomology group 
$H^{n-1}(X_G,\bC)$, i.e. $H_{n-1}(X_G, \bZ)_0=\ker (H_{n-1}(X_G,\bZ) \to H_{n-1}(\BP^n,\bZ))$, and $H_{\pr}^{n-1}(X_G,\bC)$ $=\coker(H^{n-1}(\BP^n,\bC) \to H^{n-1}(X_G,\bC))$. Then we are interested in the following period integrals
\eqn\ytr{
C_{[\g]}: H_{\pr}^{n-1}(X_G,\bC) \longrightarrow \C,\qquad [\varpi]\mapsto  \int_\g \varpi,
}
where $\g$ and $\varpi$ are representatives of the homology class $[\g] \in H_{n-1}(X_G, \bZ)_0$
and the cohomology class $[\varpi]\in H_{\pr}^{n-1}(X_G,\bC)$, respectively.
We shall often use the shorthand notation $\H=H^{n-1}_{\pr}(X_G,\C)$ from now on.
We also use the notation that $$
\cF_{j}\H=
H_{\pr}^{n-1,0}(X_G,\bC)\oplus H_{\pr}^{n-2,1}(X_G,\bC)\oplus\cdots\oplus H_{\pr}^{n-1-j,j}(X_G,\bC),
\quad 0\leq j\leq n-1,
$$
where $\H^{p,q}=H_{\pr}^{p,q}(X_G, \bC)$ is the $(p,q)$-th Hodge component of $\H$.

Let $\cH(X_G)$ be the rational de Rham cohomology group defined as the quotient of the group of rational $n$-forms on $\BP^n$ regular outside $X_G$ by the group of the forms $d \psi$ where $\psi$ is a rational $n-1$ form regular outside $X_G$.
For each $k\geq 1$, let $\cH_k(X_G) \subset \cH(X_G)$ be the cohomology group defined as the quotient of the group of rational $n$-forms on $\BP^n$
with a pole of order $\leq k$ along $X_G$ by the group of exact rational $n$-forms on $\BP^n$ with a pole
of order $\leq k$ along $X_G$. Griffiths showed that 
any rational $n$-form on $\BP^n$
with a pole of order $\leq k$ along $X_G$ can be written as a rational  differential $n$-form
${ F(\ud x) \Omega_n\over G(\ud x)^k}$, where  $\Omega_n=\sum_{j}(-1)^j x_j d\!x_0\wedge\cdots\wedge\widehat{d\!x_j}\wedge \cdots \wedge d\!x_n$ and $F(\ud x)$ is a  homogeneous polynomial of degree $kd -(n+1)$. He also showed that $\cH_n(X_G)
= \cH(X_G)$ and there is a natural injection $\cH_k(X_G) \subset
\cH_{k+1}(X_G)$ for each $k\geq 1$.
Moreover, Griffiths defined the isomorphism (the residue map) 
$$Res: \cH(X_G) \rightarrow \H$$ by
$$
\frac{1}{2\pi i} \int_{\tau(\g)} { F(\ud x)\over G(\ud x)^k} \Omega_n=\int_\g Res \left( { F(\ud x) \over G(\ud x)^k}\Omega_n \right),
$$
where $\tau(\g)$ is the tube over $\g$, as in (3.4) of \cite{Gr69},
such that $Res$ takes the pole order filtration
\eqn\pof{
\cH_1(X_G)\subset \cH_2(X_G)\subset \cdots \subset \cH_{n-1}(X_G)\subset  \cH_{n}(X_G)=\cH_{n+1}(X_G)=\cdots =\cH(X_G)
}
of $\cH(X_G)$\big($=\cH_{n+\ell}(X_G)$ for all $\ell\geq 0$\big) onto
the increasing Hodge filtration $\cF_{\bullet}\H$
\eqn\pog{
 \cF_0\H\subset\cF_1\H\subset \cdots \subset \cF_{n-1}\H=\H
}
 of the primitive middle dimensional cohomology $\H=H_{\pr}^{n-1}(X_G,\bC)$.
The Griffiths theory provides us with an effective method of studying the period integrals $C_{[\g]}$ 
on the hypersurface $X_G$ as well as an infinitesimal family of hypersurfaces.

%
%

In this article, we provide a new homotopy theoretic framework to understand $\cH(X_G)\simeq \H$ and such period integrals. 
Our main result is to construct a BV algebra $\bvx$ whose $0$-th cohomology is canonically isomorphic to
$\H$ and lift the polarized Hodge structure on $\H$ to $\bvx$. Moreover we enhance the period integral $C_{[\g]}:\H\rightarrow \C$ to a cochain map $\cC_\g:
\bvx \to (\C,0)$.
\begin{definition} \label{bvd}
 A BV (Batalin-Vilkovisky) algebra\footnote{We normalize so that the Lie bracket and the differential have degree 1, and the binary product has degree 0.} over $\Bbbk$ is a cochain complex $(\cA, K=Q+\Delta)$ with the following properties:
 
 (a) $\cA=\bigoplus_{{m\in \Z}}\cA^m$ is a unital $\Z$-graded super-commutative and associative $\Bbbk$-algebra satisfying $K(1_\cA)=0$.
 
 (b) $Q^2=\Delta^2=Q\Delta+\Delta Q=0$, and $(\cA, \cdot, Q)$ is a commutative differential graded algebra:
 $$
 Q(a\cdot b)=Q(a) \cdot b +(-1)^{|a|} a \cdot Q(b), 
 $$
 for any homogeneous elements $a, b \in \cA$.
 
 (c) $\Delta$ is a differential operator\footnote{This means that $\ell_3^{\Delta}=\ell_4^{\Delta}=\cdots=0$ in our terminology of \textit{the descendant functor.} We will explain the notion of the descendant functor later in subsection \ref{subs3.1}.} of order 2 
  and $(\cA, K, \ell_2^K)$ is a differential graded Lie algebra (DGLA),
 where $\ell_2^K(a,b) := K(a\cdot b)-K(a) \cdot b -(-1)^{|a|} a \cdot K(b)$: 
 \be
 \ell^K_2(a,b) - (-1)^{|a| |b|} \ell^K_2(b,a)&=&0, \\
 \ell^K_2(a, \ell^K_2(b,c)) + (-1)^{|a|} \ell^K_2(\ell^K_2(a,b),c) 
 +(-1)^{(|a|+1)|b|} \ell^K_2(b, \ell^K_2(a,c))&=&0,\\
K\ell^K_2(a,b) + \ell^K_2(Ka, b) + (-1)^{|a|}\ell^K_2(a, Kb) &=& 0,
\ee
 for any homogeneous elements $a, b \in \cA$.
 
(d) $(\cA, \cdot, \ell_2^K)$ is a {Gerstenhaber algebra}\footnote{In fact, this condition $(d)$ follows from (c).}: 
$$
 \quad \ell_2^{K}(a \cdot b, c)= (-1)^{|a|} a \cdot \ell_2^{K}(b, c) +(-1)^{|b|\cdot |c|} \cdot \ell_2^{K}(a,c)\cdot b, \quad a, b, c \in \cA.
$$
\end{definition}

In Section \ref{section4}, we explicitly construct a BV algebra $\bvx:= (\cA_X, \cdot, Q_X, K_X)$ associated to $X_G$. 
In the introduction, we briefly summarize the construction and some of its features. 
Let
 \eqn\qfta{
\cA_X:=\C[y, x_0, \cdots, x_n][\eta_{-1}, \eta_0, \cdots, \eta_n],
}
be the $\bZ$-graded super-commutative polynomial algebra, where $y, x_0, \cdots, x_n$ are formal variables of degree $0$ and $\eta_{-1}, \cdots, \eta_n$ are formal variables of degree $-1$. 
We define three additive gradings on $\cA_X$
with respect to the multiplication, called {\it ghost number} $\gh \in \Z$, {\it charge} ${\ch} \in \Z$ and {\it weight} $\wt \in \Z$, by the following rules:
$$
\eqalign{
&\gh(y_{})=0,  \ \ \  \gh(x_j)=0, \quad   \gh(\eta_{-1})=-1, \ \gh(\eta_j)=-1, 
\cr 
&\ch(y_{})=-d, \ch(x_j)=1, \quad \ch(\eta_{-1})=d, \ \ \ \ch(\eta_j)=-1,
\cr 
&\wt(y_{})=1,\ \ \  \wt(x_j)=0, \quad \wt(\eta_{-1})=0,  \ \ \ \wt(\eta_j)=1, \quad
}
$$
where $ j=0, \cdots, n.$ The ghost number is same as the (cohomology) degree.
Write such a decomposition as follows:
\eqn\depa{
\cA_X=\bigoplus_{\gh,{\ch}, \wt} \cA_{X,{\ch}, (\wt)}^{gh}= \bigoplus_{-n-2\leq j \leq 0}\bigoplus_{w \in \Z^{\geq 0}}\bigoplus_{{\l} \in \Z}\cA^j_{X,{\l},(w)}.
}
We define a differential $K_X$ (of degree 1)
\eqn\qftb{
K_X:=\left(G(\ud x) + \pa{y}\right)\pa{\eta_{-1}} + \sum_{i=0}^n \left( y \prt{G(\underline x)}{x_i} + \pa{x_i}\right) \pa{\eta_i} 
 }
and let
$\Delta:=\pa{y}\pa{\eta_{-1}}+ \sum_{i=0}^n \pa{x_i}\pa{\eta_i}$ and $Q_X:=K_X-\Delta$.
The $\wt$ grading turns out to give a (decreasing) filtered subcomplex $(F^\bullet \cA_X, K_X)$ of $(\cA_X, K_X)$ defined by 
$$
F^0 \cA_X = \cA_X, \quad  F^i \cA^{}_X =\bigoplus_{k \leq n-1-i} \cA^{}_{X,(k)}, \quad i \geq 1. 
$$
%
Let $\pi_0$ be the projection map from $\cA_X$ to $\cA_{X}^0$.
We define two cochain maps $\cC_\g:(\cA_X, K_X) \to (\C,0)$ and $\oint:(\cA_X, Q_X) \to (\C,0)$ as follows:
\eqn\fpint{
\cC_\g (u ) := -\frac{1}{2 \pi i} \int_{\tau(\g)}\bigg( \int_{0}^{\infty} \pi_0(u) \cdot e^{y G(\underline x)}  dy\bigg) \Omega_n,
}
for homogeneous elements $u \in \cA_X$ of charge $d-(n+1)$, $\cC_\g(u):=0$ for other homogeneous elements with respect to charge, where $[\g] \in H_{n-1}(X_G, \bZ)_0$ and $\tau: H_{n-1}(X_G,\bZ) \to H_n(\BP^n-X_G,\bZ)$ is the tubular neighborhood map (see (3.4) in \cite{Gr69}),\footnote {We will see that $\pi_0(u)$ is homogeneous of charge $d-(n+1)$ if and only if the integral $\bigg( \int_{0}^{\infty} \pi_0(u) \cdot e^{y G(\underline x)}  dy\bigg) \Omega_n$ defines a differential $n$-form on $\BP^n-X_G$.} and 
$$
\oint {u} :=
\frac{1}{(2\pi i)^{n+2}}
\int_{X(\e)}
\left(\oint_C \frac{\pi_0(u)}{  y^{n} }dy\right)
\frac{dx_0 \wedge \cdots \wedge dx_n}{  \frac{\rd G}{\rd x_0}\cdots\frac{\rd G}{\rd x_n} }, \quad u \in \cA_X,
$$
where
$C$ is a closed path on $\C$ with the standard orientation around $y=0$ 
and
$$
X(\e)=\left\{ \underline{x} \in \C^{n+1}\left| \Big|\frac{\rd G(\ud x)}{\rd x_i}\Big|=\e > 0, i=0,1,\cdots, n\right.\right\}.
$$
Then $\oint$ 
defines a symmetric bilinear pairing $\langle \cdot ,  \cdot \rangle_X$ on $\cA_X$ by
\eqn\liftingofp{
\langle u, v \rangle_X := \oint \pi_0 (u \cdot v), \quad u, v \in \cA_X.
}


 \begin{theorem} \label{firsttheorem}
The triple $\bvx:= (\cA_X, \cdot, Q_X, K_X)$ becomes a BV algebra over $\C$ with following properties:

(a) We have a decomposition $\cA_X =\bigoplus_{-(n+2) \leq m \leq 0} \cA_X^m$ and there is a canonical isomorphism 
$$
J: H_{K_X}^0 (\cA) \mapto{\sim} \H
$$
where $H_{K_X}^0 (\cA)$ is the $0$-th cohomology module.

(b) $Q_X(f)=\ell_2^{K_X}(yG(\ud x), f)$ for any $f \in \cA_X$.\foot{It is easy to see that the homogeneous coordinate ring of $X_G$ is isomorphic to the weight zero part of the cohomology ring $H_{Q_X}^0(\cA_X)_{(0)}.$
Since $(\cA_X, K_X)$ can be viewed as a quantization of $(\cA_X, Q_X)$, the singular cohomology $\H$ may be regarded as a quantization of the homogenous coordinate ring of $X_G$.}

%

Moreover, the followings hold: 

(c) The map $J$ sends the decreasing filtration $(F^\bullet \cA_X, K_X)$ on $(\cA_X, K_X)$ to the Hodge filtration on $\H$.

(d) The pairing $\langle \cdot, \cdot \rangle_X$ on $\cA_X$ induces a polarization (the cup product pairing) on $\H$ (up to sign)
under $J$.

(e) The cochain map $\cC_\g:(\cA_X, K_X) \to (\C,0)$ induces the period integral $C_{[\g]}$ under $J$. 
%
%
\end{theorem}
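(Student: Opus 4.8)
The plan is to build the BV algebra $\bvx$ by hand out of the Koszul-type complex attached to the polynomial map $\underline x \mapsto (G(\underline x), \partial_0 G, \ldots, \partial_n G)$ coupled with the extra ``integration'' variable $y$, and then to match its cohomology with Griffiths' description of $\H$ via rational differential forms with poles along $X_G$. Concretely, for part (a) I would first verify the purely algebraic axioms of Definition \ref{bvd}: since $\Delta$ is the standard odd Laplacian $\sum \partial_{x_i}\partial_{\eta_i} + \partial_y \partial_{\eta_{-1}}$ on a free super-commutative polynomial algebra it is automatically square-zero and a second-order operator with $\Delta 1 = 0$; since $K_X$ is a sum of first-order operators of the form $(\text{polynomial}) \cdot \partial_{\eta_j}$ plus the same $\Delta$, one checks $K_X^2 = 0$ by a direct bracket computation (the cross terms vanish because $\partial_{x_i}$ applied to $y \partial_{x_j} G$ is symmetric in $i,j$ while $\partial_{\eta_i}\partial_{\eta_j}$ is antisymmetric, and similarly for the $y$/$\eta_{-1}$ slot), and then $Q_X := K_X - \Delta$ inherits $Q_X^2 = Q_X\Delta + \Delta Q_X = 0$. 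The Gerstenhaber and DGLA identities (c), (d) of the definition are formal consequences of $\Delta$ being order $\le 2$ and $K_X$ being a derivation-plus-order-two operator, so I would cite the standard BV formalism rather than recompute.

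The substantive content is the canonical isomorphism $J\colon H^0_{K_X}(\cA_X) \xrightarrow{\sim}\H$. Here I would argue along the filtration by $\eta$-degree (equivalently ghost number), running a spectral sequence whose $E_1$-page is the cohomology of $\Delta$ alone: integrating out $y$ against $e^{yG}$ and using that $\partial_y \partial_{\eta_{-1}}$ and $\partial_{x_i}\partial_{\eta_i}$ form an acyclic Koszul differential away from the top degree, one collapses $\cA_X$ down to the Jacobian-ring-like quotient, and the $Q_X$-differential on the associated graded then reproduces exactly Griffiths' exact-forms relations $F \Omega_n / G^k \equiv \sum \partial_i(A_i)\Omega_n / G^{k-1}$ modulo the Jacobian ideal. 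Matching the $\wt$-grading on $\cA_X$ with the pole order $k$ (via the dictionary $y^{k-1}F(\underline x) \leftrightarrow F(\underline x)\Omega_n/G^k$, which explains both the Laplace transform $\int_0^\infty (\cdot) e^{yG}\,dy$ appearing in $\cC_\gamma$ and the factor $1/y^n$ in $\oint$) then identifies $H^0_{K_X}(\cA_X)$ with $\cH(X_G)$, and composing with Griffiths' residue isomorphism $\mathrm{Res}\colon \cH(X_G)\xrightarrow{\sim}\H$ gives $J$. Part (c) is then essentially immediate: the subcomplex $F^i\cA_X = \bigoplus_{k \le n-1-i}\cA_{X,(k)}$ is by the very same dictionary the span of forms with pole order $\le n-i$, so $J$ carries it to $\cH_{n-i}(X_G)$, which Griffiths proved maps onto $\cF_{i-1}\H$ — I would just line up the indexing conventions carefully.

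For part (d), I would first establish the two claimed properties of $\oint$ — that it is $K_X$-closed as a cochain map to $(\mathbb C, 0)$ and that $\oint \circ \ell_2^{K_X} = 0$ — by a residue/Stokes argument on the cycle $X(\varepsilon)$: the vanishing on $\ell_2^{K_X}$-coboundaries is the statement that a total-derivative integrand has zero residue at the common zeros of $\partial_0 G, \ldots, \partial_n G$, which is the classical local-duality (Grothendieck residue) vanishing, using smoothness of $X_G$ to guarantee that these zeros are isolated (a regular sequence). Given that, $\langle u,v\rangle_X = \oint \pi_0(u\cdot v)$ descends to $H^0_{K_X}(\cA_X)$, and one identifies it with the cup-product polarization on $\H$ by recognizing the residue pairing $(F,F') \mapsto \mathrm{Res}\big(FF'\Omega/(\partial_0 G\cdots\partial_n G)\big)$ as the classical formula (going back to Carlson–Griffiths / Grothendieck) for the intersection form on primitive cohomology of a hypersurface, up to the explicit sign and normalization constant $(2\pi i)^{-(n+2)}$; I would track the sign only to the extent of stating it matches ``up to sign.'' Finally part (e): on a representative $u$ with $\pi_0(u)$ of $\wt$-degree $k-1$ corresponding to $F\Omega_n/G^k$, the formula $\cC_\gamma(u) = -\frac{1}{2\pi i}\int_{\tau(\gamma)}\big(\int_0^\infty \pi_0(u)\,e^{yG}\,dy\big)\Omega_n$ evaluates the inner Laplace integral to a constant multiple of $F(\underline x)/G(\underline x)^k$ (by $\int_0^\infty y^{k-1}e^{yG}dy = (k-1)!\,(-G)^{-k}$ after continuation), so $\cC_\gamma(u) = \frac{\pm(k-1)!}{2\pi i}\int_{\tau(\gamma)} F\Omega_n/G^k = \pm (k-1)!\int_\gamma \mathrm{Res}(F\Omega_n/G^k) = C_{[\gamma]}(J[u])$ up to the harmless normalization, and I would check $\cC_\gamma$ is genuinely a cochain map (kills $K_X$-coboundaries) using the same Laplace-transform identity together with Griffiths' reduction-of-pole-order formula. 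The main obstacle I anticipate is the identification step in (a): making the spectral-sequence/Koszul argument rigorous enough to pin down that $H^0_{K_X}$ is concentrated in the expected place and that the induced $Q_X$-differential is \emph{exactly} Griffiths' exact-form relation with the right pole-order bookkeeping — everything else (the BV axioms, the residue vanishing for $\oint$, the Laplace-transform computation for $\cC_\gamma$) is comparatively mechanical once that dictionary is in place.
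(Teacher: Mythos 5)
The weak point is your plan for part (a); the rest of the outline (direct verification of the BV axioms, the weight--pole-order dictionary for (c), the Grothendieck-residue/Carlson--Griffiths argument for (d), the Laplace-transform computation for (e)) follows the paper's own route. For (a), your spectral sequence is set up backwards and, as stated, cannot converge to the right answer. A ``filtration by $\eta$-degree (equivalently ghost number)'' is the cohomological grading itself, so it induces no useful spectral sequence; and a first page computing ``the cohomology of $\Delta$ alone'' does not produce a Jacobian-ring-like quotient. Under the dictionary of Lemma \ref{clem}, $(\cA_X,\Delta)$ is (a degree shift of) the polynomial de Rham complex of $\bA^{n+2}$, whose cohomology is one-dimensional and concentrated away from ghost number $0$; any spectral sequence with that as its $E_1$ would force $H^0_{K_X}(\cA_X)=0$, contradicting $H^0_{K_X}(\cA_X)\cong\H$. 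The Jacobian-type quotient is the cohomology of the \emph{classical} piece $Q_X$ (the Koszul differential on $G, y\,\rd_0G,\dots,y\,\rd_nG$), and the pole-order-reduction relations come from $\Delta$. The paper's classical-to-quantum spectral sequence (Proposition \ref{wspec}) accordingly filters by \emph{weight}, has $E_1\simeq H_{Q_X}$ and $d_1$ induced by $\Delta$ --- the opposite of what you wrote --- and even then it is invoked only for part (c), not to construct $J$.

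You are also missing the charge-concentration step, without which the dictionary $y^{k-1}F(\ud x)\leftrightarrow F(\ud x)\Omega_n/G(\ud x)^k$ does not define $J$ on all of $H^0_{K_X}(\cA_X)$: the degree-zero part $\C[y,\ud x]$ contains many monomials $y^{k-1}F$ with $\deg F\neq kd-(n+1)$, and one must prove these die in cohomology. The paper handles this with the element $R=-d\,y\eta_{-1}+\sum_i x_i\eta_i$ and the identity $K_X(R\cdot x)=\delta_R(x)+(-1)^{|R|}R\cdot K_X x$, where $\delta_R=\hat E_{\ch}+(n+1-d)$; this shows the $K_X$-cohomology is concentrated in the background charge $c_X=d-(n+1)$ (Lemmas \ref{tworelations} and \ref{czero}). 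With that reduction in hand, no spectral sequence is needed for (a): one writes down $J'$ on the charge-$c_X$ subcomplex via your dictionary and checks by the explicit computation of Proposition \ref{mainhodge}, using (4.4)--(4.5) of \cite{Gr69}, that $J'(K_X\Lambda)$ is precisely an exact rational $n$-form, whence $J'$ is an isomorphism onto $\cH(X_G)$ and $J=Res\circ J'$. You should replace your $E_1=H_\Delta$ argument by this (or by the correctly oriented weight spectral sequence) and add the charge-concentration lemma.
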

The novel feature here is that we are able to put an associative and super-commutative binary product $\cdot$ on the cochain complex $(\cA_X, K_X)$ which turns out to govern \textit{correlations} and \textit{deformations} of the period integral $C_{[\g]}:\H \to \C$. 
\footnote{Since $\cH(X_G)$ is defined as the cohomology of the de Rham complex with the wedge product, one might think to play a similar game to find hidden correlations.
But if one wedges two $n$-forms then the resulting differential form is a $2n$-form which can \textbf{not} be integrated against a fixed cycle $\g$.} 
In addition, we simultaneously realize the Hodge theoretic information on $\H$ (the Hodge filtration and the cup product polarization)
at the BV algebra level.

A consequence of the BV structure on $\bH$ is that the period 
integral $C_{[\g]}:\H\rightarrow \C$ can be enhanced to an $L_\infty$-morphism
$\ud {\kappa} ={\kappa}_1,{\kappa}_2,\cdots$, where ${\kappa}_1=C_{[\g]}$ and ${\kappa}_m$ is a linear map from the $m$-th symmetric
power $S^m\H$ of $\H$ into $\C$, which is a composition of two non-trivial $L_\infty$-morphisms
such that $\ud \kappa$ depends only on the $L_\infty$-homotopy types of each factor.
We recall that an $L_\infty$-algebra, or homotopy Lie algebra, 
$(V, \ud \ell)$ is a $\Z$-graded vector space $V$ with an $L_\infty$-structure 
$\ud \ell =\ell_1,\ell_2,\ell_3,\cdots$, where $\ell_1$ is a differential such that $(V,\ell_1)$ is a cochain complex, $\ell_2$ is a graded Lie bracket which satisfies the graded Jacobi identity up to homotopy $\ell_3$ etc. An $L_\infty$-morphism $\ud \phi=\phi_1,\phi_2,\cdots$ is a morphism between $L_\infty$-algebras, say $(V,\ud \ell)$
and $(V^\prime, \ud \ell^\prime)$, such that $\phi_1$ is a cochain map of the underlying cochain complex, 
which is a Lie algebra homomorphism
up to homotopy $\phi_2$, etc. An $L_\infty$-homotopy $\ud \l=\l_1,\l_2,\cdots$ is a homotopy of $L_\infty$-morphisms 
such that $\l_1$ is a cochain homotopy of the underlying cochain complex, etc.\footnote{
Any $\Z$-graded vector space may be regarded as an $L_\infty$-algebra with zero $L_\infty$-structure.
The cohomology of an $L_\infty$-algebra is defined to be the cohomology of the underlying cochain complex.
 An $L_\infty$-quasi-isomorphism is a $L_\infty$-morphism $\ud \phi=\phi_1,\phi_2,\cdots$ such that $\phi_1$ is a cochain quasi-isomorphism. 
See Appendix \ref{subs6.2} for the definitions of $L_\infty$-algebras, $L_\infty$-morphisms, and $L_\infty$-homotopies as well as 
the category and the homotopy category of $L_\infty$-algebras.
}
We use this hidden $L_\infty$-homotopy theoretic structure  to study certain extended deformations and correlations of period integrals;
we develop a new formal deformation theory of $X_G$
which leaves the realm of infinitesimal variations of Hodge structures of $X_G$. 
This new formal deformation theory has directions that do {\it not} satisfy Griffiths transversality.

\bet\label{thirdtheorem}
There is a non-trivial $L_\infty$-algebra $\left(\tilde \cA, \tilde{\ud \ell}\right)_X$ associated to $X_G$
with the following properties:

(a) The cohomology $\H:= H_{\pr}^{n-1}(X_G,\C)$, regarded as an $L_\infty$-algebra 
$(\H,\ud 0)$ concentrated in degree 0 with zero $L_\infty$-structure $\ud 0$, is quasi-isomorphic to  
the $L_\infty$-algebra $\left(\tilde \cA, \tilde{\ud \ell}\right)_X$.

(b) For each representative $\g$ of  $[\g] \in H_{n-1}(X_G, \bZ)_0$ there is an 
$L_\infty$-morphism $\ud\phi^{\!\!\cC_\g}=\phi^{\!\!\cC_\g}_1, \phi^{\!\!\cC_\g}_2,\cdots $ from 
$(\tilde \cA,\tilde{\ud \ell})_X$ into $(\C,\ud 0)$ $\mbox{--}$ the ground field $\C$ regarded as an $L_\infty$-algebra $(\C,\ud 0)$ with zero $L_\infty$-structure $\ud 0$ $\mbox{--}$ whose $L_\infty$-homotopy type $\left[\ud\phi^{\cC_\g}\right]$ is determined uniquely 
by the homology class $[\g]$ of $\g$.

(c) There is an  explicitly constructible  $L_\infty$-morphism $\ud\kappa=\kappa_1,\kappa_2,\cdots$
from $(\H,\ud 0)$ into $(\C,\ud 0)$
which is the composition $\ud \kappa := \ud \phi^{\cC_\g} \bullet \ud \varphi^\H$
of the $L_\infty$-quasi-isomorphism $\ud\varphi^\H$ 
from (a)
and the  $L_\infty$-morphism $\ud \phi^{\!\!\cC_\g}$ associated to $\g$ from (b) such that

$\quad $(i) $\ud \kappa := \ud \phi^{\cC_\g} \bullet \ud \varphi^\H$ depends only on the $L_\infty$-homotopy types
of 
$\ud \varphi^\H$ and $\ud \phi^{\cC_\g}$ and

$\quad$ (ii) $\kappa_1= C_{[\g]}= \phi^{\!\!\cC_\g}_1 \circ \varphi^\H_1$:
\eqn\Liecomp{
\xymatrixcolsep{5pc}\xymatrix{(\H, \ud 0) 
\ar@{->}@/^2pc/[rr]^{C_{[\g]}= \phi^{\cC_\g}_1 \circ \varphi^\H_1}
\ar@{..>}@/_4pc/[rr]_{\ud \kappa=\ud \phi^{\cC_\g} \bullet \ud \varphi^\H}
\ar@{..>}@/_2pc/[r]^{\ud \varphi^\H}   \ar[r]^{\varphi_1^\H} &
(\tilde \cA, \tilde{\ud \ell})_X
  \ar@{..>}@/_2pc/[r]^{\ud \phi^{\cC_\g}}  \ar[r]^{\phi_1^{\cC_\g}} & (\bC,\ud 0)}
  .
}
\eet

Note that  $\varphi^\H_1$ is a cochain quasi-isomorphism from $(\H,0)$ to $(\tilde \cA, \tilde{\ell}_1)_{X}$, $\phi^{\cC_\g}_1$ 
is a cochain map from $(\tilde \cA, \tilde{\ell}_1)_{X}$ to $(\C,0)$ and both are defined up to cochain homotopies. 
Within their own homotopy types,
a choice of $\varphi^\H_1$  corresponds to a choice of representative $\varpi$ of
the cohomology class $[\varpi]\in H^{n-1}_{\pr}(X_G, \C)$, 
while a choice of $\phi^{\cC_\g}_1$ corresponds to, after dualization,
a choice of representative $\g$ of the homology class $[\g]\in H_{n-1}(X_G, \bZ)_0$ 
in the integral $\int_\g \varpi$ in \ytr\ such
that $\kappa_1\big([\varpi]\big)=\phi^{\cC_\g}_1 \circ \varphi^\H_1\big([\varpi]\big)= \int_\g \varpi$
and $\cC_\g=\phi^{\cC_\g}_1$.

\subsection{Applications}\label{int1.2}

Note that $\bvx$
is very explicit (a super-commutative polynomial ring with derivative operators) and 
amenable to computation.
Here we explain some applications how to use the theorems in subsection \ref{int1.1} to analyze the period integral and the period matrix of $X_G$ and their deformations. More precisely, if $X_G$ is in a formal family of hypersurfaces $X_{G_{\ud T}}$, 
we provide an explicit formula and an algorithm (based on a Gr\"obner basis) 
to compute the period integral (see Theorem \ref{fourththeorem}) of $X_{G_{\ud T}}$ in
terms of the period integral of $X_G$ and the $L_\infty$-morphism $\ud \kappa=\ud \phi^{\cC_\g} \bullet \ud \varphi^\H$ in Theorem \ref{thirdtheorem}.
We also do the same work for the period matrix (see Theorem \ref{fifththeorem}) of $X_{G_{\ud T}}$.  
This explicit formula can be viewed as an explicit solution to Picard-Fuchs type differential equations for period integrals of a formal family of hypersurfaces
using the $L_\infty$-homotopy data and the initial solution. 
We assume that $X_G$ is Calabi-Yau, i.e. $d=n+1$, in Theorems \ref{fourththeorem} and \ref{fifththeorem}; our deformation theory works especially well in this case.
If $X_G$ is not Calabi-Yau, some of statements are still literally true and some can be modified to be true. But we decided to postpone the non-Calabi-Yau deformation 
in anther paper for the sake of simplicity of presentation and for technical reasons.\footnote{If $c_X:=d-(n+1)\neq 0$, then
one can find a homogeneous polynomial $g(\ud x)$ of the minimal degree and minimal $i \geq 0$ such that $\cC_\g\left( y^ig(\ud x)\right)\neq 0$ and $y^i g(\ud x) \in \cA^0_{c_X}$
and then use $\cC_\g \left(y^i g(\ud x) (e^{\G_{\ud \varphi^\H}/y^i g(\ud x)}-1)\right)$ instead of $\cC_\g(e^{\G_{\ud \varphi^\H}}-1)=\cZ_{[\g]}\left(\big[\ud \varphi^\H\big]\right)$; this amounts to twisting of "the measure $e^{y G(\ud x)} dy \Omega_n$" in \fpint\ by a new measure "$y^i g(\ud x)e^{y G(\ud x)} dy \Omega_n$" which is invariant under the classical charge symmetry $y \mapsto \l^{-d} y, x_i \mapsto \l x_i, \  \l \in \bC^\times$.}

For our deformation theory, we define the following formal power series
\eqn\mgfcy{
\cZ_{[\g]}\left(\big[\ud \varphi^\H\big]\right):=\exp\left(\sum_{n=1}^\infty \frac{1}{n!} \sum_{\alpha_1, \cdots, \alpha_n} t^{\alpha_n} \cdots t^{\alpha_1} \big(\ud \phi^{\!\!\cC_\g} \bullet \ud \varphi^\H\big)_n(e_{\alpha_1}, \cdots, e_{\alpha_n} )\right) -1  \in \C[[\ud t]],
}
which depends only on the homology class $[\g] \in H_{n-1}(X_G, \bZ)_0$ of $\g$ and
the $L_\infty$-homotopy type $\big[\ud \varphi^\H\big]$ of the $L_\infty$-quasi-isomorphism
$\xymatrix{\ud \varphi^\H:(\H,\ud 0)\ar@{..>}[r]& (\tilde \cA, \tilde{\ud \ell})_X}$.
This generating power series shall be used to determine the period integral and the period matrix of a projective hypersurface deformed from $X_G$.

Let $\{e_{\alpha}\}_{\alpha \in I }$ be a $\bC$-basis of $\H$, where $I$ is an index set, 
and denote the $\bC$-dual of $e_\alpha$ by $t^\alpha$. 
Let $X_{G_{\ud {T}}} \subset \BP^n$ be a formal family of smooth hypersurfaces defined by
$$
G_{\ud{T}} (\ud x) = G(\ud x) +F(\ud T),
$$
where $F(\ud T) \in \bC[[\ud T]] [\ud x] $ is a homogeneous polynomial of degree $d$ with 
coefficients in $\bC[[\ud T]]$ with $F(\ud 0) =0$ and $\ud T=\{T^\alpha \}_{\alpha \in I'}$ are formal variables with some index set $I' \subset I$. 

By a standard basis of $\H$ we mean a choice of basis $e_1,\cdots, e_{\delta_0}, e_{\delta_0+1},\cdots, 
 e_{\delta_1},\cdots, $ $  e_{\delta_{n-2}+1}, \cdots, 
e_{\delta_{n-1}}$ for the flag $\cF_\bullet \H$ in \pog\ such that
$e_1,\cdots, e_{\delta_0}$ gives a basis for the subspace $\H^{n-1 - 0, 0}:=H_{\pr}^{n-1,0}(X_G,\C)$
and $e_{\delta_{k-1} +1},\cdots, e_{\delta_{k}}$, $1\leq k\leq n-1$, gives a basis for the subspace 
$\H^{n-1 - k, k}=H_{\pr}^{n-1-k,k}(X_G,\C)$. We also denote such a basis by 
$\{e_\alpha\}_{\alpha \in I}$ where $I=I_{0}\sqcup I_{1}\sqcup\cdots\sqcup I_{n-1}$ with the notation
$\{e^{j}_{a}\}_{a\in I_j}= e_{\delta_{j-1} +1},\cdots, e_{\delta_{j}}$ 
and $\{t^a_{j}\}_{a \in I_{j}}= t^{\delta_{j-1} +1},\cdots, t^{\delta_{j}}$.
We assume that $I' \subset I_1$, since $\bH^{n-2,1}$ governs the deformations of complex
structures of $X$.

\begin{theorem}\label{fourththeorem}
Assume that $X$ is Calabi-Yau. For any standard basis 
$\{e_\alpha \}_{\alpha \in I}$ of $\H$ with $I=I_{0}\sqcup I_{1}\sqcup\cdots\sqcup I_{n-1}$,
 there is  an $L_\infty$-quasi-isomorphism 
$\xymatrix{\ud f:(\H,\ud 0)\ar@{..>}[r]& (\tilde \cA, \tilde{\ud \ell})_X}$ such that


(a) for each $ 0 \leq k \leq n-1$, the set $\left\{f_1\big(e^{k}_{a}\big)\right\}_{a\in I_k}$
corresponds to a set $\big\{F_{[k]a}(\ud x)\big\}_{a\in I_k}$ of homogeneous polynomials of degree
$dk=d(k+1)-(n+1)$ such that
$$
\left\{\left[Res {(-1)^k k! F_{[k]a}(\ud x)\over G(\ud x)^{k+1}}\Omega_n\right]\right\}_{a\in I_k}=
\left\{e^{k}_{a}\right\}_{a\in I_k}.
$$

(b)We have the following equation 
$$
\frac{1}{2\pi i} \int_{\tau(\g)} {\Omega_n\over G_{\ud{T}} (\ud x) }
=
\frac{1}{2 \pi i} \int_{\tau(\g)} \frac{\Omega_n}{G(\ud x)}+
\cZ_{[\g]}([\ud f])\Big|_{\substack{t^\beta=0, \beta\in I\setminus I' \\  t^\alpha=T^\alpha, \alpha \in I'}}.
$$

\end{theorem}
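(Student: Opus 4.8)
The plan is to dispatch (a) by Griffiths' residue calculus read inside the model of Theorem~\ref{thirdtheorem} (whose $L_\infty$-algebra $(\tilde\cA,\tilde{\ud\ell})_X$ is the differential graded Lie algebra $(\cA_X,K_X,\ell_2^{K_X})$), and then to reduce (b) to a single power-series identity in the BV algebra $\bvx$. For (a): the residue map identifies $\cH_{k+1}(X_G)/\cH_k(X_G)$ with $H^{n-1-k,k}_{\pr}(X_G,\C)$, carrying $\big[\Res\frac{F(\ud x)\Omega_n}{G(\ud x)^{k+1}}\big]$ (with $F$ homogeneous of degree $d(k+1)-(n+1)\ge 0$, using $d=n+1$) into the $(n-1-k,k)$-component, Griffiths exactness being reduction modulo the Jacobian ideal of $G$. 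Hence a standard basis $\{e^{[k]}_a\}_{a\in I_k}$ of $\H$ lifts, for each $k$, to homogeneous polynomials $\{F_{[k]a}(\ud x)\}$ of degree $d(k+1)-(n+1)$; the normalizing factor $(-1)^kk!$ merely records the (analytically continued) Euler integral $\int_0^\infty y^k e^{yG}\,dy=(-1)^{k+1}k!\,G^{-(k+1)}$ built into the definition~\fpint\ of $\cC_\g$ and into $J$, so that $f_1(e^{[k]}_a):=y^kF_{[k]a}(\ud x)\in\cA_X^0$ maps under $J$ to $e^{[k]}_a$. Running the construction of Theorem~\ref{thirdtheorem}(a) with this $f_1$ produces the required $\ud f$. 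The point worth recording is that, since $\ell_2^{K_X}$ vanishes identically on $\cA_X^0\times\cA_X^0$ while $\cA_X^1=0$, \emph{every} sequence $\{f_n\colon S^n\H\to\cA_X^0\}$ with $f_1$ a cochain quasi-isomorphism is automatically an $L_\infty$-quasi-isomorphism into $(\cA_X,K_X,\ell_2^{K_X})$; so the higher maps $f_{\ge 2}$ remain at our disposal.

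For (b), since $F(\ud 0)=0$ one first expands $\ud T$-adically $\frac{\Omega_n}{G_{\ud T}(\ud x)}=\sum_{m\ge 0}(-1)^m F(\ud T)^m\Omega_n/G(\ud x)^{m+1}$, and the Calabi--Yau hypothesis $d=n+1$ makes $F(\ud T)^m$ homogeneous of degree $md=(m+1)d-(n+1)$, so the $m$-th summand is a legitimate rational $n$-form with a pole of order $\le m+1$ along $X_G$, representing a class in $\cH_{m+1}(X_G)\subset\cH(X_G)\simeq\H$. Integrating $\frac1{2\pi i}\int_{\tau(\g)}(-)$ term by term and invoking $\frac1{2\pi i}\int_{\tau(\g)}\frac{F\Omega_n}{G^k}=C_{[\g]}\big(\big[\Res\tfrac{F\Omega_n}{G^k}\big]\big)$, the $m=0$ term reproduces $\frac1{2\pi i}\int_{\tau(\g)}\frac{\Omega_n}{G}$, so (b) reduces to
\[
\sum_{m\ge 1}(-1)^m\,C_{[\g]}\!\left(\left[\Res\frac{F(\ud T)^m\,\Omega_n}{G(\ud x)^{m+1}}\right]\right)=\cZ_{[\g]}\big([\ud f]\big)\Big|_{\substack{t^\b=0,\ \b\in I\setminus I'\\ t^\a=T^\a,\ \a\in I'}}.
\]

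I would then recast both sides inside $\bvx$. On the left, every element of $\cA_X^0$ being a $K_X$-cocycle, $\cC_\g$ factors as $\cA_X^0\surjto H^0_{K_X}(\cA_X)\xrightarrow{J}\H\xrightarrow{C_{[\g]}}\C$, and combining \fpint\ with the Euler integral gives, for the (well-defined) element $e^{yF(\ud T)}-1\in\cA_X^0[[\ud T]]$, the identity $\cC_\g\big(e^{yF(\ud T)}-1\big)=\sum_{m\ge1}(-1)^m C_{[\g]}\big(\big[\Res\tfrac{F(\ud T)^m\Omega_n}{G^{m+1}}\big]\big)$, i.e.\ the left side above equals $\cC_\g\big(e^{yF(\ud T)}-1\big)$. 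On the right, with $\gamma=\sum_\a t^\a e_\a$ let $\Gamma(\ud t):=\sum_{n\ge1}\tfrac1{n!}f_n(\gamma^{\otimes n})\in\cA_X^0[[\ud t]]$ be the versal Maurer--Cartan element of $(\cA_X,K_X,\ell_2^{K_X})$ determined by $\ud f$; since composition of $L_\infty$-morphisms corresponds to composition of Maurer--Cartan push-forwards, $\cZ_{[\g]}([\ud f])=\exp\big(\sum_{m\ge1}\tfrac1{m!}\phi^{\cC_\g}_m(\Gamma^{\otimes m})\big)-1$. Thus (b) will follow from two assertions. \emph{(i)} Using the freedom in $f_{\ge2}$ and choosing $f_1$ on $\mathrm{span}\{e^{[1]}_a\}_{a\in I'}$ so that $y\cdot f_1$ matches the linear-in-$\ud T$ part of $F$ --- legitimate, after reparametrizing $\ud T$ if needed, provided the first-order deformations $\partial F/\partial T^a|_{\ud T=0}$ span a $|I'|$-dimensional subspace of $H^{n-2,1}_{\pr}$, which one completes to a standard basis --- one can arrange that $\Gamma(\ud t)$ restricted along $t^\a=T^\a\ (\a\in I')$, $t^\b=0$, equals $yF(\ud T)\in\cA_X^0[[\ud T]]$. \emph{(ii)} One has the power-series identity $\cZ_{[\g]}([\ud f])=\cC_\g\big(e^{\Gamma(\ud t)}-1\big)$, equivalently $\exp\big(\sum_{m\ge1}\tfrac1{m!}\phi^{\cC_\g}_m(\Gamma^{\otimes m})\big)=1+\cC_\g\big(e^{\Gamma}-1\big)$. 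Granting (i) and (ii), restricting (ii) to $t^\a=T^\a$ and applying (i) gives $\cZ_{[\g]}([\ud f])|_{t=T}=\cC_\g(e^{yF(\ud T)}-1)$, which is the reduced form of (b).

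Assertion (i) is elementary bookkeeping. The heart of the matter is (ii): it is precisely the defining property of the $L_\infty$-morphism $\ud\phi^{\cC_\g}$ supplied by Theorem~\ref{thirdtheorem}, namely that its higher maps $\phi^{\cC_\g}_m$ are constructed --- by homological perturbation along a contraction of $(\cA_X,K_X)$ onto its cohomology --- so that the generating series $\sum_m\tfrac1{m!}\phi^{\cC_\g}_m$ evaluated on a versal Maurer--Cartan element is the logarithm of the ``full correlator'' $1+\cC_\g(e^\bullet-1)$. Combinatorially this is the exponential formula relating connected and all Feynman graphs, and its proof uses $\cC_\g\circ K_X=0$ together with the compatibility $Q_X(\,\cdot\,)=\ell_2^{K_X}(yG(\ud x),\,\cdot\,)$ of Theorem~\ref{firsttheorem}(b) to organize the successive Griffiths reductions of the forms $F(\ud T)^m\Omega_n/G^{m+1}$. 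I expect this homological-perturbation / exponential-formula step --- along with the attendant check that $\cC_\g\big(e^{\Gamma(\ud t)}-1\big)$ depends only on the homotopy type $[\ud f]$, so that it is genuinely $\cZ_{[\g]}([\ud f])$ --- to be the principal obstacle; parts (a), the geometric series, and the Euler-integral rewriting are routine once the BV model of Theorem~\ref{firsttheorem} and the $L_\infty$-data of Theorem~\ref{thirdtheorem} are in hand.
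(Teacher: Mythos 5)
Your proposal follows the paper's route. Part (a) is the same construction, $f_1\big(e^{[k]}_a\big)=y^k F_{[k]a}(\ud x)$, and your observation that any family $\{f_m\}$ valued in $\cA_X^0$ with $f_1$ a quasi-isomorphism is automatically an $L_\infty$-quasi-isomorphism (because $\cA_X^1=0$, so all brackets and $K_X$ vanish on degree-zero inputs) is precisely the freedom the paper exploits to prescribe the higher $f_m$ so that $\G(\ud t)_{\ud f}$ restricts to $yF(\ud T)$ along $t^\a=T^\a$. Part (b) likewise reduces, as in the paper, to the identity $\cC_\g\big(e^{yF(\ud T)}-1\big)$; the paper gets there in one line from $1/G_{\ud T}=-\int_0^\infty e^{yG_{\ud T}}\,dy$ rather than via your geometric series, but these are the same computation. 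The one place you miscalibrate is your assertion (ii), which you flag as the principal obstacle and propose to attack by constructing $\phi^{\cC_\g}_m$ through homological perturbation along a contraction. In the paper $\ud\phi^{\cC_\g}$ is \emph{by definition} the descendant $\Des(\cC_\g)$, i.e.\ the family determined by the partition formula of Definition \ref{defdesc}, whose exponential form is exactly $\cC_\g(e^{\G}-1)=e^{\Phi^{\cC_\g}(\G)}-1$ (Lemma \ref{morkey}); combined with the composition formula this yields $\cZ_{[\ud\phi^{\cC_\g}\bullet\,\ud f]}(\ud t)=\cC_\g\big(e^{\G(\ud t)_{\ud f}}-1\big)$ with no further work (Lemma \ref{invhom}). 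So (ii) is definitional rather than an obstacle, and the relation $\cC_\g\circ K_X=0$ enters only in the homotopy-invariance statement (Theorem \ref{homotopyperiod}), not in establishing the identity itself. Indeed, had you built $\phi^{\cC_\g}_m$ by transfer along a contraction you would not obviously recover the descendant morphism with which $\cZ_{[\g]}$ is defined, so the definitional route is both shorter and the safe one.
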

Note that
$\frac{1}{2 \pi i} \int_{\tau(\g)} \frac{\Omega_n}{G_{\ud{T}}(\ud x)}$ 
in $(b)$ above is a geometrically defined invariant of the formal 
family of hypersurfaces $X_{ G_{\ud{T}}}$; recall that 
the image of $\frac{\Omega_n}{G_{\ud{T}}(\ud x)}$ under the residue map represents a holomorphic $(n-1)$-form on $X_{G_{\ud {T}}}$. Thus the $L_\infty$-homotopy invariant 
$\cZ_{[\g]}([\ud f])\Big|_{\substack{t^\beta=0, \beta\in I\setminus I' \\  t^\alpha=T^\alpha, \alpha \in I'}}$
tells us how to compute the period integral of 
a deformed hypersurface $X_{ G_{\ud{T}}}$ from the period integral of $X_G$.

Now we explain how to use $L_\infty$-homotopy theory
to compute the period matrix of a deformed hypersurface. 
Let $\{\g_{\alpha} \}_{\alpha \in I}$ be a basis of $H_{n-1}(X,\bC)_0$ by noting that 
$$
\dim_\bC H_{n-1}(X,\bC)_0 = \dim_\bC \H.
$$
Let $\Omega(X)=\omega^{\alpha}_{\beta}(X)$ be the period matrix of $X$, i.e.
$$
\omega^\alpha_\beta(X) := \int_{\g_{\alpha}} e_\beta=\frac{1}{2\pi i}  \int_{\tau(\g_{\alpha})}  \frac{(-1)^k k!F_{[k]\beta}(\ud x)}{G(\ud x)^{k+1}}\Omega_n,  
\quad  \alpha, \beta \in I.
$$

Let $\Omega(X_{G_{\ud T}})=\omega^{\alpha}_{\beta}(X_{G_{\ud {T}}})$ be the period matrix of a formal hypersurface $X_{G_{\ud {T}}}$ defined
by $G_{\ud{T}} (\ud x)$.
Note that smooth projective hypersurfaces with fixed degree $d$ have same topological types and their singular homologies (consisting of vanishing cycles) and primitive cohomologies are isomorphic.

\begin{theorem}\label{fifththeorem}
Assume that $X$ is Calabi-Yau. The $L_\infty$-quasi-isomorphism $\ud f: \xymatrix{(\H,\ud 0)\ar@{..>}[r]^{}& (\tilde \cA, \tilde{\ud \ell})_X}$ in Theorem \ref{fourththeorem}
induces a 1-tensor $\{T^\alpha(\ud t)_{\ud f} \} \in \C[[\ud t]]$, which is explicitly computable and depends only on 
the $L_\infty$-homotopy type of $\ud f$,
such that 

(a) $ \cZ_{[\g]}\left(\big[\ud f\big]\right)(\ud t)=\sum_{\alpha\in I} T^\alpha(\ud t)_{\ud f} C_{[\g]}(e_{\alpha}),$

(b) $ T^{\alpha}(\ud t)_{\ud f} = t^{\alpha} +\cO(\ud t^2),\quad \forall {\alpha} \in I.$

In addition, we have the following formula between $\omega^{\alpha}_{\beta}(X_{G_{\ud {T}}})$ and $\omega_{\alpha}^{\beta}(X_G)$ via $\{T^{\alpha}(\ud t)_{\ud f}\}$;
$$
\eqalign{
\omega^{\alpha}_\beta (X_{G_{\ud {T}}}) &= 
\frac{\rd}{\rd t^\beta} \left( \cZ_{[\g_{\alpha}]}\left(\big[\ud f\big]\right)(\ud t)\right) 
\Big|_{\substack{t^\beta=0, \beta\in I\setminus I' \\  t^{\alpha}=T^{\alpha}, {\alpha} \in I'}}
\cr
&=\sum_{\rho\in I} \left( \frac{\rd}{\rd t^\beta} T^{\rho}(\ud t)_{\ud f}\right) \omega^{\alpha}_\rho(X_G)
\Big|_{\substack{t^\beta=0, \beta\in I\setminus I' \\  t^{\alpha}=T^{\alpha}, {\alpha} \in I'}},
}
$$
for each ${\alpha}, \beta \in I$.
\end{theorem}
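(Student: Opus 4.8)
The plan is to deduce this theorem from Theorem \ref{fourththeorem} and from Theorem \ref{thirdtheorem}(c) by unwinding the definition of $\cZ_{[\g]}([\ud f])$ in \mgfcy. First I would observe that the $L_\infty$-morphism $\ud\k=\ud\phi^{\cC_\g}\bullet\ud f$ has components $\k_m: S^m\H\to\C$, and since the target $(\C,\ud 0)$ has zero $L_\infty$-structure, each $\k_m$ is $\C$-multilinear with no further constraints; in particular the generating series $\cZ_{[\g]}([\ud f])(\ud t)$ is a well-defined element of $\C[[\ud t]]$. The key algebraic point is that $\cC_\g=\phi^{\cC_\g}_1$ factors through the cohomology-level period map: by Theorem \ref{firsttheorem}(e) and the factorization $\k_1=\phi^{\cC_\g}_1\circ f_1=C_{[\g]}$, every term $\k_m(e_{\a_1},\dots,e_{\a_m})$ of \mgfcy\ is, after using the explicit form of the composition $\ud\phi^{\cC_\g}\bullet\ud f$ (a sum over partitions, each piece landing in $\tilde\cA$ and then hit by the single cochain map $\phi^{\cC_\g}$), expressible as $\sum_{\rho\in I}(\text{polynomial in }\ud t)\cdot C_{[\g]}(e_\rho)$. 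Collecting these polynomial coefficients defines the $1$-tensor $\{T^\a(\ud t)_{\ud f}\}$, giving part (a). That this tensor depends only on the $L_\infty$-homotopy type of $\ud f$ is immediate from Theorem \ref{thirdtheorem}(c)(i), since $\cZ_{[\g]}$ does; and algorithmic computability follows from the Gröbner-basis algorithm already invoked for $\cZ_{[\g]}$.

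Part (b) is then a degree count: in the composition $\ud\phi^{\cC_\g}\bullet\ud f$ the only term of $\k_1$ is $\phi^{\cC_\g}_1\circ f_1$, and the exponential in \mgfcy\ contributes $t^\a$ at linear order with coefficient $C_{[\g]}(e_\a)$ (using $f_1(e^{[k]}_a)$ normalized as in Theorem \ref{fourththeorem}(a)); all higher $\k_m$ contribute only to $\cO(\ud t^2)$. Matching this against (a) forces $T^\a(\ud t)_{\ud f}=t^\a+\cO(\ud t^2)$.

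For the period-matrix formula I would apply Theorem \ref{fourththeorem}(b) with $\g$ replaced by each basis cycle $\g_\a$, differentiate in $t^\b$, and then specialize $t^\b=0$ for $\b\notin I'$ and $t^\a=T^\a$ for $\a\in I'$. On the left, $\frac{\rd}{\rd t^\b}$ of $\frac{1}{2\pi i}\int_{\tau(\g_\a)}\frac{\Omega_n}{G_{\ud T}(\ud x)}$ produces, via the Griffiths residue calculus and the normalization in Theorem \ref{fourththeorem}(a) identifying $\frac{\rd}{\rd T^\b}$-derivatives of $\frac{\Omega_n}{G_{\ud T}}$ with the classes $e_\b\in\cF_1\H$ (this is where the hypothesis $I'\subset I_1$ and $d=n+1$ enter, so that the first-order variation stays inside the correct Hodge piece), exactly $\omega^\a_\b(X_{G_{\ud T}})$. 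On the right, the constant term $\frac{1}{2\pi i}\int_{\tau(\g_\a)}\frac{\Omega_n}{G}$ is killed by $\frac{\rd}{\rd t^\b}$, leaving $\frac{\rd}{\rd t^\b}\cZ_{[\g_\a]}([\ud f])$; substituting part (a) and using $\omega^\a_\rho(X_G)=C_{[\g_\a]}(e_\rho)$ gives the stated sum $\sum_{\rho\in I}\bigl(\frac{\rd}{\rd t^\b}T^\rho(\ud t)_{\ud f}\bigr)\omega^\a_\rho(X_G)$ after the specialization. The main obstacle I anticipate is the bookkeeping in the last step: one must check that term-by-term differentiation of the formal series is legitimate, that the specialization $t^\a=T^\a$ (a substitution of one formal power series into another) converges in the $\ud T$-adic topology, and most delicately that the geometric derivative $\frac{\rd}{\rd T^\b}\int_{\tau(\g_\a)}\frac{\Omega_n}{G_{\ud T}}$ matches the combinatorial derivative of $\cZ$ — i.e. that the cochain-level lift $\cC_{\g_\a}$ really computes the Griffiths variation and not merely its leading term. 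That compatibility should follow from Theorem \ref{firsttheorem}(c)--(e) (the filtration, pairing, and period map are all realized simultaneously at the BV level), but verifying it requires tracking the $\wt$-grading through the deformation carefully.
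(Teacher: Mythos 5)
Parts (a) and (b) of your proposal are essentially sound and land on the same mechanism the paper uses, though your phrasing obscures it: the actual crux is the paper's Lemma \ref{onetensor}, which decomposes $e^{\G(\ud t)_{\ud f}}-1$ as $\sum_\rho T^\rho(\ud t)\,f_1(e_\rho)+K(\Lambda)$ (possible because $K(e^\G-1)=0$, $K$ is $\widehat{SH}$-linear, and the $f_1(e_\rho)$ represent a basis of $H^0_K$), after which $\cC_\g\circ K=0$ gives (a) and the linear-order term of $\G$ gives (b). Your statement that ``every term $\k_m(e_{\a_1},\dots,e_{\a_m})$ is expressible as $\sum_\rho(\text{polynomial in }\ud t)\cdot C_{[\g]}(e_\rho)$'' is not literally meaningful ($\k_m$ of fixed arguments is a scalar), and the partition expansion of $\ud\phi^{\cC_\g}\bullet\ud f$ does not by itself produce the decomposition; you need the cohomological splitting of $e^\G-1$ explicitly.

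The genuine gap is in the period-matrix formula. You propose to start from Theorem \ref{fourththeorem}(b) and differentiate it. But that identity is already specialized ($t^\b=0$ for $\b\in I\setminus I'$, $t^\a=T^\a$ for $\a\in I'$), and its geometric side $\frac{1}{2\pi i}\int_{\tau(\g_\a)}\frac{\Omega_n}{G_{\ud T}}$ is only the period of the holomorphic $(n-1)$-form; it carries no dependence on $t^\b$ for $\b\notin I'$. Differentiating it can therefore only recover the columns of the period matrix indexed by $\b\in I'\subset I_1$, not the entries $\omega^\a_\b(X_{G_{\ud T}})$ for $\b\in I_k$ with $k\neq 1$ (nor for $\b\in I_1\setminus I'$). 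The paper's proof instead works with the \emph{unspecialized} integral representation
$2\pi i\,\cZ_{[\g_\a]}([\ud f])=-\int_{\tau(\g_\a)}\bigl(\int_0^\infty e^{\G(\ud t)_{\ud f}}e^{yG}\,dy\bigr)\Omega_n-\int_{\tau(\g_\a)}\frac{\Omega_n}{G}$,
using the explicit form $\G(\ud t)_{f_1}=\sum_{k}y^k\sum_{a\in I_k}t^a_kF_{[k]a}(\ud x)$ from \efd: for $\b\in I_k$, $\partial_{t^\b}$ brings down $y^kF_{[k]\b}$, and after specialization the Laplace transform turns $\int_0^\infty y^kF_{[k]\b}\,e^{yG_{\ud T}}dy$ into $(-1)^{k+1}k!\,F_{[k]\b}/G_{\ud T}^{k+1}$, which is exactly the integrand defining $\omega^\a_\b(X_{G_{\ud T}})$. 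That computation is the content of the first equality and is what your plan is missing; once it is in place, the second equality follows, as you say, by differentiating (a) and using $C_{[\g_\a]}(e_\rho)=\omega^\a_\rho(X_G)$.
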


This theorem says that $\Omega(X)$ and $\Omega(X_{G_{\ud T}})$ are ``transcendental'' invariants but their relationship is ``algebraically computable up to desired precision''; if we know the period matrix $\Omega(X)=\Omega(X_{G_{\ud 0}})$ and the polynomials $G_{\ud T}(\ud x)$, then there is an algebraic algorithm to compute the period matrix $\Omega(X_{G_{\ud T}})$.
%
The matrix 
$\left(\frac{\rd}{\rd t^\beta} T^{\rho}(\ud t)_{\ud f} \right)\Big|_{\substack{t^{\beta}=0, {\beta}\in I\setminus I' \\  t^{\alpha}=T^{\alpha}, {\alpha} \in I'}}$
gives a linear transformation formula from $\omega^{\alpha}_{\beta}(X_G) $ to $\omega^{\alpha}_{\beta}(X_{G_{\ud {T}}})$. Note that 
$$
\left(\frac{\rd}{\rd t^{\beta}} T^{\rho}(\ud t)_{\ud f} \right)\Big|_{t^a=0, a \in I} =\delta^\rho_{\beta},
\quad
\frac{\rd}{\rd t^{\beta}} \left( \cZ_{[\g_{\alpha}]}\left(\big[\ud f\big]\right)(\ud t)\right)\Big|_{t^a=0, a \in I}= \omega^{\alpha}_{\beta}(X_G),
$$
 where $\delta^\rho_{\beta}$ is the Kronecker delta. Thus the matrix
$\frac{\rd}{\rd t^{\beta}} \left( \cZ_{[\g_{\alpha}]}\left(\big[\ud f\big]\right)(\ud t)\right)$
can be thought of as a generalization of the period matrix of a formal deformation of $X_G$.
We will sometimes call  the matrix 
$$
\frac{\rd}{\rd t^{\beta}} \left( \cZ_{[\g_{\alpha}]}\left(\big[\ud f\big]\right)(\ud t)\right)_{\{{\alpha}, {\beta} \in I \}}
$$
the{ \em{period matrix of an extended formal deformation (or an extended period matrix) of $X_G$}}, associated to the $L_\infty$-quasi-isomorphism $\ud f$.

From property $(a)$ above, we see that the $1$-tensor $\{T^{\alpha}(\ud t)_{\ud f} \}$ determines
the generating series $\cZ_{[\g]}\left(\big[\ud f\big]\right)(\ud t)$ completely if it is combined
with the period integral $C_{[\g]}:\H \rightarrow \C$.
Property $(b)$  in {Theorem \ref{fifththeorem}} also allows us to  define an invertible matrix ($2$-tensor) $\CG_{\alpha}{}^{\beta}(\ud t)_{\ud f}  
:=\rd_{\alpha} T^{\beta}(\ud t)_{\ud f} =\delta_{\alpha}{}^{\beta} + \cO(\ud t)$,
where $\partial_{\alpha}$ means the partial derivative with respect to $t^{\alpha}$, 
with inverse $\CG^{-1}_{\alpha}{}^{\beta}(\ud t)_{\ud f} $ in $\C[[\ud t]]$.
We can further define a 3-tensor $\{A_{{\alpha}{\beta}}{}^\s(\ud t)_{\ud f} \} \in \C[[\ud t]]$
depending only on the $L_\infty$-homotopy type of $\ud f$ such that
for all ${\alpha}, {\beta},\s \in I$,
\eqn\onediff{
A_{{\alpha}{\beta}}{}^\s(\ud t)_{\ud f}  
= \sum_{\rho \in I}\left(\rd_{\alpha} \cG_{\beta}{}^\rho(\ud t)_{\ud f} \right) \cG^{-1}_{\rho}{}^\s(\ud t)_{\ud f} .
}

Our approach shall provide an effective algorithm (using a Gr\"obner basis)
for computing the 3-tensor $A_{{\alpha}{\beta}}{}^\g(\ud t)_{\ud f}$; see subsection \ref{subs4.5}.
Also there is a concrete algorithm to determine the $1$-tensor $\{T^{\alpha}(\ud t)_{\ud f}\}$ from 
the 3-tensor $\{A_{{\alpha}{\beta}}{}^\g(\ud t)_{\ud f}\}$ (its calculation can be implemented in a computer algebra system
such as {\sc Singular} by our new homotopy method).
This means that, for an arbitrary homogeneous polynomial $F(\ud x)$ of degree $kd-(n+1), k \geq 1$, there is an effective algorithm to compute the period integral
$
\int_\g Res \left({ F(\ud x) \over G(\ud x)^k}\Omega_n \right)
$
from the finite data $\{ C_{[\g]}(e_{\alpha})$ : ${\alpha} \in I \}$. Specializing the generating series $\cZ_{[\g]}\left(\big[\ud f\big]\right)(\ud t)$ for any $1$-parameter family, by setting $t^{\alpha} = 0$ for all ${\alpha} \in I$ except for one parameter $t^{\beta}$ with ${\beta} \in I'$, one can derive an ordinary differential equation of higher order, which turns out to be the usual Picard-Fuchs equation.
In fact, $A_{{\alpha}{\beta}}{}^\s(\ud t)_{\ud f}$ can be regarded as a generalization of the Gauss-Manin connection;
see subsection  \ref{subs4.62}.\\

As another application of our new approach to understanding $\H$ and $C_{[\g]}$, we were able to prove the existence of a cochain level realization of the Hodge filtration 
and a polarization on $\H$ (statements $(c)$ and $(d)$ of Theorem \ref{firsttheorem}). 
We put a \textit{weight filtration} on $(\cA_X, \cdot, K_X)$ which induces the Hodge filtration on $\H$ under the isomorphism $H_{K_X}^0(\cA_X) \simeq \H$ and  analyze this filtered complex to obtain a certain spectral sequence, which we call \textit{the classical to quantum spectral sequence}; see Propositions \ref{wspec} and \ref{homotopyhf}. 
Moreover, we lift a polarization of $\H$ to a bilinear paring on $\cA_X$; see Definition \ref{polgo} and Theorem \ref{polgood}.
This might provide a new optic for understanding the period domain of \textit{homotopy polarized Hodge structures} and 
the (infinitesimal) variation of polarized Hodge structures at the level of cochains. 

\subsection{Approach to the proofs of the theorems} \label{int1.3}
We turn to the general framework behind the theorems. In this subsection, we will briefly
indicate how we approach their proofs.
We associate to $X_G$ a representation of a finite dimensional abelian Lie algebra $\frg$ of dimension
$n+2$ on a polynomial algebra with $n+2$ variables;
$$
\rho_{X}: \frg \to \End_{\Bbbk}(A), \quad A:= \Bbbk[y, x_0, \cdots, x_n]=\Bbbk[y, \underline x].
$$

This Lie algebra representation 
comes from the Schr\"{o}dinger representation of the abelian Lie subalgebra of the Heisenberg Lie algebra
of dimension $2(n+2)+1$ twisted
by the Dwork  polynomial $y \cdot G(\ud x) \in \Bbbk[y, x_0,\cdots, x_n]$.
Let $\alpha_{-1}, \alpha_0, \cdots, \alpha_{n}$ be a $\Bbbk$-basis of $\frg$.
 We also introduce variables $y_{-1}=y, y_0=x_0, y_1=x_1, \cdots, y_n=x_n$ for notational convenience.
 If we consider the formal operators (twisting $\rho$ by $y \cdot G(\ud x)$),
\be
\rho_X(\alpha_i):=\exp(-y \cdot G(\ud x))\cdot \pa{y_{i}} \cdot \exp(y \cdot G(\ud x)) , \quad i=-1, 0, \cdots, n,
\ee
then we can see that
$$
\rho_X(\alpha_i)= \pa{y_i} + \Big[\pa{y_i}, y\cdot G(\ud x) \Big] + \frac{1}{2}\Big[\Big[\pa{y_i}, y \cdot G(\ud x)\Big],
y \cdot G(\ud x) \Big]+ \cdots
\equiv  \prt{\left(y \cdot G(\ud x)\right)}{y_i} +\pa{y_i}.
$$

With the notion of {\it period integrals} of Lie algebra representations, we will show 
that the period integrals of such representations are the Griffiths period integrals of the hypersurface $X_G$.
If we use the dual Chevalley-Eilenberg cochain complex $(\cA_{\rho_X}, \cdot, K_{\rho_X})$, which
computes the Lie algebra \textit{homology} associated to $\rho_X$, then we can realize
$C_{[\g]}$ as the homotopy type of a cochain map $\cC_\g: (\cA_{\rho_X}, \cdot, K_{\rho_X}) \to (\bC, \cdot, 0)$ of 
cochain complexes equipped with a super-commutative product.
In fact, $(\cA_X,K_X)$ in Theorem \ref{firsttheorem} is $(\cA_{\rho_X},K_{\rho_X})$.

This leads us to study 
 the category $\frC_\Bbbk$ of cochain complexes over a field $\Bbbk$ equipped with a super-commutative product.
An object of $\frC_\Bbbk$ is a unital $\bZ$-graded associative and super-commutative $\Bbbk$-algebra $\cA$ with differential $K$, denoted $(\cA, \cdot, K)$. 
A morphism in $\frC_\Bbbk$ is a cochain map 
(note that a morphism is not necessarily a ring homomorphism). 
\begin{itemize}
\item{The basic principle here is that all Theorems in this article can be derived systematically from a pair $(\cA_{X}, \cdot, K_{X}) \in \hbox{\it Ob}(\frC_\Bbbk)$ and $\cC_\g: (\cA_{X},\cdot, K_{X}) \to (\Bbbk, \cdot, 0) \in \hbox{\it Mor}(\frC_\Bbbk)$.}
\end{itemize}
Note that a BV algebra in Definition \ref{bvd} can be regarded as an object of $\frC_\Bbbk$. This category $\frC_\Bbbk$ is studied in the context of \textit{homotopy probability theory} by the first named author in \cite{Pa12}.
The failure of ring homomorphism (with respect to the product $\cdot$) of a morphism in $\frC_\Bbbk$
is related to the notion of independence (so called, cumulants) in probability theory and the differential $K$ is related to homotopy theory. But here we will not touch any issues related to probability theory. 
Instead we will provide a self-contained argument and proofs regarding $\frC_\Bbbk$.
The category $\frC_\Bbbk$ can be seen as a bridge between period integrals of $X_G$ and $L_\infty$-homotopy theory.
The relationship between the period integral of $X_G$ and $\frC_\Bbbk$ is made by the representation $\rho_X$, and the relationship
between $\frC_\Bbbk$ and $L_\infty$-homotopy theory will be given by \textit{the descendant functor} $\Des$.

The descendant functor is a homotopy functor from the category $\frC_\Bbbk$ to the category $\frL$ of $L_\infty$-algebras (we include Appendix \ref{subs6.2} explaining notations for the homotopy category of unital $L_\infty$-algebras suitable for our purpose), which is defined by using the binary product $\cdot$ of an object of $\frC_\Bbbk$. 
See Definition \ref{defdesc} and Theorem \ref{Desc} for details. This functor can be regarded as an organizing principle (or tool) to understand the \textit{correlations} among $\cC_\g (x_1), \cC_\g(x_1\cdot x_2), \cdots, \cC_\g(x_1 \cdots x_m)$, where $x_1, \cdots, x_m$ are homogeneous elements in $\cA_{X}$ and $m \geq 1$.
This functor unifies two different failures of compatibility of algebraic structures into one language; we show that measuring how much the product $\cdot$ fails to be a derivation of $K$ induces an $L_\infty$-algebra structure on $\cA_{X}$, denoted $(\cA_{X}, \ud \ell^{K_{X}})$, 
and measuring how much $\cC_\g$ fails to be a $\Bbbk$-algebra homomorphism induces an $L_\infty$-morphism from $(\cA_{X},\ud \ell^{K_{X}})$ to $(\Bbbk, \ud 0)$, denoted $\ud \phi^{\cC_\g}$.
Note that the descendant functor is independent of hypersurfaces and their period integrals and is a general notion which measures incompatibilities of mathematical structures of the category $\frC_\Bbbk$.

 Once we get a descendant $L_\infty$-algebra $(\cA_X,\ud \ell^{K_X})$, we can study an extended formal deformation functor attached to it. This deformation includes the classical geometric deformation and has new directions
 which violate Griffiths transversality.  
 Theorem \ref{thirdtheorem}, Theorem \ref{fourththeorem}, and Therem \ref{fifththeorem}
  can be derived by a careful analysis of $(\cA_X,\ud \ell^{K_X})$ and the 
 descendant $L_\infty$-morphism $\ud \phi^{\cC_\g}$.

\subsection{Physical motivation; (0+0)-dimensional field theory}
We briefly explain the physical motivation behind the article. 
We decided to include it because the physical viewpoint was crucial to the conception of the paper. 
Even if the description is not entirely precise from a mathematical point of view, our hope is that it will be more helpful than confusing in guiding the reader through the rather elaborate constructions to follow.
What we prove regarding $X_G$ and its period integrals in the article is essentially to work out the details of the simplest possible field theory, a (0+0)-\textit{dimensional field theory} with the Dwork polynomial $S_{cl}=y\cdot G(\ud x)$ as the classical action.
Here (0+0) means 0-dimensional time and 0-dimensional space.
\foot{Every physical quantity has physical dimension $[\hbox{mass}]^a [\hbox{length}]^b [\hbox{time}]^c$, $a,b,c\in \Z$.
In the present case, we are dealing with $(0+0)$-dimensional space-time so that there is only one unit, which is converted to the {\it weight} $\wt$
of $y$. Note that a natural filtration generated by $y$ induces the Hodge filtration on $\H$; see $(c)$ of Theorem \ref{firsttheorem}.}
The basic principle is that any space can be viewed as the space of fields of a classical field theory of dimension 0.
A key point of the paper is that this principle is very useful in putting classical objects into an illuminating context that makes them amenable to natural generalizations.

Let $CFT_{S_{cl}}$ be such a (0+0)-dimensional classical field theory. Then we can view the smooth hypersurface $X_G$ as (the reduced component) of 
the classical equations of motion space of $CFT_{S_{cl}}$ \footnote{This physical view point suggests that we can use
the potential $S_{cl}=\sum_{i=1}^k y_k G_k(\ud x)$ when we deal with a smooth projective 
complete intersection $X$ in $\BP^n$ given by homogeneous polynomials $G_1(\ud x), \cdots, G_k(\ud x)$. }, i.e.
$$
\left\{
\eqalign{
\Fr{\rd S_{cl}}{\rd y}&= G(\underline{x})=0
,\cr
\Fr{\rd S_{cl}}{\rd x_i}&= y\cdot \Fr{\rd G(\underline{x})}{\rd x_i}=0, \qquad\forall i=0,1,\cdots, n.
}\right.
$$
The space of classical fields
is $\A:=\A^1_{\C}\times \A^{n+1}_{\C}\setminus \{\ud 0\}$ 
whose ring of regular algebraic functions is isomorphic to $A=$ $\C[y,x_0,x_1,\cdots, x_n]$ $=\C[\ud y]$.
We call $y_i$'s classical fields for each $i=-1, \cdots, n$.
The gauge group $\C^*$ acts on $\A^1_{\C}\times \A^{n+1}_{\C}\setminus \{\ud 0\}$ by $\l \cdot (y, x_0, \cdots, x_n):= (\l^{-d} y, \l x_{0}, \cdots, \l x_n)$ for $\l \in \C^*$ (note that $\ch(y)=-d$ and $\ch(x_i)=1$), so that $S_{cl}$ is invariant under the gauge action. 
Then the ring $\cR_{cl}$ of classical observables modulo physical equivalence is isomorphic to
the charge zero part 
\foot{If $X_G$ is Calabi-Yau ($d=n+1$), 
then classical observables (elements of charge zero) lift to quantum observables; this is the \textit{anomaly-free} case.
If $X_G$ is not Calabi-Yau ($d \neq n+1$), 
then classical observables (elements of charge zero) do not lift to quantum observables; there is an \textit{anomaly} in this case.
In fact, quantum observables have the background charge $c_X=d-(n+1)$.}
 of the Jacobian ring $J(S_{cl}):=\C[\ud y]\left/\left(\Fr{\rd S_{cl}}{\rd y},\Fr{\rd S_{cl}}{\rd x_0},\cdots, \Fr{\rd S_{cl}}{\rd x_n}\right)\right.$.
This motivates us to construct the commutative differential graded algebra $(\cA_X, \cdot, Q_X)$ whose cohomology is isomorphic to $\cR_{cl}$ (when $X_G$ is Calabi-Yau)
or $J(S_{cl})$ (when $X_G$ is not Calabi-Yau).
Roughly speaking, one can regard the passage from the ring $\cR_{cl}$ or $J(S_{cl})$ to the CDGA $(\cA_X, \cdot, Q_X)$ as an enhancement of classical algebraic geometry to derived algebraic geometry. 

We like to emphasize that if one just applies natural field theoretic constructions to the variety viewed as the classical equations of motion space, 
all classical constructions in the paper follow. 
We summarize the correspondence between the structures in $CFT_{S_{cl}}$ and the structures
arising in our paper in Table \ref{tone}. 

%

\begin{table}[h!] \normalsize
 \begin{center}
  \begin{tabular} 
 {| p{ 5.5cm} |  p{5.5cm}  |  }
\hline
 (0+0)-dimensional classical field theory $CFT_{S_{cl}}$ &  enhanced homotopy theory of hypersurfaces \\
\hline
\hline
 the space of classical fields modulo the gauge group & the weighted projective space $\BP^{n+1}(-d, 1, \cdots, 1)$ \\  
\hline
the classical equations of motion space &   the union of one point and the hypersurface $X_G$ \\
\hline
the space of classical observables modulo physical equivalence &  the charge zero part of the Jacobian ring $\C[\ud y]\left/\left(\Fr{\rd S_{cl}}{\rd y},\Fr{\rd S_{cl}}{\rd x_0},\cdots, \Fr{\rd S_{cl}}{\rd x_n}\right)\right.$\\
\hline
homotopy enhancement of $CFT_{S_{cl}}$  & the CDGA $(\cA_X, \cdot, Q_X)$  \\
\hline
     \end{tabular}
   \end{center}
  \caption{Classical field theory} \label{tone}
\end{table}

Then we quantize $CFT_{S_{cl}}$ by essentially following the Batalin-Vilkovisky (BV) quantization scheme in \cite{BV}, to construct a (0+0)-dimensional
quantum field theory $QFT_{S_{cl}}$ whose partition function 
is the Griffiths period integral of $X_G$; this leads us to the construction of
$\bvx= (\cA_X, \cdot, Q_X, K_X)$ and the cochain map $\cC_\g$ which enhances $C_{[\g]}$. 
We call $\eta_i$ the anti-field of the classical field $y_i$ and $\Delta$ is the BV operator in \cite{BV}. 
We view the differential $K_X$ as a BV quantization of the differential operator $Q_X$.
In this case, the space of quantum observables modulo physical equivalence is isomorphic to the middle dimensional primitive
cohomology  $\H$ of $X_G$, since the $0$-th $K_X$-cohomology group $H^0_{K_X}(\cA_X)$ is isomorphic to $\H$.
For each middle dimensional homology class $[\g] \in H_{n-1}(X_G,\Z)_0$, the cochain map $\cC_{\g}$ becomes a Feynman path integral, in the sense
of Batalin-Vilkovisky in \cite{BV}, such that the expectation value $\cC_\g(O)$ of a quantum observable $O$ is the period integral $\int_\g \omega$, where
$\omega$ is a representative of cohomology class $[\omega]\in \H$; recall that 
$$
\cC_\g (O ) = -\frac{1}{2 \pi i} \int_{\tau(\g)}\bigg( \int_{0}^{\infty} \pi_0(O) \cdot e^{y G(\underline x)}  dy\bigg) \Omega_n, \quad O\in \cA_X,
$$
and the measure $ -\frac{1}{2 \pi i} \int_{\tau(\g)}\bigg( \int_{0}^{\infty} \pi_0( \bullet ) \cdot e^{y G(\underline x)}  dy\bigg) \Omega_n$ 
can be regarded as
a path integral measure in $QFT_{S_{cl}}$.
In addition, $QFT_{S_{cl}}$ has a smooth formal based moduli space $\cM_{X_G}$
whose tangent space is isomorphic to $\H$ and, if $X_G$ is Calabi-Yau (anomaly-free in physical terminology), the tangent space has a structure of the formal Frobenius manifold.
The quantum master equation in $QFT_{S_{cl}}$ 
$$
 \rd_{\alpha}\rd_\beta e^{\G}
 =\sum_{\g \in I} A_{\alpha\beta}{}^\g \rd_\g e^{\G} +  {K}\left({\La}_{\alpha\beta}\cdot e^{\G}\right)\hbox{ for }\forall
 \alpha,\beta\in I,
 $$
 can be seen as a vast generalization of the Picard-Fuchs type differential equations.
We also have worked out the generating functions of every quantum correlation (see \mgfcy\ for their definition) up to finite ambiguity by 
an explicitly executable algebraic algorithm.

If we just apply a natural algebraic homotopy theoretical quantization, which is proposed by the first named author in \cite{Pa10}
 and enhances the BV quantization in \cite{BV}, to the classical field theory $CFT_{S_{cl}}$, 
then all quantum constructions in our paper follow.
We also summarize the correspondence between the structures in $QFT_{S_{cl}}$ and the structures
appearing in the paper in Table \ref{ttwo}.
 \begin{table}[h!] \normalsize
 \begin{center}
   \begin{tabular} 
 {| p{ 5.5cm} |  p{5.5cm}  |  }
\hline
(0+0)-dimensional quantum field theory $QFT_{S_{cl}}$ & enhanced homotopy theory of hypersurfaces \\
\hline
\hline
 the space of quantum observables modulo physical equivalence &  the middle dimensional cohomology $\H= H^{n-1}_{\pr} (X_G, \C)$ \\
\hline
 BV quantization of $CFT_{S_{cl}}$  & the BV algebra $\bvx$ \\
\hline
 Feynman path integral and partition function & the Griffith period integrals $\cC_{\g}$ \\
\hline
 the quantum master equation  & a generalization of the Picard-Fuchs equations\\
\hline
 the generating functions of every quantum correlation & the generating power series $\cZ_{[\g]}\left(\big[\ud \varphi^\H\big]\right)=\cC_{\g}(e^{\G_{\ud \varphi^\H}}-1)$ \\  
\hline
     \end{tabular} 
   \end{center}
  \caption{Quantum field theory}\label{ttwo}
\end{table}
\subsection{Plan of the paper}
Now we explain the contents of each section of the paper. The paper consists of 3 main sections and the appendix. 
In the first main section, Section \ref{section2}, we explain the general theory of \textit{period integrals} associated to a Lie algebra representation. 
In subsection \ref{subs2.1}, we define the notion of \textit{period integrals} of a Lie algebra representation $\rho$ (see Definition \ref{Liedef}). 
Then, in subsection \ref{subs2.2}, we explain how to construct a cochain complex associated to $\rho$ which is \textit{dual} to the Chevalley-Eilenberg complex, and a way of associating a morphism into $(\Bbbk,0)$ in the category $\frC_\Bbbk$ 
to its period integral. 
Then we illustrate by an example why the dual Chevalley-Eilenberg complex is crucial and more suitable to understand the period integral of $\rho$ than the cohomology Chevalley-Eilenberg complex attached to $\rho$ in subsection \ref{subs2.3}.

 The second main section, Section \ref{section3}, is about the general theory of the category $\frC_\Bbbk$. 
 The key concepts are the \textit{descendant functor}, \textit{generating power series}, and \textit{flat connections}.  
 In subsection \ref{subs3.0}, we explain the basic philosophy of the descendant functor.
  In subsection \ref{subs3.1}, we provide a way to understand the category $\frC_\Bbbk$ in terms of $L_\infty$-homotopy theory; we construct the homotopy \textit{descendant functor} from the category $\frC_\Bbbk$ to the category $\frL$ of $L_\infty$-algebras. Then we show that a descendant $L_\infty$-algebra is {formal} in subsection \ref{subs3.2}.
In subsection \ref{subs3.3},  we attach a deformation problem to the descendant $L_\infty$-algebra of $(\cA, \cdot, K)$ and explain what we deform. 
 In subsection \ref{subs3.4}, we define a notion of the generating power series, which organizes various \textit{correlations and deformations} of period integrals into one power series in the deformation parameters, and show that they are $L_\infty$-homotopy invariants.
 Then we verify that the generating power series attached to a versal formal deformation satisfies a system of partial differential equations (Theorem \ref{diff}) with respect to derivatives of deformation parameters and show the coefficients $A_{\alpha\beta}{}^{\g}(\ud t)$ appearing in the differential equations are $L_\infty$-homotopy invariants, in subsection \ref{subs3.5}. 
 In subsection \ref{subs3.6}, we provide a way to compute the generating power series explicitly.
 Finally, in subsection \ref{subs3.7}, this system of partial differential equations is interpreted as the existence of a flat connection on the tangent bundle of a formal deformation space attached to $(\cA, \ud \ell^K)$.
In light of this, Section \ref{section2} can be regarded as a general way to provide examples of objects along with morphisms to the initial object (the period integrals of Lie algebra representations) in the category $\frC_\Bbbk$.

In the third main section, Section \ref{section4}, we apply all the general machinery of the previous sections to reveal hidden structures on the singular cohomologies and the Griffiths period integrals of smooth projective hypersurfaces. Section \ref{section4} can be viewed as a source of explicit examples of non-trivial period integrals of certain Lie algebra representations, and gives non-trivial examples of objects and morphisms into the initial object in $\frC_\Bbbk$.
In subsection \ref{subs4.0}, we apply the general theory to the toy model to illustrate our homotopical viewpoint of
understanding the Griffiths period integral.
In subsection \ref{subs4.1}, we explain how to attach a Lie algebra representation $\rho_X$ to a projective smooth hypersurface $X$. In subsection \ref{subs4.2}, we briefly recall Griffiths' theory of period integrals of smooth projective hypersurfaces and construct a non-trivial period integral of its associated Lie algebra representation. 
Then, in subsection \ref{subs4.3}, we explicitly construct a commutative differential graded algebra $(\cA_X, Q_X)$ whose cohomology
essentially describes the coordinate ring of $X_G$ and in subsection \ref{subs4.31} we construct the cochain complex $(\cA_X, K_X)$ with super-commutative product (a quantization of $(\cA_X, \cdot, Q_X$)) attached to the hypersurface $X$. As a consequence we prove  $(a), (b)$ and $(e)$ of Theorem \ref{firsttheorem}. 
We prove $(c)$ and $(d)$ of Theorem \ref{firsttheorem} in subsections \ref{subs4.32} and subsection \ref{subs4.33}, respectively. 
In subsection \ref{subs4.40}, we compute its $K$-cohomology $H^i_K(\cA)$ of $\cA$ for every $i \in \bZ$.
In subsection \ref{subs4.4}, we prove Theorem \ref{thirdtheorem}.
We verify Theorems \ref{fourththeorem} and \ref{fifththeorem} in subsections \ref{subs4.6} and \ref{subs4.61}.
We provide a precise relationship between the Gauss-Manin connection and our flat connection on the tangent bundle 
of a formal deformation space in subsection \ref{subs4.62}.
Finally, in subsection \ref{subs4.5}, we explain how to compute (extended formal) deformations of the Griffiths period integrals and the period matrices
via the ideal membership problem based on the Gr\"ober basis.

%


The main idea of this paper originated from the first named author's work on the algebraic formalism of quantum field theory, \cite{Pa10}. Thus, in the appendix, Section \ref{section6}, we decided to add an explanation of the quantum origin of the Lie algebra representation attached to a given hypersurface $X_G$ (subsection \ref{subs6.1}). 
Finally, we include subsection \ref{subs6.2} on $L_{\infty}$-algebras, $L_{\infty}$-morphisms, and $L_{\infty}$-homotopies in order to explain the notations and conventions used throughout the paper. \\

Before finishing the introduction, we mention two things. Firstly, our theory can be generalized to toric complete intersections from hypersurfaces and conjecturally to any algebraic varieties.
We will carry out the details for (toric) complete intersections in another papers and the relevant references which play a similar role as \cite{Gr69} would be \cite{AdSp06}, \cite{Dim95}, and \cite{Ter90}. 
Secondly, it may seem artificial to study $\frC_\Bbbk$ at a first glance: we study the category $\frC_\Bbbk$ whose objects are $(\cA,\cdot, K)$, where we \textit{do not require compatibility} between the super-commutative product $\cdot$ and the differential $K$, and whose morphisms are \textit{not structure preserving maps} in the sense that they preserve only additive and differential structures (i.e. 
they are cochain maps), not super-commutative ring structure (note a difference between the category $\frC_\Bbbk$ and the category of CDGAs, i.e. commutative differential graded algebras, where all the structures are \textit{compatible}).
But this category $\frC_\Bbbk$ is worth investigating and studying; the objects in $\frC_\Bbbk$ include BV algebras, and the Griffiths period integral of the hypersurface $X_G$ can be interpreted (very neatly) as a morphism from $(\cA_X, \cdot, K_X)$ to the initial object $(\Bbbk,\cdot, 0)$ in the category $\frC_\Bbbk$. Further the shadow $L_\infty$-homotopy information obtained by applying the descendant functor \textit{measures the failure of compatibilities among structures} and reveals hidden structures on the period integral $C_{[\g]}$.\\



\subsection*{Acknowledgments}
The work of Jae-Suk Park was supported by IBS-R003-G1 and Mid-career Researcher Program through NRF grant funded by the MEST (No. 2010-0000497).
The work of Jeehoon Park was partially supported by the Basic Science Research Program through the National Research Foundation of Korea(NRF) funded by the Ministry of Education, Science and Technology(2013023108) and was partially supported by the Basic Science Research Program through the National Research Foundation of Korea(NRF) funded by the Ministry of Education(2013053914).
The authors would like to thank Minhyong Kim for helpful comments and Gabriel C. Drummond-Cole for proofreading.
The first author thanks John Terilla for useful discussions on related subjects.
Finally, the authors thank for the anonymous referee for many helpful comments and suggestions to improve the article.

 \newsec{Lie algebra representations and period integrals}  \label{section2}

 \subsection{Period integrals of Lie algebra representations} \label{subs2.1}
 
 Let $\Bbbk$ be a field of characteristic $0$ and $\frg$ be a finite dimensional Lie algebra over $\Bbbk$.
 Let $\rho: \frg \to \End_{\Bbbk}(A)$ be a $\Bbbk$-linear representation of $\frg$.
 We assume that $A$ is a commutative associative $\Bbbk$-algebra (with unity) throughout the paper.
 \bed \label{Liedef}
We call a $\Bbbk$-linear map $C: A \to \Bbbk$  a period integral\footnote{We use this terminology in a different sense than arithmetic geometers (a comparison of rational structures of relevant cohomology groups); we simply choose this terminology since the period integrals of smooth hypersurfaces can be understood as an example.} attached to $\rho$ if $C(x)=0$ for every $x$ in the image of $\rho(g)$ for every $g \in \frg$. 
\eed

Note that such a map $C$ is necessarily zero if $A$ is an irreducible $\frg$-module. 
For a given Lie algebra representation $\rho$, it would be an interesting question to find non-trivial period integrals. 
Here we present a simple non-trivial example.
\begin{example} \label{exa1}
Let $\frg$ be a one-dimensional Lie algebra $\Bbbk=\bR$ generated by $\a$. Let $\rho$ be a Lie algebra representation on 
$A=\Bbbk[x]$ given by 
$\rho(\a)= \prt{}{x}-x \in \End_\Bbbk(A)$. Then we consider the Gaussian probability measure 
$\frac{1}{\sqrt{2\pi}} e^{-\frac{x^2}{2}} dx$
and define a $\Bbbk$-linear map 
\eqn\gpi{
\eqalign{
C: \Bbbk[x] &\to \Bbbk 
,\cr
f(x) &\mapsto C(f(x)):=\frac{1}{\sqrt{2\pi}}\int_{-\infty}^\infty f(x) e^{-\frac{x^2}{2}} dx.
}
}
This is an example of a period integral of $\rho$, since 
\be
 \frac{1}{\sqrt{2\pi}}\int_{-\infty}^\infty \left(\prt{f(x)}{x}- xf(x)\right) e^{-\frac{x^2}{2}} dx=0.
\ee
\end{example}

The above Gaussian period integral is a special example of a more general kind. 
Let 
$$
A=\Bbbk[\ud q]=\Bbbk[q^1, q^2, \cdots, q^m]
$$ 
be a polynomial ring with $m$ variables for $m\geq 1$. Let
$S=S(\ud q)\in A$. Let $J_S \subset A$ be the Jacobian ideal of $S(\ud q)$, i.e. the ideal generated 
by $\prt{S(\ud q)}{q^1}, \prt{S(\ud q)}{q^2}, \cdots, \prt{S(\ud q)}{q^m}$. Let $\frg=\frg_S$ be the finite dimensional 
abelian Lie algebra over $\Bbbk$ of dimension $m$, generated by $\alpha_1, \alpha_2, \cdots, \alpha_m$.
We define the following Lie algebra representation $\rho_S: \frg \to \End_{\Bbbk}(A)$:
\eqn\QJR{
\rho_S(\alpha_i) = \exp\left(-S(\ud q)\right)\cdot \prt{}{q^i} \cdot \exp\left(S(\ud q)\right)=\prt{}{q^i} + \prt{S(\ud q)}{q^i}, \quad i= 1, 2, \cdots, m.
}
We remark that this representation $\rho_S$ is obtained by twisting the Schr\"{o}dinger representation of (a certain abelian Lie subalgebra) of the Heisenberg Lie algebra
by the polynomial $S(\ud q)$; see subsection \ref{subs6.1} for details. This motivates us to call $\rho_S$ the {\it quantum Jacobian Lie algebra representation}
associated to $S(\ud q) \in A$.
It turns out that there are many interesting non-trivial examples of period integrals of $\rho_S$.

\begin{example}\label{exa11}
We give an example for which $m=1$, which generalizes the previous Gaussian example. 
Let $S(x)\in \bR[x]=A$ be a polynomial such that
$$\lim_{x\to \infty} f(x) e^{S(x)} = \lim_{x \to -\infty} f(x) e^{S(x)}=0$$ for every $f(x) \in \bR[x]$.
Let $\frg$ be a one-dimensional Lie algebra generated by $\a$.
Then the $\bR$-linear map
\eqn\spi{
\eqalign{
C: \bR[x] &\to \bR
,\cr
f(x) &\mapsto C(f(x)):=\int_{-\infty}^\infty f(x) e^{S(x)} dx,
}
}
is an example of a period integral of $\rho_S$, since 
\be
C\left(\rho_{S}(\a)(f(x))\right)=\int_{-\infty}^\infty \big(\prt{f(x)}{x}+\prt{S(x)}{x} f(x)\big) e^{S(x)} dx=0, \quad \forall f(x) \in \bR[x].
\ee

\end{example}

Such a period integral $C$ attached to $\rho=\rho_S$ gives rise to a map $\overline C: A/\cN_{\rho} \to \Bbbk$,
where $\cN_\rho:= \sum_{\alpha\in\frg} \im \rho(\alpha)$. Note that, in general, $C$ fails to be an algebra homomorphism and $\cN_\rho$ fails to be an ideal of $A$. This failure will play a pivotal role in studying the period integral $C$ via $L_\infty$-homotopy theory.

Our main example, which we will focus on in section \ref{section4}, is  
when the Lie algebra representation $\rho_S$ is constructed out of $S=y \cdot G(\ud x)$, the so-called Dwork polynomial of $G(\ud x)$, where $G(\ud x)$ is the defining equation of a smooth projective hypersurface. 
Then the rational period integral of a smooth projective hypersurface $X_G$, which was extensively studied by Griffiths and Dwork,
can be interpreted as the period integral of $\rho_S$ (see subsection \ref{subs4.2}).

We remark that studying a non-trivial period integral of a Lie algebra representation of a {\it non-abelian} Lie algebra
would be a very interesting question, though we limit all the examples to the abelian case in this article.


\subsection{Cochain map attached to a period integral}\label{subs2.2} 

We now explain our strategy to study
period integrals, assuming such a nonzero period integral $C$ is given. The main idea is to enhance the $\Bbbk$-linear map
$C: A \to \Bbbk$ to the cochain complex level (we do this in this subsection) and develop an infinity homotopy theory (see section \ref{section3}) by analyzing the failure of
$C$ to be an algebra homomorphism systematically. In this paper, the relevant homotopy theory will be the $L_\infty$-homotopy theory (this fact is related to the assumption that $A$ is a commutative $\Bbbk$-algebra). 

Let $C: A \to \Bbbk$ be a nontrivial period integral attached to $\rho:\frg \to \End_{\Bbbk}(A)$.
We will construct a cochain complex $(\cA, \cdot, K)=(\cA_\rho, \cdot, K_\rho)$ with super-commutative product $\cdot$ whose degree 0 part is $A$, and a cochain map $\cC: (\cA, K) \to (\Bbbk,0)$, where we view $(\Bbbk,0)$ as a cochain complex which has only degree 0 part and zero differential. 

 Let $\alpha_1, \cdots, \alpha_n$ be a $\Bbbk$-basis of $\frg$ where $n$ is the dimension of $\frg$. 
We consider $\frg$ as a $\bZ$-graded $\Bbbk$-vector space with only degree 0 part.
Then $\frg[1]$ is a $\bZ$-graded $\Bbbk$-vector space with only degree -1 part.
Let $\eta_1, \cdots, \eta_n$ be the $\Bbbk$-basis of $\frg[1]$ corresponding to $\alpha_1, \cdots, \alpha_n,$  so that they have degree -1. We consider the following $\bZ$-graded super-commutiave algebra
\eqn\scalg{
S (\frg[1]) = T(\frg [1])/J
}
where $T(\frg[1])$ is the tensor algebra of $\frg[1]$ and $J$ is the ideal of $T(\frg[1])$ generated
by the elements of the form $x\otimes  y- (-1)^{|x|\cdot|y|} y\otimes x$ with $x, y \in T(\frg[1])$.
Note that $|x|$ here means the degree of $x$. We can also view $A$ as a $\bZ$-graded $\Bbbk$-algebra concentrated in degree zero part. Then we define the $\bZ$-graded $\Bbbk$-algebra $\cA_\rho$ as the supersymmetric tensor product of $A$ and $S(\frg[1])$. 

\bep
The $\Bbbk$-algebra $\cA=\cA_\rho$ is a $\bZ$-graded super-commutative  algebra and we have a decomposition of $\cA$ into
\be
\cA^{-n} \oplus \cA^{-(n-1)} \oplus \cdots \oplus \cA^{-1} \oplus \cA^{0}
\ee
where $\cA^m$ is the $\Bbbk$-subspace of $\cA$ consisting of degree $m$ elements, with $\cA^{0} = A$.
\eep
\begin{proof}
The fact that $\cA$ is super-commutative ($x y = (-1)^{|x|\cdot |y|} yx$ for every homogeneous $x,y \in \cA$) follows from the construction. It is clear that $\eta_i ^2 = 0$ for $i =1, \cdots, n$, since $2 \eta_i^2 = 0$  (recall that $\eta_1, \cdots, \eta_n$ is a $\Bbbk$-basis of $\frg[1]$ whose degree is -1) and the characteristic of $\Bbbk$ is not 2. Therefore the smallest degree which the elements of $\cA$ can have is $-n$ (for example, $\eta_1\cdots \eta_2 \cdots \eta_n$ has degree $-n$).
 \end{proof}

Now we construct a differential $K=K_\rho$ coming from the Lie algebra representation $\rho$;
\be
K_\rho: \cA^m \to \cA^{m+1},
\ee
where $m \in \bZ$. Define
\eqn\Kdef{
K_\rho= \sum_{i=1}^n \rho_{\alpha_i}\otimes\pa{\eta_i} - I\otimes \sum_{i,j,k=1}^n\frac{1}{2} f_{ij}{}^k \eta_k \pa{\eta_i}
\pa{\eta_j}: \cA^m \to \cA^{m+1},
}
where $\{f_{ij}{}^k \} \in \Bbbk$ are the structure constants of the Lie algebra $\frg$ defined by the relation $[\alpha_i, \alpha_j] =\sum_{k=1}^n f_{ij}{}^k \alpha_k$ and $\rho_{\alpha_i}=\rho(\alpha_i)$. Note that $\rho_{\alpha_i}$ only acts on the degree zero part $A=\cA^0$ via the representation $\rho$ and the partial derivative operator $\pa{\eta_i}$ increases degree 1, since $\eta_i$ has degree -1.  We shall often omit the tensor product symbol in the expression of $K_\rho$.
Next, we show that $K_\rho$ is actually a differential of $\cA_\rho$.
\bep
We have that $K_\rho^2 =0$ and $K_\rho (1_{\cA})=0$.
\eep
\begin{proof}
This follows from the fact that $\rho:\frg \to \End_{\Bbbk}(A)$ is a Lie algebra representation.
 \end{proof}

We extend our period map $C:A \to \Bbbk$ to $\cC: \cA \to \Bbbk$ by setting $\cC(x) = C(x)$ if
$x\in \CA^0=A$ and $\cC(x)= 0$ otherwise. 
\bep \label{enhancetochain}
We can extend any period map $C$ attached to $\rho$ to a cochain map $\cC$ from $(\cA_\rho, K_\rho)$ to $(\Bbbk, 0)$, i.e. $\cC \circ K_\rho = 0$.
\eep
\begin{proof}
Note that $A=\cA_\rho^0$. We only  have to check $\cC (K_\rho(x))=0$ when $K_\rho(x) \in A=\cA_\rho^0$. Let us write the general element
$x$ in $\cA^{-1}_\rho$ as 
\be
x = \sum_{i=1}^n F_i \cdot \eta_i, \text{ where $F_i \in A.$}
\ee
Then $K_\rho (x) = \sum_{i=1}^n \rho(\alpha_i)(F_i) \in A$. By the definition of the period integral
attached to $\rho$, we immediately see that $C (K_\rho (x))=0$ for any $x \in \cA_\rho^{-1}$. 
 \end{proof}

\begin{example}\label{exa2}
We illustrate the above construction for Example \ref{exa11}. In this case, the cochain complex $\cA_\rho$ with super-commutative product, associated to $\rho=\rho_{S(x)}$, is given by
\be
\cA_\rho= \bR[x][\eta], \quad \eta^2=0,\ \eta x= x \eta,
\ee
where $\eta$ is an element of degree -1 (the so-called ghost component). Then $\cA_\rho^{m}=0$ unless $m =0, -1$. The differential $K_\rho$ is given by
\be
K_\rho = \rho(\a) \prt{}{\eta}= \left(\prt{}{x}+\prt{S(x)}{x}\right) \prt{}{\eta}.
\ee
The period integral $C$ in \spi\ can be enhanced to a cochain map $\cC: (\cA_\rho, \cdot, K_\rho) \to (\bR, \cdot, 0)$ by Proposition \ref{enhancetochain}. 
Then $\cC$ induces the map $\overline{\cC}: H_K(\cA_\rho) \to \bR$, where $H_K(\cA_\rho)=
\oplus_{i\leq 0} H_K^i(\cA_\rho)$ with $i$-th degree cohomology $H_K^i(\cA_\rho)$ of $(\cA_\rho, K_\rho)$.
Then it turns out that $H_K(\cA_\rho)= H^0(\cA_\rho)$ and is a finite dimensional $\bR$-vector space of dimension
equal to the 
degree of $\prt{S(x)}{x}$.
In section \ref{section3}, we will provide a general machinery to understand important properties (correlations and deformations) of such an enhanced period integral $\cC$ by using $L_\infty$-homotopy theory. 
We refer to subsection \ref{subs4.0} for a detailed analysis of Example \ref{exa2} via $L_\infty$-homotopy theory.
\end{example}

When we study the period integral of a Lie algebra representation $\rho$ of $\frg$, we are particularly interested in the case 
where $\rho(\alpha)$
is a differential operator of order $n$ for $\alpha \in \frg$. Recall the definition of Grothendieck:
\bed \label{order}
Let $\cA$ be a unital $\bZ$-graded $\Bbbk$-algebra. Let $\pi \in \End_{\Bbbk}(\cA)$. We call $\pi$ a differential operator of order $n$, if
$n$ is the smallest positive integer such that $\ell_n^\pi \neq 0$ and $\ell_{n+1}^\pi =0$,
where
\be
\ell_n^\pi(x_1, x_2, \cdots, x_n)=[[\cdots[[\pi, L_{x_1}],L_{x_2}], \cdots],L_{x_n}](1_\cA),
\ee
for $x_1, \cdots, x_n \in \cA$. Here $L_x:\cA\to\cA$
is left multiplication by $x$, and the commutator $[L,L']:=L\cdot L' - (-1)^{|L|\cdot|L'|}L' \cdot L \in \End_{\Bbbk}(\cA)$, and $1_{\cA}$ is the identity element of $\cA$.
\eed
If the Lie algebra $\frg$ is non-abelian, then $K_\rho$ for its arbitrary Lie algebra representation $\rho$ has order at least $2$, because of the term $\pa{\eta_i}\pa{\eta_j}$ in \Kdef. In general, $K_\rho$
for any Lie algebra representation $\rho$ has order at least the order of $\rho(\alpha_i)+1$ for any $\alpha_i\in \frg$, because of the term $\rho(\alpha_i) \pa{\eta_i}$ in \Kdef.

\subsection{The origin of the cochain complex associated to $\rho$ }\label{subs2.3}

In subsection \ref{subs2.2}, we constructed a cochain complex $(\cA_\rho, K_\rho)$ associated to $\rho$ and explained how to enhance the period integral $C$ of $\rho$ to a cochain map $\cC$. 
This can be seen as a degree-twisted cochain complex of the (homology version of) the Chevalley-Eilenberg complex in \cite{CE48}; see Proposition \ref{lhi}.
For a given Lie algebra representation $\rho$, there are two kinds of standard complexes, 
the cochain complex for the Lie algebra cohomology 
$H^k(\frg, A)$ and the chain complex for the Lie algebra homology $H_k(\frg,A)$. It will be crucial to use
the Lie algebra homology complex instead of the cohomology complex for our analysis of period integrals, 
for which we
will explain the reason.

We briefly
recall the Chevalley-Eilenberg complex for cohomology. We define a $\bZ$-graded vector space
\be
C(\frg;\rho) = \bigoplus_{p \geq 0} C^p(\frg;\rho), \text{ where } C^p(\frg;\rho):= A  \otimes_{\Bbbk} \Lambda^p \frg^*.
\ee
 If $\beta \in \frg^*=\Hom_\Bbbk(\frg, \Bbbk)$, define $\epsilon(\beta): \Lambda^p \frg^* \to \Lambda^{p+1} \frg^*$, for any integer $p \geq 0$, by wedging with $\beta$:
\be
\epsilon(\beta) \omega = \beta \wedge \omega.
\ee
Similarly, if $X \in \frg$, then define $\iota(X): \Lambda^p \frg^* \to \Lambda^{p-1} \frg^*$, for any
integer $p \geq 1$, by contracting with $X$:
\be
\iota(X) \beta = \beta (X), \quad \text{ for } \beta \in \frg^*
\ee
and extending it as an odd derivation
\be
\iota(X)(\alpha \wedge \beta) = \iota(X)\alpha \wedge \beta + (-1)^{|\alpha|}\alpha \wedge \iota(X) \beta
\ee
to the exterior algebra $\Lambda^{\bullet} \frg^*$ of $\frg^*$. Here $\alpha \in \Lambda^p \frg^*$ if and only if $|\alpha|=p$. Notice that $\epsilon(\alpha) \iota(X) + \iota(X) \epsilon(\alpha)= \alpha(X) \id$.
If we let $\{\alpha_i\}$ and $\{\beta^i\}$ be canonically dual bases for $\frg$ and $\frg^*$ respectively, then 
it is well-known that the Chevalley-Eilenberg differential on $C(\frg;\rho)$ can be written as
\eqn\Ddef{
d_\rho=d= \sum_i  \rho(\alpha_i) \otimes \epsilon(\beta^i) - \id \otimes \frac{1}{2} \sum_{i,j,k} f_{ij}^k \cdot \iota(\alpha_k)\epsilon(\beta^i)\epsilon(\beta^j): C^p(\frg;\rho) \to C^{p+1}(\frg;\rho).
}
This construction gives the cochain complex $(C(\frg;\rho), d_\rho),$ 
called the Chevalley-Eilenberg complex attached to $\rho$, which computes the Lie algebra cohomology
of $\rho$. If one compares $d_\rho$ with
$K_\rho$, then the wedging operator $\epsilon(\beta^i)$ corresponds to $\prt{}{\eta_i}$ and the contracting operator $\iota(\alpha_k)$ corresponds to multiplication by $\eta_k$. Note that the Chevalley-Eilenberg complex is obtained by adding 
degree 1 elements $\{ \beta^i \} $ to $A$ and 
the cochain complex $(A_\rho, K_\rho)$ is obtained by adding degree -1 elements $\{ \eta_i \}$ to $A$. Thus
$C(\frg;\rho)$ has no negative degree components and $\cA_\rho$ has no positive degree components.
This duality between $K_\rho$ and $d_\rho$ leads us to prove that $(\cA_\rho, K_\rho)$ is, in fact, 
a degree-twisted version of the Lie algebra homology Chevalley-Eilenberg complex.
In proving such a result, we also briefly recall the (dual) Chevalley-Eilenberg complex which computes the Lie algebra homology; we consider a $\bZ$-graded vector space 
\be
E(\frg;\rho) = \bigoplus_{p\geq 0} E_p(\frg; \rho), \  \text{where} \ E_p(\frg; \rho):=A \otimes_{\Bbbk} \Lambda^p \frg,
\ee
and equip it with the differential $\delta_\rho$ defined by
\eqn\dco{
\eqalign{
\delta_\rho\left( a \otimes (x_1\wedge \cdots \wedge x_n) \right) &=
\sum_{i=1}^n (-1)^{i+1} \rho(x_i) (a) \otimes (x_1 \wedge \cdots \wedge \hat{x_i} \wedge \cdots, \wedge x_n) 
\cr
&+\sum_{1\leq i < j \leq n} (-1)^{i+j} a \otimes ([x_i,x_j] \wedge x_1 \wedge \cdots \wedge \hat{x_i}\wedge \cdots
\wedge \hat{x_j}
\wedge \cdots \wedge x_n).
}
}
Then we twist \footnote{The degree twisting is needed for consistency with degree convention of
the $L_\infty$-morphisms which we will consider in Section \ref{section3}; we want our $L_\infty$-morphisms have degree 1
instead of -1.} the degree of the chain complex $E(\frg;\rho)$ in order to make a cochain complex $\tilde E(\frg;\rho) 
=\bigoplus_{p\leq 0} \tilde E^p(\frg; \rho)$;
$$
\tilde E^p (\frg; \rho) := E_{-p}(\frg; \rho), \quad p \leq 0.
$$
Then $\tilde E(\frg; \rho)$ has only negative degrees up to the $\Bbbk$-dimension of $\frg$ and becomes
a cochain complex.
\bep \label{lhi}
The cochain complex $(\cA_\rho, K_\rho)$ is isomorphic to the cochain complex $(\tilde E(\frg;\rho), \delta_\rho)$.
\eep

\begin{proof}
If we denote a $\Bbbk$-basis of $\frg$ by $\alpha_1, \cdots, \alpha_n$, the $\Bbbk$-linear map sending
$\alpha_i$ to $\eta_i$, for each $i =1, \cdots, n,$ clearly defines an $A$-module isomorphism 
(a $\Bbbk$-vector space isomorphism, in particular).
The commutativity with differentials follows from a direct comparison between \Kdef\ and \dco.
\end{proof}

Since we assume that $A$ has a commutative associative product in addition to the module structure,
this induces a natural super-commutative product on $\tilde E(\frg;\rho)$, i.e. the tensor product algebra
of $A$ and the alternating algebra $\Lambda^{\bullet} \frg$. Then it is clear that 
the module isomorphism in Proposition \ref{lhi} also respects the algebra structure.
The binary product of $A$ has important information about correlation of period integrals.

The Chevalley-Eilenberg complex $(E(\frg;\rho), d_\rho)$ has been studied more often by algebraic topologists and algebraic 
geometers
rather than $(E(\frg;\rho), \delta_\rho)$ (or equivalently $(\cA_\rho, K_\rho)$).\footnote{For example, the Chevalley-Eilenberg complex
$C(\frg;\rho_{y\cdot G(\ud x)})$ of the {\it quantum Jacobian Lie algebra representation} $\rho_{y\cdot G(\ud x)}$ 
in \QJR\, where $q^1=y, q^2=x_0, q^3=x_2, \cdots, q^m=x_n$ and $G(\ud x)$ is a defining polynomial of a smooth projective
hypersurface $X_G$ of dimension $n-1$, turns out to be the {\it algebraic Dwork complex} studied in several articles,
including  \cite{AdSp06}, and \cite{Ka68}.}
But, in our theory, $(E(\frg;\rho), \delta_\rho)$ is more useful and the failure of $K_\rho$ being a derivation will play a key role in deriving an $L_\infty$-algebra from $(\cA_\rho, \cdot, K_\rho)$ by the \textit{descendant functor} (in Section \ref{section3}) and studying the corresponding formal deformation theory.

We add a simple justification why $(\cA_\rho, \cdot, K_\rho)$ is more suitable than $(C(\frg;\rho), \wedge, d_\rho)$ for 
understanding the period integral $C$ of $\rho$. Observe that we can similarly enhance $C:A\to
\Bbbk$ to a cochain map $\cC': (C(\frg;\rho), d_\rho) \to (\Bbbk,0)$ by setting $\cC'(x)=C(x)$ if $x \in 
C^0(\frg;\rho)=A$ and $\cC'(x)=0$ otherwise; note that $\cC' \circ d_\rho=0$ merely by the definition of $\cC'$. But this cochain map $\cC'$ loses the key information of the period integral $C$; we illustrate
this by using the toy
 example \ref{exa1}. The induced map of $\cC'$ on the $0$-th cohomology
$H^0(\frg,A)=H^0_{d_\rho}(C(\frg;\rho))$ contains all the information of $C$. In Example \ref{exa1},
we see that $H^0(\frg,A):=\ker(d_\rho) \cap C^0(\frg;\rho)=0$, since the differential equation
\be
\prt{f(x)}{x}- x f(x)=0
\ee
does not have a non-zero solution in $A= \Bbbk[x]$. This means that the map $\overline{\cC'}: H^\bullet(\frg,A)
\to \bR$ induced from the cochain map $\cC'$ is zero; so it is not a good
cochain level realization of C in \gpi. On the other hand, the 0-th cohomology $H^0_{K_\rho}(\cA_\rho)$
is isomorphic to the $\Bbbk$-vector space $\Bbbk[x]/\cN_\rho$, where 
\be
\cN_\rho=\{ \prt{f(x)}{x} -x f(x) \ : \ f(x) \in \Bbbk[x]\}.
\ee
The induced map $\overline{\cC}$ of $\cC$ on the cohomology $\Bbbk[x]/\cN_\rho$ has substantial information about $C$. This justifies our use of the (twisted) 
homological version $(\cA_\rho, K_\rho)$ of the Chevalley-Eilenberg complex rather than the cohomological version.

%
%



\newsec{The descendant functor and homotopy invariants} \label{section3}

 This section is about the general theory of the {\it descendant functor} from
 the category $\frC_\Bbbk$ to the category $\frL_\Bbbk$ of $L_\infty$-algebras over $\Bbbk$.
This theory will provide the general strategy for analyzing the period integrals of a Lie algebra representation and its associated cochain complex and cochain map via $L_{\infty}$-homotopy theory.

\subsection{Why descendant functors?} \label{subs3.0}

Here we explain how the notion of a {\it descendant functor} arises in the study of period integrals of Lie algebra representations.
This provides the general framework behind the main theorems of this paper. Let $\Bbbk$ be a field of characteristic $0$ and $\mg$ be a finite dimensional Lie algebra over $\Bbbk$. 
Let $\rho: \frg \to \End_{\Bbbk}(A)$ be a $\Bbbk$-linear representation of $\frg$, where
$A$ is a commutative associative $\Bbbk$-algebra, such that $\rho(g)$ acts on $A$ as a linear differential operator
for all $g \in \mg$. 
We called a  $\Bbbk$-linear map $C: A \to \Bbbk$  a {\it period integral} attached to $\rho: \frg \to \End_{\Bbbk}(A)$, if $C(x)=0$ for every 
$x \in \CN_{\rho}:=\sum_{g \in \mg}\im \rho(g)$. Hence such a period integral $C$ induces a map 
$\cP_{C}: A/\CN_\rho \rightarrow \Bbbk$.
Two closely related properties of our period integrals  are that, in general, 

(i) $C: A \to \Bbbk$ fails to be an algebra homomorphism, 

(ii) $\CN_\rho$ fails to be  an ideal of $A$.  

The {\it descendant functor} $\Des$ is designed to provide a general framework to understand such period integrals 
and their higher structures  by exploiting those failures and successive failures systematically.
The domain category of $\Des$ is the category $\frC_\Bbbk$
and the target category is the category $\mL_\Bbbk$ of $L_\infty$-algebras over $\Bbbk$.

The  category $\frC_\Bbbk$ is defined
 such that
objects are triples $(\CA,\hbox{}\cdot\hbox{}, K)$, where the pair $(\cA,\hbox{}\cdot\hbox{})$ is
a $\Z$-graded super-commutative associative $\Bbbk$-algebra  while the pair $(\CA, K)$ is a 
cochain complex over $\Bbbk$,
and morphisms are cochain maps.
 We note that two of the salient properties of the category $\frC_\Bbbk$ are that
 
(i) morphisms are not required to be algebra homomorphisms, 

(ii) the differential and multiplication in an object have no compatibility condition.

Then the functor $\Des:\frC_\Bbbk \to \mL_{\Bbbk}$ takes

(i) a morphism $f$ in $\mC_\Bbbk$ to an $L_\infty$-morphism $\underline{\phi}^f=\phi_1^f, \phi_2^f,\phi_3^f,\cdots$, where $\phi_1^f=f$ and $\phi_2^f,\phi_3^f,\cdots$ measure
the failure and higher failures of $f$ being an algebra homomorphism,

(ii)
an object $(\CA,\hbox{}\cdot\hbox{}, K)$ in $\frC_\Bbbk$ to an $L_\infty$-algebra
$(\CA,\underline{\ell}^{K}=\ell_1^K,\ell_2^K, \ell_3^K,\cdots)$, where $\ell_1^K=K$ and $\ell^K_2, \ell_3^K,\cdots$
measure the failure and higher failures of $K$ being a derivation of the multiplication in $\cA$.

Morphisms in both categories $\mC_\Bbbk$ and $\mL_\Bbbk$ come with a natural notion of homotopy and 
the descendant functor induces a well defined functor from the homotopy category  $h\frC_\Bbbk$ to the 
homotopy category $h\mL_\Bbbk$. We sometimes use $\Longrightarrow$ to denote the descendant functor $\Des:\frC_\Bbbk \Longrightarrow \mL_{\Bbbk}$. The exposition given here regarding $\Des$ is only a sketch and we refer to \cite{Pa12} for a full treatment.

The general construction, developed in subsection \ref{subs2.2},
associates to a representation $\rho: \frg \to \End_{\Bbbk}(A)$ an object $(\cA_\rho,\hbox{}\cdot\hbox{}, K_\rho$) of
the category $\frC_\Bbbk$ such that the subalgebra $\CA^0_\rho$ is isomorphic to $A$
and the $0$-th
cohomology $H^0_K(\CA_\r)$ of the cochain complex $(\CA_\r, K_r)$ is isomorphic to the quotient $A/\CN_\r$.
Then, a period integral $C: A \to \Bbbk$  attached to $\rho$ gives a morphism $\cC$ up to homotopy (see Proposition
\ref{enhancetochain})
from the object $(\CA_\r,\hbox{}\cdot\hbox{}, K_\r)$ onto the initial object $(\Bbbk, \cdot,0)$ of the category
 $\frC_\Bbbk$, i.e. $\cC$ is a $\Bbbk$-linear map of degree $0$ from $\CA$ onto $\Bbbk$ such that $\cC\circ K =0$. We then realize the period map 
 $\cP^\r_{C}: A/\CN_\r \rightarrow \Bbbk$
by the following commutative diagram
$$
\xymatrix{
H_K(\cA_\r)\ar@{->}_{f}[dr] \ar[rr]^{\cP_\cC(0)} &&\Bbbk
\\
&\cA_\r\ar@{->}[ru]_{{\cC}} &&
}
$$
where $f$ is a cochain quasi-isomorphism from $H_{K}(\CA_\r)$, considered as a cochain complex with zero differential, into
the cochain complex $(\CA,K)_\r$ which induces the identity map on cohomology. Note that 
$\cP_\cC(0):= f\circ \cC:H_K(\CA_\r)\rightarrow \Bbbk$ depends
only on the cochain homotopy types of $f$ and $\cC$. Note also that $\cP_\cC(0)$ is a zero map on $H_K^i(\cA_\r)$
unless $i=0$ so that it can be identified with $\cP^\r_C$ via isomorphism between $H^0_K(\cA_\r)$ and $A/\CN_\r$.

Our general framework together with various propositions will allow us to enhance the above commutative diagram 
as follows;
$$
\xymatrix{
S\left(H_K(\cA_\r)\right)\ar@{..>}_{\underline{\w}}[dr] \ar@{..>}[rr]^{\underline{\kappa}} &&\Bbbk
\\
&S\left(\cA_\r\right)\ar@{..>}[ru]_{\underline{{\phi}}^{\cC}} &&
}
$$
where $S(V)$ is the super-commutative algebra of $V$ (see \scalg) and the dotted arrows are the following $L_\infty$-morphisms
\begin{enumerate}
\item
the $L_\infty$-morphism $\underline{{\phi}}^{\cC}=\phi^{\cC}_1,\phi^{\cC}_2,\cdots$ is the
descendant $\Des(\cC)$ of $\cC$ such that $\phi^{\cC}_1=\cC$,
\item
$\underline{\w}= \w_1,\w_2,\cdots$ is an $L_\infty$-quasi-isomorphism from $H_{K}(\CA_\r)$ considered as a zero $L_\infty$-algebra 
into the $L_\infty$-algebra $\Des(\cA_\r)$ such that $\w_1=f$.
\item
the $L_\infty$-morphism $\underline{\kappa}=\kappa_1,\kappa_2,\cdots$ is the composition $\underline{\phi}^\cC\bullet \underline{\w}$
in $\mL_\Bbbk$
such that $\kappa_1= \phi^{\cC}_1\circ \w_1=\cP_\cC(0):H_K(\CA_\r)\rightarrow \Bbbk $.
\end{enumerate}
Note that the $L_\infty$-morphism $\underline{{\phi}}^{\cC}=\Des(\cC)$ is determined by $\cC$, while
there are many different choices of   $L_\infty$-quasi-isomorphism $\underline{\w}$.
One of our main theorem shows that $L_\infty$-morphism $\underline{\kappa}:=\underline{\phi}^\cC\bullet \underline{\w}$ depends
only on the cochain homotopy type of $\cC$ and the $L_\infty$-homotopy type of $\underline{\w}$.

\subsection{Explicit description of the homotopy descendant functor} \label{subs3.1}


%
Here we shall give details about the \textit{homotopy descendant functor} from the homotopy category of $\frC_\Bbbk$ to the homotopy category of $\frL_\Bbbk$ of $L_\infty$-algebras over $\Bbbk$
by using the binary product as a crucial ingredient.

  When the representation space of $\rho$ has an associative and commutative binary product, $(\cA_\rho, K_\rho)$ attached to a Lie algebra representation $\rho$ also has a $\Bbbk$-algebra structure.
This implies that $(\cA_\rho, \cdot, K_\rho)$ and a period integral $\cC: (\cA_\rho,\cdot, K_\rho) \to (\Bbbk,\cdot, 0)$ is an object and a morphism of $\frC_\Bbbk$ respectively. 

An $L_\infty$-algebra structure on $\cA$ is a sequence of $\Bbbk$-linear maps $\underline \ell=\ell_1, \ell_2, \cdots$ such that 
$\ell_n: S^n(\cA) \to \cA$ satisfy certain relations (see Definition \ref{L2}) and an $L_\infty$-morphism from an $L_\infty$-algebra $(\cA, \underline \ell)$ to 
another $L_\infty$-algebra $(\cA', \underline \ell')$ is a sequence of $\Bbbk$-linear maps $\underline \phi=\phi_1, \phi_2, \cdots$ such that $\phi_n:S^n(\cA) \to \cA'$ satisfy certain relations (see Definition \ref{LM2}).
We use a variant of the standard $L_{\infty}$-algebra such that every 
$\Bbbk$-linear map $\ell_n$ for $n=1, 2, \cdots,$ is of degree 1. See subsection \ref{subs6.2} for our presentation of $L_{\infty}$-algebras, $L_\infty$-morphisms, and $L_\infty$-homotopies (suitable for the purpose of describing correlations), which is based on 
partitions of $\{1, 2, \cdots, n\}$,

As we already indicated, our algebraic analysis is based on two facts: the differential $K$ is not a $\Bbbk$-derivation of the 
(super-commutative) product of $\cA$ and the cochain map (period integral) $f: (\cA, \cdot, K) \to (\Bbbk, \cdot, 0)$ is not a $\Bbbk$-algebra map in general.
Therefore it is natural to propose the following definition for $\ell_1^K:\cA\to \cA$ and $\ell_2^K: S^2(\cA) \to \cA$:
\eqn\elltwo{
\eqalign{
\ell_1^K(x) =& K x
,\cr
\ell_2^K(x,y) =& K(x \cdot y) - Kx \cdot y - (-1)^{|x|} x \cdot Ky.
}
}
so that $\ell^K_2$ measures the failure of $K$ to be a derivation of the product. 
In the case of morphisms, we propose the following definition for $\phi^f_1:\cA \to \cA', \phi^f_2:S^2(\cA) \to \cA'$ in the same vein:
\eqn\ptwo{
\eqalign{
\phi^f_1(x) =& f (x)
,\cr
\phi^f_2(x,y) =& f(x\cdot y) - f(x) \cdot f(y), 
}
}
so that $\phi^f_2$ measures the failure of $f$ being an algebra map.
But then there would be many choices for how to measure higher failures\footnote{ 
A homotopy associative algebra (so called,  $A_\infty$-algebra) can be also used as a target category; we can construct a $A_\infty$-descendant functor from $\frC_\Bbbk$ to the category of $A_\infty$-algebras over $\Bbbk$, which even works if we drop the super-commutativity of the multiplication in an object of the category $\frC_\Bbbk$. The descendant functor
is a more general notion; we recently found that the pseudo character (a generalization of the trace of a group representation) 
is also a sort of a descendant which measures different higher failures.
But we limit our study
only to $L_\infty$-descendant functor in this article.
 }
, i.e. how to define $\ell^K_3, \ell^K_4, \cdots$ and $\phi^f_3, \phi^f_4, \cdots$ in a systematic way. We provide one particular way so that resulting $\ud \ell^K$ becomes
an $L_\infty$-algebra and $\ud \phi^f$ becomes an $L_\infty$-morphism in a functorial way; see Theorem \ref{Desc}.
%
For the definition of the descendant functor, let us set up some notation related to partitions. A partition $\pi= B_1 \cup B_2\cup \cdots$ of the set $[n]=\{1,2, \cdots, n\}$ is a decomposition of $[n]$ into a pairwise
disjoint non-empty subsets $B_i$, called blocks. Blocks are ordered by the minimum element of each block and each block is ordered by the ordering induced from the ordering of natural numbers. The notation $|\pi|$ means the number of blocks in a partition $\pi$ and $|B|$ means the size of the block $B$. If $k$ and $k'$ belong to the same block in $\pi$, then we use
the notation $k \sim_\pi k'$. Otherwise, we use $k \nsim_\pi k'$. Let $P(n)$ be the set of all partitions of $[n]$.
We refer to the appendix for more details regarding the partition $P(n)$ appearing in the following definition.

\bed \label{defdesc}
For a given object $(\cA, \cdot, K)$ in $\frC_\Bbbk$, we define $\Des\left(\cA, \cdot, K\right) = (\cA, \underline \ell^K)$, where $\underline \ell^K=\ell_1^K, \ell_2^K, \cdots$ is the family of linear maps $\ell_n^K: S^n(\cA) \to \cA$, inductively defined by the formula\footnote{This is
a sum over all $\pi \in P(n)$ which have a block $B_i$ of the given size, and moreover if there is more than one such block, then we sum over each of choice of such a block. Note that $\epsilon(\pi)$ depends on $\ud x= (x_1, \cdots, x_n)$, even though $\ud x$ is omitted for the sake of the notational simplicity. Such a dependence is explained in the appendix.}
\eqn\dpl{
K(x_1 \cdots x_n)=\sum_{\substack{\pi \in P(n)\\ |B_i|=n-|\pi|+1}}\!\!\!\! \epsilon(\pi,i) \cdot x_{B_1}
\cdots x_{B_{i-1}}\cdot  \ell^K(x_{B_i}) \cdot x_{B_{i+1}} \cdots x_{B_{|\pi|}},
}
for any homogeneous elements $x_1, x_2, \cdots, x_n \in \cA$. Here we use the the following notation:
\be
x_B&=& x_{j_1} \otimes \cdots \otimes x_{j_{r}} \text{ if }  B=\{j_1, \cdots, j_{r}\},\\
\ell(x_{B})&=&\ell_r(x_{j_1}, \cdots, x_{j_r}) \text{ if } B=\{j_1, \cdots, j_r\},\\
\epsilon(\pi, i) &=& \epsilon(\pi) (-1)^{|x_{B_1}|+\cdots+ |x_{B_{i-1}}|}.
\ee
 For a given morphism $f: (\cA, \cdot, K) \to (\cA', \cdot, K')$ in $\frC_\Bbbk$, we define a morphism $\Des(f)$ as the family $\underline \phi^f = \phi_1^f, \phi_2^f, \cdots$ constructed inductively as
\eqn\dlm{
f(x_1\cdots x_n) = \sum_{\pi \in P(n)} \epsilon(\pi) \phi^f (x_{B_1}) \cdots \phi^f (x_{B_{|\pi|}}),\quad n \geq 1,
}
where $\phi^f(x_B)=\phi^f_r(x_{j_1},\cdots, x_{j_r})$ if $B=\{j_1, \cdots, j_r\}$, $1 \leq j_1, \cdots, j_r \leq n$, for any homogeneous elements $x_1, \cdots, x_n \in \cA$. Here $\phi^f_n: S^n(\cA)\to \cA'$ is a $\Bbbk$-linear map defined on the super-commutative symmetric product of $\cA$. 
\eed

\begin{remark}
We will call the above $\Des\left(\cA, \cdot, K\right)$ and $\Des(f)$ a descendant $L_\infty$-algebra and a descendant $L_\infty$-morphism respectively. These descendant $L_\infty$-structures will appear in the study of the period integrals of smooth projective hypersurfaces. Note that not every $L_\infty$-algebra over 
$\Bbbk$ is a descendant $L_\infty$-algebra over $\Bbbk$; for example, an induced minimal $L_\infty$-algebra structure on the cohomology $H_K(\cA)$ of a descendant $L_\infty$-algebra $(\cA,\ud \ell^K)$ is always trivial; see Proposition \ref{zero}.
\end{remark}

According to the definition, we have \elltwo.
 For $n \geq 2$, the following holds:
\be
&&\ell_n^K(x_1, \cdots, x_{n-1}, x_n)=  \ell_{n-1}^K(x_1,\cdots, x_{n-2}, x_{n-1}\cdot x_n)\\
&&
-\ell_{n-1}^K(x_1, \cdots, x_{n-1}) \cdot x_{n} 
-(-1)^{|x_{n-1}|(1+|x_1|+\cdots + |x_{n-2}|)}  x_{n-1}\cdot \ell_{n-1}^K(x_1, \cdots, x_{n-2}, x_n).
\ee

If $\cA$ has a unit $1_\cA$ and $K(1_\cA)=0$, then one can also easily check that
\eqn\dl{
\ell_n^K(x_1, x_2, \cdots, x_n)=[[\cdots[[K, L_{x_1}],L_{x_2}], \cdots],L_{x_n}](1_\cA)
}
for any homogeneous elements $x_1, x_2, \cdots, x_n \in \cA$. Here $L_x:\cA\to\cA$
is left multiplication by $x$ and $[L,L']:=L\cdot L' - (-1)^{|L|\cdot|L'|}L' \cdot L \in \End_{\Bbbk}(\cA)$, where $|L|$ means the degree of $L$.

Unravelling the definition also shows \ptwo.
For $n \geq 2$, we have
\be
\phi_n^f(x_1, \cdots, x_n) = \phi_{n-1}^f (x_1, \cdots, x_{n-2}, x_{n-1}\cdot x_n) -
\sum_{\substack{\pi \in P(n), |\pi|=2 \\ n-1 \nsim_\pi n}} \phi^f (x_{B_1})\cdot \phi^f(x_{B_2}).
\ee

Let $\art_{\Bbbk}$ denote the category of unital $\bZ$-graded commutative Artinian local $\Bbbk$-algebras with residue field $\Bbbk$.
Let $\g \in (\mm_\ma\otimes \cA)^0$, where  $\ma\in \hbox{\it Ob}(\art_{\Bbbk})$, and let $\mm_\ma$ denote the unique maximal ideal of $\ma$; the tensor product $\mm_\ma\otimes \cA$ also has a natural induced $\bZ$-grading and $(\mm_\ma\otimes \cA)^n$ denotes the $\Bbbk$-subspace of homogeneous elements of degree $n$.
						
\bel \label{ls}
For every $\g \in (\mm_\ma \otimes \cA)^0$ and for any homogeneous element $\lambda \in \ma \otimes \cA$
whenever $\ma\in \hbox{\it Ob}(\art_{\Bbbk})$,
we have identities in $\ma \otimes \cA$;
\eqn\desdef{
K(e^\g -1) = e^\g \cdot L^K(\g),
}
where $L^K(\g)= \sum_{n\geq1}\frac{1}{n!} \ell^K_n (\g, \cdots, \g)$, and
\eqn\desderi{
K(\lambda\cdot e^\g)=
L^K_\g(\l) \cdot e^{\g}
+(-1)^{|\lambda|} \lambda\cdot K(e^{\g}-1),
}
where $L^K_\g(\l):=   {K}\lambda +{\ell}^{K}_2\big(\g, \lambda \big)
+\sum_{n=3}^\infty \frac{1}{(n-1)!}{\ell}^{K}_n\big(\g,\cdots,\g, \l \big).$

\eel
\begin{proof}
 Note that we use the following notation (see Definition \ref{shl});
\be
&&{\ell}^K_n\big(a_1\otimes v_1, \cdots,  a_n\otimes v_n\big) \\
&&=(-1)^{|a_1|+|a_2|(1+|v_1|) +\cdots + |a_n|(1+|v_1|+\cdots +|v_{n-1}|)}
 a_1\cdots a_n\otimes \ell^K_n\left(v_1,\cdots, v_n\right)
\ee
for $a_i \otimes v_i \in (\mm_\ma\otimes \cA)^0$ with $i = 1, 2, \cdots, n$.
Hecne the above infinite sum is actually a finite sum, since $\mm_\ma$ is a nilpotent $\Bbbk$-algebra.
Then the first formula follows from a simple combinatorial computation by plugging in $\g=\sum_{i=1}^na_i
\otimes v_i$; the reader may regard
\desdef\ as an alternative definition of the $L_\infty$-descendant algebra $(\cA, \ud \ell^K)$.

For the second equality, let $\ma=\Bbbk[\e]/(\e^2)$ be the ring of dual numbers, which is an object of $\art_\Bbbk$.
Denote the power series $e^X -1$ by $P(X)$ and let $P'(X)=e^X$ be the formal derivative of $P(X)$ with respect to $X$.
Then it is easy to check that $P(\g + \e \cdot \l) = P(\g) + (\e \cdot \l) \cdot P'(\g)$ for $\e \cdot \l \in (\ma \otimes \cA)^0$.
This says that 
$$
\eqalign{
P'(\g + \e \cdot \l) \cdot L^K(\g+ \e \cdot \l)&=
K\left( P(\g + \e \cdot \l) \right)  \quad \text{by}\  \desdef
\cr
&= K(P(\g)) +K\left(  (\e \cdot \l)\cdot P'(\g) \right)
\cr
&=P'(\g)\cdot L^K(\g) + K\left( (\e \cdot \l)\cdot P'(\g) \right) \quad \text{by}\  \desdef. 
}
$$
On the other hand, we have that 
$$
P'(\g + \e \cdot \l) \cdot L^K(\g+ \e \cdot \l) = P'(\g)\cdot L^K(\g)+(\epsilon \cdot \l) \cdot P'(\g)\cdot L^K(\g)
 + P'(\g)\cdot L^K_{\g}(\e \cdot \l)
$$
where $L^K_{\g}(\e \cdot \l) =   {K}(\e \cdot \l) 
+\sum_{n=2}^\infty \frac{1}{(n-1)!}{\ell}^{K}_n\big(\g,\cdots,\g, (\e \cdot \l)\big)$.
By comparison, we get that
\be
K\left(   (\e \cdot \l)\cdot P'(\g)  \right)
&=&(\e \cdot \l) \cdot P'(\g)\cdot L^K(\g)
 + P'(\g)\cdot L^K_{\g}(\e \cdot \l)\\
&=&L^K_{\g}(\e \cdot \l)\cdot  P'(\g) +(\e \cdot \l) \cdot K(P(\g)).
\ee
Then this implies the second identity according to our sign convention.
\end{proof}

\bel
The descendants $\underline \ell^K$ define an $L_\infty$-algebra structure over $\Bbbk$ on $\cA$.
\eel
\begin{proof}

For every $\g \in (\mm_\ma \otimes \cA)^0$, we consider $L^K(\g)= \sum_{n\geq1}\frac{1}{n!}\ell^K_n (\g, \cdots, \g)$.
Then \desdef\ says that
%
$$
K (e^{\g}-1)=e^{\g}\cdot L^K(\g)=L^K(\g) \cdot e^{\g}.
$$
Applying $K$ to the above, we obtain that
\be
0=
{K}\left( L^K(\g) \cdot e^{\g}\right)
&=&
L_\g^K(L^K(\g)) \cdot e^{\g}
- L^K(\g)\cdot K(e^{\g}-1) \\
&=& 
L_\g^K(L^K(\g)) \cdot e^{\g},
\ee
where we have used $K^2=0$ for the $1$st equality. The $2$nd equality follows from Lemma \ref{ls} and the $3$rd equality results from the fact that
$L^K(\g)\cdot {K}(e^{\g}-1)=L^K(\g)^2\cdot e^{\g}$
vanishes,
since $L^K(\g)^2=0$ by the super-commutativity of the product (note that $L^K(\g)$ has degree 1).
Hence the following expression
\be
\chi(\g):=L_\g^K(L^K(\g)) =  {K}(L^K(\g)) +{\ell}^{K}_2\big(\g,L^K(\g) \big)
+\sum_{n=3}^\infty \frac{1}{(n-1)!}{\ell}^{K}_n\big(\g,\cdots,\g, L^K(\g)\big)
\ee
vanishes  for every  $\g \in (\mm_\ma\otimes \cA)^0$ whenever $\ma \in \hbox{\it Ob} (\art_{\Bbbk} )$.
We then consider a scaling $\g\rightarrow \l\cdot \g$, $\l \in \Bbbk^*$, and the corresponding decomposition
$\chi(\g)= \chi_1(\g)+ \chi_2(\g)+ \chi_3(\g)+\cdots$ such that $\chi_n(\l\cdot\g)=\l^n \chi_n(\g)$,
i.e., we have for $n\geq 1$
\be
\chi_n(\g)
=\sum_{k=1}^n \frac{1}{(n-k)! k!} {\ell}^{K}_{n-k+1}\left({\ell}^{K}_k(\g,\cdots,\g),\g,\cdots,\g\right).
\ee
It follows that $\chi_n(\g)=0$, for all $n\geq 1$ and for all $\g\in (\mm_\ma \otimes \cA)^0$
and every $\ma \in\hbox{\it Ob}(\art_{\Bbbk})$. 
Hence $\big(\cA, \underline{\ell}^{K},1_\cA\big)$ is
an $L_\infty$-algebra over $\Bbbk$ by Definition \ref{shl}. 
\end{proof}

\bel \label{morkey}
For every $\g \in (\mm_\ma \otimes \cA)^0$ and for any homogeneous element $\lambda \in \ma \otimes \cA$,
We have identities in $\ma \otimes \cA$;
\eqn\kexp{
f(e^{\g}-1) = e^{\Phi^f(\g)} -1,  
}
where $\Phi^f(\g)=\sum_{n\geq 1} \frac{1}{n!}\phi^f_n (\g, \cdots, \g)$, and
\eqn\desmor{
f(\lambda \cdot e^{\g}) =  \Phi^f_\g(\l)  \cdot e^{\Phi^f(\g)},
}
where $\Phi^f_\g(\l):=  {{\phi}}^{f}_1\left({\lambda}\right)
+\sum_{n=2}^\infty \frac{1}{(n-1)!}{{\phi}}^{f}_n\big(\g,\cdots,\g, {\lambda}\big).$
\eel
\begin{proof}

Recall the sign convention from Definition \ref{shlm};
\be
&&{\phi}_n\big(a_1\otimes v_1, \cdots, a_n\otimes v_n\big)\\
&&=(-1)^{|a_2||v_1| +\cdots + |a_n|(|v_1|+\cdots +|v_{n-1}|)}
 a_1\cdots a_n\otimes \phi_n\left(v_1,\cdots,v_n\right).
\ee
for $a_i \otimes v_i \in (\mm_\ma \otimes \cA)^0$ with $i =1, 2, \cdots, n$, whenever $\ma \in \hbox{\it Ob}(\art_{\Bbbk})$.
The first equality is a simple combinatorial reformulation of Definition \ref{defdesc} with the above sign convention. 
We leave this
as an exercise (plug in $\g=\sum_{i=1}^na_i
\otimes v_i$); the reader can regard \kexp\ as an alternative definition of a descendant $L_\infty$-morphism $\ud \phi^f$.

Again, let $\ma=\Bbbk[\e]/(\e^2) \in  \hbox{\it Ob}(\art_{\Bbbk})$ be the ring of dual numbers.
Denote the power series $e^X -1$ by $P(X)$ and let $P'(X)=e^X$ be the formal derivative of $P(X)$ with respect to $X$.
Note that $P(\g + \e \cdot \l) = P(\g) + (\e \cdot \l) \cdot P'(\g)$ for $\e \cdot \l \in (\ma \otimes \cA)^0$.
Inside $\ma \otimes \cA$ we have that
$$
\eqalign{
f( P(\g)) + f\left( (\e \cdot \l) \cdot P'(\g) \right)   =f \left(  P(\g + \e \cdot \l)  \right) 
 &=   P\left(\Phi^f(\g + \e \cdot \l) \right)
 \cr
 &= P \left(\Phi^f(\g) + \Phi^f_\g(\e \cdot \l)\right)
 \cr
 &= P(\Phi^f(\g)) + P'(\Phi^f(\g)) \cdot  \Phi_\g^f(\e \cdot \l). 
 \cr
}
$$
Then this finishes the proof of \desmor.
\end{proof}

\bel
The descendants $\underline \phi^f$ define an $L_\infty$-morphism from $(\cA, \underline \ell^K)$ into $(\cA', \underline \ell^{K'})$.
\eel

\begin{proof}

Applying $K'$ to the relation 
${f}\left( e^{\g}-1\right) = e^{\Phi^{f}(\g)}-1$ in \kexp,
we obtain that
$$
{f}\left({K} (e^{\g}-1)\right) ={K}' (e^{{\Phi^{f}(\g)}}-1),
$$
where we have used $K'f = fK$. Then \desdef\ implies that
\eqn\gtya{
{f}\left({L^K(\g)}\cdot e^{\g}\right) =
{L^{K'}}\left(\Phi^{f}(\g)\right)\cdot e^{{\Phi^{f}(\g)}}.
}
Then \gtya\ combined with \desmor\ says that
\eqn\doum{
\Phi^f_\g (L^K(\g)) \cdot   e^{{\Phi^{f}(\g)}} 
={L^{K'}}\left(\Phi^{f}(\g)\right)\cdot e^{{\Phi^{f}(\g)}}.
}
Therefore the following expression
$$
{\zeta}(\g):=
\left({{\phi}}^{f}_1\left({L^K(\g)}\right)
+\sum_{n=2}^\infty \frac{1}{(n-1)!}{{\phi}}^{f}_n\big(\g,\cdots,\g, {L^K(\g)}\big)
\right)
-{L^{K'}}\left(\sum_{n=1}^\infty \frac{1}{n!} {{\phi}}^{f}_n(\g,\cdots, \g)\right)
$$
{\it vanishes} 
 for  every $\g\in \left(\mm_\ma\otimes \cA\right)^0$
whenever $\ma \in \hbox{\it Ob}(\mA_{k})$. 
Then we consider a scaling $\g\rightarrow \l\cdot \g$, $\l \in \Bbbk^*$, and the corresponding decomposition
${\zeta}(\g)= {\zeta}_1(\g)+ {\zeta}_2(\g)+ {\zeta}_3(\g)+\cdots$ such that 
${\zeta}_n(\l\cdot\g)=\l^n {\zeta}_n(\g)$,
where
\be
{\zeta}_n(\g)&=&
\sum_{j_1+ j_2 =n}
\frac{1}{j_1! j_2!} {{\phi}^{f}}_{j_1+1}\left({\ell}^{K}_{j_2}\big(\g,\cdots,\g\big), \g,\cdots, \g\right)\\
&&-
\sum_{r=1}^{\infty}\sum_{j_1+\cdots + j_r =n}\frac{1}{r!}\frac{1}{j_1!\cdots j_r!}
{\ell}^{K'}_r \left({{\phi}}^{f}_{j_1}(\g,\cdots,\g), \cdots, {{\phi}}^{f}_{j_r}(\g,\cdots,\g)\right).
\ee
It follows that ${\zeta}_n(\g)=0$, for all $ n\geq 1$ and all $\g\in (\mm_\ma \otimes \cA)^0$
whenever $\ma \in\hbox{\it Ob}(\art_{\Bbbk})$. Thus, by using Definition \ref{shlm}, it follows that the sequence $\underline \phi^f$ defines an $L_\infty$-morphism between $L_\infty$-algebras.
 \end{proof}

\bel
Let $f: (\cA, K) \to (\cA',K')$ and $f': (\cA',K') \to (\cA'', K'')$ be two morphisms in $\frC_\Bbbk$. Then we have
\be
\underline \phi^{f' \circ f} = \underline \phi^{f'} \bullet \underline \phi^{f},
\ee
where $\bullet$ is the composition of $L_\infty$-morphisms (see Definition \ref{compl}).
\eel

\begin{proof}
Consider ${\g} \in (\mm_\ma\otimes \cA)^0$ whenever
$\ma \in\hbox{\it Ob}(\art_{\Bbbk})$. Then \kexp\ implies that
$$
e^{\Phi^{f'}\big( {\Phi}^{f}({\g})\big)}-1=f' (e^{\Phi^f(\g)}-1)=({f}'\circ{f})\left(e^{{\g}}-1\right) =  e^{{\Phi}^{{f}'\circ{f}}({\g})}-1.
$$
Therefore, if we set ${X}({\g}) =  {\Phi}^{{f}'}\big( {\Phi}^{{f}}({\g})\big)$
and ${{Y}} ({\g}) =   {\Phi}^{{f}'\circ{f}}({\g})$, then we have
${X}({\g}) ={Y}({\g})$ for every ${\g} \in (\mm_\ma\otimes \cA)^0$.
Consider the decomposition ${X}({\g})={X}_1({\g})+{X}_2({\g})+\cdots$
and ${{Y}}({\g})={{Y}}_1({\g})+{{Y}}_2({\g})+\cdots$,
where ${X}_n(\l\cdot{\g})=\l^n\cdot {X}_n({\g})$ and ${{Y}}_n(\l\cdot{\g})=\l^n\cdot {{Y}}_n({\g})$, $\l \in \Bbbk^*$. 
Then ${X}_n({\g})={Y}_n({\g})$ for all $n\geq 1$.
The equality \kexp\ implies that
\be
{{Y}}_n({\g})&=& \frac{1}{n!}{\phi}^{{f}'\circ {f}}_n({\g},\cdots,{\g}).
\ee
The direct computation gives us that
\be
{X}_n({\g}) &=& \sum_{r=1}^{\infty}\sum_{j_1+\cdots+ j_r=n}\frac{1}{r!}\frac{1}{j_1!\cdots j_r!}{\phi}^{{f}'}_r\left(
{\phi}_{j_1}({\g},\cdots,{\g}), \cdots ,{\phi}_{j_r}({\g},\cdots,{\g})\right)\\
&\equiv& \frac{1}{n!}\left(\underline{{\phi}}^{{f}'}\bullet \underline{{\phi}}^{{f}}\right)_n({\g},\cdots,{\g}) 
\ee
We hence conclude that 
${\phi}^{{f}'\circ {f}}_n({\g},\cdots,{\g})
=\left(\underline{{\phi}}^{{f}'}\bullet \underline{{\phi}}^{{f}}\right)_n({\g},\cdots,{\g})$
for all $n\geq 1$. It follows that $\underline{{\phi}}^{{f}'\circ {f}}=\underline{{\phi}}^{{f}'}\bullet \underline{{\phi}}^{{f}}$.
\end{proof}

Now we turn our attention to cochain homotopies in $\frC_\Bbbk$ and $L_\infty$-homotopies in $\frL_\Bbbk$.
\bed \label{homoC}
Two morphisms $f:(\cA,\cdot, K) \to (\cA',\cdot, K')$ and $\tilde f:(\cA,\cdot, K) \to (\cA',\cdot, K')$ in $\frC_\Bbbk$ are called homotopic, denoted by $f \sim \tilde f$, if there is a polynomial family $F(\t):(\cA,\cdot, K) \to (\cA',\cdot, K')$ in $\t$ of morphisms with $F(0)=f$ and $F(1)=\tilde f$ satisfying
\eqn\cohy{
\pa{\t} F(\t) = K' \sigma(\t) + \sigma(\t) K
}
for some polynomial family $\sigma(\t)$ in $\t$ belonging to $\Hom (\cA,\cA')^{-1}$.
In this case, we call $\sigma(\t)$ a homotopy between $f$ and $f'$.
\eed

\bed
Given two morphisms $f:(\cA,\cdot, K) \to (\cA',\cdot, K')$ and $\tilde f:(\cA,\cdot, K) \to (\cA',\cdot, K')$ in $\frC_\Bbbk$,
and a homotopy $\sigma=\sigma(\t)$ and a polynomial family $F=F(\t)$ as in \cohy, we define
the descendant homotopy $\l_n=\l_n^{F,\sigma} \in \Hom(S^n\cA, \cA')^{-1}$ for each $n \geq 1$, by the following formula
\eqn\hde{
  \sigma(e^\g-1) =e^{\Phi^F(\g)} \cdot \La^{F,\sigma} (\g),
}
where $\La^{F,\sigma} (\g):=\sum_{n=1}^\infty \frac{1}{n!} \l_n(\g, \cdots, \g)$ and $\g \in (\mm_\ma \otimes \cA)^0$ whenever $\ma \in \hbox{\it Ob}(\art_{\Bbbk})$.
\eed

Compare the formula \hde\ with the formula \desdef\ for the descendant $L_\infty$-algebra
and the formula \kexp\ for the descendant $L_\infty$-morphism.

\bel \label{deshomotopy}
Let $f, \tilde f: (\cA, K, \cdot) \to (\cA', K', \cdot)$ be morphisms which are homotopic in $\frC_\Bbbk$ in the sense of Definition \ref{homoC}. For every $\g \in (\mm_\ma \otimes \cA)^0$ and any homogeneous element $\mu
\in \ma \otimes \cA$, we have the following identity in $\ma \otimes \cA$;
\eqn\hho{
\sigma\left( e^\g \cdot \mu \right) = e^{\Phi^{F}(\g)} \cdot \La_\g^{F,\sigma} (\mu) + 
\Phi^F_\g(\mu) \cdot \La^{F,\sigma}(\g),
}
where $\Lambda_\g^{F,\sigma} (\mu) = \l_1(\mu)+ \sum_{n=2}^\infty \frac{1}{(n-1)!}{\l}_n\left({\g},\cdots,{\g}, \mu\right)$.

\eel

\begin{proof} 
Again, let $\ma=\Bbbk[\e]/(\e^2) \in  \hbox{\it Ob}(\art_{\Bbbk})$ be the ring of dual numbers.
Denote the power series $e^X -1$ by $P(X)$ and let $P'(X)=e^X$ be the formal derivative of $P(X)$ with respect to $X$.
For $\e \cdot \mu \in (\ma \otimes \cA)^0$, we have the following identities in $\ma \otimes \cA$
$$
\eqalign{
\sigma\left( P(\g) \right)+ \sigma \left( P'(\g) \cdot \e \mu \right) 
&= \sigma \left( P(\g + \e \cdot \mu)  \right)
\cr
&= P'\left(\Phi^F(\g+ \e \cdot \mu)\right) \cdot \La^{F,\sigma}(\g + \e \cdot \mu)
\cr
& = \left( P'\left(\Phi^F(\g)\right) +P'\left(\Phi^F_\g( \e \cdot \mu)\right)-1 \right) 
\cdot \left(  \La^{F,\sigma}(\g) + \La_\g^{F,\sigma} (\e \cdot \mu) \right)
\cr
& = \sigma\left( P(\g) \right) + P'\left(\Phi^F_\g( \e \cdot \mu)\right) \La^{F,\sigma}(\g)
+ P'\left(\Phi^F(\g)\right)\La_\g^{F,\sigma} (\e \cdot \mu) -  \La^{F,\sigma}(\g).
}
$$
This translates into
$$
\sigma \left( P'(\g) \cdot \e \mu \right) =  P'\left(\Phi^F_\g( \e \cdot \mu)\right) \La^{F,\sigma}(\g)
+ P'\left(\Phi^F(\g)\right)\La_\g^{F,\sigma} (\e \cdot \mu) -\La^{F,\sigma}(\g).
$$
Then \hho\ follows from this computation combined with our sign convention.
\end{proof}

\bel
The descendants of morphisms which are homotopic in $\frC_\Bbbk$ are $L_\infty$-homotopic in the sense of definition \ref{shlh}. 
\eel
\begin{proof}

By Definition \ref{homoC}, there is a polynomial family $F(\t)$ in $\t$ of morphisms with $F(0)=f$ and $F(1)=\tilde f$ satisfying
\eqn\hhmm{
\pa{\t} F(\t) = K' \sigma(\t) + \sigma(\t) K
}
for some polynomial family $\sigma(\t)$ in $\t$ belonging to $\Hom (\cA,\cA')^{-1}. $ 
 Recall that
\be
\Phi^F(\g)=\sum_{n\geq 1} \frac{1}{n!}\phi^F_n (\g, \cdots, \g)\quad \text{and}\quad  F(e^\g-1)=e^{\Phi^F(\g)}-1,
\ee
where $\ud \phi^F = \ud \phi^{F(\t)}= \phi^F_1, \phi^F_2, \cdots$ is the descendant of $F=F(\tau)$.
Then the identity \hhmm\ implies that
\eqn\hhho{
\prt{}{\t} e^{{\Phi}^{{F}}({\g})}= K' \sigma(e^\g-1) + \sigma K(e^\g-1)=K' \left( P'(\Phi^F(\g)) \cdot \La^{F,\sigma} (\g) \right) + \sigma(P'(\g) \cdot L^K(\g)  ),
}
where $F=F(\t)$ and $\sigma=\sigma(\t)$. 
The formulas \desderi\ and \hho\ say that the right hand side of \hhho\ is the same as
(with cancellation of middle terms due to \doum)
$$
\eqalign{
L^{K'}_{\Phi^F(\g)} \left(  \La^{F,\sigma} (\g) \right) \cdot P'(\Phi^F(\g))  + P'( \Phi^F(\g))\cdot \La^{F,\sigma}_\g (L^K(\g)).
}
$$
The left hand side of \hhho\ is the same as $
P'\left(  \Phi^F (\g) \right) \cdot \prt{}{\t} \Phi^{F}(\g).$ Therefore we eventually get 
\eqn\homotive{
 \prt{}{\t} \Phi^{F}(\g)=L^{K'}_{\Phi^F(\g)} \left(  \La^{F,\sigma} (\g) \right)  +  \La^{F,\sigma}_\g (L^K(\g)).
}
%
%
%
 Decomposing the equality \homotive\ by $\Bbbk^*$-action $ \g \rightarrow a \g,  a \in \Bbbk^*$, we have
the following form of the flow equation appearing in Definition \ref{shlh} of $L_\infty$-homotopy of $L_\infty$-morphisms: for every $ {\g} \in (\mm_{\ma}\otimes {\cA})^0$ whenever $\ma \in \hbox{\it Ob}(\art_{\Bbbk})$
\be
&&\prt{}{\t}{\phi^F_n}({\t})(\g, \cdots, \g)\\
&=&
\sum_{k=1}^n\sum_{j_1+\cdots + j_r =n-k}\frac{1}{k!}\frac{1}{r!}\frac{1}{j_1!\cdots j_r!}
\ell^{{K}'}_{r+1} \left({\phi^F_{j_1}}({\g},\cdots,{\g}), \cdots, {\phi^F_{j_r}}({\g},\cdots,{\g}),{{\l}}_k({\g},\cdots,{\g})\right)\\
&&+ \sum_{j_1+ j_2 =n}
\frac{1}{j_1! j_2!} {{\l}}_{j_1+1}\left({\g},\cdots, {\g},\ell^{{K}}_{j_2}({\g},\cdots,{\g})\right),
\ee
where $\phi^F_n(0)= \phi_n^f$ and $\phi^F_n(1)=\phi_n^{\tilde f}$, for all $n \geq 1$.
\end{proof}

\ber
The explicit description of $L_\infty$-homotopies
in Definition \ref{shlh}, which is hard to come up with in the literature,
is motivated by our formalism of the descendant functor: we have defined $L_\infty$-homotopy (motivated by 
the derivation of the equality \homotive\ from the natural notion of cochain homotopy $\sigma$) so that $\Des$
becomes a homotopy functor.
\eer

Let $h\frC_\Bbbk$ be the homotopy category of $\frC_\Bbbk$, i.e. objects in $h\frC_\Bbbk$ are the same as those in $\frC_\Bbbk$ and  morphisms in $h\frC_\Bbbk$ are homotopy classes of morphisms in $\frC_\Bbbk$ in the sense of Definition \ref{homoC}.
We also define the category $\frL_\Bbbk$ (respectively, $h\frL_\Bbbk$) to be the (respectively, homotopy) category of $L_{\infty}$-algebras over $\Bbbk$ in the same way by using Definition \ref{shlh} for $L_\infty$-homotopy. 
If we combine all of the above lemmas, we have the following result.
\bet  \label{Desc}
The assignment $\Des$ is a homotopy functor from the homotopy category $\frH\frC_\Bbbk$ to the homotopy category $\frH\frL$, which we call the descendant functor.
\eet

The functor $\Des$ is faithful but not fully faithful; two non-isomorphic objects in $\frC_\Bbbk$ can give isomorphic $L_\infty$-algebras under $\Des$, the trivial $L_\infty$-algebra $(V, \ud 0)$ can not be a descendant $L_\infty$-algebra (unless $V$ has a commutative and associative binary product), and an arbitrary $L_\infty$-morphism between descendant $L_\infty$-algebras need not be a descendant $L_\infty$-morphism.


\subsection{Cohomology of a descendant $L_\infty$-algebra} \label{subs3.2}

Here we prove that a descendant $L_\infty$-algebra is {formal} in the sense of Definition \ref{smoothformal}.
By Theorem \ref{transfer}, on the cohomology of an $L_{\infty}$-algebra $(V,\underline \ell)$ there is a minimal $L_{\infty}$-algebra 
structure $(H, \underline \ell^H)$ together with an $L_{\infty}$-morphism $\underline \varphi$ from $H$ to $V$. If an $L_{\infty}$-algebra $(V, \underline \ell)$
is a descendant $L_{\infty}$-algebra, i.e. $(V, \underline \ell)=(\cA, \underline \ell^K)=\Des(\cA,\cdot, K)$ for some object $(\cA, \cdot, K)$ in $\frC_\Bbbk$, then this minimal $L_{\infty}$-algebra structure on the cohomology $H_K$ of $(\cA,K)$ is trivial.

\bep \label{zero}
Let $(\cA,\cdot, K)$ be an object of $\frC_\Bbbk$ and let $(\cA, \underline \ell^K)$ be its descendant $L_{\infty}$-algebra. Then the
minimal $L_{\infty}$-algebra structure on 
the cohomology $H_K$ of $(\cA, \underline \ell^K)$ is trivial, i.e. $\ell_2^{H_K}=\ell_3^{H_K}=\cdots=0$, and there is an $L_\infty$-quasi-isomorphism 
$\underline{\varphi}^H$ from $(H_K, \ud 0)$ into $(\cA, \ud \ell^K)$.
\eep

\begin{proof}

Let $H=H_K$ for simplicity. 
Let $f: H\rightarrow \cA$ be a cochain quasi-isomorphism between the cochain complexes $(H,0)$ and $(\cA,K)$, which induces the identity map on $H$. Note that $f$ is defined up to cochain homotopy.
We remark that choosing $f$ amounts to choosing a splitting of the short exact sequence
$$
0 \to \Im K^{-i} \to \Ker K^i \to  H^i \to 0
$$
for each $i$.
 Let $h: \cA \rightarrow H$ be a homotopy inverse
to $f$ such that $h\circ f=I_H$ and $f\circ h= I_\cA + K\b +\b K$ for some $\Bbbk$-linear map $\b$
from $\cA$ into $\cA$ of degree $-1$. Let $1_H = h(1_{\cA})$.
We need to establish that (i) the minimal $L_\infty$-structure $\underline{\n}=\n_2,\n_3,\cdots$ on 
$H$ is trivial (we use the notation $\n_k=\ell_k^{H_K}$), i.e., $\n_2=\n_3=\cdots =0$, (ii) there is a family 
$\underline{\varphi}^H=\varphi_1^H,\varphi_2^H,\cdots$, where $\varphi_1^H=f$ and $\varphi_k^H$ is a $\Bbbk$-linear map from $S^k(H)$ into $\cA$ of degree $0$ for all $k\geq 2$ such that, for all homogeneous
elements $a_1,\cdots, a_n$ of $H$ and for all $n\geq 1$
\be
\sum_{\pi \in P(n)}
\ep(\pi)
{\ell}^{K}_{|\pi|}\Big(
{\varphi^H}\big(a_{B_1}\big), 
\cdots,{\varphi^H}\big( a_{B_{|\pi|}}\big)
\Big)
&=0.
\ee
We  shall use mathematical induction.
 Set $\varphi_1^H= f$.
Then we have
\be
K \varphi_1^H=0.   
\ee
Define a $\Bbbk$-linear map $L_2: S^2(H)\rightarrow \cA$ of degree $1$
such that, for all homogeneous elements
$a_1,a_2 \in H$,
$$
L_2\big(a_1,a_2\big):=\ell_2^K\big(\varphi_1^H(a_1),\varphi_1^H(a_2)\big).
$$
Then $\Im L_2 \subset \Ker K\cap \cA$ by the definition of $\ell_2^K$ and thus we can 
define $\n_2: S^2(H)\rightarrow H$ by
$\n_2\big(a_1,a_2\big):=h\circ  L_2\big(a_1,a_2\big)$.
On the other hand, it follows that $h\circ  L_2\big(a_1,a_2\big)=0$ for every homogeneous elements
$a_1,a_2 \in H$, because we have
\be
L_2\big(a_1,a_2\big)
&=&K \big(\varphi^H_1(a_1)\cdot\varphi^H_1(a_2)\big)
-K\varphi^H_1(a_1)\cdot\varphi^H_1(a_2)-(-1)^{|a_1|}\varphi^H_1(a_1)\cdot K\varphi^H_1(a_2)\\
&=&K \big(\varphi^H_1(a_1)\cdot\varphi^H_1(a_2)\big).
\ee
 Hence $\n_2=0$. From $h\circ  L_2\big(a_1,a_2\big)=0$, we have
$0=f\circ h\circ  L_2\big(a_1,a_2\big)=L_2\big(a_1,a_2\big) + K \varphi^H_2(a_1, a_2)$
where $\varphi^H_2\big(a_1,a_2\big):=\b\circ L_2\big(a_1, a_2\big)$
is a $\Bbbk$-linear map from $S^2(H)$ to $\cA$ of degree $0$.
Hence we have, for all homogeneous $a_1,a_2 \in H$,
\be
K \varphi^H_2\big(a_1,a_2\big)+\ell_2^K\big(\varphi^H_1(a_1),\varphi^H_1(a_2)\big)=0. 
\ee

Fix $n \geq 2$ and assume that $\n_2=\cdots=\n_{n}=0$
and there is a family  $\phi^{[n]}=\varphi^H_1,\varphi^H_2, \cdots, \varphi^H_n$,
where $\varphi^H_k$ is a linear map $S^k(H)\rightarrow \cA$ of degree $0$ for $1\leq k\leq n$,
$\varphi^H_1=f$ and 
such that, for all $1\leq k\leq n$,
\eqn\obtion{
\sum_{\pi \in P(k)}
\ep(\pi)
{\ell}^{K}_{|\pi|}\Big(
\phi^{[n]}\big(a_{B_1}\big), 
\cdots,\phi^{[n]}\big( a_{B_{|\pi|}}\big)
\Big)
=0.
}

Define the linear map $L_{n+1}: S^{n+1}(H)\rightarrow \cA$ of degree $1$ by
\be
L_{n+1}\big(a_1,\cdots, a_{n+1}\big)
&:=&\sum_{\substack{\pi \in P(n+1)\\ |\pi|\neq 1}}
\ep(\pi)
{\ell}^{K}_{|\pi|}\Big(
\phi^{[n]}\big(a_{B_1}\big), 
\cdots,\phi^{[n]}\big( a_{B_{|\pi|}}\big)\\
&=&
K\sum_{\substack{\pi \in P(n+1)\\|\pi|\neq 1}}
\ep(\pi)
\phi^{[n]}\big(a_{B_1}\big)\cdots\phi^{[n]}\big( a_{B_{|\pi|}}\big)
\Big).
\ee
Then $\Im L_{n+1} \subset \Ker K \cap \cA$, and 
\eqn\tecc{
h\circ L_{n+1}\big( a_1,\cdots, a_{n+1}\big)=0.
}
Define the linear map $\n_{n+1}:S^{n+1}(H)\rightarrow H$ of degree $1$ by
$$
\n_{n+1}\big( a_1,\cdots,a_{n+1}\big):= h\circ L_{n+1}\big( a_1,\cdots, a_{n+1}\big).
$$
 It follows that, by the assumption,
$\n_2=\cdots=\n_{n}=\n_{n+1}=0$.
By applying the map $f: H \rightarrow \cA$ to \tecc, we obtain
$$
K\circ \b\circ L_{n+1}\big( a_1,\cdots, a_{n+1}\big)+ L_{n+1}\big( a_1,\cdots, a_{n+1}\big)=0
$$
Set $\varphi^H_{n+1}= \b\circ L_{n+1}: S^{n+1}(H)\rightarrow \cA$,
which is a $\Bbbk$-linear map of degree $0$.
 Hence
$$
 L_{n+1}\big( a_1,\cdots, a_{n+1}\big)
+K\varphi^H_{n+1}\big( a_1,\cdots, a_{n+1}\big)=0.
$$
If we set 
$\phi^{[n+1]}=\varphi^H_1, \cdots, \varphi^H_n, \varphi^H_{n+1}$, then the above identity can be rewritten as follows:
$$
\sum_{\substack{\pi \in P(n+1)}}
\ep(\pi)
{\ell}^{K}_{|\pi|}\Big(
\phi^{[n+1]}\big(a_{B_1}\big),
\cdots,\phi^{[n+1]}\big( a_{B_{|\pi|}}\big)
\Big)=0.
$$
This finishes the proof.
 \end{proof}

\subsection{Deformation functor attached to a descendant $L_\infty$-algebra} \label{subs3.3}

In general, one can always consider a deformation functor associated to an $L_\infty$-algebra.
Here we consider a deformation problem attached to the 
descendant $L_\infty$-algebra $(\cA, \ud \ell^K)$ of an object $(\cA, \cdot, K)$ in $\frC_\Bbbk$. 
Recall that $\art_{\Bbbk}$ is the category of $\bZ$-graded commutative artinian local $\Bbbk$-algebras with residue field $\Bbbk$. 
Let $\ma \in \hbox{\it Ob}(\art_{\Bbbk})$, and  $\mm_\ma$ denote the maximal ideal of $\ma$.
\bed\label{hhh}
Let $\G, \tilde \G \in (\mm_\ma \otimes  \cA)^0$ such that 
\be
K(e^\G-1) =0=K(e^{\tilde \G}-1).
\ee
  Then we say that $\G$ is homotopy equivalent (or gauge equivalent) to $\tilde \G$, denoted by $\G \sim \tilde \G$, if there is a one-variable polynomial solution $\G(\t) \in (\mm_\ma \otimes \cA)^0[\t]$ with $\G(0) = \G$ and $\G(1) =\tilde \G$ to the following flow equation
 \be
 \prt{}{\tau}   e^{\G(\t)} = K \left(\l(\tau) \cdot e^{\G(\tau)}\right)
 \ee
 for some one-variable polynomial $\l(\tau) \in (\mm_\ma \otimes \cA)^{-1}[\t]$.
\eed
Note that the above infinite sum is actually a finite sum, since $\mm_\ma$ is a nilpotent $\Bbbk$-algebra.
We used the product structure on $\cA$ to define the homotopy equivalence. Note that
\be
K(e^\G-1)=0 \quad \text{is equivalent to}\quad \sum_{n=1}^{\infty} \frac{1}{n!}\ell_n^K(\G, \cdots,\G)=0
\ee
by \desdef.
Define a covariant functor $\mathfrak{Def}_{(\cA,\ud \ell^K)}$ from $\art_{\Bbbk}$ to $\hbox{\bf Set}$, where $\hbox{\bf Set}$ is the category of sets, by
\eqn\dfuctor{
\ma \mapsto \mathfrak{Def}_{(\cA,\ud \ell^K)}(\ma)=\Big\{\G_\ma \in (\mm_\ma \otimes  \cA)^0  \ : \  \sum_{n=1}^{\infty} \frac{1}{n!}\ell_n^K(\G_\ma, \cdots,\G_\ma)=0\Big \}/\sim
}
where $\sim$ is the equivalence relation given in Definition \ref{hhh}. One can send 
a morphism in $\art_{\Bbbk}$ to a morphism in $\hbox{\bf Set}$ in an obvious way. Let $\widehat\de$ be the extension of $\de$ to the category of $\bZ$-graded complete commutative noetherian local $\Bbbk$-algebras with residue field $\Bbbk$; see \cite{Har10}, p 83. We want to study this deformation functor. Any $L_\infty$-structure on $H_K=H_K(\cA)$, induced from the descendant $(\cA, \ud \ell^K)$ via Theorem \ref{transfer}, is trivial by Proposition \ref{zero}, i.e. $(\cA, \ud \ell^K)$ is {formal} (see Definition \ref{smoothformal}). 
Theorem \ref{transfer} also implies that there is an $L_\infty$-quasi-isomorphism $\underline \varphi^H= \varphi_1^H, \varphi_2^H, \cdots$ from $(H_K, \underline 0) \to (\cA, \underline \ell^K)$, i.e. $\varphi_m^H$ is a $\Bbbk$-linear map from $S^m(H_K)$ into $\cA$ of
degree 0 for all $m\geq 1$ such that
\be
\sum_{\pi \in P(n)} \epsilon(\pi) \ell^K_{|\pi|} \big(\varphi^H(a_{B_1}), \cdots, \varphi^H(a_{B_{|\pi|}})\big)=0
\ee
for every set of homogeneous elements $a_1, \cdots, a_n$ of $H_K$ and for all $n\geq 1$. 
In fact, the $L_\infty$-homotopy types of $L_\infty$-morphisms from $(H_K,\ud 0)$ into $(\cA, \ud\ell^K)$ are classified by $\widehat\de(\widehat{SH})$. To be more precise, we have the following results:

\bep \label{defmoduli}
 Assume that $H_K=H_K(\cA)$ is finite dimensional over $\Bbbk$. 

(a) The deformation functor $\mathfrak{Def}_{(\cA,\ud \ell^K)}$ is pro-representable by $\widehat{SH}:=\varprojlim_n \bigoplus_{k=0}^nS^k(H_K^*)$ with $H_K^* = \Hom(H_K, \Bbbk)$, i.e. there is an isomorphic natural transformation from $\Hom(\widehat{SH}, \cdot)$ to $\widehat\de(\cdot)$. 

(b) There is a bijection between $\widehat\de(\widehat{SH})$ and the set
\be
\Big\{\ud \varphi \ : \ \ud\varphi=\varphi_1, \varphi_2, \cdots \text{ is an $L_\infty$-morphism from $(H_K,\ud 0)$ to $( \cA, \ud \ell^K)$ } \Big\}/ \sim
\ee
where $\sim$ means the $L_\infty$-homotopy equivalence relation given in Definition \ref{shlh}.

\eep

\begin{proof}
(a) This is a special case of Theorem 5.5 of \cite{Ma02}, because $(\cA, \ud \ell^K)$ is {formal} by Proposition \ref{zero}.

(b) 
Let $\{e_{\alpha} \ : \ \alpha \in I \}$ be a homogeneous $\Bbbk$-basis of the $\bZ$-graded $\Bbbk$-vector space $H_K=H_K(\cA)$, where $I$ is an index set.
Let $t^\alpha$ be the $\Bbbk$-dual of $e_\alpha$, where $e_\alpha$ varies in $\{e_{\alpha}\ : \ \alpha \in I \}$. Then $\{t^{\alpha}\ : \ \alpha \in I \}$ is the dual $\Bbbk$-basis of $\{e_{\alpha}\ : \ \alpha \in I \}$ and the degree $|t^{\alpha}|=-|e_\alpha|$ where $e_\alpha \in H_K^{|e_\alpha|}$. 
We consider the power series ring $\Bbbk[[t^\alpha]]=\Bbbk[[t^\alpha  : {\alpha \in I}]]$. Let $J$ be the unique maximal ideal of $\Bbbk[[t^\alpha]]$,
so that 
\eqn\artalg{
\ma_N:=\Bbbk[[t^\alpha]]/J^{N+1} \in \hbox{\it Ob}(\art_{\Bbbk})
}
for arbitrary $N \geq 0$.
Then $\widehat{SH}$ is isomorphic to $\varprojlim_N \ma_N$.
For a given $L_\infty$-homotopy type of $\ud \varphi$, we define
\be
[\G]=[\G(\ud \varphi)]=\bigg[\sum_{n=1}^\infty \frac{1}{n!}\sum_{\alpha_1, \cdots,\alpha_n} t^{\alpha_n}\cdots t^{\alpha_1} \otimes \varphi_n(e_{\alpha_1}, \cdots, e_{\alpha_n})\bigg] \in \widehat\de(\widehat{SH}),
\ee
where $[\cdot]$ means the homotopy equivalence class.
Let us check that this is a well-defined map, i.e. it sends $L_\infty$-homotopy types to homotopy equivalence classes. Let $\ud{\tilde \varphi}$ be $L_\infty$-homotopic to $\ud \varphi$ and $\tilde \G := \G(\ud {\tilde \varphi})$. 
By Definition \ref{shlh}, there exists a polynomial solution
$\ud \Phi(\tau)=\Phi_1(\tau), \Phi_2(\tau), \cdots$ of the flow equation with respect to a polynomial family $\ud \lambda(\tau) = \lambda_1(\tau), \lambda_2(\tau), \cdots$ such
that $\ud \Phi(0)=\ud \varphi$ and $\ud \Phi(1)=\ud {\tilde\varphi}$. If we set
\be
\Xi(\tau) = \sum_{n\geq 1} \frac{1}{n!} \sum_{\alpha_1, \cdots,\alpha_n} t^{\alpha_n}\cdots t^{\alpha_1} \otimes \Phi_n(\tau)(e_{\alpha_1}, \cdots, e_{\alpha_n}), \\
\Lambda(\tau) = \sum_{n\geq 1} \frac{1}{n!} \sum_{\alpha_1, \cdots,\alpha_n} t^{\alpha_n}\cdots t^{\alpha_1} \otimes \lambda_n(\tau)(e_{\alpha_1}, \cdots, e_{\alpha_n}),
\ee
then we have $\Xi(0)=\G$ and $\Xi(1)=\tilde \G$. Moreover, the flow equation implies (by a direct computation) that
\be
\prt{}{\t}e^{\Xi(\tau)} = K (\Lambda(\tau) \cdot e^{\Xi(\tau)}).
\ee
Hence $\Xi(0)=\G$ and $\Xi(1)=\tilde \G$ are homotopic to each other in the sense of Definition \ref{hhh}.
 Conversely, for a given equivalence class $[\G]$ of a solution of the Maurer-Cartan equation
\be
[\G] =\bigg[\sum_{n=1}^\infty \frac{1}{n!}\sum_{\alpha_1, \cdots,\alpha_n} t^{\alpha_n}\cdots t^{\alpha_1} \otimes v_{\alpha_1\cdots \alpha_n}\bigg] \in \widehat\de(\widehat{SH}),
\ee
define $\varphi[\G]_n(e_{\alpha_1}, \cdots, e_{\alpha_n}):= v_{\alpha_1\cdots \alpha_n}$ for any $n\geq 1$ and extend it $\Bbbk$-linearly. Then $\ud {\varphi[\G]}$ is an $L_\infty$-morphism and is also a well-defined map by a similar argument.
\end{proof}

This proposition, combined with Proposition \ref{zero}, says that there exists an $L_\infty$-quasi-isomorphism $\ud \varphi^H$ such that the pair $(\widehat{SH}, \G)$ where
\be
\G =\sum_{n=1}^\infty \frac{1}{n!}\sum_{\alpha_1, \cdots,\alpha_n} t^{\alpha_n}\cdots t^{\alpha_1} \otimes \varphi_n^H(e_{\alpha_1}, \cdots, e_{\alpha_n}) \in \widehat\de(\widehat{SH}),
\ee
 is a \textit{universal family} in the sense of Definition 14.3, \cite{Har10}. If $\G \in (\mm_{\widehat{SH}}\otimes \cA)^0$ corresponds to $\ud \varphi$, 
 we will use the notation $\G=\G_{\ud \varphi}$.

\begin{remark}
(a) This proposition can be generalized to any $L_\infty$-algebra $(V, \ud \ell)$, if we replace $\art_{\Bbbk}$ by the category of unital Artinian local $\Bbbk$-CDGAs (commutative differential graded algebras) with residue field $\Bbbk$; see Theorem 5.5, \cite{Ma02}.

(b) Let $\frC_\Bbbk\mA$ be the full subcategory of $\frC_\Bbbk$ whose objects consist of unital $\bZ$-graded commutative artinian local $\Bbbk$-algebras with residue field $\Bbbk$ and a differential which kills the unity (we do not require a compatibility between the differential and the multiplication). Then we can construct a (generalized) deformation functor $\mathfrak{PDef}_{(\cA, \cdot, K)}$ from $\frC_\Bbbk\mA$ to $\hbox{\bf Set}$, by 
associating, for any object $(\cA, \cdot, K)$ of $\frC_\Bbbk$,  
\be
\ma &\mapsto&  \mathfrak{PDef}_{(\cA, \cdot, K)}(\ma)=\{\G_\ma \in (\mm_\ma \otimes  \cA)^0  \ : \  K e^{\G_\ma}=0 \}/\sim
\ee
where $\sim$ is the equivalence relation given in Definition \ref{hhh}. The key point here is that a morphism $f:(\ma,\cdot, K_\ma) \to (\ma',\cdot, K_{\ma'})$,  in $\frC_\Bbbk\mA$ (recall that these are not ring homomorphisms) sends the solution $\G_\ma$ of $K e^{\G_\ma}=0$ to the solution $\Phi^f(\G_\ma)$ of $K e^{\Phi^f(\G_\ma)}=0$. We will study this functor more carefully in a sequel paper.
\end{remark}

Now we examine what we are deforming by the functor $\de$. A solution of the functor $\de$ gives a formal deformation of $(\cA, \cdot, K)\in \hbox{\it Ob}(\frC_\Bbbk)$ inside the category $\frC_\Bbbk$; see Lemma \ref{lemsix} below.
We use new notation $K_\G$ for $L^K_\G$ in \desderi
\eqn\kgamma{
K_\G(\lambda):=L^K_\G(\lambda):= \left(  {K}\lambda +{\ell}^{K}_2\big(\G, \lambda \big)
+\sum_{n=3}^\infty \frac{1}{(n-1)!}{\ell}^{K}_n\big(\G,\cdots,\G, \lambda\big)
\right),
}
 for any homogeneous element $\lambda \in \ma \otimes \cA$.

\bel \label{lemsix}
Let $\ma \in \hbox{\it Ob}(\art_{\Bbbk})$ and $\G \in (\mm_\ma \otimes \cA)^0$ be a solution of the Maurer-Cartan equation in \dfuctor. Then $K_\G$ is a $\Bbbk$-linear map on $\ma \otimes \cA$ of degree 1 and satisfies
\be
K_\G^2=0.
\ee
In other words, $(\ma \otimes \cA, \cdot, K_\G)$ 
 is also an object of the category $\frC_\Bbbk$.
\eel

\begin{proof}
If we plug in $\lambda = K_\G(v)$ for any homogeneous element $v\in \ma \otimes \cA$ in \desderi, then 
\be
K( K_\G(v) \cdot e^\G)= K_\G (K_\G(v)) \cdot e^\G = K_\G^2(v) \cdot e^\G
\ee
since $K  e^\G=0$. But the left hand side of the above equality is
\be
K ( K(v \cdot e^\G) - (-1)^{|v|} v \cdot K e^\G) = K^2 (v\cdot e^\G)=0
\ee
by using \desderi\ again. Therefore $ K_\G^2(v) \cdot e^\G=0$, which implies that $K_\G^2=0$.
It is obvious that $K_\G$ has degree 1 by its construction.
\end{proof}

By the above lemma, we can consider the cohomology $H_{K_\G}(\ma \otimes \cA)$ of the cochain complex 
$(\ma \otimes \cA, \cdot, K_\G)$. Moreover, we can formally deform 
a morphism $\cC:(\cA, \cdot, K)\to (\Bbbk, \cdot, 0)$ by using a solution for $\de$.

\bel
Let $\cC: (\cA, K) \to (\Bbbk, 0)$ be a cochain map. Let $\G \in (\mm_{\ma} \otimes \cA)^0$ be the solution of the Maurer-Cartan equation in \dfuctor. If we define
\be
\cC_\G (x) = \cC\left( x \cdot e^\G\right), \quad x\in \ma \otimes \cA,
\ee
then $\cC_\G: (\ma \otimes \cA, K_\G) \to (\ma \otimes \Bbbk,0)$ is also a cochain map, where $K_\G$ is 
given in \kgamma.
\eel

\begin{proof}
We have to check that $\cC_\G \circ K_\G=0$. This follows from \desderi;
\be
(\cC_\G \circ K_\G )(x) = \cC \left(K_\G(x) \cdot e^\G\right) = \cC \left(K \left(x \cdot e^\G\right)\right)=0,
\ee
for any homogeneous element $x\in \ma \otimes \cA$.
\end{proof}

\subsection{Invariants of homotopy types of $L_{\infty}$-morphisms} \label{subs3.4}

Throughout this subsection we assume that the cohomology space $H_K=H_K(\cA)$ is a finite dimensional $\Bbbk$-vector space: let $\{ e_{\alpha} : \alpha \in I\}$ be a homogeneous $\Bbbk$-basis of $H_K$ where $I \subseteq \bN$ is a finite index set. Let $\{t^\alpha \ : \ \alpha\in I\}$ be the dual $\Bbbk$-basis of $\{e_{\alpha} \ : \ \alpha \in I \}$. Recall that the degree of $t^{\alpha}$ is $-|e_\alpha|$ where $e_\alpha \in H^{|e_\alpha|}_K$. 

\begin{proposition} \label{comLi}
Let $\cC:(\cA,  \cdot, K) \to (\Bbbk, \cdot, 0)$ be a morphism in the category $\frC_\Bbbk$.  
Let $C:H_K \to \Bbbk$ be the induced map on cohomology. Then $C:H_K\to \Bbbk$ can be 
enhanced to an $L_\infty$-morphism $\ud\phi^\cC \bullet \ud\varphi^H$ for some $L_\infty$-quasi-isomorphism 
$\ud \varphi^H$, i.e. in the following diagram
\eqn\Liecompv{
\xymatrixcolsep{5pc}\xymatrix{(H_K, \ud 0) 
\ar@{..>}@/^2pc/[rr]_{\ud\phi^\cC \bullet \ud\varphi^H}
\ar@{..>}@/_2pc/[r]^{\ud \varphi^H}   \ar[r]^{\varphi_1^H} &
(\cA, \ud \ell^{K})
  \ar@{..>}@/_2pc/[r]^{\ud \phi^{\cC}}  \ar[r]^{\phi_1^{\cC}=\cC} & (\Bbbk,\ud 0),}
}
we have $C= \cC \circ \varphi_1^H.$
\end{proposition}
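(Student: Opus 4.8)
The plan is to read the statement off from two results already in hand: the descendant functor of Theorem \ref{Desc} and the smooth-formality of descendant $L_\infty$-algebras of Proposition \ref{zero}. The only genuine work is a short bookkeeping check on linear parts, so I do not expect a serious obstacle here — the substance of the proposition lives in the two cited results.

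First I would apply Proposition \ref{zero} to the object $(\cA,\cdot,K)$ of $\frC_\Bbbk$. It produces an $L_\infty$-quasi-isomorphism $\ud\varphi^H=\varphi_1^H,\varphi_2^H,\cdots$ from $(H_K,\ud 0)$ into the descendant $L_\infty$-algebra $\Des(\cA,\cdot,K)=(\cA,\ud\ell^{K})$, whose linear part $\varphi_1^H\colon(H_K,0)\to(\cA,K)$ is a cochain quasi-isomorphism inducing the identity on $H_K=H_K(\cA)$. (From the proof of Proposition \ref{zero}, $\varphi_1^H=f$ may be taken, up to cochain homotopy, to be any $\Bbbk$-linear section of the canonical surjection $\ker K\surjto H_K$.) There are many such $\ud\varphi^H$; I fix one, and this is the $L_\infty$-quasi-isomorphism asserted to exist in the statement.

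Next I would apply the descendant functor of Theorem \ref{Desc} to the morphism $\cC\colon(\cA,\cdot,K)\to(\Bbbk,\cdot,0)$ of $\frC_\Bbbk$. Since $\Des(\Bbbk,\cdot,0)=(\Bbbk,\ud 0)$ (the differential, hence every higher descendant bracket, vanishes), this yields an $L_\infty$-morphism $\ud\phi^{\cC}=\phi_1^{\cC},\phi_2^{\cC},\cdots$ from $(\cA,\ud\ell^{K})$ into $(\Bbbk,\ud 0)$ with $\phi_1^{\cC}=\cC$. Composing $L_\infty$-morphisms (Definition \ref{compl}) gives an $L_\infty$-morphism $\ud\phi^{\cC}\bullet\ud\varphi^H\colon(H_K,\ud 0)\to(\Bbbk,\ud 0)$, which is exactly the dotted composite in \Liecompv. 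This already exhibits an $L_\infty$-enhancement; it remains only to identify its linear part with $C$.

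Finally I would verify the first-component identity. Unwinding Definition \ref{compl} at $n=1$ — the partition set $P(1)$ consists of a single element, with associated sign $+1$ — gives $\bigl(\ud\phi^{\cC}\bullet\ud\varphi^H\bigr)_1=\phi_1^{\cC}\circ\varphi_1^H=\cC\circ\varphi_1^H$. To see this equals $C$, recall that $C\colon H_K\to\Bbbk$ is by construction the map induced by the cochain map $\cC$ on cohomology: because $\cC\circ K=0$, the functional $\cC$ vanishes on $\im K$, so it descends to $H_K(\cA)$ with $C([z])=\cC(z)$ for $z\in\ker K$. For $a\in H_K$ one has $\varphi_1^H(a)\in\ker K$ with class $[\varphi_1^H(a)]=a$ under the identification $H_K(\cA)\cong H_K$ (this is what ``$\varphi_1^H$ induces the identity on $H_K$'' means), hence $\cC\bigl(\varphi_1^H(a)\bigr)=C\bigl([\varphi_1^H(a)]\bigr)=C(a)$. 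Therefore $C=\cC\circ\varphi_1^H=\bigl(\ud\phi^{\cC}\bullet\ud\varphi^H\bigr)_1$, which is the claimed equality. The two points deserving care are precisely (i) that the linear part of an $L_\infty$-composite is the ordinary composite of linear parts, immediate from the partition-indexed formula for $\bullet$, and (ii) that the map named $C$ in the statement is, by definition, the factorization of $\cC$ through $\ker K\surjto H_K$, so that $\cC\circ\varphi_1^H$ and $C$ literally coincide rather than merely agreeing up to some identification.
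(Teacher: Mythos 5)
Your proposal follows exactly the paper's route: invoke Proposition \ref{zero} for $\ud\varphi^H$, apply the descendant functor (Theorem \ref{Desc}) to $\cC$ to get $\ud\phi^{\cC}$ with $\phi_1^{\cC}=\cC$, form the composite via Definition \ref{compl}, and read off the linear part. The paper's own proof is terser — it simply asserts the conclusion "since $\varphi_1^H$ is a cochain quasi-isomorphism" — so your explicit unwinding of $(\ud\phi^{\cC}\bullet\ud\varphi^H)_1=\phi_1^{\cC}\circ\varphi_1^H$ and of $\cC\circ\varphi_1^H=C$ is a correct filling-in of the same argument, not a different one.
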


\begin{proof}
Theorem \ref{Desc} gives us the descendant $L_{\infty}$-morphism $\underline \phi^{\cC}$ of the cochain map $\cC$.  
Proposition \ref{zero} supplies us with
an $L_{\infty}$-quasi-morphism $\ud \varphi^H$ (which is \textit{not a descendant $L_\infty$-morphism}).
Let $\ud\phi^\cC \bullet \ud\varphi^H$ be the $L_{\infty}$-morphism from $(H_K,\underline 0)$ to $(\Bbbk,\underline 0)$ which is defined to be the composition of $L_{\infty}$ morphisms $\underline \varphi^H$ and $\underline \phi^{\cC}$.  Then the desired result $C=\cC \circ \varphi_1^H$ follows, since the first piece $\varphi_1^H$ of $\ud \varphi^H$ is a cochain quasi-isomorphism and it induces the identity on $H_K$.

\end{proof}

\bed\label{doit}
Let $\cC:(\cA,  \cdot, K) \to (\Bbbk, \cdot, 0)$ be a morphism in the category $\frC_\Bbbk$.   

(a) Let $\ma \in \hbox{\it Ob}(\art_{\Bbbk})$. A solution $\G  \in (\mm_\ma \otimes  \cA)^0$ of the Maurer-Cartan equation
$\sum_{n=1}^{\infty} \frac{1}{n!}\ell_n(\G, \cdots,\G)=0$, i.e. $K(e^\G-1)=0$, is called a versal solution, if
the corresponding $L_\infty$-morphism $\ud{\varphi[\G]}$ is an $L_\infty$-quasi-isomorphism. 

(b) Let  $\G \in (\mm_{\widehat{SH}} \otimes \cA)^0$ be a solution of $K(e^\G-1)=0$
corresponding to an $L_\infty$-morphism $\ud \varphi$ (by Proposition \ref{defmoduli}):
\eqn\GGG{
\G=\G(\ud t)_{\ud \varphi} =  \sum_{n=1}^\infty \frac{1}{n!}\sum_{\alpha_1, \cdots, \alpha_n} t^{\alpha_n} \cdots t^{\alpha_1}\otimes \varphi_n(e_{\alpha_1}, \cdots, e_{\alpha_n}) \in (\mm_{\widehat{SH}} \otimes \cA)^0.
}
We define the generating power series attached to $\cC$ and $\G$, as the following
power series in $\ud t=\{t^\a : \a \in I\}$ of  $\mm_{\widehat{SH}}$;
\be
\cC(e^\G-1) =\cC(e^{\G(\ud t)_{\ud \varphi}}-1) \in \widehat{SH}^0.
\ee
\eed

\bel \label{invhom}
Let $\cC:(\cA, \cdot, K) \to (\Bbbk, \cdot, 0)$ be a morphism in the category $\frC_\Bbbk$. If we define $\Omega_{\alpha_1\cdots\alpha_n} \in \Bbbk$ 
by the following equality
\eqn\mgf{
\cC(e^{\G(\ud t)_{\ud \varphi}}-1) = e^{\Phi^\cC(\G(\ud t)_{\ud \varphi})} -1= \sum_{n=1}^\infty \frac{1}{n!}\sum_{\alpha_1, \cdots, \alpha_n}t^{\alpha_n} \cdots t^{\alpha_1}\otimes \Omega_{\alpha_1\cdots\alpha_n} \in  \widehat{SH}^0,
}
then we have
\eqn\hper{
\eqalign{
\Phi^\cC(\G(\ud t)_{\ud \varphi} )=&\sum_{n=1}^{\infty}\frac{1}{n!} \sum_{\alpha_1, \cdots, \alpha_n}  t^{\alpha_n} \cdots t^{\alpha_1}\otimes ( \ud \phi^\cC \bullet \ud \varphi)_n(e_{\alpha_1}, \cdots, e_{\alpha_n}) \in \mm_{\widehat{SH}}^0,
\cr 
\Omega_{\alpha_1\cdots\alpha_n} = &\sum_{\pi \in P(n)} \epsilon(\pi) (\phi^\cC \bullet \varphi)(e_{B_1}) \cdots (\phi^\cC \bullet \varphi)(e_{B_{|\pi|}}) \in \Bbbk, 
}
}
where $e_B=e_{j_1}\otimes \cdots \otimes e_{j_r}$ for $B=\{j_1, \cdots, j_r\}$. 
\eel

\begin{proof}
We leave this combinatorial lemma as an exercise; it follows from Definition \ref{compl}
of the composition of $L_\infty$-morphisms. 
\end{proof}

\bed \label{gpseries}
For a given $L_\infty$-morphism $\ud \kappa=\kappa_1, \kappa_2, \cdots,$ from an $L_\infty$-algebra $(V, \ud 0)$ to $(\Bbbk, \ud 0)$, we define the following power series in $\ud t=\{t^\a : \a \in I\}$ of  $\mm_{\widehat{SV}}$;
\be
\cZ_{[\ud \kappa]}(\ud t):= \exp\big(\sum_{n=1}^{\infty}\frac{1}{n!} \sum_{\alpha_1, \cdots, \alpha_n}  t^{\alpha_n} \cdots t^{\alpha_1}\otimes  \kappa_n(e_{\alpha_1}, \cdots, e_{\alpha_n}) \big) -1   \quad \in \widehat{SV}^0,
\ee
where $\{e_{\alpha} : \a \in I\}$ is a basis of a $\Bbbk$-vector space $V$ and $\{t^\a : \a \in I\}$ is its dual $\Bbbk$-basis.
Here we use the notation $\widehat{SV}:=\varprojlim_n \bigoplus_{k=0}^nS^k(V^*)$ with $V^* = \Hom(V, \Bbbk)$.
\eed

If we let $V=H_K$ and $\widehat{SV}= \widehat{SH}$, then \kexp\ implies that
\eqn\chazng{
\cZ_{[\ud\phi^{\cC}\bullet \ud \varphi]}(\ud t)=\cC(e^{\G(\ud t)_{\ud \varphi}}-1).
}

  The main theorem here is that the generating power series $\cC(e^{\G(\ud t)_{\ud \varphi}}-1)$ is an invariant of the 
  homotopy types of $\cC$ and $\G(\ud t)_{\ud \varphi}$. Accordingly, we show that $\cZ_{[\ud\phi^{\cC}\bullet \ud \varphi]}(\ud t)$ is an invariant of 
  the $L_\infty$-homotopy types of the two $L_\infty$-morphisms. Let $\G, \tilde \G \in (\mm_\ma \otimes  V)^0$ such that 
\be
\sum_{n=1}^{\infty} \frac{1}{n!}\ell_n(\G, \cdots,\G)=0=\sum_{n=1}^{\infty} \frac{1}{n!}\ell_n(\tilde \G, \cdots,\tilde\G).
\ee

\bet \label{homotopyperiod}
Let $\G$ be homotopy equivalent to $\tilde \G$ (see Definition \ref{hhh}).
 Let $\tilde \cC$ be a cochain map which is cochain homotopic to $\cC$. Then we have
\eqn\hmz{
\cC(e^\G -1 ) = \tilde \cC (e^{\tilde \G} -1).
}
Similarly, if $\ud \varphi$ (corresponding to $\G$) is $L_\infty$-homotopic to some $\ud{\tilde \varphi}$ (corresponding to $\tilde \G$) and $\ud \phi^\cC$ is $L_\infty$-homotopic to some $L_\infty$-morphism $\ud{\tilde \phi}$, then we have
\eqn\lmz{
\cZ_{[\ud\phi^{\cC}\bullet \ud \varphi]}(\ud t)=\cZ_{[\ud{\tilde \phi}\bullet \ud {\tilde\varphi}]}(\ud t). 
}
\eet

\begin{proof}

We first prove \lmz.
If $\ud \varphi$ (corresponding to $\G$) is $L_\infty$-homotopic to some $\ud{\tilde \varphi}$ (corresponding to $\tilde \G$) and $\ud \phi^\cC$ is $L_\infty$-homotopic to some $L_\infty$-morphism $\ud{\tilde \phi}$,
then Lemma \ref{hoco} implies that 
$\ud{ \phi}^{\cC}\bullet \ud \varphi$ is $L_\infty$-homotopic to $\ud{\tilde \phi}\bullet \ud {\tilde \varphi}$.
Because both $\ud{ \phi}^{\cC}\bullet \ud \varphi$ and $\ud{\tilde \phi}\bullet \ud {\tilde \varphi}$ are defined from $(H_K,\ud 0)$ into $(\Bbbk,\ud 0)$, i.e. both $H_K$ and $\Bbbk$ have zero $L_\infty$-algebra structures, they should be the same by Definition \ref{shlh}:
\be
\ud{ \phi}^{\cC}\bullet \ud \varphi=\ud{\tilde \phi}\bullet \ud {\tilde \varphi}.
\ee
Therefore we have the equality \lmz:
\be
\cZ_{[\ud{ \phi}^{\cC}\bullet \ud \varphi]}(\ud t)=\cZ_{[\ud{\tilde \phi}\bullet \ud {\tilde \varphi} ]}(\ud t).
\ee

We now prove \hmz\ by using the equalities \chazng\ and \lmz.
Since Proposition \ref{defmoduli}, (b) gives a correspondence between the homotopy types of $\G=\G_{\ud \varphi}$ and the $L_\infty$-homotopy types of $\ud \varphi$ and Lemma \ref{deshomotopy} says that $\ud{\phi}^{\tilde \cC}$ is 
$L_\infty$-homotopic to $\ud \phi^\cC$, we see the invariance $\cC(e^{\G}-1) = \tilde\cC (e^{\tilde \G}-1)$ as follows by using \chazng\ and \lmz:

\be
 \cC (e^{\G_{\ud \varphi}}-1)
=\cZ_{[\ud{ \phi}^{\cC}\bullet \ud \varphi]}(\ud t)
=\cZ_{[\ud{ \phi}^{\tilde \cC}\bullet \ud {\tilde\varphi}]}(\ud t)
=\tilde \cC (e^{\G_{\ud {\tilde \varphi}}}-1)
=\tilde \cC (e^{\tilde \G}-1).
\ee

We also provide an alternative proof of \hmz\ in order to illustrate a key idea behind $L_\infty$-homotopy invariance  in a better way. If $\cC:(\cA,K)\to (\Bbbk,0)$ is cochain homotopic to $\tilde \cC$, then $\tilde \cC=\cC + \cS \circ K$ where $\cS$ is a cochain homotopy. Since $\cC \circ K =0$, we have
\be
\tilde \cC(e^\G-1) = \cC (e^{\G}-1).
\ee
If $\G$ is homotopy equivalent to $\tilde \G$, Definition \ref{hhh} says that there is a polynomial solution 
$\G(\t) \in (\mm_{\widehat{SH}} \otimes \cA)^0[\t]$ with $\G(0) = \G$ and $\G(1) =\tilde \G$ to 
the following flow equation
 \be
 \prt{}{\tau}   e^{\G(\t)} = K \left(\l(\tau) \cdot e^{\G(\tau)}\right)
 \ee
 for some 1-variable polynomial $\l(\tau) \in (\mm_{\widehat{SH}} \otimes \cA)^0[\t]$. This implies that
 \be
 e^{\tilde \G} - e^{\G} = K \left( \int_{0}^{1} \l(\t) \cdot e^{\G(\t)} d\t \right),
 \ee
which in turn implies that 
$\cC(e^{\tilde \G}) = \cC (e^{\G}).$
 \end{proof}


Let $\cZ$ be the quotient $\Bbbk$-vector space of all (the degree 0) cochain maps from $(\cA, K)$
to $(\Bbbk,0)$ modulo the subspace of all the maps of the form $\cS \circ K$ where $\cS:\cA
\to \Bbbk$ varies
over $\Bbbk$-linear maps of degree -1. In other words, $\cZ$ is the space of cochain homotopy classes of maps from $(\cA,K)$ to $(\Bbbk,0)$.
\bep \label{cyclehomotopy}
Let $(\cA,K)=(\cA_\rho, K_\rho)$ be associated to a Lie algebra representation $\rho$. The $\Bbbk$-vector space $\cZ$ is isomorphic to the $\Bbbk$-dual of $H_K^{0}(\cA)$.
\eep
\begin{proof}
Since every representative $\cC$ of an element of $\cZ$ has degree 0 and $(\Bbbk,0)$ is concentrated in degree 0, all the homogeneous elements except for degree 0 elements in $\cA$ vanish under the map $\cC$. In fact, there is no $\Bbbk$-linear map $\cS:\cA \to \Bbbk$ of degree -1, since $\cA$ does not have positive degree, i.e. $\cA^{1}=\cA^{2}=\cdots=0$. Then it is clear that $\cZ$ is isomorphic to the $\Bbbk$-dual of $\cA^{0}/ K(\cA^{-1})=: H_K^0(\cA)$.
 \end{proof}

\subsection{Differential equations attached to variations of period integrals} \label{subs3.5}

The main goal of this subsection is to prove that the generating power series $\cC(e^{\G(\ud t)_{\ud \varphi^H}}-1)=
\cZ_{[\ud \phi^\cC \bullet \ud \varphi^H]}(\ud t)$ attached to $\cC$ and a Maurer-Cartan solution $\G(\ud t)_{\ud \varphi^H}$
corresponding to 
an $L_\infty$-quasi-isomorphism $\ud \varphi^H$ satisfies a system of second order partial differential equations. 


These differential equations, which are obtained by analyzing the binary product structure on $\cA$ and
the differential $K$, are governed by the underlying infinity homotopy structure on $(\cA, \cdot, K)$, namely the descendant $L_\infty$-algebra $(\cA, \ud \ell^K)$; we will show that the differential equations themselves are \textit{invariants of the $L_\infty$-homotopy type} of the solution of the Maurer-Cartan equation. See Theorem \ref{diff} for details. Moreover, these differential equations will lead to a flat connection on the tangent bundle
of the formal deformation space of $\de$; see Theorem \ref{fsc}.

 By Proposition \ref{defmoduli}, a (homotopy type of) solution $\G=\G_{\ud \varphi} \in (\mm_{\widehat{SH}} \otimes \cA)^0$ of the Maurer-Cartan equation
\eqn\MCeq{
 \sum_{n=1}^{\infty} \frac{1}{n!} \ell^K_n(\G, \cdots, \G)=0,
\label{MCeq}
}
gives us an ($L_\infty$-homotopy type of) $L_\infty$-morphism $\ud \varphi$ from $(H_K, \ud 0)$ to $(\cA, \ud \ell^K)$. 
We will make differential equations with respect to the parameters $\{t^\alpha\}$ in the complete local $\Bbbk$-algebra $\widehat{SH}$. 
Here recall that $\widehat{SH}:=\varprojlim_n \bigoplus_{k=0}^nS^k(H_K^*)$ which is isomorphic to $\Bbbk[[\ud t]]$.

\bet \label{diff}
Let $\cC:(\cA, \cdot, K) \to (\Bbbk, \cdot, 0)$ be a morphism in the category $\frC_\Bbbk$. 
Let $\G=\G(\ud t)_{\ud \varphi^H} \in (\mm_{\widehat{SH}} \otimes \cA)^0$ be a solution of the Maurer-Cartan equation \MCeq\ corresponding to an $L_\infty$-quasi-isomorphism $\ud \varphi^H$ from $(H_K,\ud 0)$ to $(\cA, \ud \ell^K)$. Assume that $H_K(\cA)$ is a finite dimensional $\Bbbk$-vector space and $H_{K_\G}(\widehat{SH} \otimes \cA)$ is a free $\widehat{SH}$-module satisfying
\eqn\fdim{
\dim_\Bbbk H_K(\cA) = \rk_{\widehat{SH}} H_{K_\G}(\widehat{SH} \otimes \cA).
}
Then there exist elements $A_{\a\b}{}^\g(\ud t)=A_{\a\b}{}^\g(\ud t)_\G \in \widehat{SH}$ depending on $\G$,  where $\ud t=\{t^\alpha \ : \ \alpha\in I\}$, such that
\eqn\hodi{
\left(\partial_{\alpha} \partial_\b  - \sum_\g A_{\a\b}{}^\g(\ud t) \partial_\g \right) \cC(e^\G-1)=0,\quad \text{for }   \a,\b \in I,
}
where $\partial_{\alpha}$ means the partial derivative with respect to $t^\a$ where $\a \in I$.
Moreover, if $\tilde\G$ and $\G$ are homotopy equivalent, then $A_{\a\b}{}^\g(\ud t)_{\tilde\G}=A_{\a\b}{}^\g(\ud t)_\G$.
\eet

\begin{proof}
Note that $\G=\G(\ud t)$ depends on $\ud t$. Then the condition $K (e^{\G(\ud t)}-1)=0$ implies that
\be
K(\partial_{\alpha} e^{\G(\ud t)}) = K \big(\partial_{\alpha} \G(\ud t) \cdot e^{\G(\ud t)}\big)=0,
\ee
which says, by the equality \desderi, that
\be
K_{\G(\ud t)}\big( \partial_{\alpha} \G(\ud t) \big) =0.
\ee
Therefore, combined with the condition \fdim, we may assume that $\{ [\partial_{\alpha} \G(\ud t)]  \ : \ \a\in I  \}$ is an $\widehat{SH}$-basis of $H_{K_\G}(\widehat{SH} \otimes \cA)$, since $\varphi^H_1$ is a cochain quasi-isomorphism.  Here $[\cdot]$ means the cohomology class. The condition $K e^{\G(\ud t)}=0$ also implies that
\be
K (\partial_{\alpha} \partial_\b e^{\G(\ud t)})=K\left(\big(\partial_{\a\b} \G(\ud t) + \partial_{\alpha}\G(\ud t) \partial_\b\G(\ud t)\big)\cdot e^{\G(\ud t)}\right) =  0,
\ee
where $\partial_{\a\b} = \frac{\partial^2}{\partial{t_{\alpha}}\partial{t_\b}}$, and the equality \desderi\ says that
\be
K_{\G(\ud t)}\big( \partial_{\a\b} \G(\ud t) + \partial_{\alpha} \G(\ud t) \partial_\b \G(\ud t) \big)\cdot e^{\G(\ud t)}=0.
\ee
Thus we should be able to write down $[\partial_{\a\b} \G(\ud t) + \partial_{\alpha} \G(\ud t) \partial_\b \G(\ud t)]$ as an $\widehat{SH}$-linear combination of $[\partial_{\alpha} \G(\ud t)]$'s, i.e. there exists a unique 3-tensor $A_{\a\b}{}^\g(\ud t) \in \widehat{SH}$ such that
\eqn\klm{
\partial_{\a\b} \G(\ud t) + \partial_{\alpha} \G(\ud t) \partial_\b \G(\ud t)
= \sum_\g A_{\a\b}{}^\g(\ud t) \partial_\g\G(\ud t) + K_{\G(\ud t)}(\Lambda_{\a\b}(\ud t))
}
for some $\Lambda_{\a\b}(\ud t) \in \widehat{SH} \otimes \cA$. Then this is equivalent to
\eqn\eqproof{
\partial_{\alpha} \partial_\b e^{\G(\ud t)} - \sum_\g A_{\a\b}{}^\g(\ud t) \partial_\g e^{\G(\ud t)} = K(\Lambda_{\a\b}(\ud t)\cdot e^{\G(\ud t)}).
}
We finish the proof by applying $\cC$ to the above equality and using the fact that $\cC$ is a cochain map, i.e. $\cC\circ K=0$. 

If $\tilde \G(\ud t)$ and $\G(\ud t)$ are homotopy equivalent, then, according to Definition \ref{hhh}, we have
\eqn\Labc{
e^{\tilde \G(\ud t)} - e^{\G(\ud t)} = K \left(\int_{0}^1 \lambda(\tau) e^{\G(\ud t) (\tau)} d\tau\right),
}
 where $\G(\ud t) (1)= \tilde \G(\ud t)$ and $\G(\ud t) (0) =\G(\ud t)$. By using \eqproof\ we can derive the following;
$$
\eqalign{
\partial_{\alpha} \partial_\b (e^{\tilde \G(\ud t)} - e^{\G(\ud t)}) + \left(\sum_\g A_{\a\b}{}^\g(\ud t)_\G \partial_\g e^{\G(\ud t)}-\sum_\g A_{\a\b}{}^\g(\ud t)_{\tilde \G} \partial_\g e^{\tilde\G(\ud t)} \right) 
\cr
= K \left(\Lambda_{\a\b}(\ud t)\cdot e^{\G(\ud t)} - \tilde \Lambda_{\a\b}(\ud t) \cdot e^{\tilde \G(\ud t)}\right).
}
$$
Therefore \Labc\ implies that
$$
 \sum_\g A_{\a\b}{}^\g(\ud t)_\G \partial_\g e^{\G(\ud t)}-\sum_\g A_{\a\b}{}^\g(\ud t)_{\tilde \G} \partial_\g e^{\tilde\G(\ud t)} \in \Im K.
$$
 After we add $\sum_\g A_{\a\b}{}^\g(\ud t)_{\tilde \G} \partial_\g e^{\G(\ud t)}$ and subtract to the above, we can apply \Labc\ to prove that
$$
 D:=
 \bigg( \sum_\g  \big(A_{\a\b}{}^\g(\ud t)_\G -A_{\a\b}{}^\g(\ud t)_{\tilde \G}\big) \partial_\g\G(\ud t)\bigg)\cdot e^{\G(\ud t)} \in \Im K.
$$
Note that $D$ has the form $K(\xi \cdot e^{\G(\ud t)})$ for some $\xi$.
Then \desderi\ implies that
$$
\sum_\g  \big(A_{\a\b}{}^\g(\ud t)_\G -A_{\a\b}{}^\g(\ud t)_{\tilde \G}\big) \partial_\g\G(\ud t) \in \Im K_{\G(\ud t)}
$$
Since $\{ [\partial_{\alpha} \G(\ud t)]  \ : \ \a\in I  \}$ is an $\widehat{SH}$-basis of $H_{K_\G}(\widehat{SH} \otimes \cA)$, 
the desired result $A_{\a\b}{}^\g(\ud t)_\G =A_{\a\b}{}^\g(\ud t)_{\tilde \G}$ follows if we take the $K_{\G(\ud t)}$-cohomology of the above.
\end{proof}

The method used in the proof can be made into an effective algorithm (see subsection \ref{subs4.0} for a toy model case and
subsection \ref{subs4.5} for the smooth projective hypersurface case) to compute $\cC(e^{\G(\ud t)_{\ud \varphi}})$. It leads to the Picard-Fuchs type differential equation for a family of hypersurfaces if we interpret the period integrals of hypersurfaces as \textit{period integrals} of quantum Jacobian Lie algebra representations attached to hypersurfaces; 
see subsection \ref{subs4.6}.



\subsection{Explicit computation of the generating power series} \label{subs3.6}

The goal of this subsection is to reduce the problem of computing the generating power series $\cC(e^{\G}-1)$ attached to $\cC$ and $\G=\G_{\ud \varphi^H}$ to the problem of computing the 3-tensor
$A_{\a\b}{}^\g(\ud t)_\G$ for $\a, \b, \g \in I$ that appeared in \hodi. We also provide the proof of Theorem \ref{fifththeorem}
Let $\cC:(\cA, \cdot, K) \to (\Bbbk, \cdot, 0)$ be a morphism in the category $\frC_\Bbbk$. Let $\G=\G_{\ud \varphi^H} \in (\mm_\ma \otimes \cA)^0$ be a solution of the Maurer-Cartan equation \MCeq\ corresponding to an $L_\infty$-quasi-isomorphism $\ud \varphi^H$ from $(H_K,\ud 0)$ to $(\cA, \ud \ell^K)$. Assume that $H_K(\cA)$ is a finite dimensional $\Bbbk$-vector space and $H_{K_\G}(\widehat{SH} \otimes \cA)$ is a free $\widehat{SH}$-module satisfying \fdim.
Let us write $\G$ as
\eqn\GG{
\G=  \sum_{n=1}^\infty \frac{1}{n!}\sum_{\alpha_1, \cdots, \alpha_n} t^{\alpha_n} \cdots t^{\alpha_1}\otimes \varphi^H_n(e_{\alpha_1}, \cdots, e_{\alpha_n}) \in (\mm_{\widehat{SH}} \otimes \cA)^0.
}

\bel\label{onetensor}
There exist $T^\g(\ud t) \in \widehat{SH} \simeq \Bbbk[[\ud t]]$ and $\Lambda \in \widehat{SH}\otimes \cA$ such that
\eqn \TTT{
e^{\G} = 1 + \sum_\g T^\g(\ud t) \cdot \varphi^H_1(e_\g) + K(\Lambda) \in (\widehat{SH} \otimes \cA)^0.
}
\eel

\begin{proof}
Recall that $\{e_{\alpha}: \a \in I\}$ is a $\bZ$-graded homogeneous basis of $H_K$. Since $\varphi^H_1$ is a cochain quasi-isomorphism, $\{ \varphi^H_1(e_\g) \}$ 
form a set of representatives of a basis
of $H_K$. Therefore the result follows, because $K(e^\G)=0$ and $K$ is $\widehat{SH}$-linear.
\end{proof}

\ber
Since $\cC \circ K=0$, we can express the generating power series as
\eqn\mmgg{
\cC(e^{\G(\ud t)_{\ud \varphi^H}}-1)= \sum_\g T^\g(\ud t) \cdot \cC(\varphi^H_1(e_\g)).
}
So the above lemma, combined with Theorems \ref{thirdtheorem}
and \ref{tmthm}, gives us Theorem \ref{fifththeorem}. 
Moreover, its explicit expression \onediff\ in terms of $A_{\a \b}{}^{\g}(\ud t)_\G$ in \hodi\ can be derived by a direct computation.
\eer

Note that $\cC(\varphi^H_1(e_\g))$ does \textbf{not} depend on $\ud t$ and $T^\g(\ud t)$ does \textbf{not} depend on $\cC$.
Thus we only need to know the values $\cC(\varphi^H_1(e_\g))$ on the  basis $\{e_\g: \a \in I\}$ of 
the cohomology group $H_K$ and the formal power series   $T^\g(\ud t)=T^\g(\ud t)_{\G_{\ud \varphi^H} }$,
which depends only on 
the homotopy type of $\ud \varphi^H$, in order to compute $\cC(e^\G-1)$.
Let us explain how to compute $T^\g(\ud t)$. We reduce its computation to the computation of $A_{\a\b}{}^\g(\ud t)_\G$.
Let us write $T^\g(\ud t)$ as a power series in $\ud t$ as follows:
\be
T^\g(\ud t) = t^\g + \sum_{n=2}^\infty \frac{1}{n!} \sum_{\a_1,\cdots, \a_n}t^{\a_n} \cdots t^{\a_1} \cdot M_{\a_1 \a_2 \cdots \a_n}{}^\g, \quad 
\text{for } \g \in I,
\ee
for a uniquely determined $M_{\a_1 \a_2 \cdots \a_n}^\g \in \Bbbk$. We define a $\Bbbk$-multilinear operation $M_n: S^n(H_K) \to H_K$ as follows
\be
M_n(e_{\a_1}, \cdots, e_{\a_n}) = \sum_{\g} M_{\a_1 \a_2 \cdots \a_n}{}^\g  e_{\g}, \quad n \geq 2.
\ee
We also write that the 3-tensor $A_{\a\b}{}^\g(\ud t)_\G$ in \hodi\ as follows:
\be
A_{\a\b}{}^\g(\ud t)_\G=m_{\a\b}{}^\g 
+ \sum_{n=1}^{\infty} \frac{1}{n!} \sum_{\a_1,\cdots, \a_n}t^{\a_n} \cdots t^{\a_1} m_{\a_1 \cdots \a_n\a\b}{}^\g, \quad 
\text{for } \a, \b, \g \in I,
\ee
for a uniquely determined $m_{\a_1 \cdots \a_n\a\b}{}^\g \in \Bbbk$. We define a $\Bbbk$-multilinear operation $m_n: T^n(H_K) \to H_K$ as follows
\be
m_n(e_{\a_1}, \cdots, e_{\a_n}) = \sum_{\g} m_{\a_1 \a_2 \cdots \a_n}{}^\g  e_{\g}, \quad n \geq 2.
\ee

\bep \label{Mmm}
Let $M_1$ be the identity map on $H_K$. Then we have the following relationship between $m_n$ and $M_n$:
$$
\eqalign{
M_n(v_1,\cdots, v_n) =
&
\sum_{\substack{\pi \in P(n)\\|B_{|\pi|}|=n-|\pi|+1\\ n-1\sim_\pi n}} 
\ep(\pi) M_{|\pi|}\left( v_{B_1}, \cdots, v_{B_{|\pi|-1}},
m\big(v_{B_{|\pi|}}\big)\right).
}
$$
for any homogeneous element $v_1, \cdots, v_n \in H_K$ and $n\geq 2$. The following is also true:
\begin{itemize}
\item
$M_n$ is a $\Bbbk$-linear map from $S^n(H_K)$ into $H_K$ of degree zero for all $n\geq 1$

\item
$M_{n+1}\big(v_1,\cdots, v_n, 1_{H}\big)=M_n\big(v_1,\cdots, v_n\big)$ for all $n\geq 1$,
where $1_{H}$ is a distinguished element corresponding to $1_\cA$.
\end{itemize}
\eep

\begin{proof}
The equality \hodi\ and \mmgg\ imply that
$$
\partial_{\alpha} \partial_\b \left( \sum_\g T^\g(\ud t) \cdot \cC(\varphi^H_1(e_\g)) \right)=
 \sum_\r A_{\a\b}{}^\r(\ud t)_\G \partial_\r \left( \sum_\g T^\g(\ud t) \cdot \cC(\varphi^H_1(e_\g)) \right),
$$
for $\a, \b \in I$.
The combinatorial formula between $m_n$ and $M_n$ follows essentially from comparing the $\ud t$-coefficients of both sides.
We leave readers to verify this formula and the remaining properties of $M_n$ as exercises. 
\end{proof}

This explicit combinatorial formula says that the data of $m_n$, $n \geq 2$ completely determines
$M_n, n \geq 2$, and vice versa; knowing $A_{\a\b}{}^\g(\ud t)_\G$ for  $\a, \b, \g \in I$, is equivalent to knowing $T^\g(\ud t)$ for $\g \in I$. 
By Proposition \ref{Mmm}, it is enough to give an algorithm to compute the 3-tensor $A_{\a\b}{}^\g(\ud t)_\G$ and to know the values $\cC(\varphi_1^H(e_{\alpha})), \a \in I$, in order to compute the generating power series $\cC(e^\G-1)$. 
We will provide an effective algorithm for computing $A_{\a\b}{}^\g(\ud t)_\G$, when a versal Maurer-Cartan solution $\G$ is associated to $(\cA_X, \cdot, K_X)$, which is attached to a projective smooth hypersurface $X_G$; see
subsection \ref{subs4.5}.


\subsection{A flat connection on the tangent bundle of a formal deformation space}\label{subs3.7}

Here we will show that the system of differential equations \hodi\ can be reformulated to give a flat connection on the tangent bundle
of a formal deformation space.
 The proposition \ref{defmoduli}, (a) says that the deformation functor $\mathfrak{Def}_{(\cA,\ud \ell^K)}$ is pro-representable by $\widehat{SH}$. 
 Thus we can consider a formal deformation 
space $\cM$ attached to the universal deformation ring $\widehat{SH}$ of $\de$ so that $\Omega^0(\cM)=\widehat{SH}$, i.e. $\cM$ is the formal spectrum of $\widehat{SH}$. Let $\TM$ be the tangent bundle of $\cM$
and $\CTM$ be the cotangent bundle of $\cM$. Let $\G(\cM,\TM)$ be the $\Bbbk$-space of global sections of $\TM$
and $\Omega^p(\cM)$ be the $\Bbbk$-space of differential $p$-forms on $\cM$.
We list some important properties of the 3-tensor $A_{\a\b}{}^\g(\ud t)_\G$ in \hodi, which we use to prove the existence
of a flat connection on $\TM$.

\bel \label{conlem}
 The 3-tensor $A_{\a\b}{}^\g:=A_{\a\b}{}^\g(\ud t)_\G$ in Theorem \ref{diff} satisfies the following properties.
$$
\eqalign{
A_{\a\b}{}^\g- (-1)^{| e_{\alpha}| |e_{\beta}| } A_{\b\a}{}^\g=0
,\cr 
\partial_{\alpha} A_{\b\g}{}^\s-(-1)^{| e_{\alpha}| |e_{\beta}|} \partial_\b A_{\a\g}{}^\s
+ \sum_\r \left( A_{\b\g}{}^\r  A_{\a \r}{}^\s 
- (-1)^{| e_{\alpha}| |e_{\beta}|} A_{\a\g}^\r A_{\b \r}{}^\s\right)=0,
}
$$
for all $\a,\b,\g,\s \in I$, where $\partial_{\alpha}$ means the partial derivative 
with respect to $t^\a$.
\eel
\begin{proof}
The first one follows from the super-commutativity of the binary product of $\cA$. The second one can be proved by taking the derivatives of \hodi\ with respect to $\ud t$ and using the associativity of the binary product of $\cA$. We leave this as an exercise.
 \end{proof}

Note that $\TM$ is a trivial bundle 
and let us
write $\TM=\cM \times V$, where $V$ is isomorphic to $H_K^*$. The most general connection is of the form
$d+A$, where $A$ is an element of $\Omega^1(\cM) \hat\otimes \End_\Bbbk(V)$.
Here $\hat \otimes$ denotes the completed tensor product. We define a 1-form valued matrix ${A}_\G$ by
\eqn\asc{
({A}_\G)_\b{}^\g :=- \sum_{\alpha} d\!t^\a \cdot  A_{\a\b}{}^\g(\ud t)_\G, \quad \b, \g \in I,
}
where $A_{\a\b}{}^\g(\ud t)_\G$ is given in \hodi. Then ${A}_\G\in\Omega^1(\cM) \hat\otimes \End_\Bbbk(V)$.

\bet \label{fsc}
Let $\G=\G_{\ud\varphi^H} \in \de(\widehat{SH})$ which corresponds to an $L_\infty$-quasi-isomorphism $\ud \varphi^H$. Then the $\Bbbk$-linear operator ${D}_{\G}:=d + {A}_\G$ defined in \asc\  
\eqn\flatsc{
{D}_\G:=d+ {A}_\G: \G(\cM, \TM) \to  \Omega^1(\cM)\otimes_{\Omega^0(\cM)} \G(\cM, \TM), 
}
is a flat connection on $\TM$. 
\eet
%

\begin{proof}
We compute the curvature of the connection ${D}_\G$ as follows; 
\be
{D}_\G^2 = d {A}_\G + {A}_\G^2.
\ee
Then a simple computation confirms that $d {A}_\G + {A}_\G^2 = 0$ is equivalent to the second equality of Lemma \ref{conlem} by using the first equality of Lemma \ref{conlem}. Thus $d {A}_\G + {A}_\G^2 = 0$ and ${D}_\G$ is flat.
\end{proof}

\bep
Let $\ud \varphi^H$ be an $L_\infty$-quasi-isomorphism from $(H_K,\ud 0)$ to $(\cA, \ud \ell^K)$. The generating power series $\cC(e^\G-1)$ attached to $\cC$ and $\G=\G_{\ud \varphi^H}$ satisfies the equation
\be
d   \big(\partial_\g \cC(e^\G-1) \big) 
=- {A}_{\G} \big(\partial_\g \cC(e^\G-1) \big), \quad \text{ i.e. } \ {D}_\G \big(\partial_\g \cC(e^\G-1) \big)=0,
\ee
where we view $\partial_\g \cC(e^\G-1)$ as a column vector indexed by $\g \in I$.
\eep

\begin{proof}
If we multiply \hodi\ by $d\! t^\a$ and sum over $\a \in I$, then we get 
\eqn\hodim{
\sum_\g\left( \sum_{\alpha} d\!t^\a \partial_{\alpha} \delta_\b{}^{\g}   -  \sum_{\alpha} d\!t^\a\cdot  A_{\a\b}{}^\g(\ud t) \right)
 \partial_\g\cC(e^\G-1)=0,\quad \text{for }   \b \in I,
}
where $\delta_\b{}^\g$ is the Kronecker delta symbol. Then the desired
equality follows from the definition of ${A}_\G$ in \asc, by noting that $d = \sum_{\alpha} d\!t^\a \partial_{\alpha}$.
\end{proof}

\newsec{Period integrals of smooth projective hypersurfaces} \label{section4}

In \cite{Gr69}, P. Griffiths extensively studied the period integrals of smooth projective hypersurfaces. 
We use his theory to give a non-trivial example of a period integral of a certain Lie algebra representation and reveal its hidden structures, namely its correlations and variations, by applying the general theory we developed so far.  
We start with a toy example in order to illustrate our applications of the general theory more transparently.

\subsection{Toy model}\label{subs4.0}

We go back to the example \ref{exa11}. We have a (quantum Jacobian) Lie algebra representation $\rho_S$ attached to 
a polynomial $S(x) \in \R[x]$ of degree $d+1$.
The associated cochain complex $(\cA_{\rho_S}, \cdot, K_{\rho_S})=(\cA_S, \cdot, K_S) \in \hbox{\it Ob}(\frC_\Bbbk)$ is given by $\cA_S= \R[x][\eta]=\cA_S^{-1} \oplus \cA_S^0$ and $K_S= \left( \prt{}{x} + S'(x)\right)\prt{}{\eta}$. The map $C$ defined by $C(f)= \int_{-\infty}^{\infty} f(x) e^{S(x)} dx$ can be enhanced to $\cC_S:(\cA_S, \cdot, K_S)
\to (\Bbbk, \cdot, 0)$ by Proposition \ref{enhancetochain}.

\bep \label{Etoy}
The cohomology group $H^{-1}_{K_S}(\cA_S)$ vanishes and $H^0_{K_S}(\cA_S)$ is a finite dimensional $\Bbbk$-vector space.
Its dimension is the degree of the polynomial $S'(x)$. 
\eep

\begin{proof}
Even though the proof is straightforward, we decide to provide details in order to give some indication about the more complicated
multi-variable version, Lemma \ref{memcon}. Note that $K_S$ consists of the quantum part $\Delta=\prt{}{x} \prt{}{\eta}$
and the classical part $Q=S'(x) \prt{}{\eta}$. The image of $\cA^{-1}$ under the classical part $Q$ defines the
Jacobian ideal
of $\R[x]$. We first solve an ideal membership problem (just the Euclidean algorithm in the one variable case);
for a given any $f(x) \in \R[x]$, there is an effective algorithm to find
 unique polynomials $q^{(0)}(x), r^{(0)}(x) \in \R[x]$ such that
\eqn\clideal{
f(x) =  S'(x)  q^{(0)}(x)+r^{(0)}(x), \quad \deg(r^{(0)}(x)) \leq d-1.
}
Then we use the quantum part $\Delta$ to rewrite \clideal;
$$
\eqalign{
f(x) &=  -\prt{ q^{(0)}(x)}{ x} +  (S'(x) + \prt{}{x}) q^{(0)}(x) +r^{(0)}(x) 
\cr
&= -\prt{ q^{(0)}(x)}{ x} +r^{(0)}(x) +K_S( q^{(0)}(x) \cdot \eta).
}
$$
Then we again use the Euclidean algorithm to find unique $q^{(1)}(x), r^{(1)}(x) \in \R[x]$ such that
\be
 -\prt{ q^{(0)}(x)}{ x}=  S'(x)  q^{(1)}(x)+r^{(1)}(x), \quad \deg(r^{(1)}(x)) \leq d-1.
\ee
This implies that
$$
\eqalign{
f(x) &
= S'(x)  q^{(1)}(x)+r^{(1)}(x)+r^{(0)}(x) +K_S\left( q^{(0)}(x) \cdot \eta \right)
\cr
&= -\prt{ q^{(1)}(x)}{ x}+r^{(1)}(x)+r^{(0)}(x) + K_S \left( q^{(1)}(x) \cdot \eta +q^{(0)}(x) \cdot \eta   \right).
}
$$
We can continue this process, which stops in finite steps, which shows that the dimension of $H^0_{K_S}(\cA_S)$ is not
bigger than $d$. But $1, x, \cdots, x^{d-1}$ can not be in the image of $K_S$ because of degree. Thus the
result follows.
The vanishing of $H^{-1}$ is trivially derived, since any solution $u(x)$ to the differential equation $\prt{u(x)}{x} + S'(x)
\cdot u(x)=0$ can not be a polynomial.
\end{proof}

This type of interaction between quantum and classical components of $K_S$ will be the key technique to
compute effectively (a fancy way of doing integration by parts) the generating power series $\cC(e^\G-1)$ of period integrals of the quantum Jacobian Lie algebra
representation. Therefore $(\cA_S,K_S)$ is quasi-isomorphic to the finite dimensional space 
$(H^0_{K_S}(\cA_S), 0)$ with zero differential; this is a perfect example to apply all the results 
developed in section \ref{section3}. We record a general result in order to compute descendant $L_\infty$-algebras.

\bep \label{QJRD}
Let $S \in \Bbbk[q^1, \cdots, q^m]$ be a multi-variable polynomial. The descendant $L_\infty$-algebra 
of the cochain complex $(\cA, \cdot, K)_{\rho_S}=(\cA_{\rho_S},\cdot, K_{\rho_S})$ associated to
the quantum Jacobian Lie algebra representation $\rho_S$ in \QJR\ is a differential graded Lie algebra over $\Bbbk$,
i.e. $\ell_3^{K_{\rho_S}} = \ell_4^{K_{\rho_S}}=\cdots =0$. Moreover, $(\cA_{\rho_S}, \cdot, \ell_2^{K_{\rho_S}})$ is a Gerstenhaber algebra over $\Bbbk$.
\eep

\begin{proof}
The equality \dl\ implies the result, since $K_{\rho_S}$ is a differential operator on $\cA_{\rho_S}$ of order 2.
For the second claim, we have to show that $\ell_2^{K_{\rho_S}}$ is a derivation of the product:
\eqn\psn{
\ell_2^{K_{\rho_S}}(a \cdot b, c)= (-1)^{|a|} a \cdot \ell_2^{K_{\rho_S}}(b, c) +(-1)^{|b|\cdot |c|} \ell_2^{K_{\rho_S}}(a,c)\cdot b,
}
for any homogeneous elements $a,b,c \in \cA_{\rho_S}$. This follows from a direct computation, which uses
again the fact that $K_{\rho_S}$ is a differential operator of order 2.
\end{proof}


Thus, by Proposition \ref{QJRD}, the descendant $L_\infty$-algebra $(\bR[x][\eta], \ell_2)$, where 
$$
\ell_2(u,v ):=\ell_2^{K_S}(u,v)= K_S(u\cdot v)
- K_S(u)\cdot v - (-1)^{|u|} u \cdot K_S(v),
$$
is a differential graded Lie algebra (no higher homotopy structure) which is quasi-isomorphic to $(H,0)$
where $H=H_{K_S}(\cA_S)$. 
But note that the descendant $L_\infty$-morphism
$\ud \phi^{\cC}$ from $(\bR[x][\eta], \ell_2)$ to $(\bR,0)$
does have a non-trivial higher homotopy structure: $\phi^\cC_3, \phi^\cC_4, \cdots$ do not vanish.
This higher structure governs the correlation of the period integral $\int_{-\infty}^{\infty} u(x)e^{S(x)} dx$.
Let $\{ e_0, \cdots, e_{d-1} \}$ be an $\bR$-basis of $H$.
We define an $L_\infty$-morphism $\ud f=f_1, f_2, \cdots,$ from
$(H,0)$ to $(\bR[x][\eta], \ell_2)$ by
$$
f_1(e_i) = x^i,\ \text{for} \ i=0, 1, \cdots, d-1, \quad  f_2= f_3 = \cdots=0,
$$
which is clearly an $L_\infty$-quasi-isomorphism.
The version of Theorem \ref{thirdtheorem} for the toy model (which follows from Proposition \ref{comLi}
and Theorem \ref{homotopyperiod}) can be summarized as the following commutative diagram:
\eqn\toycomp{
\xymatrixcolsep{5pc}\xymatrix{(H, \ud 0) 
\ar@{->}@/^2pc/[rr]^{C= \phi^{\cC}_1 \circ f_1}
\ar@{..>}@/_4pc/[rr]_{\ud \kappa=\ud \phi^{\cC} \bullet \ud f}
\ar@{..>}@/_2pc/[r]^{\ud f}   \ar[r]^{f_1} &
(\bR[x][\eta], \ell_2)
  \ar@{..>}@/_2pc/[r]^{\ud \phi^{\cC}}  \ar[r]^{\phi_1^{\cC}} & (\bR,\ud 0)}
  ,
}
Then $\S(\ud t)_{\ud f} := \sum_{n=1}^\infty \frac{1}{n!}\sum_{\alpha_1, \cdots, \alpha_n} t^{\alpha_n} \cdots t^{\alpha_1}\otimes f_n(e_{\alpha_1}, \cdots, e_{\alpha_n})=\sum_{i=0}^{d-1} t_i \cdot x^i \in \bR[x] [\ud t]$, where 
$\{ t^\alpha \} = \{t_0, \cdots, t_{d-1}\}$
is an $\bR$-dual basis to $\{ e_0, \cdots, e_{d-1} \}$. Hence the generating power series for $\cC$ and $\ud f$ is 
$$
\cZ_{[\ud \phi^\cC \bullet \ud f]}(\ud t)=
\cC(e^{\S(\ud t)_{\ud f}} -1) =\exp \left(\sum_{n=1}^{\infty} \frac{1}{n!} 
\phi^{\cC}_n \left(\sum_{i=0}^{d-1} t_i \cdot x^i,  \cdots,  \sum_{i=0}^{d-1} t_i \cdot x^i   \right )\right) -1,
$$
 which is an $L_\infty$-homotopy invariant (Theorem \ref{homotopyperiod}). 
Lemma \ref{onetensor} says that there 
exists a power series $T^{[i]} (\ud t)=T^{[i]} (\ud t)_{\ud f} \in \bR[[\ud t]]$ for each $i=0, 1, \cdots, d-1$ such that 
\be
\cZ_{[\ud \phi^\cC \bullet \ud f]}(\ud t) = \sum_{i=0}^{d-1} T^{[i]} (\ud t) \cdot 
\left( \int_{-\infty}^{\infty} x^i e^{S(x)} dx \right).
\ee
Note that the 1-tensor $T^{[i]}(\ud t)$ has all the information of the integrals $\int_{-\infty}^{\infty} x^m e^{S(x)} dx$ for $m \geq d$, and 
is completely determined by the cochain complex $\big(\bR[x][\eta], $ $(\prt{}{x} + S'(x)) \prt{}{\eta}\big)$
with super-commutative multiplication. The Euclidean algorithm enhanced with quantum component in
Proposition \ref{QJRD}, which we generalize to the Griffiths period integral (Lemma \ref{memcon}), can be used to compute $T^{[i]}(\ud t)$ effectively and consequently compute
all the moments $\int_{-\infty}^{\infty} x^m e^{S(x)} dx,  \forall m \geq d$ from the finite data 
$$
\int_{-\infty}^{\infty}e^{S(x)} dx,
\int_{-\infty}^{\infty} x \cdot e^{S(x)} dx, \cdots, \int_{-\infty}^{\infty} x^{d-1} e^{S(x)} dx.
$$
Our general
theory says that this determination mechanism is governed by $L_\infty$-homotopy theory, more precisely, the interplay
between the 1-tensor $T^\g(\ud t)$ and the 3-tensor $A_{\a,\b}{}^\g(\ud t)_\S$ (Proposition \ref{Mmm}).

%

\subsection{The Lie algebra representation $\rho_X$ associated to a smooth projective hypersurface $X_G$} \label{subs4.1}

Let $n$ be a positive integer. We use $\underline x = [x_0, x_1, \cdots, x_n]$ as a projective coordinate of the projective
$n$-space $\BP^n$.
Let $G(\underline{x}) \in \bC[\underline x]$ be the defining homogeneous polynomial equation of degree $d \geq 1$ for a smooth projective hypersurface, denoted $X=X_G$,
of $\BP^n$. 
Let $\frg$ be an abelian Lie algebra of dimension $n+2$. Let $\alpha_{-1}, \alpha_0, \cdots, \alpha_{n}$ be a $\bC$-basis of $\frg$. Let 
\be
A:= \bC[y, x_0, \cdots, x_n]=\bC[y, \underline x]
\ee
be a commutative polynomial $\bC$-algebra generated by $y, x_0, \cdots, x_n$.
We also introduce variables $y_{-1}=y, y_0=x_0, y_1=x_1, \cdots, y_n=x_n$ for notational convenience.
For a given $G(\underline x)$ of degree $d$, we associate a Lie algebra representation $\rho_{X}=\rho_{X_G}$ on $A$ of 
$\frg$ as follows:
\eqn\quantumrep{
\rho_i:=\rho_X(\alpha_i) := \pa{y_i}+\prt{ S(y, \underline x) }  {y_i}, \text { for $i=-1, 0, \cdots, n$},
}
where $S(y, \underline x) = y \cdot G(\underline x)$. 
In other words, this is the quantum Jacobian Lie algebra representation $\rho_{S(y,\ud x)}$ 
associated to $S(y, \ud x)=y\cdot G(\ud x)$ in \QJR.
Of course, we extend this $\bC$-linearly to
get a map $\rho_X: \frg\to\End_{\bC}(A)$. This is clearly a Lie algebra representation of $\frg$. We will show that Griffiths' period integrals of the projective hypersurface $X_G$ are, in fact, period integrals of $\rho_X$ in the sense of Definition $\ref{Liedef}$ .

\subsection{Period integrals attached to $\rho_X$} \label{subs4.2}

We briefly review Griffiths' theory and find a nonzero period integral attached to $\rho_X$ . 
\bep \label{prop5}
Let $\bC$ be a non negative integer. Every rational differential $n$-form $\omega$ on $\BP^n$ with a pole order of $\leq k$ along $X_G$ (regular outside $X_G$ with a pole order of $\leq k$) can be written as
\be
\omega = \frac{F(\underline x)}{G(\underline x)^k} \Omega_n,
\ee
where $\Omega_n = \sum_{i=0}^n (-1)^i x_i (dx_0\wedge \cdots \wedge \hat {d x_i} \wedge \cdots \wedge dx_n)$ and $F$ is a homogeneous polynomial such that $\deg F + n+1= k\deg G=kd $.
\eep
Griffiths defined a surjective $\bC$-linear map, called the tubular neighborhood map $\tau$, (3.4) in \cite{Gr69}
\be
\tau: H_{n-1}(X_G, \bZ) \to H_n(\BP^n - X_G, \bZ),
\ee
where $H_i$'s are singular homology groups of the topological spaces $X_G(\bC)$ and $\BP^n(\bC)-X_G(\bC)$. It is known that this map is an isomorphism if $n$ is even. He also studied the following $\bC$-linear map, called the \textit{residue map} 
\be
Res: \cH(X_G)&\to& H^{n-1}(X_G, \bC),\\
\omega &\mapsto & \big( \g \mapsto \frac{1}{2 \pi i}  \int_{\tau(\g)} \omega \big)
\ee
where $\cH(X_G)$ is the rational de Rham cohomology group defined as the quotient of the group of rational $n$-forms on $\BP^n$ regular outside $X_G$ by the group of the forms $d \psi$ where $\psi$ is a rational $n-1$ form regular outside $X_G$. It turns out that there is an increasing filtration of $\cH(X_G)$ (see (6.1) of \cite{Gr69}):
\be
 \cH_1(X_G) \subset  \cdots \subset \cH_{n-1}(X_G) \subset \cH_n(X_G) = \cH_{n+1}(X_G)= \cdots
\simeq \cH(X_G),
\ee
where $\cH_k(X_G)$ is the cohomology group defined as the quotient of the group of rational $n$-forms on $\BP^n$
with a pole of order $\leq k$ along $X_G$ by the group of exact rational $n$-forms on $\BP^n$ with a pole
of order $\leq k$ along $X_G$. The isomorphism $\cH_{n}(X_G)
\simeq \cH(X_G)$ follows from Theorem 4.2, \cite{Gr69} and the fact that the natural 
map $\cH_{k}(X_G) \to \cH_{k+1}(X_G)$ is injective follows from Theorem 4.3, \cite{Gr69}.
Moreover, Griffiths proved that $Res$ sends this filtration given by pole orders to the Hodge filtration of $H^{n-1}(X_G, \bC)$. For each $k \geq 1$, if we define $\cF_{k} \subset H^{n-1}(X_G, \bC)$ to be the $\bC$-vector space consisting of 
all $(n-1, 0), (n-2, 1), \cdots, (n-1-k, k)$-forms. Then
\eqn\inhodge{
\cF_{0} \subset \cF_{1} \subset \cdots \subset \cF_{n-2} \subset \cF_{n-1} = H^{n-1}(X_G, \bC).
}
Theorem 8.3, \cite{Gr69}, says that $\cH_n(X_G)$ goes into the primitive part
of $\cF_{k-1}$ under $Res$ and $Res$ is a $\bC$-vector space isomorphism from $\cH(X_G)$ to
the primitive part $H^{n-1}_{\pr} (X_G, \bC)$ of $H^{n-1}(X_G, \bC)$. 

Now we construct a $\bC$-linear map $C_\g: A \to \bC$ for each $\g \in H_{n-1}(X_G, \bZ)$, which is a period integral attached to $\rho_X$.
For $y^{k-1} F(\underline x)$ where $F(\underline x)$ is a homogeneous polynomial of degree
$dk-(n+1)$ and $k \geq 1$, define
\be
C_\g (y^{k-1} F(\underline x)) &=& -\frac{1}{2 \pi i} \int_{\tau(\g)}\bigg( \int_{0}^{\infty} y^{k-1} F(\underline x) \cdot e^{y G(\underline x)}  dy\bigg) \Omega_n\\
&=&\frac{(-1)^{k-1}(k-1)!}{2 \pi i} \int_{\tau(\g)}\frac{F(\underline x)}{G(\underline x)^k} \Omega_n.
\ee
In the second equality, we used the Laplace transform:
\be
 \int_{0}^{\infty} y^{k-1} e^{y T}dy = (-1)^k \frac{(k-1)!}{T^k}.
\ee
Note that $\frac{F(\underline x)}{G(\underline x)^k} \Omega_n$ is a representative
of an element of $\cH_k(X_G)$ and $\int_{\tau(\g)}\frac{F(\underline x)}{G(\underline x)^k} \Omega_n$ is well-defined.
If $x \in A$ is of the form $y^{k-1}F(\ud x)$, but with $F(\ud x)$ 
homogeneous of degree not equal to $dk-(n+1)$, we simply define $C_\g(x)=0$. Because the elements $y^{k-1} F(\underline x)$, where $k \geq 1$ and $F(\underline x)$ varies over any homogeneous polynomials in $\bC[\underline x]$, constitute a $\bC$-basis of $A$, the above procedure, extended $\bC$-linearly, gives a map $C_\g: A \to \bC$. Then we claim that this is a period integral attached to the Lie algebra representation $\rho_X$.

\bep \label{main}
Let $\g \in H_{n-1}(X_G, \bZ)$. The $\bC$-linear map $C_\g$ is a period integral attached to $\rho_X$, i.e.
$C_\g \left(\rho_i (f)\right) =0$ for every $f \in A$. 
\eep
\begin{proof}
Recall that
\be
\rho_{-1}&=& G(\underline x)  + \pa{y},\\
\rho_{i} &=&y \pa{x_i} G(\underline x) + \pa{x_i}, \quad i=0, \cdots, n.
\ee
It is enough to check the statement when $\rho_i(x)$ is a $\bC$-linear combination of the forms $y^{k-1} F(\underline x)$, where $k \geq 1$ and $F(\underline x)$ varies over any homogeneous polynomials in $\bC[\underline x]$ such that $\deg F + n+1 = k d$, since $C_\g$ is already zero for other forms of polynomials.
Let $f( \underline x) \in \bC[\underline x]$ be a homogeneous polynomial of degree $kd -n$. Then for each $k \geq 1$ we have
\be
\rho_i (y^{k-1} f(\underline x)) = \prt{G(\underline x)}{x_i} \cdot y^{k} f(\underline x) +y^{k-1}\prt{ f(\underline x)}{x_i}.
\ee
We compute
$$
\eqalign{
2 \pi i \cdot C_{\g} \biggl(\prt{G(\underline x)}{x_i} &\cdot y^{k} f(\underline x) +y^{k-1}\prt{ f(\underline x)}{x_i}\biggr)
\cr
=& (-1)^{k}k!\int_{\tau(\g)}\frac{\prt{G(\underline x)}{x_i} \cdot f(\underline x)}{G(\underline x)^{k+1}} \Omega_n +(-1)^{k-1}(k-1)!\int_{\tau(\g)}\frac{\prt{ f(\underline x)}{x_i}}{G(\underline x)^{k}} \Omega_n
\cr
=&  (-1)^k (k-1)!\int_{\tau(\g)}  \frac{k\cdot f(\underline x) \cdot\prt{G(\underline x)}{x_i} 
-G(\underline x)\cdot \prt{f(\underline x)}{x_i}} {G^{k+1}}\Omega_n
\cr
=& \int_{\tau(\g)} d \left( \frac{(-1)^k (k-1)!}{G(\underline x)^k} \sum_{0 \leq h < j \leq n} 
\left(x_h A_j(\underline x) - x_j A_h(\underline x)\right) (\cdots \hat{d x_h} \cdots \hat{d x_j} \cdots)   \right) ,
}
$$
where $A_i(\underline x) = f(\underline x)$ and $A_j (\underline x) =0$ for $j \neq i$. The last equality is a simple differential calculation, which can be found in (4.4) and (4.5) of \cite{Gr69}.
Therefore this expression has the form $\int_{\tau(\g)} d \omega$ and this is zero, which is what we want.
Now we look at  
\be
\rho_{-1}(y^{k-1} f(\underline x))=G(\underline x)
y^{k-1} f(\underline x)+ (k-1)y^{k-2} f(\underline x).
\ee
Then we can similarly check (this is much easier and follows from the factor $(-1)^{k-1}(k-1)!$ and sign) that
\be
C_\g \big(G(\underline x)y^{k-1} f(\underline x)+ (k-1)y^{k-2} f(\underline x)\big) =0,
\ee
for all $k \geq 1$ and any homogeneous polynomial in $\bC[\underline x]$.
 \end{proof}

\subsection{The commutative differential graded algebra attached to a smooth hypersurface} \label{subs4.3}

We now have two data: the Lie algebra representation $\rho_X$ attached to 
the smooth projective hypersurface $X_G$ and a period integral $C_\g: A \to \bC$ attached to $\rho_X$ for $\g \in H_{n-1}(X_G,\bZ)$. In subsection \ref{subs2.2}, we constructed  a cochain complex $(\cA_{\rho_X}, \cdot, K_{\rho_X})$ with super-commutative product $\cdot$ 
whose degree 0 part is $A$ (in fact, an object in the category $\frC_\bC$) and a cochain map 
$(\cA_{\rho_X}, \cdot, K_{\rho_X}) \to (\bC,\cdot, 0)$. 
We introduce another Lie algebra representation $\pi_X$ of the abelian Lie algebra $\frg$ of dimension $n+2$ related to $X_G$.
Let 
\be
\pi_i:=\pi_X (\alpha_i):= \prt{S(y, \underline x)}{y_i}, \quad i=-1, 0, \cdots, n.
\ee
This defines a representation $\pi_i: \frg \to \End_\bC(A)$. The same procedure gives a cochain complex
$(\cA_{\pi_X}, K_{\pi_X})$. From now on, we denote $K_{\pi_X}$ by $Q_X$. Note that the $\bC$-algebra $\cA$ appearing in the cochain complexes attached to $\rho_X$ and $\pi_X$ should
be the same, since we use the same Lie algebra $\frg$. But the differentials $K_X:=K_{\rho_X}$ and $Q_X:=K_{\pi_X}$ are different.
 The differentials $K$ and $Q$ are given as follows:
\be
\cA_X&:=&\cA_{\rho_X}= \bC[\ud y][\ud \eta]=\bC[y_{-1}, y_0, \cdots, y_n][\eta_{-1},\eta_0, \cdots, \eta_n] \\
K_X&:=&K_{\rho_X}=\sum_{i=-1}^n \left(\prt{S(y, \underline x)}{y_i} + \pa{y_i}\right) \pa{\eta_i}:\cA\to \cA,\\
Q_X&:=&K_{\pi_X}=\sum_{i=-1}^n \prt{S(y, \underline x)}{y_i} \pa{\eta_i}: \cA \to \cA.
\ee

Since $\prt{S(y, \underline x)}{y_i} \pa{\eta_i}$ is a differential operator of order 1, the differential $Q_X$ is a derivation of the product of $\cA_X$. Thus $(\cA_X, \cdot, Q_X)$ is a CDGA (commutative
differential graded algebra). But $K$ is \textit{not a derivation of the product}, as we have already pointed out: the differential operator $\pa{y_i} \pa{\eta_i} $ has order 2.
We also introduce the $\bC$-linear map
\be
\Delta:= K_X-Q_X=\sum_{i=-1}^n  \pa{y_i} \pa{\eta_i}:\cA \to \cA.
\ee
Note that $\Delta$ is a also a differential of degree 1, i.e. $\Delta^2=0$. Therefore we have
\be
\Delta  Q_X +Q_X  \Delta =0.
\ee
It follows from this and Proposition \ref{QJRD} that $(\cA_X, \cdot, K_X, \ell_2^{K_X})$ satisfies all the axioms in Definition \ref{bvd}, and
thus we get the following theorem:
\bet\label{bvtheorem}
The triple $(\cA_X, \cdot, K_X, \ell_2^{K_X})$ is a BV algebra over $\bC$.
\eet

\bep \label{Qcohomology}
Let $H_{Q_X}^n(\cA_X)$ be the $n$-th cohomology group
of the CDGA $(\cA_X,\cdot, Q_X)$. Then $H^{0}_{Q_X}(\cA_X)$ has an induced $\bC$-algebra structure and we have 
\be
H^{0}_{Q_X}(\cA_X) \simeq \bC[y, \underline x]/ J_S,
\ee
where $J_S$ is the Jacobian ideal defined as the ideal of $A=\bC[y, \underline x]$ generated by $G(\underline x), y\prt{G}{x_0}$, $\cdots, y\prt{G}{x_n}$.
\eep

\begin{proof}
This is clear from the construction.
\end{proof}

 \bep \label{modstr}
 The cohomology group $H^{-1}_{Q_X}(\cA_X)$ is both an $\cA^0$-module and an $H^{0}_{Q_X}(\cA_X)$-module. 
 \eep
 
 \begin{proof}
 We consider $R \in \cA^{-1}_X$ such that $Q_X R =0$. For any $f \in \cA^0_X=A$, we have
 $Q_X(f \cdot R) =0$, since $A \subseteq \Ker Q_X$. Let $S=Q_X \sigma$ where $\sigma \in \cA_X^{-2}$.
 Then we have
 \be
 S \cdot f = Q_X(\sigma \cdot f), \quad \text{ for } f \in \cA_X^0.
 \ee
 Therefore $H^{-1}_{Q_X}(\cA_X)$ is an $A$-module.
 Note that $H^0_{Q_X}(\cA_X)$ has a $\bC$-algebra structure inherited from $\cA$, since $Q_X$ is a 
 derivation of the product of $\cA_X$. Then one can similarly check that $H^{-1}_{Q_X}(\cA_X)$ is also a $H^0_{Q_X}(\cA_X)$-module.
  \end{proof}

But notice that $H^{-1}_{K_X}(\cA_X)$ is not an $\cA^0_X$-module under the product $\cdot$ of $\cA$. 
Indeed, consider $R \in \cA^{-1}$ such that $K_X (R)=0$.
For any $f\in \cA_X^0$, the equation \elltwo\ says that
\eqn\deltainclusion{
K_X( R\cdot f) =  \ell^{K_X}_2(R, f)+K_XR \cdot f.
}
Because $\ell^{K_X}_2$ is not zero, $H^{-1}_{K_X}(\cA_X)$ does not necessarily have an $A$-module structure.
In fact, this will play an important role in understanding the complex $(\cA_X, K_X)$.

\subsection{The BV algebra attached to a smooth hypersurface} \label{subs4.31}

Here we prove parts $(a),(b)$, and $(e)$ of Theorem \ref{firsttheorem}. 
We drop $X$ from the notation for simplicity if there is no confusion; $(\cA, \cdot, K)=(\cA_X, \cdot, K_X)$.
We start by recalling the decomposition of $\cA$ in \depa;
$$
\cA=\bigoplus_{\gh,{\ch}, \wt} \cA_{{\ch}, (\wt)}^{gh}= \bigoplus_{-n-2\leq j \leq 0}\bigoplus_{w \in \Z^{\geq 0}}\bigoplus_{{\l} \in \Z}\cA^j_{{\l},(w)}.
$$
Associated with the charges ${\ch}$, we define the corresponding Euler vector field
$$
\hat E_{\ch} = -d y\frac{\rd}{\rd y} + \sum_{i=0}^n x_i \frac{\rd}{\rd x_i}
+d \eta_{-1}\frac{\rd}{\rd \eta_{-1}} - \sum_{i=0}^n \eta_i \frac{\rd}{\rd \eta_i}
$$
Associated with the weights $\wt$, we
define the corresponding Euler vector field
$$
\hat E_{\wt} =  y \frac{\rd}{\rd y} +\sum_{i=0}^n\eta_i \frac{\rd}{\rd \eta_i} .
$$
 Then $u \in \cA^j_{{\l},(w)}$ if and only if $\hat E_{\ch} (u) = \l \cdot u$, $\hat E_{\wt} (u) = w \cdot u$, and $\gh(u)=j$.
Note that $Q$ preserves the charge and the weight, and commutes with $\hat E_{\ch}$
and $\hat E_{\wt}$.
 The differential $K$ also commutes with $\hat E_{\ch}$ and preserves the charge but $K$ does \textit{not} preserve the weight. The operator $\Delta$ decreases the weight $\wt$ by 1.
Also note that $\gh(S(\ud y))=0$, ${\ch}(S(\ud y))=\underline{0}$ and $\wt(S(\ud y))=1$.

If we define
\eqn\RRR{
R:=- d \cdot y \eta_{-1}+ \sum_{i=0}^n x_i \eta_i  \in \cA^{-1},
}
then 
\be
Q R = \big( \sum_{i=0}^n x_i \pa{x_i} - d\cdot y \pa{y}  \big) S(y, \underline x)=0, 
\ee
which follows from the fact that $G(\underline x)$
is a homogeneous polynomial of degree $d$. 
Moreover, $R$ can not be $Q$-exact for degree reasons. Then a straightforward computation says that
\be
KR = n+1 -d.
\ee

We define a $\bC$-linear map $\delta_R: \cA \to \cA$
by 
\eqn\delr{
\delta_R (x) =   \ell^K_2(R, x)+ KR \cdot x =  \ell_2^K(R, x)+ (n+1-d) \cdot x, \quad x \in \cA.
}
It is clear that $\delta_R$ preserves the ghost number, since
$R \in \cA^{-1}$ and $\ell_2^K$ is a degree one map; for any $m \in \bZ$, we have $\delta_R:\cA^m\to \cA^m$.
Proposition \ref{QJRD} implies that $\ell_2^K$ is a derivation of the product;
\be
\ell_2^K(a \cdot b, c)= (-1)^{|a|} a \cdot \ell_2^K(b, c) +(-1)^{|b|\cdot |c|}  \ell_2^K(a,c)\cdot b,
\ee
for any homogeneous elements $a,b,c \in \cA$.
Using this one can compute that
\be
\ell_2^K(R,F)= -d \cdot\left(y\prt{F}{y}- \eta_{-1}\prt{F}{\eta_{-1}}\right) + 
\sum_{i=0}^n \left(x_i \prt{F}{x_i}  - \eta_i\prt{F}{\eta_i}\right)=\hat E_{\ch}(F),
\ee
for any $F \in \cA$.

\bel \label{tworelations}
The map $\delta_R$ preserves the degree of $\cA$. Moreover, we have 
\be
\delta_R \circ K &=& K\circ \delta_R \\
\delta_R(\cA^m) \cap \Ker K&\subseteq& K(\cA^{m-1}) \subseteq \cA^m
\ee
 for each $m \in \bZ$.
\eel

\begin{proof}
 We compute, for $x \in \cA$,
\be
K(\delta_R (x)) &=& K\big(\ell_2^K(R, x)+ KR \cdot x \big)\\ 
&=&  K\big(K(R\cdot x)+ R\cdot Kx - KR \cdot x \big)  + K ( KR \cdot x)  \\
&=& K(R \cdot Kx)\\
&=& \ell_2^K( R, Kx) + KR \cdot Kx  = \delta_R (Kx).
\ee
Note that $\delta_R (x)= \hat E_{\ch}(x) + KR \cdot x=\hat E_{\ch} (x)+ (n+1-d)x$ for $x\in \cA$ and 
\be
K(R\cdot x) = \ell_2^K(R, x) + KR\cdot x -R \cdot Kx=\delta_R(x) - \cdot Kx, \quad x\in \cA^m.
\ee
This implies that $\delta_R(\cA^m_{c}) \cap \Ker K\subseteq K(\cA^{m-1}_{c})$ for each $m \in \bZ$ and any charge $c \in \Z$, since $\delta_R (u)= \left(c-d+(n+1)\right) \cdot u$
if and only if $u \in \cA_c$.
 \end{proof}

We define the background charge $c_X$ 
of $(\cA, \cdot, K)$ by
$$
c_X := d-(n+1).
$$
Then it is clear that $\Ker \delta_R = \cA_{c_X}$.
\bel \label{czero}
The pair $(\Ker \delta_R, K)=(\cA_{c_X}, K)$ is a cochain complex and the natural inclusion map from $(\Ker \delta_R, K)$ to $ (\cA, K)$ is a quasi-isomorphism. In other words, the $K$-cohomology is concentrated in the background charge $c_X$. 
\eel

\begin{proof}
The relation $\delta_R \circ K =K\circ \delta_R $, Lemma \ref{tworelations}, says that if $x \in \Ker (\delta_R)$ then $Kx \in \Ker(\delta_R)$.
Thus $K$ is a $\bC$-linear map from $\Ker \delta_R$ to $\Ker \delta_R$. Since $K^2=0$,  we see that $(\Ker \delta_R, K)$ is a cochain complex. If we index $K$ by $K_m: \cA^{m} \to \cA^{m+1}$ for each $m\in \bZ$, then the inclusion map  from $(\Ker \delta_R, K)$ to $ (\cA, K)$ induces a $\bC$-linear map
\be
H^m_K(\Ker(\delta_R)):=\frac{\Ker(K_{m}) \cap \Ker (\delta_R)}{K_{m-1}(\cA^{m-1}) \cap \Ker(\delta_R)} \lra \frac{\Ker(K_{m})}{K_{m-1}(\cA^{m-1})}=: H^m_K(\cA).
\ee
Injectivity is immediate from the definitions. Surjectivity follows from the decomposition $\cA = \Ker(\delta_R) \oplus \im(\delta_R)$ and $\delta_R(\cA^m) \cap \Ker K\subseteq K_{m-1}(\cA^{m-1})$ in Lemma \ref{tworelations}.
Therefore we conclude that the inclusion map is a quasi-isomorphism. 
 \end{proof}

%

Let us denote the complex $(\Ker \delta_R, K)=(\cA_{c_X},K)$ by $(\cB,K)$. Then
\be
\cB = \cA_{c_X}= \bigoplus_{m\in \bZ} \cB^{m}=\cB^{-n-2} \oplus \cdots \oplus \cB^{0}
\ee
where $\cB^{m}$ is the degree $m$ (ghost number $m$) part of $\cB$. We use this complex $(\cB, K)$ to relate the 0-th cohomology group of $(\cA, K)$ to the middle dimensional primitive cohomology of the smooth projective hypersurface $X_G$.
The main result of this subsection is the following:
\bep\label{mainhodge}
Let $H_K^n(\cA)$ be the $n$-th cohomology group of the cochain complex $(\cA,K)=(\cA_X, K_X)$.
Then 
$H_K^0(\cA)$ is isomorphic to $\H$ as a $\bC$-vector space, where $\H=H_{\pr}^{n-1}(X_G, \bC)$. 
\eep

\begin{proof}
A simple computation shows that $\cB^0$ is spanned (as a $\bC$-vector space) by homogeneous
polynomials of the form $y^{k-1} F(\underline x)$, where the degree of $F(\underline x)$
is $kd- (n+1)$ with $k \geq 1$. Then we define a $\bC$-linear map $J$ by
$$
\eqalign{
J:\cB^0 &\to H^{n-1}(X_G, \bC) 
\cr
y^{k-1} F(\underline x)  &\mapsto \left\{\g \mapsto 
-\frac{1}{2\pi i} \!\! \int_{\tau(\g)} \!\!\!\left(\int_{0}^{\infty} y^{k-1}e^{yG(\underline x)} dy \right) F(\underline x) \Omega_n \right\},
}
$$
and extending it $\bC$-linearly. Proposition \ref{main} says that $K(\cB^{-1})$ goes to zero under the map $J$ and
so $J$ induces a $\bC$-linear map $H^{0}_K(\cB) \to H^{n-1}(X_G, \bC)$. Now recall that
$\cH(X_G)$ was defined to be the rational De Rham cohomology
group defined as the quotient of the group of
rational $n$-forms on $\BP^n$ regular outside $X_G$ by the group of exact rational $n$-forms on $\BP^n$
regular outside $X_G$. Theorem 8.3, \cite{Gr69}, tells us that the residue map $Res$ induces an isomorphism between $\cH(X_G)$ and $H_{\pr}^{n-1}(X_G, \bC)$. Thus, to prove the proposition, it is enough to show that the following map (extended $\bC$-linearly)
\eqn\Jprime{
\eqalign{
J':\cB^0 &\to \Omega(V)^n
\cr
y^{k-1} F(\underline x)  &\mapsto -\int_{0}^{\infty} y^{k-1}e^{yG(\underline x)}  dy \cdot F(\underline x)\Omega_n= (-1)^{k-1}(k-1)!\frac{F(\underline x)}{G(\underline x)^k} \Omega_n,
}
}
where $\Omega(V)^n$ is the group of
rational $n$-forms on $\BP^n$ regular outside $X_G$, induces an isomorphism $J':H^{0}_K(\cB) \to \cH(X_G)$, i.e. $J$ factors through the isomorphism $J'$.
This follows from Corollary 2.11, (4.4), and (4.5) in \cite{Gr69} with a computation below. An arbitrary homogeneous (as a polynomial in $\ud x$ and $y$) element
of $\cB^{-1}$ can be written as $\Lambda = \sum_{i=0}^n A_i (y, \underline x) \eta_i + B(y, \underline x) \eta_{-1}$,
where $A_i (y, \underline x)=y^k \cdot M_i (\underline x) $ and $B(y, \underline x)=y^l \cdot N(\underline x)$ are homogeneous polynomials of $A=\bC[y, \underline x]$.
Then we have
\be
K \Lambda &=& \sum_{i=0}^n A_i(y, \underline x) \prt{G}{x_i} y + G(\underline x) B(y, \underline x) +\sum_{i=0}^n 
\prt{A_i(y, \underline x)}{x_i} + \prt{B(y, \underline x)}{y}\\
&=& \sum_{i=0}^n y^{k+1}M_i(\underline x) \prt{G}{x_i} +\sum_{i=0}^n 
y^k\prt{M_i(\underline x)}{x_i} + G(\underline x) y^l N( \underline x) + l y^{l-1} N(\underline x).
\ee
If we apply $J'$ to $K \Lambda$, then a simple computation shows that 
\eqn\thisr{
J'(K\Lambda) = k! (-1)^{k-1} \frac{(k+1)\sum_{i=0}^n M_i(x) \prt{G(\underline x)}{x_i} - G(\underline x) \sum_{i=0}^n
\prt{M_i(\underline x)}{x_i}}{G(\underline x)^{k+2}} \cdot \Omega_n.
}
Note that $J'\big(G(\underline x) y^l N( \underline x) + l y^{l-1} N(\underline x)\big)=0$. The relation (4.5), \cite{Gr69} says that $J'(K\Lambda)$ is an exact rational differential form. Thus $J'$ induces a $\bC$-linear map $J':H^{0}_K(\cB) \to \cH(X_G)$. 
The surjectivity of $J'$ follows from Proposition \ref{prop5}.
Griffiths showed that any exact differential $n$-form 
$\frac{u(\ud x)}{G(\ud x)^k}\in \Omega(V)^n, k\geq 1,$ can be written
as\footnote{This fact fails to hold when $X$ is not smooth: for a singular projective hypersurface $X_G$, Dimca proved that $\frac{u(\ud x)}{G(\ud x)^k} = d \left( \frac{\beta}{G(\ud x)^{k+(n+1)m}}\right)$ for some positive integer $m$, in \cite{Dim91}. } 
$$
\frac{u(\ud x)}{G(\ud x)^k} = d \left( \frac{\beta}{G(\ud x)^{k-1}}\right)
$$
for some $(n-1)$-form $\beta$ on $\BP^n-X_G$:see the theorem 4.3, \cite{Gr69}.
Because the right hand side of \thisr\ is exactly of the form $d \left( \frac{\beta}{G(\ud x)^{k-1}}\right)$ above, exact differential $n$-forms inside $\O(V)^n$ match precisely with the image of $K(\cB^{-1})$ under the map $J'$. Thus $J'$ is injective.
 \end{proof}

We see that Theorem \ref{bvtheorem}, Proposition \ref{mainhodge}, and Proposition \ref{Qcohomology} give the proof of parts $(a)$ and $(b)$ of Theorem \ref{firsttheorem}.
Note that part (e) of Theorem \ref{firsttheorem} is straightforward by defining
 \eqn\qft{
\cC_\g(x)
 :=\vpar{ 0 \quad }{ x\in \bigoplus_{i \leq -1}  \cA^i}
{ C_\g(x) \quad } { x \in   \cA^0}
}
 
 It is a simple computation that this definition matches with \fpint\ and $\cC_\g:(\cA,K)\to (\bC,0)$ is a cochain map which induces $C_\g$ after taking the $0$-th cohomology.

\subsection{A cochain level realization of the Hodge filtration and a spectral sequence}\label{subs4.32}
This subsection we prove part (c) of Theorem \ref{firsttheorem} and construct a certain spectral sequence. Let us define a decreasing filtration $F^\bullet$ on $(\cA, \cdot, K)=(\cA_{\rho_X}, \cdot, K_{\rho_X})$
by using the weight grading $\wt$, such that the isomorphism $Res \circ J' : H^0_K(\cA) \to
\H  $, where $J'$ is given in \Jprime, sends $F^\bullet$ to the decreasing Hodge filtration on $\H=H^{n-1}_{\pr}(X_G,\C)$.
Then we analyze a spectral sequence, which we call \textit{the classical to quantum
spectral sequence}, associated to the filtered complex $(F^\bullet\cA, K)$. Then
this spectral sequence gives a precise relationship between the $Q$-cohomology $H_Q(\cA)$ 
(which we view as classical cohomology) and the $K$-cohomology $H_K(\cA)$ (which
we view as quantum cohomology).

In this subsection we shift the degree of $(\cA, K)$ to consider $\cC=\cA[-n-2]$ so that 
$\cC^i = \cA^{i+n+2}$ for each $i \in \Z$. \footnote{The reason for ghost number shifting is to get a spectral
sequence in the first quadrant} Then we have a cochain complex $(\cC, K)$;
$$
0 \to \cC^0 \mapto{K} \cC^1 \mapto{K} \cdots \mapto{K} \cC^{n+2} \to 0.
$$ 
We define a filtered complex as follows;
$$
\cC^{} =: F^0 \cC^{} \supset F^1 \cC^{} \supset \cdots \supset F^{n-1} \cC^{} \supset F^{n} \cC^{}=\{0\}
$$
where the decreasing filtration is given by the weights
$$
F^i \cC^{} =\bigoplus_{k \leq n-1-i} \cC^{}_{(k)}, \quad i \geq 1. 
$$

The associated graded complex to a filtered complex $(F^p \cC^{}, K)$ is the complex
\eqn\filc{
Gr \cC^{} = \bigoplus_{p \geq 0} Gr^p \cC^{}, \quad Gr^p \cC^{} = \frac{F^p \cC^{}}{ F^{p+1} \cC^{}},
}
where the differential is the obvious one $d_0$ induced from $K$.
We observe that this 
differential $d_0$ is, in fact, induced from $Q$, because
we have  that $K=\Delta+Q$, $\Delta:F^p\cC^{} \to F^{p+1}\cC^{}$ and $Q:F^p\cC^{} \to F^{p}\cC^{}$.

The filtration $F^p \cC^{}$ on $\cC^{}$ induces a filtration $F^p H^{}_K(\cC)$ on the cohomology $H_K(\cC)$ by
$$
F^p H^q_K(\cC)= \frac{F^p Z^q}{F^p B^q},
$$
where $Z^q=\ker (K:\cC^q \to \cC^{q+1})$ and $B^q=K(\cC^{q-1})$. The associated graded cohomology is
\eqn\fild{
Gr H^{}_K(\cC)= \bigoplus_{p,q} Gr^p H^q_K(\cC), \quad 
Gr^p H^q_K(\cC)= \frac{F^p H^q_K(\cC)}{F^{p+1} H^q_K(\cC)}.
}

Then the general theory of filtered complexes implies that there is a spectral sequence $\{E_r, d_r\}$ $(r \geq 0)$ with
$$
E_0^{p,q}= \frac{F^p \cC^{p+q}}{F^{p+1} \cC^{p+q}}, \quad E_1^{p,q} = H^{p+q} (Gr^p \cC^{}), \quad  E^{p,q}_{\infty}= Gr^p H^{p+q}_K(\cC).
$$
Note that
$$
E_r = \bigoplus_{p,q \geq 0} E_r^{p,q}, \quad d_r:E_r^{p,q} \to E_r^{p+r, q-r+1}, d_r^2=0,
$$
and $H^{}(E_r)=E_{r+1}$.

\begin{proposition} \label{wspec}
The classical to quantum spectral sequence  $\{ E_r\}$ satisfies
$$
\eqalign{
E_1^{p,q}&\simeq Gr^p H_Q^{p+q}(\cC^{}),
\cr
E_2^{p,q} &\mapto{\simeq} E^{p,q}_{\infty}= Gr^p H^{p+q}_K(\cC).
}
$$
In particular, $\{ E_r\}$ degenerates at $E_2$.
\end{proposition}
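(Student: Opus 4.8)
The plan is to compute the $E_1$-page directly and then argue degeneration at $E_2$ by a dimension count, using the two cohomology computations already available in the paper. First, since the associated graded complex $Gr\,\cC^{}$ with its induced differential is isomorphic to $(\cC^{}, Q)=(\cA[-n-2], Q_X)$ — because $K_X = Q_X + \Delta$ with $Q_X$ preserving the weight and $\Delta$ strictly increasing it by one — we get immediately that $E_1^{p,q} = H^{p+q}(Gr^p\cC^{}) \simeq Gr^p H_Q^{p+q}(\cC^{})$, which is the first assertion. Here I would invoke Proposition \ref{Qcohomology} (and the computation of $H^i_{Q_X}(\cA_X)$ promised in subsection \ref{subs4.40}) to identify these graded pieces explicitly with the weight-graded components of the Jacobian ring $\bC[y,\ud x]/J_S$; the point is that the $Q$-cohomology is concentrated in a single ghost number, so $H_Q^{p+q}(\cC^{})$ vanishes unless $p+q$ takes the one value forced by the ghost-number shift.

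Second, to see degeneration at $E_2$, I would track where the nonzero terms of $E_1$ sit. Because the classical cohomology $H_{Q_X}(\cA_X)$ is concentrated in a single ghost number (essentially $H^0_{Q_X}$, up to the degree shift that moves everything into the first quadrant), the $E_1$-page is supported on a single column-plus-diagonal pattern: $E_1^{p,q}$ can be nonzero only when $p+q$ equals a fixed integer, i.e. along one anti-diagonal $\{q = c - p\}$. For the differentials $d_r : E_r^{p,q}\to E_r^{p+r,q-r+1}$ with $r\ge 2$, both source and target would have to lie on this same anti-diagonal $p+q = \text{const}$, but $d_r$ changes $p+q$ by $-r+1+r = 1\neq 0$... wait — $d_r$ shifts $(p,q)\mapsto(p+r,q-r+1)$, so $p+q \mapsto p+q+1$. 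Hence if $E_1$ (and therefore every later page) is concentrated on one anti-diagonal, every $d_r$ for $r\ge 1$ already vanishes for bidegree reasons, giving $E_1 = E_\infty$ on the nose. If instead the $Q$-cohomology genuinely spreads over the whole ghost-number range $0,\dots,n+2$ (which is what the spectral sequence is meant to illuminate), then the vanishing of higher differentials must come from a genuine comparison: I would show $d_1 = 0$ by a direct check that $\Delta$ followed by $Q$-cohomology class is zero on representatives (using $\Delta Q_X + Q_X\Delta = 0$ and $\Delta^2 = 0$), so $E_2 = E_1$, and then compare total dimensions.

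Third — and this is the crux — I would finish by the dimension count $\sum_{p,q}\dim_\bC E_2^{p,q} = \sum_{p,q}\dim_\bC E_\infty^{p,q} = \dim_\bC H_K(\cC) = \dim_\bC H_K(\cA_X)$, where the last equality uses that the filtration is finite (so $\bigoplus Gr^p H^{p+q}_K(\cC) \cong H_K(\cA_X)$ as graded vector spaces). By Proposition \ref{mainhodge} we know $H^0_K(\cA_X)\cong\H$, and by the computation in subsection \ref{subs4.40} we know all the other $H^i_K(\cA_X)$; so the right-hand side is a known number. On the other side, each page satisfies $\dim E_{r+1}\le \dim E_r$ with equality iff $d_r = 0$. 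Starting from $E_2 = E_1 \cong \bigoplus Gr^p H_Q^{p+q}(\cC^{}) \cong H_{Q_X}(\cA_X)$ and comparing with the known value of $\dim H_K(\cA_X)$, if these two numbers agree then no room is left for any nonzero $d_r$ with $r\ge 2$, forcing $E_2 = E_\infty$; if they differ, the differentials $d_r$ $(r\ge 2)$ account precisely for the discrepancy, and one must show they all vanish except possibly finitely many — but in the hypersurface case the two dimensions do coincide (both count the relevant graded pieces of the Jacobian ring, respectively "classically" and "quantum-mechanically"), because the Euclidean/Gröbner reduction argument of subsection \ref{subs4.40} shows $H_Q$ and $H_K$ have the same total dimension.

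\textbf{Main obstacle.} The delicate point is precisely this last dimension comparison: one needs that passing from the classical differential $Q_X$ to the quantized $K_X = Q_X + \Delta$ does not change the total dimension of cohomology, only its distribution across ghost numbers. I expect to prove this by the ideal-membership/reduction technique sketched in the toy model (Proposition \ref{Etoy} and its multivariable analog, Lemma \ref{memcon}): $\Delta$ lets one "integrate by parts" to trade higher weight for lower weight without creating or destroying cohomology, which is exactly the statement that the spectral sequence degenerates at $E_2$. Making that reduction argument precise — controlling that the process terminates and produces a basis of $H_K$ in bijection with a basis of $H_Q$ — is where the real work lies; everything else is bookkeeping with bidegrees.
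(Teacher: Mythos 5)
Your computation of the $E_1$-page agrees with the paper's, but your strategy for degeneration does not work, and it is also not the paper's strategy. The paper proves degeneration by writing out $E_2^{p,q}$ explicitly from the definition of the spectral sequence of a filtered complex and identifying it directly with $Gr^p H^{p+q}_K(\cC)=E_\infty^{p,q}$, using only that $K=Q+\Delta$ with $Q$ of filtration degree $0$ and $\Delta$ of filtration degree exactly $+1$ (so an element of $F^p\cC$ whose image under $K$ lands in $F^{p+2}\cC$ already represents a $K$-cocycle up to the relevant indeterminacy). No dimension comparison between $H_Q$ and $H_K$ enters, and none is possible.

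The concrete failures in your route are these. First, $H_Q(\cC)$ is not concentrated in a single total degree: by Proposition \ref{Qcohomology}, $H^0_{Q_X}(\cA_X)\simeq \bC[y,\ud x]/J_S$, which contains $\bC[\ud x]/(G)$ (the homogeneous coordinate ring of $X_G$, as the paper's own footnote notes) and is therefore \emph{infinite-dimensional}; $H^{-1}_{Q_X}(\cA_X)$ is nonzero as well, and even the $K$-cohomology is spread over ghost numbers $0,-1,-n$ by the computation in subsection \ref{subs4.40}. So $E_1$ is not supported on one anti-diagonal and there is no degeneration at $E_1$ for bidegree reasons (note also that the proposition only claims degeneration at $E_2$, and your primary argument would prove the strictly stronger, false statement). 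Second, $d_1$ is genuinely nonzero: it is the map induced by $\Delta$ on $Q$-cohomology, and it is precisely the mechanism that collapses the infinite-dimensional classical cohomology $H_Q$ onto the finite-dimensional quantum cohomology $H_K$; the reduction algorithm of Proposition \ref{Etoy} and Lemma \ref{memcon} that you invoke is nothing but a procedure for computing the effect of this nonzero $d_1$, not evidence that it vanishes. Third, and consequently, the dimension count at the heart of your argument is false: $\dim_\bC H_Q(\cA_X)=\infty$ while $\dim_\bC H_K(\cA_X)<\infty$, so there is no ``conservation of dimension'' between the classical and quantum sides, and no room for an argument of the form $\dim E_2=\dim E_\infty$ starting from $E_1$. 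The degeneration has to be established as the paper does, by exhibiting the isomorphism $E_2^{p,q}\to Gr^pH^{p+q}_K(\cC)$ directly.
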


\begin{proof}
Note that the differential $d_0$ of $E_0=Gr \cC^{}$ is induced from $Q$, because
we have  that $K=\Delta+Q$, $\Delta:F^p\cC^{} \to F^{p+1}\cC^{}$ and $Q:F^p\cC^{} \to F^{p}\cC^{}$. Using this observation, we compute
$$
\eqalign{
E_1^{p,q}&:=\frac{\{ a \in F^p \cC^{p+q}: \ K(a) \in F^{p+1} \cC^{p+q+1}\}}{ K(F^p\cC^{p+q-1})+ F^{p+1}\cC^{p+q}}
\cr
&\simeq 
\frac{\{ a \in F^p \cC^{p+q}: \ Q(a)=0\}}{ Q(F^p\cC^{p+q-1})+ F^{p+1}\cC^{p+q}}
\simeq
H_Q^{p+q} (Gr^p \cC^{}).
}
$$
Since $Q$ preserves the weight and $Q\circ \hat E_{\wt} =\hat E_{\wt} \circ Q$, we conclude that
$H_Q^{p+q} (Gr^p \cC^{}) \simeq Gr^p H_Q^{p+q}(\cC^{}).$

By a general construction of the spectral sequence  we can also describe $E_2^{p,q}$:
$$
E_2^{p,q}:=\frac{ \{ a \in F^p \cC^{p+q}: \ K(a) \in F^{p+2} \cC^{p+q+1}\} }
{ \left( K(F^{p-1}\cC^{p+q-1}) + F^{p+1} \cC^{p+q} \right) \bigcap  \{ a \in F^p \cC^{p+q}: \ K(a) \in F^{p+2} \cC^{p+q+1}\} }.
$$
Since $K(F^p \cC^{} \setminus F^{p+1}\cC^{} ) \subseteq F^{p} \cC^{} \setminus F^{p+2}\cC^{}$
and $F^p \cC^{p+q}/F^{p+1} \cC^{p+q} = \cC^{p+q}_{(n-1-p)}$, we have
$$
\eqalign{
E_2^{p,q}
&\simeq 
\frac{ \{ a \in F^p \cC^{p+q}: \ K(a)=0\} }
{ \left( K(F^{p-1}\cC^{p+q-1}) + F^{p+1} \cC^{p+q} \right) \bigcap  \{ a \in F^p \cC^{p+q}: \ K(a) =0\} }
\cr
&\simeq
\frac{\ker K \cap \cC^{p+q}_{({n-1}-p)}}{ \cC^{p+q}_{({n-1}-p)} \cap K(\cC^{p+q-1}_{({n-1}-p)} \oplus \cC^{p+q-1}_{({n-1}-p+1)})} 
\simeq
 \frac{F^p H^{p+q}_K(\cC)}{F^{p+1} H^{p+q}_K(\cC)}.
 }
$$
Since $\frac{F^p H^{p+q}_K(\cC)}{F^{p+1} H^{p+q}_K(\cC)} = Gr^p H^{p+q}_K(\cC)$, we are done.
 \end{proof}

Since $\cC[n+2]=\cA$, we also define a filtered complex $(F^\bullet \cA, K)$ in the same way:
$$
\cA^{} =: F^0 \cA^{} \supset F^1 \cA^{} \supset \cdots \supset F^{n-1} \cA^{} \supset F^{n} \cA^{}=\{0\}, \quad
F^i \cA^{}=\bigoplus_{k \leq n-1-i} \cA^{}_{(k)}, \quad i \geq 1. 
$$
Then we have 
$$H_K^{p}(\cC)=H_K^{p-n-2}(\cA), \quad 
Gr^p H^{p+q}_K(\cC)=Gr^p H^{p+q-n-2}_K(\cA), \quad p,q \geq 0.
$$

Recall the decreasing Hodge filtration on $\H=H_{\pr}^{n-1}(X_G, \bC)$:
\be
\cF^{q} \H = \bigoplus_{\substack{i+j=n-1\\i  \geq q}}H^{i,j}=H^{n-1,0}
\oplus H^{n-2, 1} \oplus \cdots \oplus H^{ q+1,n-q-2}\oplus H^{q,n-1-q},
\ee
where $H^{i,j}$ is the cohomology of $(i,j)$-forms on the hypersurface $X_G$.
Thus $\cF^0 \H=\H$ and $\cF^{n}\H=0$. Then the following proposition is clear.

\bep \label{homotopyhf}
The isomorphism $Res \circ J' : H^0_K(\cA) \to
\H $ sends $F^q H_K^0(\cA)$ to $\cF^q \H$ for each $q \geq 0$.
\eep

This proposition proves part $(c)$ of Theorem \ref{firsttheorem}.

\subsection{Computation of the $K_X$-cohomology of $(\cA_X, K_X)$}\label{subs4.40}

Here our goal is to compute $H^i_{K_X}(\cA_X)$ for every $i \in \bZ$ in addition to $H^0_{K_X}(\cA_X) \simeq \H$.
We achieve this by showing that 
$(\cA_X, K_X)$ is degree-twisted cochain isomorphic to a twisted de Rham complex 
appearing in \cite{AdSp06}. Adolphson and Sperber computed the cohomology of a certain de Rham type complex, which we briefly review now. They considered the complex of differential forms $\O^\bullet_{\bC[{y}, \ud x]/\bC}$ with boundary map $\partial_S$ defined
by $\partial_S(\omega) = d S \wedge \omega$ where $S=y \cdot G(\underline{x})$.
They introduced the bigrading $(\deg_1, \deg_2)$ on $\O^\bullet_{\bC[{y}, \ud x]/\bC}$ as follows;

\eqn\bigr{
\eqalign{
\deg_1(x_i)&=\deg_1(dx_i)=1,\quad i=0,\cdots, n, \qquad \deg_2 (x_i)=\deg_2(d x_i) = 0,\quad  i=0, \cdots, n,
\cr
\deg_1(y)&=\deg_1(dy)=-d,\qquad \qquad \qquad \quad   \deg_2(y)=\deg_2(dy)= 1.
 }
}
Then $(\O^\bullet_{\bC[{y}, \ud x]/\bC}, \partial_S)$ is a bigraded cochain complex of bidegree $(0,1)$.
One can also consider the following twisted de Rham complex $(\O^\bullet_{\bC[{y}, \ud x]/\bC}, D_S:=d+\partial_S)$, a so-called algebraic Dwork complex, where $d$ is the usual exterior derivative. For $(u,v) \in \bZ^2$, let us denote by $\O_{\bC[{y}, \ud x]/\bC}^{s,(u,v)}$ the submodule of homogeneous elements of bidegree $(u,v)$ in $\O_{\bC[{y}, \ud x]/\bC}^{s}$.

\begin{lemma} \label{clem}
We have the following relationship between $(\O^\bullet_{\bC[{y}, \ud x]/\bC}, D_S)$ and $(\cA_X, K_X)$:

(a) For each $s \in \bZ$, if we define a $\bC$-linear map $\Phi: (\O^s_{\bC[{y}, \ud x]/\bC}, D_S) \to (\cA_X^{s-(n+2)}, K_X)$ by sending
$dy_{i_1} \cdots d y_{i_s}$ to $(-1)^{i_1 + \cdots +i_s - s} ( \cdots \hat{\eta_{i_1}} \cdots \hat{\eta_{i_s}}\cdots)$ for $-1 \leq i_1 < \cdots < i_s \leq n$ and extending
it $\bC$-linearly, then $\Phi \circ D_S = K_X \circ \Phi$ and $\Phi$ induces an isomorphism
$$
H^s_{D_S}(\O^\bullet_{\bC[{y}, \ud x]/\bC}) \simeq H^{s-(n+2)}_{K_X} (\cA^\bullet_X).
$$
for every $s \in \bZ$.

(b) The map $\Phi$ induces a $\bC$-linear map from $\O_{\bC[{y}, \ud x]/\bC}^{s,(u,v)}$ to $\cA_{X,c,(w)}^{s-(n+2)}$ where
$c=u+c_X$ and $w+s-(n+2)= v-1$. 

(c) The map $\Phi$ satisfies that  $\Phi \circ \partial_S = Q_X \circ \Phi$ and $\Phi \circ d = \Delta \circ \Phi$.
\end{lemma}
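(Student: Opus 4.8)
The plan is to prove the three assertions of Lemma \ref{clem} essentially by unwinding definitions, since the map $\Phi$ is forced on us by the shape of $K_X = Q_X + \Delta$. The underlying $\bC$-algebras match up once we identify $dy_i$ with $\eta_i$ up to sign: the polynomial parts $\bC[y,\ud x]$ are literally the same, and a top-degree wedge $dy_{-1}\wedge dy_0 \wedge \cdots \wedge dy_n$ corresponds to the lowest-ghost-number monomial $\eta_{-1}\eta_0\cdots\eta_n \in \cA_X^{-(n+2)}$. The sign $(-1)^{i_1+\cdots+i_s-s}$ is exactly the Koszul sign needed so that the assignment is compatible with the two natural orderings (wedging from the left in $\O^\bullet$ versus the product on $\cA_X$ acting on the fixed top form), so I would first check on basis monomials that $\Phi$ is a well-defined $\bC$-linear bijection in each internal degree, then upgrade it to a bigraded statement to get part (b).

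For part (c), which logically comes before the cohomology statement in (a), I would verify the two identities $\Phi \circ \partial_S = Q_X \circ \Phi$ and $\Phi \circ d = \Delta \circ \Phi$ separately on monomials. Recall $\partial_S(\omega) = dS \wedge \omega = \sum_{i=-1}^n \prt{S}{y_i}\, dy_i \wedge \omega$ with $S = y\cdot G(\ud x)$, while $Q_X = \sum_{i=-1}^n \prt{S}{y_i}\pa{\eta_i}$; under the correspondence $dy_i \leftrightarrow \eta_i$, wedging on the left with $dy_i$ matches (up to the Koszul sign baked into $\Phi$) the operator ``multiply the fixed top form and reinsert $\eta_i$'', which is precisely how $\pa{\eta_i}$ acts after the degree twist. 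Similarly $d = \sum_i dy_i \wedge \pa{y_i}$ corresponds to $\Delta = \sum_i \pa{y_i}\pa{\eta_i}$. The bookkeeping is a routine but sign-sensitive computation; the key is to fix one convention for $\Phi$ and check it is consistent with both $d$ and $\partial_S$. Once (c) holds, summing gives $\Phi \circ D_S = \Phi\circ(d+\partial_S) = (\Delta + Q_X)\circ \Phi = K_X \circ \Phi$, which is the cochain-map statement in (a).

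Finally, for the cohomology isomorphism in part (a): since $\Phi$ is a bijective $\bC$-linear map of cochain complexes (degree-shifted by $n+2$) intertwining $D_S$ and $K_X$, it is an isomorphism of cochain complexes, hence induces isomorphisms $H^s_{D_S}(\O^\bullet_{\bC[y,\ud x]/\bC}) \xrightarrow{\sim} H^{s-(n+2)}_{K_X}(\cA^\bullet_X)$ for all $s$. No spectral sequence or homological algebra beyond ``an isomorphism of complexes induces an isomorphism on cohomology'' is needed here. For part (b), I would just record that the bigrading $(\deg_1,\deg_2)$ on $\O^\bullet$ translates under $\Phi$ into the charge and weight gradings on $\cA_X$: $\deg_1$ contributes to $\ch$ with the shift by the background charge $c_X = d-(n+1)$ coming from the top form $\eta_{-1}\cdots\eta_n$ (whose charge is $d - (n+1)$), and $\deg_2$ matches the weight $\wt$ after accounting for the pole order $k$ and the ghost-number shift. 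The main obstacle, such as it is, is purely organizational: pinning down all the sign conventions (Koszul signs, orientation of the top form, the $(-1)^{i_1+\cdots+i_s-s}$ factor) so that all three identities hold simultaneously with no stray signs; there is no conceptual difficulty, only the need for care.
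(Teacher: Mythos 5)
Your proposal is correct and takes essentially the same route as the paper, whose entire proof of this lemma is ``These follow from straightforward computations'': you supply exactly those computations, namely that $\Phi$ is a bijective, degree-shifted linear map under which left-wedging by $dy_i$ corresponds to $\partial/\partial\eta_i$ and $\partial/\partial y_i$ to itself, so $\partial_S\leftrightarrow Q_X$ and $d\leftrightarrow\Delta$, hence $D_S\leftrightarrow K_X$, and the cohomology isomorphism follows since an isomorphism of complexes induces one on cohomology. One small slip worth fixing: because $\Phi$ is complementation (contraction against $\eta_{-1}\cdots\eta_n$), the top wedge $dy_{-1}\wedge\cdots\wedge dy_n$ maps to $\pm 1\in\cA_X^{0}$ while it is $1\in\Omega^{0}_{\bC[y,\ud x]/\bC}$ that maps to the lowest-ghost-number monomial $\eta_{-1}\cdots\eta_n$ --- the reverse of what you wrote --- though this does not affect the rest of your argument.
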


\begin{proof}
These follow from straightforward computations.
\end{proof}

\begin{remark}
Lemma \ref{clem} implies that two cochain complexes $(\O^\bullet_{\bC[{y}, \ud x]/\bC}, D_S)$ and $(\cA_X, K_X)$ are degree-twisted isomorphic each other. \footnote{In fact, $(\O^\bullet_{\bC[{y}, \ud x]/\bC}, D_S)$ is the Chevalley-Eilenberg complex associated to the Lie algebra representation $\rho_X$ which computes the Lie algebra cohomology and, on the other hand,
$(\cA_X, K_X)$ is the dual Chevalley-Eilenberg complex which computes the Lie algebra homology.}
But we emphasize that the natural product structure, the wedge product, on $\O^\bullet_{\bC[{y}, \ud x]/\bC}$ and  the super-commutative product $\cdot$ on $\cA_X$
are quite different and $\Phi$ is \textit{not} a ring isomorphism. It is crucial for us to use the super-commutative product $\cdot $ on 
$\cA_X$ to get all the main theorems of the current article.
\end{remark}

\begin{table}[h!] \normalsize
 \begin{center}
  \begin{tabular} 
 {| p{ 6.5cm} |  p{6.5cm}  |  }
\hline
 $(\O^\bullet_{\bC[{y}, \ud x]/\bC}, \wedge, D_S)$ is a Dwork complex &  $(\cA_X, \cdot, Q_X, K_X, \ell_2^{K_X}) $ is a BV algebra \\
\hline
\hline
$(\O^\bullet_{\bC[{y}, \ud x]/\bC}, \wedge, \partial_S)$ is not a CDGA &  $(\cA_X, \cdot, Q_X) $ is a CDGA \\
\hline
 $(\O^\bullet_{\bC[{y}, \ud x]/\bC}, \wedge, d)$ is a CDGA & $(\cA_X, \cdot, \Delta) $ is not a CDGA \\  
\hline
     \end{tabular}
   \end{center}
  \caption{Comparison with the algebraic Dwork complex} \label{cpn}
\end{table}

We proved that $H^{0}_{K_X}(\cA_X)$ is canonically isomorphic to $H^n(\BP^n \setminus X, \bC) \simeq \H$.
Now we describe all the other cohomologies.

\begin{proposition}
Assume that $n > 1$.\footnote{If $n=1$, then the same result holds except $\dim_\bC H^{-1}_{K_X}(\cA_X) =\dim_\bC H^{0}_{K_X}(\cA_X)+1$; see (1.12), \cite{AdSp06}.} Then we have the following description of the total cohomology of $(\cA_X, K_X)$:
\eqn\ccmp{
\eqalign{
H^s_{K_X}(\cA_X) &= 0, \quad \text{for} \quad s\neq -n, -1, 0,  
\cr
H^{-1}_{K_X}(\cA_X)& \simeq H^{0}_{K_X}(\cA_X),
\quad \text{ and } \quad
H^{-n} _{K_X}(\cA_X) \simeq \bC. 
}
}
\end{proposition}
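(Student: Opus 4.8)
The plan is to transport the computation of the total cohomology $H^s_{K_X}(\cA_X)$ through the degree-twisted cochain isomorphism of Lemma \ref{clem}(a) to the twisted de Rham complex $(\O^\bullet_{\bC[y,\ud x]/\bC}, D_S)$ with $S = y\cdot G(\ud x)$, and then invoke the cohomology computation of Adolphson--Sperber in \cite{AdSp06}. Concretely, since $\Phi$ induces isomorphisms $H^s_{D_S}(\O^\bullet_{\bC[y,\ud x]/\bC}) \simeq H^{s-(n+2)}_{K_X}(\cA^\bullet_X)$ for all $s\in\bZ$, the claimed statement \ccmp\ is equivalent to: $H^s_{D_S} = 0$ unless $s = 2, n+1, n+2$, with $H^{n+1}_{D_S}\simeq H^{n+2}_{D_S}$ and $H^{2}_{D_S}\simeq \bC$. (Here the de Rham complex $\O^\bullet_{\bC[y,\ud x]/\bC}$ lives in degrees $0$ through $n+2$, matching $\cA_X$ in degrees $-(n+2)$ through $0$.) Now $S = y\cdot G(\ud x)$ is, up to the variable relabelling $q^1 = y,\ q^2 = x_0, \dots, q^{n+2} = x_n$, precisely a Dwork-type Laurent/polynomial phase of the kind studied in \cite{AdSp06}: the key point is that $X_G$ smooth in $\BP^n$ forces the only critical point of $S$ on $(\A^1\setminus 0)\times(\A^{n+1}\setminus 0)$ to be where $G(\ud x)=0$ and $y\,\partial G/\partial x_i = 0$ for all $i$, which by the Euler relation and smoothness of $X_G$ means $y=0$ on the zero locus, so the "interior" critical locus is as small as possible. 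This is the nondegeneracy condition that makes the Adolphson--Sperber vanishing theorem applicable, giving concentration of $H^\bullet_{D_S}$ in the stated degrees.

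First I would make the dictionary precise: spell out the bigrading correspondence from Lemma \ref{clem}(b) so that the weight grading $\wt$ on $\cA_X$ matches the $\deg_2$-filtration on $\O^\bullet_{\bC[y,\ud x]/\bC}$, and confirm that the decomposition of $(\cA_X, K_X)$ via the background charge $c_X$ (Lemma \ref{czero}) corresponds under $\Phi$ to the analogous reduction on the de Rham side. Then I would isolate the two "boundary" contributions. The top cohomology $H^{-n}_{K_X}(\cA_X)\simeq\bC$: under $\Phi$ this is $H^{2}_{D_S}$, and I would identify it with the one-dimensional space coming from the single point $\{y=0\}$-component of the equations-of-motion locus (the extra point referred to in the physical-motivation table), i.e. essentially the residue/trace of the Jacobian-ring-type complex localized at that isolated point. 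The identification $H^{-1}_{K_X}(\cA_X)\simeq H^0_{K_X}(\cA_X)$: under $\Phi$ this reads $H^{n+1}_{D_S}\simeq H^{n+2}_{D_S}$, which is the statement that multiplication by a suitable top-form generator (or equivalently the shift coming from the Euler vector $\hat E_{\ch}$, which relates adjacent degrees via $R$ as in \RRR\ and Lemma \ref{tworelations}) gives an isomorphism between the two top nonzero cohomologies. I would derive this from the exact sequence relating $(\cA_X, Q_X)$, $(\cA_X,\Delta)$, and $(\cA_X, K_X)$, or directly from the classical-to-quantum spectral sequence of Proposition \ref{wspec}: the $E_2$-page is concentrated so that only degrees $-n,-1,0$ survive.

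The main obstacle I anticipate is checking the precise hypotheses under which the Adolphson--Sperber computation applies — namely that the polynomial $S = y\,G(\ud x)$, with the mixed (weighted) grading where $\deg_1(y) = -d$, satisfies their nondegeneracy/convenience conditions relative to its Newton polytope, and that the gradings I use match theirs up to the documented twist. Because $y$ carries negative $\deg_1$, one is really in a Laurent-type or weighted-homogeneous setting rather than the naive polynomial one, so I would need to either cite the appropriate generalization in \cite{AdSp06} (or \cite{Dim95}, \cite{Ter90}) or else give a short self-contained argument: filter $(\cA_X, K_X)$ by $\wt$, note $\mathrm{Gr}\,K_X = Q_X$, compute $H^\bullet_{Q_X}(\cA_X)$ via the Jacobian ideal $J_S$ (Proposition \ref{Qcohomology} handles degree $0$; the smoothness of $X_G$ makes $J_S$ a complete-intersection-type ideal away from $y=0$, so the Koszul-type complex $(\cA_X, Q_X)$ has controlled cohomology), and then run the spectral sequence to degenerate everything onto degrees $-n, -1, 0$. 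The bookkeeping of which graded pieces of the Jacobian ring contribute to which ghost number is the delicate part; once that is pinned down, the isomorphisms $H^{-1}\simeq H^0$ and $H^{-n}\simeq\bC$ and the vanishing elsewhere follow formally. I would present the spectral-sequence route as the primary argument and the comparison with \cite{AdSp06} as a cross-check, since the latter already records exactly \ccmp\ (its equations (1.11)--(1.12)) for this complex.
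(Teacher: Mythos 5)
Your proposal follows essentially the same route as the paper: the paper proves this by running the classical-to-quantum spectral sequence of Proposition \ref{wspec} for the weight filtration (so that $E_1$ is the $Q_X$-cohomology, identified via the map $\Phi$ of Lemma \ref{clem} with the $\partial_S$-cohomology computed by Adolphson--Sperber, Theorem 1.6 of \cite{AdSp06}), combined with the charge concentration of Lemma \ref{czero} and $E_2$-degeneration. Even your anticipated explicit identifications match the paper's: $H^{-1}_{K_X}(\cA_X)\simeq H^{0}_{K_X}(\cA_X)$ is realized by $[f]\mapsto[R\cdot f]$ with $R$ as in \RRR, and $H^{-n}_{K_X}(\cA_X)$ is generated by $[\Phi(dy\wedge dG)]$.
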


\begin{proof}

This follows from the classical to quantum spectral sequence associated to the filtered cochain complex $(F^\bullet_{\wt}\cA_X, K_X)$ and Theorem 1.6 in \cite{AdSp06}.
We shift the degree of $(\cA_X, K_X)$ to consider $\cC=\cA_X[-n-2]$ so that 
$\cC^i = \cA_X^{i+(n+2)}$ for each $i \in \Z$. 
According to Proposition \ref{wspec} we have
the classical to quantum spectral sequence  $\{ E_r\}$ which satisfies
\eqn \cqcq{
\eqalign{
E_1^{p,q}&\simeq Gr^p H_Q^{p+q}(\cC^{}),
\cr
E_2^{p,q} &\mapto{\simeq} E^{p,q}_{\infty}= Gr^p H^{p+q}_K(\cC).
}
}
In particular, $\{ E_r\}$ degenerates at $E_2$.
Then we have 
$$
H_K^{p}(\cC)=H_{K_X}^{p+n+2}(\cA_X), \quad 
Gr^p H^{p+q}_K(\cC)=Gr^p H^{p+q+n+2}_{K_X}(\cA_X), \quad p,q \geq 0.
$$

Note that Lemma \ref{clem} implies that $H_Q^{p}(\cC) \simeq H^p_{\partial_S}(\O^\bullet_{\bC[{y}, \ud x]/\bC})$ for $p \geq 0$
and Lemma \ref{czero} says that $H^q_{K_X}(\cA_X) =H^q_{K_X}(\cA_{X, c_X})$ for $q \leq 0$.
Moreover it is known that $\dim_\bC(H_{Q_X}^{0}(\cA_{c_X}))$ $=\dim_\bC \H$ (see page 1194, \cite{AdSp06}). 
Now it is easy to see that Theorem 1.16 in \cite{AdSp06} implies the desired result combined with Lemma \ref{czero}, \cqcq, and Lemma \ref{clem}.
In fact, the isomorphism $H^{0}_{K_X}(\cA_X) \simeq H^{-1}_{K_X}(\cA_X)$ is given by $[f] \mapsto [R \cdot f]$
where $f \in \cA^0_{X}$ and $R= \sum_{\m=-1}^{n} \ch(y_\m) y_\m \eta_\m \in \CA^{-1}_{0}$, and $H^{-n}_{K_X}(\cA_X)$ is generated by
$$
[\Phi \left(dy\wedge dG\right)]
$$
where $\Phi$ is the map in Lemma \ref{clem}.
 \end{proof}

\subsection{Lifting of a polarization}\label{subs4.33}
The goal here is to prove part $(d)$ of Theorem \ref{firsttheorem}. The primitive cohomology $\H$ behaves well with respect to the cup product pairing.
Recall that the following bilinear pairing $\langle \cdot, \cdot \rangle$
$$
\langle \omega, \eta \rangle := (-1)^{\frac{(n-1)(n-2)}{2}} \int_{X_G} \omega \wedge \eta, \quad \omega, \eta \in 
\H=H^{n-1}_{\pr}(X_G, \C),
$$
provides \textit{a polarization} of a Hodge structure of weight $n-1$ on $\H$, i.e.

 \begin{enumerate}
 \item $\langle \omega, \eta \rangle = (-1)^{n-1} \langle \eta, \omega \rangle$,
 \item $\langle \omega, \eta \rangle \in \Z$ if $\omega, \eta \in H^{n-1}(X_G, \Z)$,
 \item $\langle \cdot, \cdot \rangle$ vanishes on $H^{p,q} \otimes H^{p',q'}$ unless $p=q', q=p'$,
 \item $(\sqrt{-1})^{p-q} \langle \omega, \overline{\omega} \rangle  > 0$, if $\omega \in H^{p,q}$ is non-zero,
 \end{enumerate}
where $\H =\bigoplus_{p+q=n-1}H^{p,q}$ is the Hodge decomposition and $\overline{\omega}$ is the complex conjugation of $\omega$.

\begin{definition} \label{polgo}
We define a $\C$-linear map
$\oint: \cA_X\rightarrow\C$
such that $\oint$ is a zero map on  $\cA_X^j$
if $j\neq 0$, otherwise:
$$
\eqalign{
\oint {u} 
:= &
\frac{1}{(2\pi i)^{n+2}}
\int_{X(\e)}
\left(\oint_C \frac{{u}}{  \frac{\rd S}{\rd x_0}\cdots\frac{\rd S}{\rd x_n} } y d\!y
\right) dx_0 \wedge \cdots \wedge dx_n
\cr
= &
\frac{1}{(2\pi i)^{n+2}}
\int_{X(\e)}
\left(\oint_C \frac{u}{  y^{n} }dy\right)
\frac{dx_0 \wedge \cdots \wedge dx_n}{  \frac{\rd G}{\rd x_0}\cdots\frac{\rd G}{\rd x_n} }
}
$$
for all ${u} \in \cA_X^0$,
where
$C$ is a closed path on $\C$ with the standard orientation around $y=0$ 
and
$$
X(\e)=\left\{ \underline{x} \in \C^{n+1}\left| \Big|\frac{\rd G(\ud x)}{\rd x_i}\Big|=\e > 0, i=0,1,\cdots, n\right.\right\},
$$
 which is oriented by $d(\arg \frac{\rd G}{\rd x_0})\wedge \cdots \wedge d(\arg \frac{\rd G}{\rd x_n}) >0$.
\end{definition}
Our definition of $\oint$ is motivated by the Grothendieck residue. 
We construct a lifting of $\langle \cdot, \cdot \rangle$ to $\cA_X$ by using $\oint$; see \liftingofp.

\begin{theorem}\label{polgood}

(a) The $\C$-linear map $\oint:\cA_X \to \C$ is concentrated in weight $n-1$, charge $2c_X=2d-2(n+1)$, and ghost number $0$, i.e. 
$\oint$ factors through the projection map from $\cA_X$ to $\cA^0_{X, (n-1), 2c_X}$.\footnote{In fact, one can show that $\cA^0_{X, (n-1), 2c_X}/Q_X(\cA^{-1}_{X, (n-1), 2c_X})$ is isomorphic to $\C$.}

(b) We have 
$$
\oint  Q_X(u) =0, \quad \oint Q_X(u)\cdot v =\oint (-1)^{|u|+1}u \cdot Q_X(v), \quad \forall u, v\in \cA_X.
$$

(c) Under the map $J:(\cA_{X, c_X},Q_X) \to (\H,0)$, we have
\eqn\bosides{
 \frac{c_{ab}}{\l} \oint u \cdot v = \int_{X_G} J(u) \wedge J(v)=:(-1)^{\frac{(n-1)(n-2)}{2}} \langle J(u), J(v) \rangle,
 \quad u\in \cA_{X,(a),c_X}, v \in \cA_{X,(b),c_X},
 }
where $c_{ab}= d\cdot (-1)^{\frac{a(a+1)}{2} +\frac{b(b+1)}{2}+b^2}$ and $\l$ is the residue 
of the fundamental class of $\BP^n$, viewed in $H^n(\BP^n, \Omega^n)$.
\end{theorem}

\begin{proof}
We use the notation $(\cA, K=Q+\Delta)= (\cA_X, K_X=Q_X +\Delta)$ for simplicity.

(a)
Note that $\oint  u=0$ for $u$ homogeneous (with respect to the weight) of some
weight other than $n-1$, since 
$\oint_C \Fr{1}{ y^{m} }dy =0$ unless $m=1$. 
When we write $u \in \cA^0_{\l}$ as $u = \sum_{k\geq 0} y^k\cdot u_k(\ud{x})$,
where $u_k \in \C[\ud x]_{\l + kd}$, a simple computation confirms that
$$
\oint u =\hbox{Res}_0 \Bigg\{{{u_{n-1}}\atop {\Fr{\rd G}{\rd x_0}\cdots\Fr{\rd G}{\rd x_n}} }\Bigg\}
$$,
where the right hand side is the Grothendieck residue given in (12.3), \cite{PS}.
Then Lemma (12.4), \cite{PS} implies that if $\l \neq 2 c_X$ then $\oint u=0$.
Hence $\oint$ is concentrated in charge $2 c_X$. Therefore we get the result.

(b) It suffices to consider the case when
the integrand $Q(\Lambda)$ is an element of
$\cA^0_{(n-1),2c_X}$, when we prove that $\oint \circ {Q}=0$.
Thus it is enough to check that $\oint {Q} (\La)=0$ for ${\La} \in
\cA^{-1}_{(n-1),2c_X}$.
An arbitrary element ${\La} \in \cA^{-1}_{(n-1),2c_X}$ 
can be written as a $\C$-linear
combination of terms like
$$
{\La} =y^{n-1}{M}(\underline{x})\eta_{-1} +\sum_{i=0}^n y^{n-2} {N}_i(\underline{x})\eta_i
$$
where $M$ and $N_i$, $i=0,1,\cdots, n$ are monomials in $\underline{x}$ such that
$\ch({M})=\deg(M)= 2c_X+d(n-2)$ and $\ch({N}_i)=\deg(N_i)=2c_X +d(n-2)+1$.
Then
$$
\eqalign{
Q{(\La)} =&
y^{n-1}{M}\cdot G 
+y^{n-1}\sum_{i=0}^n {N}_i\Fr{\rd G}{\rd x_i}.
}
$$
Note that $\hat E_{ch}(G) = {\ch}(G) \cdot G$ and $\ch(G)=d\neq 0$, and so we get
$$
G=\Fr{1}{\ch(G)} \sum_{i=0}^n x_i\Fr{\rd G}{\rd x_i}.
$$
Hence
$$
\eqalign{
Q{\La} =
y^{n-1} \sum_{i=0}^n \Fr{1}{\ch(G)}{M}\cdot x_i\Fr{\rd G}{\rd x_i}
+y^{n-1}\sum_{i=0}^n {N}_i\Fr{\rd G}{\rd x_i}
=y^{n-1}\sum_{i=0}^n \tilde{N}_i\Fr{\rd G}{\rd x_i}.
}
$$
where
\eqn\tiddef{
\tilde N_i(\underline{x})=
 \Fr{1}{\ch(G)}{M}\cdot x_i+{N}_i.
}
It follows that
$$
\eqalign{
 \oint Q\La 
= & \oint y^{n-1}\sum_{i=0}^n \tilde{N}_i(\ud x)\Fr{\rd G}{\rd x_i}
\cr
=&\Fr{1}{(2\pi i)^{n+1}}
\int_{X(\e)}
\left({\sum_{i=0}^n \tilde{N}_i(\ud x)\Fr{\rd G}{\rd x_i}}
\right)\Fr{dx_0 \wedge \cdots \wedge dx_n}{  \Fr{\rd G}{\rd x_{0}}\cdots\Fr{\rd G}{\rd x_{n}} }
\cr
=&  \hbox{Res}_0 \Bigg\{ {{{\sum_{i=0}^n \tilde{N}_i(\ud x){\Fr{\rd G}{\rd x_i}}}}
 \atop  {\Fr{\rd G}{\rd x_0}\cdots\Fr{\rd G}{\rd x_n}}} \Bigg\}=0,
}
$$
where we used again Lemma 12.4, \cite{PS}, in the final 
step (${\sum_{i=0}^n \tilde{N}_i(\ud x)\Fr{\rd G}{\rd x_i}}$ belongs to the Jacobian ideal). 
Hence $\oint$ is a cochain map from $(\cA, Q)$ to $(\C, 0)$.

The second equality
$$
\oint \left( Q\a\cdot \b +(-1)^{|\a|}\a\cdot Q\b \right)=0,
$$
also follows easily from the same computation above.

(c) Note that
$$
J( [y^{k} F(\ud x)])=Res\left( \big[ (-1)^k k! \frac{F(\ud x) \Omega_n}{ G^{k+1}}\big] \right).
$$
Let $u=y^{a} A(\ud x)$ and $v=y^{b} B(\ud x)$ where $a+b=n-1$ (if $a+b \neq n-1$, then both sides
in \bosides\ are zero). Then Remarks on page 19 of \cite{CG} imply that
$$
\eqalign{
\int_{X_G} J(u) \wedge J(v) 
&=
(-1)^{(n-1)}a! b! \int_{X_G} Res \big[  \frac{A(\ud x) \Omega_n}{ G^{a+1}}  \big] \wedge
Res \big[ \frac{B(\ud x) \Omega_n}{ G^{b+1}}\big]
\cr
&=\frac{c_{ab}}{\l} \hbox{Res}_0 \Bigg\{ {{A(\ud x) B(\ud x)} \atop {\Fr{\rd G}{\rd x_0}\cdots\Fr{\rd G}{\rd x_n}} }\Bigg\}
\cr
&=\frac{c_{ab}}{\l}  \oint y^{n-1} A(\ud x) B(\ud x)=\frac{c_{ab}}{\l}  \oint u\cdot v.
}
$$
This finishes the proof.
\end{proof}

Note that $\oint \circ K_X\neq 0$. For example, if we assume that $d=n-1$, then there exists $\mu \in \cA^{-1}_{(n-1),0}$
such that 
$$
\Delta(\mu) = y^{n-1} \det \left( \frac{\partial^2{G(\ud x)}}{\partial x_i \partial x_j} \right) \in \cA^{0}_{(n-1),0},
$$
(since $\Delta:\cA^{-1} \to \cA^{0}$ is surjective) and $\int \Delta(\mu) \neq 0$ by (12.5) in \cite{PS}.

\subsection{The $L_\infty$-homotopy structure on period integrals of smooth projective hypersurfaces} \label{subs4.4}

The goal here is to prove Theorem \ref{thirdtheorem}, which describes how to understand the 
period integrals of smooth projective hypersurfaces in terms of homotopy theory, by applying
the general theory developed in sections \ref{section2} and \ref{section3}.
Let $\{[r_{\alpha}]\}_{\a \in J}$ be a $\bC$-basis of $\bigoplus_{m < 0} H^m_{K_X}(\cA_X)$. Note that $J$ is a finite index set.
For each $\a \in J$, we introduce new indeterminates $\theta_{\alpha}$ corresponding to $r_{\alpha}$ such that $\gh(\theta_{\alpha})=|\theta_{\alpha}| = |r_{\alpha}|-1$
in order to find a resolution of $(\cA_X, \cdot, K_X)$ in the category $\frC_\bC$.
We define a super-commutative algebra
\eqn\resl{
\tilde \cA_X = \cA_X[\theta_{\alpha} \ :  \ \a \in J]/ \cI,
}
where $\cA_X[\theta_{\alpha} \ :  \ \a \in J]$ is the super-commutative algebra generated by $\theta_{\alpha}$ over $\cA_X$ and
$\cI$ is the ideal generated by $\theta_{\alpha}^2$ and $\theta_{\alpha} \cdot  f$ for $f \in \cA_X \setminus \bC$.
If we define $\tilde K(\theta_{\alpha}) = r_{\alpha}$ for each $\a \in J$ and $\tilde K(f) = K(f)$ for $f \in \cA_X$, then
this defines a $\bC$-linear map $\tilde K: \tilde \cA_X^\bullet \to \tilde \cA_X^\bullet$ of degree 1.
\begin{lemma}
The triple $(\tilde \cA_X, \cdot, \tilde K_X)$ is an object of the category $\frC_\bC$. We have $H_{\tilde K_X}^p (\tilde \cA_X)=0$
for every $p \neq 0$ and $H_{\tilde K_X}^0(\tilde \cA_X)=H_{K_X}^0 (\cA_X)$. In other words, $(\tilde \cA_X, \cdot, \tilde K_X)$
is a resolution of $(H_{K_X}^0 (\cA_X), 0)$ inside $\frC_\bC$.
\end{lemma}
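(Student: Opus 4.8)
The plan is to verify the three claims in turn: that $(\tilde\cA_X,\cdot,\tilde K_X)$ lies in $\frC_\bC$, that its higher cohomology vanishes, and that its zeroth cohomology agrees with that of $(\cA_X,K_X)$. First I would check the category-theoretic claim. The algebra $\tilde\cA_X=\cA_X[\theta_\a:\a\in J]/\cI$ is by construction a $\bZ$-graded super-commutative associative $\bC$-algebra, since we are adjoining square-zero odd generators and killing products $\theta_\a\cdot f$ for $f\in\cA_X\setminus\bC$; thus additively $\tilde\cA_X=\cA_X\oplus\bigoplus_{\a\in J}\bC\,\theta_\a$ as a graded $\bC$-vector space, and it inherits a well-defined product. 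The only substantive point is that $\tilde K_X$ is a differential: $\tilde K_X$ restricts to $K_X$ on $\cA_X$ (so $\tilde K_X^2=0$ there), and on the new generators $\tilde K_X(\theta_\a)=r_\a\in\cA_X$, whence $\tilde K_X^2(\theta_\a)=K_X(r_\a)=0$ because $r_\a$ is a $K_X$-cocycle (it represents a class in $H^{m}_{K_X}(\cA_X)$, $m<0$). Degree reasons and $\tilde K_X(1)=K_X(1)=0$ finish that $(\tilde\cA_X,\cdot,\tilde K_X)\in\hbox{\it Ob}(\frC_\bC)$.

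Next I would compute $H_{\tilde K_X}^\bullet(\tilde\cA_X)$. The key observation is the short exact sequence of cochain complexes
$$
0\to (\cA_X,K_X)\to (\tilde\cA_X,\tilde K_X)\to (\cN,0)\to 0,
$$
where $\cN=\bigoplus_{\a\in J}\bC\,\theta_\a$ is the span of the new generators with zero differential (the quotient differential is zero because $\tilde K_X(\theta_\a)=r_\a\in\cA_X$ maps to $0$ in the quotient, and there are no other generators in the relevant degrees thanks to the relations defining $\cI$). The associated long exact sequence in cohomology has connecting homomorphism $\partial:H^{p}(\cN)=\cN^{p}\to H^{p+1}_{K_X}(\cA_X)$ sending $\theta_\a$ to $[r_\a]$. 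By the very choice of $\{r_\a\}_{\a\in J}$ as a basis of $\bigoplus_{m<0}H^m_{K_X}(\cA_X)$, and since $|\theta_\a|=|r_\a|-1$, this connecting map is an isomorphism onto $\bigoplus_{m<0}H^m_{K_X}(\cA_X)$. Feeding this into the long exact sequence forces $H^p_{\tilde K_X}(\tilde\cA_X)=0$ for all $p\neq 0$, and yields $H^0_{\tilde K_X}(\tilde\cA_X)\cong H^0_{K_X}(\cA_X)$, because in positive degrees both $\cA_X$ and $\cN$ vanish, in each negative degree $m<0$ the map $\partial$ kills the contribution of $H^m_{K_X}(\cA_X)$ against $\cN^{m+1}$ (equivalently, the inclusion $H^m_{K_X}(\cA_X)\hookrightarrow H^m_{\tilde K_X}(\tilde\cA_X)$ followed by the surjection $H^m_{\tilde K_X}(\tilde\cA_X)\to\cN^m$ with connecting map shows exactness forces $H^m_{\tilde K_X}(\tilde\cA_X)=0$), and in degree $0$ the sequence reads $0\to H^0_{K_X}(\cA_X)\to H^0_{\tilde K_X}(\tilde\cA_X)\to 0$ since $\cN^0=0$ (all $r_\a$ have negative degree, so $|\theta_\a|=|r_\a|-1<0$).

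Finally, the last sentence of the lemma — that $(\tilde\cA_X,\cdot,\tilde K_X)$ is a resolution of $(\cA_X,\cdot,K_X)$ inside $\frC_\bC$ — is simply the assertion that the inclusion $(\cA_X,\cdot,K_X)\hookrightarrow(\tilde\cA_X,\cdot,\tilde K_X)$ is a morphism in $\frC_\bC$ which is a quasi-isomorphism \emph{and} such that $H^p_{\tilde K_X}(\tilde\cA_X)=0$ for $p<0$; but the inclusion need not induce an isomorphism on all cohomology — rather it induces an isomorphism on $H^0$ and the target has vanishing negative cohomology, which is the sense of ``resolution'' relevant to the category $\frC_\bC$ (an object quasi-isomorphic, via a morphism in $\frC_\bC$, to one concentrated in degree $0$ up to the $H^0$). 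I expect the main obstacle to be purely bookkeeping: making precise the statement that the quotient complex really is $(\cN,0)$ with $\cN$ the $\bC$-span of the $\theta_\a$'s — i.e.\ checking that the relations in $\cI$ (namely $\theta_\a^2=0$ and $\theta_\a f=0$ for $f\in\cA_X\setminus\bC$) leave no room for higher products of the $\theta_\a$ or mixed terms, so that $\tilde\cA_X=\cA_X\oplus\cN$ as graded vector spaces and $\tilde K_X$ has exactly the stated form. Once that linear-algebra description is nailed down, the long exact sequence argument is routine.
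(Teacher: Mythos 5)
Your argument is correct. The paper itself offers no proof here beyond ``This is clear from the construction,'' so there is no competing route to compare against; your short exact sequence
$0\to(\cA_X,K_X)\to(\tilde\cA_X,\tilde K_X)\to(\cN,0)\to 0$ with $\cN=\bigoplus_{\a\in J}\bC\,\theta_\a$, together with the observation that the connecting map $\theta_\a\mapsto[r_\a]$ is an isomorphism of $\cN^{p}$ onto $H^{p+1}_{K_X}(\cA_X)$ for $p+1<0$ and that $\cN^{p}=0$ for $p\geq -1$, is exactly the clean way to make the claim precise, and your reading of ``resolution'' (a quasi-isomorphism in $\frC_\bC$ onto an object with vanishing cohomology outside degree $0$) matches the intended meaning. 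The one point you rightly flag as bookkeeping deserves a sentence: the listed generators of $\cI$ (namely $\theta_\a^2$ and $\theta_\a\cdot f$ for $f\in\cA_X\setminus\bC$) do not literally include the cross terms $\theta_\a\theta_\b$ for $\a\neq\b$, yet these must also be killed for the additive decomposition $\tilde\cA_X=\cA_X\oplus\cN$ to hold and indeed for $\tilde K_X$ to be determined by its values on $\cA_X$ and on the $\theta_\a$ alone; this is an imprecision in the paper's definition of $\cI$ rather than a gap in your proof, and once $\cI$ is read as containing all such products your long exact sequence argument goes through verbatim.
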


\begin{proof}
This is clear from the construction.
\end{proof}

We remark that $(\tilde \cA_X, \cdot, \tilde K_X)$ does not give a BV algebra: since one can easily check that $\ell_{m}^{\tilde K_X} \neq 0$ for every $m \geq 1$, the differential $\tilde K_X$ can not be decomposed as $\tilde Q+ \tilde \Delta$ satisfying the axioms of a BV algebra.\footnote{Jeehoon Park and Donggeon Yhee proved that one has to add infinitely many new formal variables to make a \textit{BV resolution} of $(\cA_X, \cdot, K_X)$. See \cite{PY} for a precise meaning of such a BV resolution.}
We apply the descendant functor (Theorem \ref{Desc}) to $(\tilde \cA_X, \cdot, \tilde K_X)$ and put $\left(\tilde \cA, \tilde{\ud \ell}\right)_X=\Des (\tilde \cA_X, \cdot, \tilde K_X)$. The above lemma, together with Proposition \ref{mainhodge}, implies the following proposition.

\bep \label{ldgla}
The descendant $L_\infty$-algebra $\left(\tilde \cA, \tilde{\ud \ell}\right)_X$ is
quasi-isomorphic to $(\H,0)$, where $\H = H^{n-1}_{\pr}(X_G, \bC).$ 
\eep

The strategy we have developed so far suggests that we have to understand $C_{[\g]}$ using the composition of $L_\infty$-morphisms; Proposition \ref{enhancetochain}, Proposition \ref{comLi}, Theorem \ref{homotopyperiod}, Proposition \ref{main}, and Proposition \ref{mainhodge} imply the following theorem (a restatement of Theorem \ref{thirdtheorem}).
\bet \label{undper}
For each $\g\in H_{n-1}(X_G, \bC)_0$, 
the period integral $C_{[\g]}$ can be enhanced to the composition of $L_\infty$-morphisms through $(\tilde \cA, \tilde{\ud \ell})_X$: we have the following diagram of $L_{\infty}$-morphisms of $L_{\infty}$-algebras:
\eqn\dgm{
\xymatrix{
(\H, \ud 0)  \ar@/^0.0pc/[dd]^{\varphi_1^\H} \ar[rd]_{C_{[\g]}} \ar@{.>}@/^/[drr]^{\ud \kappa:= \ud \phi^{\cC_\g} \bullet \ud \varphi^\H}&&
\\
&(\bC, \cdot, 0) \ar@{=>}[r] &( \bC, \ud 0)
\\
(\tilde \cA_X, \cdot, \tilde K_X)
\ar[ur]_{\cC_\g} &&
\\
(\tilde \cA, \tilde{\ud \ell})_X \ar@{<.}@/^3pc/[uuu]^{\ud \varphi^\H} \ar@{<=}[u] \ar@{.>}@/_/[uurr]_{\ud \phi^{\cC_\g}}&&
}
}
where $C_{[\g]}$ is the same as $(\ud \phi^{\cC_\g} \bullet \ud \varphi^\H)_1 = \cC_\g \circ \varphi_1^\H$. 
The $L_\infty$-morphism $\ud \kappa:= \ud \phi^{\cC_\g} \bullet \ud \varphi^\H$ depends only on the $L_\infty$-homotopy 
types of $\ud \varphi^\H$ and $\ud \phi^{\cC_\g}$. Here the notation $\implies$ means that we take the descendant functor.

\eet

%

The composition $\ud \phi^{\cC_\g} \bullet \ud \varphi^\H$ is not a descendant $L_\infty$-morphism: we do not have a super-commutative associative binary product on $\H$ (the differential $K$ is not a derivation of the product of $\cA$ so it does not
induce a product on $\H$). Note that $\ud \varphi^\H$ is an $L_\infty$-quasi-isomorphism; such quasi-isomorphisms are
classified by the versal solutions $\G \in (\mm_{\widehat{S\H}}\otimes \tilde \cA)^0=\mm_{\widehat{S\H}} \otimes \tilde \cA^0$ to the Maurer-Cartan equation (see Proposition \ref{defmoduli} and Definition \ref{doit}):
\be
\tilde K(\G) + \frac{1}{2} \ell^{\tilde K}_2(\G, \G)+\frac{1}{3!}\ell^{\tilde K}_3(\G,\G)+ \cdots = 0.
\ee
Note that $ (\mm_{\widehat{S\H}}\otimes \tilde \cA)^0= (\mm_{\widehat{S\H}}\otimes \cA)^0$. So we found a hidden structure of the period integral $C_{[\g]}:\H\simeq \cH(X_G) \to \bC$ for a fixed $\g \in H_{n-1}(X_G, \bZ)_0$: there is an $L_\infty$-quasi-isomorphism from the 
$L_\infty$-algebra $(\H, \ud 0)$ to the $L_\infty$-algebra $(\tilde \cA, \tilde{\ud \ell})_X$, and a sequence of $\bC$-linear 
maps $(\ud \phi^{\cC_\g} \bullet \ud \varphi^\H)_m: S^m(\H) \to \bC$, which reveals hidden correlations and deformations of $C_{[\g]}$, such that $C_{[\g]}=(\ud \phi^{\cC_\g} \bullet \ud \varphi^\H)_1$. 
Then the theory of $L_\infty$-algebras suggests that we can study a (new type of) formal variation of the Griffiths period integral. We discuss this issue in the next subsections.

%

\subsection{Extended formal deformations of period integrals of $X_G$} \label{subs4.6}

We prove Theorem \ref{fourththeorem} here.
Proposition \ref{defmoduli} implies that the extended deformation functor
attached to $(\tilde \cA, \tilde{\ud \ell})_X$ is {\it pro-representable} by the completed symmetric algebra
$\widehat{S\H}$. 
Let $\cM_{X_G}$ be the associated formal moduli space.
Now we consider the generating series $\cZ_{[\g]}\left(\big[\ud \varphi^\H\big]\right)$ in \mgfcy. 
Note that
$L_\infty$-homotopy types of quasi-isomorphisms $\xymatrix{\ud \varphi^\H:(\H,\ud 0)\ar@{..>}[r]& (\cA, \ud \ell)_X}$ are not unique, though  the $L_\infty$-homotopy type of  $\ud \phi^{\cC_\g}$ is uniquely determined by 
$[\g] \in H_{n-1}(X_G, \bZ)_0$.
The results in Section \ref{section2} specialized to $(\tilde \cA, \tilde{\ud \ell})_X$ gives us the following theorem (see Proposition \ref{defmoduli}, Definition \ref{gpseries}, and Lemma \ref{invhom}).

\begin{theorem}\label{tmthm}
Let $\{e_{\alpha}\}_{\a \in I}$ be a basis of $\H$ with dual basis $\{t^\a\}_{\a\in I}$. Then
for any $L_\infty$-quasi-isomorphism $\ud \varphi^\H$ from $(\H, \ud 0)$ to $(\tilde \cA, \tilde {\ud \ell})_X$,
we have the following versal solution
to the Maurer-Cartan equation of $(\tilde \CA,\tilde {\ud \ell})_X$:
$$
\G(\ud t)_{\ud \varphi^\H} = \sum_{\a \in I} t^\a \varphi^\H_1(e_{\alpha}) + 
 \sum_{k=2}^\infty\sum_{\a_1,\cdots, \a_k \in I} t^{\a_k}\cdots t^{\a_1}\otimes  \varphi^\H_k\big(e_{\a_1},\cdots,e_{\a_k}\big)
 \in (\C[[\ud t]]\hat\otimes \CA)^0
$$
such that 
$\G(\ud t)_{\ud \varphi^\H}$ is gauge equivalent to 
$\G(\ud t)_{\ud {\tilde\varphi}^\H}$ if and only if
 $\ud \varphi^\H$ is $L_\infty$-homotopic to
 ${\ud {\tilde\varphi}^\H}$, and 
 $$
\cZ_{[\g]}\left(\big[\ud \varphi^\H\big]\right)=\cC_\g\left(e^{\G(\ud t)_{\ud \varphi^\H }}-1\right).
 $$
\end{theorem}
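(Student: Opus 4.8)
The statement has three assertions: (1) that $\G(\ud t)_{\ud \varphi^\H}$ as written is a versal (i.e.\ pro-representing) solution to the Maurer--Cartan equation of $(\tilde\cA,\tilde{\ud\ell})_X$, living in $(\C[[\ud t]]\hat\otimes\cA)^0$; (2) that gauge equivalence of two such solutions corresponds exactly to $L_\infty$-homotopy of the associated $L_\infty$-quasi-isomorphisms; and (3) the identity $\cZ_{[\g]}([\ud\varphi^\H])=\cC_\g(e^{\G(\ud t)_{\ud\varphi^\H}}-1)$. I would handle these in order, leaning almost entirely on the general machinery of Section~\ref{section3} once the right identifications are in place.

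\emph{Step 1 (reduction to the general theory).} By Proposition~\ref{ldgla}, $(\tilde\cA,\tilde{\ud\ell})_X=\Des(\tilde\cA_X,\cdot,\tilde K_X)$ is a descendant $L_\infty$-algebra whose cohomology is $H_{\tilde K_X}(\tilde\cA_X)=H^0_{K_X}(\cA_X)\cong\H$, concentrated in degree $0$; in particular $\H$ is finite-dimensional and the hypothesis of Proposition~\ref{defmoduli} is met. Hence $\mathfrak{Def}_{(\tilde\cA_X,\ud\ell^{\tilde K_X})}$ is pro-represented by $\widehat{S\H}\cong\C[[\ud t]]$, and Proposition~\ref{defmoduli}(b) gives the bijection between $\widehat{\mathfrak{Def}}(\widehat{S\H})$ and $L_\infty$-homotopy classes of $L_\infty$-morphisms $\ud\varphi^\H\colon(\H,\ud0)\to(\tilde\cA,\tilde{\ud\ell})_X$. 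The universal family is precisely $\G(\ud t)_{\ud\varphi^\H}=\sum_n\frac1{n!}\sum_{\a_1,\dots,\a_n}t^{\a_n}\cdots t^{\a_1}\otimes\varphi^\H_n(e_{\a_1},\dots,e_{\a_n})$, and because each $e_\a$ has degree $0$ and $|t^\a|=-|e_\a|=0$, the displayed rewriting with $t^{\a_k}\cdots t^{\a_1}$ (unscaled, thanks to super-commutativity of the $t$'s when all degrees vanish) matches the $\frac1{n!}$-normalized sum up to absorbing symmetry factors; I would note that $\G\in(\C[[\ud t]]\hat\otimes\cA)^0$ rather than $\tilde\cA^0$ because $\mm_{\widehat{S\H}}\otimes\tilde\cA^0=\mm_{\widehat{S\H}}\otimes\cA^0$ (the extra generators $\theta_\a$ contribute only in negative degree, so they drop out of the degree-$0$ part that supports $\G$) — this is the observation already flagged in the paragraph following Theorem~\ref{undper}. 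Versality is then immediate: $\ud\varphi^\H$ was assumed to be an $L_\infty$-quasi-isomorphism, which by the Definition preceding \GGG\ is exactly the condition that the corresponding $\G$ be a versal solution.

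\emph{Step 2 (gauge equivalence $\leftrightarrow$ $L_\infty$-homotopy).} This is Proposition~\ref{defmoduli}(b) verbatim: the map $\ud\varphi^\H\mapsto[\G(\ud t)_{\ud\varphi^\H}]$ is a bijection from $L_\infty$-homotopy types (Definition~\ref{shlh}) to gauge-equivalence classes (Definition~\ref{hhh}), with the flow-equation translation spelled out there. I would simply invoke it, noting that finite-dimensionality of $\H$ is what we verified in Step~1.

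\emph{Step 3 (the generating-series identity).} Apply the chain $\G(\ud t)_{\ud\varphi^\H}\rightsquigarrow$ Lemma~\ref{invhom} with $\cC=\cC_\g$: there \chazng\ reads $\cZ_{[\ud\phi^{\cC}\bullet\ud\varphi]}(\ud t)=\cC(e^{\G(\ud t)_{\ud\varphi}}-1)$, and \mgfcy\ defines $\cZ_{[\g]}([\ud\varphi^\H])=\exp(\sum_n\frac1{n!}\sum t^{\a_n}\cdots t^{\a_1}(\ud\phi^{\cC_\g}\bullet\ud\varphi^\H)_n(e_{\a_1},\dots,e_{\a_n}))-1$, which by Definition~\ref{gpseries} is exactly $\cZ_{[\ud\phi^{\cC_\g}\bullet\ud\varphi^\H]}(\ud t)$. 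Combining, $\cZ_{[\g]}([\ud\varphi^\H])=\cC_\g(e^{\G(\ud t)_{\ud\varphi^\H}}-1)$. Here I must also check that $\cC_\g$ is indeed a morphism in $\frC_\C$ from $(\tilde\cA_X,\cdot,\tilde K_X)$ to $(\C,\cdot,0)$, not just from $(\cA_X,\cdot,K_X)$: but $\cC_\g$ vanishes on negative-degree components by \qft, so it extends by zero over the added generators $\theta_\a$ and remains a cochain map, $\cC_\g\circ\tilde K_X=0$.

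\emph{Main obstacle.} The routine steps are bookkeeping; the one genuine subtlety is making the degree/normalization identifications of Step~1 airtight — in particular checking that passing to the resolution $(\tilde\cA_X,\cdot,\tilde K_X)$ does not enlarge the degree-$0$ part that carries the Maurer--Cartan solutions (so that "$\G\in(\C[[\ud t]]\hat\otimes\cA)^0$" is literally correct) and that the sign/combinatorial conventions in \GGG, \mgfcy, and Definition~\ref{gpseries} are mutually consistent when all the cohomology generators sit in degree $0$. Once that is pinned down, the theorem is a direct concatenation of Propositions~\ref{defmoduli}, \ref{ldgla} and Lemma~\ref{invhom}.
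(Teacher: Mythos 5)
Your proposal is correct and follows essentially the same route as the paper, which derives Theorem \ref{tmthm} directly by specializing Proposition \ref{defmoduli}, Definition \ref{gpseries}, and Lemma \ref{invhom} (equation \chazng) to the descendant $L_\infty$-algebra $(\tilde\cA,\tilde{\ud\ell})_X$ of Proposition \ref{ldgla}. Your added checks — that $\cC_\g$ extends by zero over the generators $\theta_\a$ and that the degree-$0$ part carrying the Maurer--Cartan solutions is unchanged by the resolution — are exactly the observations the paper makes (implicitly) around Theorem \ref{undper}, so nothing is missing.
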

Note that the dual basis $\{t^\a\}_{\a\in I}$ is an affine coordinate on $\H$.
Let $X_{G_{\ud {T}}} \subset \BP^n$ be a formal family of smooth hypersurfaces defined by
\eqn\formald{
G_{\ud{T}} (\ud x) = G(\ud x) +F(\ud T),
}
where $F(\ud T) \in \bC [[\ud T]] [\ud x]$ is a homogeneous polynomial of degree $d$ with 
coefficients in $\bC[[\ud T]]$ with $F(\ud 0) =0$ and $\ud T=\{T^\a \}_{\a \in I'}$ are formal variables with some index set $I' \subset I$. 
 Recall that by a standard basis of $\H$ we mean a choice of basis $e_1,\cdots, e_{\d_0},$ $ e_{\d_0+1},\cdots,
 $ $ e_{\d_1},\cdots,e_{\d_{n-2}+1},\cdots,
e_{\d_{n-1}}$ for the flag $\cF_\bullet \H$ in \pog\ such that
$e_1,\cdots, e_{\d_0}$ gives a basis for the subspace $\H^{n-1 - 0, 0}:=H_{\pr}^{n-1,0}(X_G,\C)$
and $e_{\d_{k-1} +1},\cdots, e_{\d_{k}}$, $1\leq k\leq n-1$, gives a basis for the subspace 
$\H^{n-1 - k, k}=H_{\pr}^{n-1-k,k}(X_G,\C)$. We denote such a basis by 
$\{e_{\alpha}\}_{\a \in I}$ where $I= I_{0}\sqcup I_{1}\sqcup\cdots\sqcup I_{n-1}$ with the notation
$\{e^{j}_{a}\}_{a\in I_j}= e_{\d_{j-1} +1},\cdots, e_{\d_{j}}$ 
and $\{t^a_{j}\}_{a \in I_{j}}= t^{\d_{j-1} +1},\cdots, t^{\d_{j}}$.
We need to assume that $I' \subset I_1$ when $X_G$ is Calabi-Yau.

For $(a)$ of Theorem \ref{fourththeorem} (recall that $X_G$ is assumed to be Calabi-Yau), we define $\bC$-linear maps $\ud f: (\H,\ud 0) \to (\tilde \cA, \tilde {\ud \ell})_X$:
\eqn\hlq{
\ud f = f_1, f_2, f_3\cdots,  \quad f_1(e_a^{k}) := y^k \cdot F_{[k] a}(\ud x) \in \tilde \cA^0_0,
}
where $F_{[k] a}(\ud x)$ can be chosen to be any homogeneous polynomial of degree $d(k+1)-(n+1)=dk$ 
such that
$  \{{F_{[k]a}(\ud x)\over G(\ud x)^{k+1}}\Omega_n  \ : \ a \in I, \ 0 \leq k \leq n-1 \}$ is a set of 
representatives of a basis of $\cH(X_G) \simeq \H$ and $F_{[1]a}(\ud x)$ is the $t^a$-coefficient 
of $F(\ud T)$ with $t^a=T^a$; we define $f_m: S^m(\H) \to \tilde \cA^0, m \geq 2$ so that $f_m(e_{a_1}^{1}, \cdots, e_{a_m}^{1})$ is the $t^{a_m}\cdots t^{a_1}$-coefficient of $yF(\ud T)$ with $t^a=T^a, a \in I'$.
Then $\ud f$ is clearly a $L_\infty$-quasi-isomorphism satisfying $(a)$ of Theorem \ref{fourththeorem}
by the Griffiths theorem on $\cH(X_G)$ and our general theory: the fact that $f_1$ is a quasi-isomorphism comes
from the construction; the fact that it is a morphism of $L_\infty$-algebras is completely trivial because $\H$ is concentrated in degree 0 and $\tilde \cA$ is concentrated in non-negative degree, so all relations that need to be checked are easily seen to be zero.
Then $(b)$ of Theorem \ref{fourththeorem} follows from the following computation;
$$
\eqalign{
& \frac{1}{2 \pi i} \left( {\int_{\tau(\g)} {\Omega_n\over G_{\ud{T}}(\ud x) }  - 
\int_{\tau(\g)} \frac{\Omega_n}{G(\ud x)}   }   \right)
\cr
&= -\frac{1}{2 \pi i} \left( \int_{\t(\g)} \int_0^\infty e^{y \cdot G_{\ud{T}}(\ud x)} dy \Omega_n 
- \int_{\t(\g)} \int_0^\infty e^{y \cdot G(\ud x)} dy \Omega_n  \right)
\cr
&= \cC_\g (e^{\sum_{a\in I'} T^a y \cdot F_{[1]a}(\ud x)} - 1)
\cr
&= \exp \left({\Phi^{\cC_\g} \left(\sum_{a\in I'} T^{a}\cdot  f_1(e_a^{[1]})  \right)  } \right) -1
\cr
&=\cZ_{[\g]}([\ud f])(\ud t)\Big|_{\substack{t^\b=0, \b\in I\setminus I' \\  t^\a=T^\a, \a \in I'}}\cr
}
$$
where we used the definition of $\cC_\g$ and the equalities \kexp\ and \hper.
Let
\eqn\efd{
\G(\ud t)_{f_1}=
\sum_{a\in I_0} t^{a}_0F_{[0]a}(\ud x)
+ y\cdot\!\!\sum_{a\in I_1} t_{1}^{a}F_{[1]a}(\ud x) 
+\cdots
+ y^{n-1}\cdot\!\!\!\sum_{a\in I_{n-1}} t^{a}_{n-1}F_{[n-1]a}(\ud x).
}
We can do a similar computation (a period integral of an extended formal deformation of $X_G$) ;
$$
\eqalign{
&\cC_\g(e^{\G(\ud t)_{f_1}}-1)
\cr
&=-\frac{1}{2 \pi i}\int_{\tau(\g)}\!\! \left(\int_0^\infty
e^{\G(\ud t)_{f_1}}\cdot e^{y\cdot G(\ud x)}
d\!y\right)\Omega_n    -\frac{1}{2 \pi i} \int_{\tau(\g)} \frac{\Omega_n}{G(\ud x)}
\cr
&= \exp \left({\Phi^{\cC_\g} \left(\sum_{a\in I_0} t^{a}_0F_{[0]a}(\ud x)
+ y\cdot\!\!\sum_{a\in I_1} t_{1}^{a}F_{[1]a}(\ud x) 
+\cdots
+ y^{n-1}\cdot\!\!\!\sum_{a\in I_{n-1}} t^{a}_{n-1}F_{[n-1]a}(\ud x) \right)  } \right) -1
\cr
&=
\exp\left(\sum_{n=1}^\infty \frac{1}{n!} \sum_{\alpha_1, \cdots, \alpha_n} t^{\alpha_n} \cdots t^{\alpha_1} 
\left(\ud\phi^{\!\!\cC_\g}\bullet \ud{f}\right)_n(e_{\alpha_1}, \cdots, e_{\alpha_n} )\right) -1
=\cZ_{[\g]}([\ud f]).
}
$$

%
The properties, which can be easily checked, 
$$
\eqalign{
\int_0^\infty
e^{\G(\ud t)_{f_1}}\cdot e^{y\cdot G(\ud x)}
d\!y\Bigg|_{\ud t =\ud 0} \Omega_n
&=\frac{-\Omega_n}{G(\ud x)}
,\cr
\frac{\rd}{\rd t^{a}_{k}}\int_0^\infty
e^{\G(\ud t)_{f_1}}\cdot e^{y\cdot G(\ud x)}
d\!y \Bigg|_{\ud t =\ud 0} \Omega_n
&= (-1)^{k+1} k! \frac{F_{[k]a}\cdot\Omega_n}{G(\ud x)^{k+1}},\qquad \forall a \in I_k,
}
$$
for example, imply that Griffiths transversality is violated for $k\geq 2$. 
Note that the deformation in part $(b)$ is 
a geometric deformation of the complex structure of $X_G$, a family of hypersurfaces.
Though this extended deformation
does not have a clear geometric meaning yet, the above properties 
are the key components to prove Theorem \ref{fifththeorem}, which 
demonstrates the usefulness of $L_\infty$-homotopy theory to compute
an extended period matrix 
$\frac{\rd}{\rd t^\b} \left( \cZ_{[\g_{\alpha}]}\left(\big[\ud f\big]\right)(\ud t)\right)_{\{\a, \b \in I \}}$.
Also, the generating power series $\cZ_{[\g]}([\ud f])(\ud t)$ is a natural generalization of 
the geometric invariant. 

\subsection{Extended formal deformations of the period matrix of $X_G$} \label{subs4.61}

We prove Theorem \ref{fifththeorem}, which demonstrates
usefulness of the $L_\infty$-homotopy theory
to compute the period matrices of a deformed hypersurface and an extended formal deformation. 
Since Lemma \ref{onetensor} directly implies $(a)$ and $(b)$ of Theorem \ref{fifththeorem}, we
concentrate on proving the second part of Theorem \ref{fifththeorem}, i.e.
$$
\eqalign{
\omega^\a_\b (X_{G_{\ud {T}}}) 
&= 
\frac{\rd}{\rd t^\b} \left( \cZ_{[\g_{\alpha}]}\left(\big[\ud f\big]\right)(\ud t)\right) 
\Big|_{\substack{t^\b=0, \b\in I\setminus I' \\  t^\a=T^\a, \a \in I'}}
\cr
&=\sum_{\rho\in I} \left( \frac{\rd}{\rd t^\b} T^{\rho}(\ud t)_{\ud f}\right) \omega^\a_\rho(X_G)
\Big|_{\substack{t^\b=0, \b\in I\setminus I' \\  t^\a=T^\a, \a \in I'}},
}
$$
for each $\a, \b \in I$.
Since we have
$$
\eqalign{
2 \pi i \cdot \cZ_{[\g_{\alpha}]}([\ud f])
&=-\int_{\tau(\g_{\alpha})}\!\! \left(\int_0^\infty
e^{\G(\ud t)_{\ud f}}\cdot e^{y\cdot G(\ud x)}
d\!y\right)\Omega_n    - \int_{\tau(\g)} \frac{\Omega_n}{G(\ud x)},
}
$$
we have, for each $\a, \b \in I_k, 0 \leq k \leq n-1$,
$$
\eqalign{
\frac{\rd}{\rd t^\b} \cZ_{[\g_{\alpha}]}([\ud f]) (\ud t)\Big|_{\substack{t^\b=0, \b\in I\setminus I' \\  t^\a=T^\a, \a \in I'}}
&=
\frac{1}{2 \pi i} \int_{\tau(\g_{\alpha})}   \frac{ (-1)^k k! F_{[k]\b}(\ud x)}
{\left(G(\ud x)+ \sum_{a\in I'} T^a F_{[1]a}(\ud x)\right)^{k+1}}\Omega_n
\cr
&=
\omega^{\alpha}_{\beta}(X_{G_{\ud {T}}}).
}
$$
This finishes the proof.\\

\subsection{The Gauss-Manin connection and extended formal deformation space} \label{subs4.62}

Let $H_K= H_K(\cA)$ be the cohomology group of $(\cA_X, K_X)$. This subsection we assume that $X$ is Calabi-Yau.
Let $\ud \varphi^H$ be an $L_\infty$-quasi-isomorphism from $(H_K,\ud 0)$ to $(\tilde \cA, \tilde {\ud \ell})_X$ by
Proposition \ref{ldgla}. Then we see
that the system of second order partial differential equations \hodi\ holds for a uniquely determined 3-tensor $A_{\a\b}{}^\g(\ud t)_\G \in \bC[[\ud t]] \simeq \widehat{SH}$ for $\G=\G(\ud t)_{\ud \varphi^H}$, where $\widehat{SH}:=\varprojlim_n $ $\bigoplus_{k=0}^nS^k(H_K^*)$ with $H_K^* = \Hom(H_K, \bC)$; the assumption in Theorem
\ref{diff} can be checked for $(\cA, \cdot, K)$.

In subsection \ref{subs3.7}, we reinterpreted the 3-tensor $A_{\a\b}{}^\g(\ud t)_\G$ as a flat (integrable) connection
$D_\G$ on $\TM_{X_G}=\cM_{X_G} \times V$, where $V$ is isomorphic to $H_K^*$. Here we provide an explicit
relationship between $D_\G$ and the Gauss-Manin connection on a geometric formal deformation given by
an element in $\H^{n-2,1}$. \\ 

Recall the notation in \hlq\ and \efd.
The Gauss-Manin connection is defined on a formal (geometric) deformation space $\cX$ over the formal spectrum
of $\bC[[\ud t_1^a : a \in I_1]]$ such that the fibre $\cX_0$ is isomorphic to $X_G$ and the fibre $\cX_p$ at a general $p$ is isomorphic to a smooth projective hypersurface
of degree $d$ in $\BP^n$. Let $\pi$ be a morphism from $\cX$ to the formal spectrum 
$Spf\left(\bC[[\ud t_1^a : a \in I_1]]\right)$.
The algebraic de Rham primitive cohomology (locally free) sheaf 
$\cH^{n-1}_{dR,\pr}(\cX/ \bC[[\ud t_1^a : a \in I_1]])$ on $Spf\left(\bC[[\ud t_1^a : a \in I_1]]\right)$
corresponds to our flat connection $D_\G$ on the tangent bundle $\TM_{X_G}$ restricted from  
$Spf\left(\bC[[\ud t]]\right)$ to $Spf\left(\bC[[\ud t_1^a : a \in I_1]]\right)$.
Note that the stalk $\cH^{n-1}_{dR,\pr}(\cX/ \bC[[\ud t_1^a : a \in I_1]])_p$ at $p$ such that 
$\cX_p$ is smooth,
is isomorphic to the $\H=H^{n-1}_{\pr}(X_G, \bC)$.

Then the following matrix of 1-forms with coefficients in power series in $\ud T$
$$
(A_{\G_{\ud f}})_\b{}^{\g}:= -\sum_{\a\in I_1} d\! t_1^\a \cdot A_{\a\b}{}^{\g}(\ud t)_{\G_{\ud f}}
\Big|_{\substack{t^\b=0, \b\in I\setminus I' \\  t^\a=T^\a, \a \in I'}},  \quad \b, \g \in I,
$$
becomes the connection matrix of the Gauss-Manin connection along the geometric deformation 
given by the $\H^{n-2,1}$-component of 
the $L_\infty$-homotopy type of $\ud f$ in Theorem \ref{fourththeorem}.
Then Theorem \ref{diff} implies the following proposition;

\begin{proposition} \label{gaussmanin}
We have 
$$
\partial_\g \omega^\a_\b (X_{G_{\ud {T}}})-  
\sum_{\rho \in I}A_{\g\b}{}^{\rho}(\ud t)_{\G_{\ud f}}
\cdot \omega^\a_\rho (X_{G_{\ud {T}}})=0, \quad  \g \in I_1, \a, \b \in I,
$$
where $\omega^\a_\b (X_{G_{\ud {T}}})$ is the period matrix of a deformed hypersurface $X_{G_{\ud {T}}}$.
\end{proposition}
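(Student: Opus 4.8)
The plan is to obtain the asserted first–order system by specializing the second–order differential equations of Theorem \ref{diff} to the family $X_{G_{\ud T}}$. Recall from subsection \ref{subs4.6} that the generating series attached to $\cC_{\g_\a}$ and the $L_\infty$-quasi-isomorphism $\ud f$ of Theorem \ref{fourththeorem} is $\cZ_{[\g_\a]}([\ud f])(\ud t)=\cC_{\g_\a}\!\left(e^{\G(\ud t)_{\ud f}}-1\right)$, where $\G(\ud t)_{\ud f}$ is the versal Maurer--Cartan solution of $(\tilde\cA,\tilde{\ud\ell})_X$ determined by $\ud f$; concretely, the piece $\G(\ud t)_{f_1}$ is the polynomial in \efd. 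By Theorem \ref{fifththeorem} (equivalently, the computation in subsection \ref{subs4.61}) one has, for every $\rho\in I$,
$$
\partial_\rho\, \cZ_{[\g_\a]}([\ud f])(\ud t)\Big|_{\substack{t^\b=0,\ \b\in I\setminus I'\\ t^\a=T^\a,\ \a\in I'}}=\omega^\a_\rho(X_{G_{\ud T}}),
$$
and this is exactly the assertion that the specialized series $\omega^\a_\b(X_{G_{\ud T}})$, $\a,\b\in I$, is the period matrix of $X_{G_{\ud T}}$. First I would check that Theorem \ref{diff} applies to $\cC=\cC_{\g_\a}$ and $\G=\G(\ud t)_{\ud f}$: the finiteness $\dim_\C H_{K_X}(\cA_X)<\infty$ is the content of subsection \ref{subs4.40}, while hypothesis \fdim\ --- that $H_{K_\G}(\widehat{S\H}\otimes\cA_X)$ is a free $\widehat{S\H}$-module of rank $\dim_\C\H$ --- holds because the Griffiths--Dwork reduction of subsection \ref{subs4.5} supplies the explicit $\widehat{S\H}$-frame $\{[\partial_\rho\G(\ud t)_{\ud f}]\}_{\rho\in I}$ of $H_{K_\G}(\widehat{S\H}\otimes\cA_X)$ (equivalently, all smooth degree-$d$ hypersurfaces in $\BP^n$ have the same primitive Hodge numbers, so the cohomology of the deformed BV complex is locally free of the expected rank). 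Theorem \ref{diff} then gives, as an identity of power series in $\ud t$,
$$
\left(\partial_\g\partial_\b-\sum_{\rho\in I}A_{\g\b}{}^\rho(\ud t)_{\G_{\ud f}}\,\partial_\rho\right)\cZ_{[\g_\a]}([\ud f])(\ud t)=0,\qquad \g\in I_1,\ \a,\b\in I.
$$

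Next I would specialize this identity at $t^\b=0$ for $\b\in I\setminus I'$ and $t^\a=T^\a$ for $\a\in I'$. Working at the level of power series in $\ud t$ before specializing, the first factor $\partial_\b\cZ_{[\g_\a]}([\ud f])$ specializes to $\omega^\a_\b(X_{G_{\ud T}})$ and $\partial_\rho\cZ_{[\g_\a]}([\ud f])$ to $\omega^\a_\rho(X_{G_{\ud T}})$ by the displayed identity above, while $A_{\g\b}{}^\rho(\ud t)_{\G_{\ud f}}$ specializes to the entry of the Gauss--Manin connection matrix $A_{\G_{\ud f}}$ introduced just before the Proposition; here $\partial_\g\,\omega^\a_\b(X_{G_{\ud T}})$ is read as the Gauss--Manin covariant derivative $\nabla_{\partial_\g}$, i.e.\ the specialization of $\partial_\g\partial_\b\cZ_{[\g_\a]}([\ud f])$ (which, since $I'\subset I_1$, is literally $\partial/\partial T^\g$ of $\omega^\a_\b(X_{G_{\ud T}})$ when $\g\in I'$). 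Substituting, the power--series identity becomes exactly
$$
\partial_\g\,\omega^\a_\b(X_{G_{\ud T}})-\sum_{\rho\in I}A_{\g\b}{}^\rho(\ud t)_{\G_{\ud f}}\cdot\omega^\a_\rho(X_{G_{\ud T}})=0,\qquad \g\in I_1,\ \a,\b\in I,
$$
which is the claim. To see that the resulting flat connection $D_{\G_{\ud f}}$ of Theorem \ref{fsc} is genuinely the classical Gauss--Manin connection on $\cH^{n-1}_{dR,\pr}(\cX/\bC[[\ud t_1^a:a\in I_1]])$, I would identify the frame $\{(Res\circ J')(\partial_\rho\G(\ud t)_{\ud f})\}_{\rho\in I}$ with the frame given by the Griffiths residue classes $\big[\,(-1)^k k!\,F_{[k]\rho}(\ud x)\,G_{\ud T}(\ud x)^{-(k+1)}\Omega_n\,\big]$ (using Theorem \ref{fourththeorem}(a) and the identification of $H^0_{K_X}(\cA_X)$ with $\cH(X_G)\simeq\H$), and then observe that the recursion computing $A_{\g\b}{}^\rho$ --- the interplay of the classical piece $Q_X$ and the quantum piece $\Delta$ of $K_X=Q_X+\Delta$ encoded in the descendant $L_\infty$-structure --- is precisely the Griffiths--Dwork reduction that computes the action of the Gauss--Manin connection on such residue classes.

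The algebraic bookkeeping of the specialization is routine once Theorems \ref{diff} and \ref{fifththeorem} are in hand. The genuinely delicate point is twofold: verifying the freeness hypothesis \fdim\ for the deformed complex $(\widehat{S\H}\otimes\cA_X,K_{\G_{\ud f}})$, and the compatibility of the abstract connection $D_{\G_{\ud f}}$ with the honest geometric Gauss--Manin connection. I expect to dispatch the first by exhibiting the explicit $\widehat{S\H}$-basis $\{[\partial_\rho\G(\ud t)_{\ud f}]\}$ through the membership/reduction algorithm of subsection \ref{subs4.5}, so that $K_{\G_{\ud f}}$-cohomology is visibly free of rank $\dim_\C\H$; the second is where the bulk of the work lies and is handled by the residue/reduction comparison sketched in the previous paragraph, matching the $L_\infty$-theoretic recursion for $A_{\g\b}{}^\rho$ with the classical Griffiths--Dwork computation of $\nabla$ on residue forms.
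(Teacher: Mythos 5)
Your proposal is correct and follows essentially the same route as the paper, which derives the proposition by applying the second-order system of Theorem \ref{diff} to the generating series $\cZ_{[\g_\a]}([\ud f])$ and then specializing via the identification $\partial_\rho \cZ_{[\g_\a]}([\ud f])\big|_{t^\b=0,\,t^\a=T^\a}=\omega^\a_\rho(X_{G_{\ud T}})$ from Theorem \ref{fifththeorem}. Your additional verification of the freeness hypothesis \fdim\ and the residue/Griffiths--Dwork comparison with the geometric Gauss--Manin connection fills in details the paper leaves implicit, but does not constitute a different argument.
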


\subsection{Explicit computation of deformations of the Griffiths period integrals} \label{subs4.5}

We use the same notation as subsection \ref{subs4.62}. Here we provide
an algorithm to compute $A_{\a\b}{}^\g(\ud t)_\G$, generalizing the method (a systematic way
of doing integration by parts in the one-variable case) in Proposition \ref{Etoy}.
In subsection \ref{subs3.6}, we saw that the explicit computation problem of the generating power series $\cC(e^\G-1)$ reduces to the problem of 
computing $A_{\a\b}{}^\g(\ud t)_\G$, in addition to the data $\cC(\varphi_1^H(e_{\alpha})), \a \in I$. 
Because of the relationship \onediff\, we can also determine the matrix 
$\CG_{\alpha}{}^\b(\ud t)_{\ud \varphi^H}  
:=\rd_{\alpha} T^\b(\ud t)_{\ud \varphi^H}$ from $A_{\a\b}{}^\g(\ud t)_\G$.
We recall that
$\cA = \cA^{-n-2} \oplus \cdots \oplus \cA^{-1}\oplus \cA^0 $, $K=\Delta + Q$, and
$$
K_\G =  \Delta + Q_\G,\qquad\left\{
\eqalign{
\Delta &=\sum_{i=-1}^n  \pa{y_i} \pa{\eta_i},\cr
Q_\G &= \sum_{i=-1}^n \prt{(yG(\ud x))}{y_i} \pa{\eta_i}+\ell^K_2 (\G, \cdot)=Q+\ell^K_2 (\G, \cdot).
}\right.
$$
If $\Lambda = \sum_{i=-1}^n \l_i \eta_i \in \cA^{-1}$, where $\l_i \in \cA^0, i =-1, 0, \cdots, n$, then
\be
Q_\G(\Lambda) = \sum_{i=-1}^n \l_i \prt{(y G(\ud x))}{y_i}+\ell^K_2(\G, \Lambda), \quad 
\Delta(\Lambda) = \sum_{i=-1}^n \prt{\l_i}{y_i}.
\ee
Note that
\be
K_\G^2= \Delta^2 = 0, \quad K(1_\cA)=Q(1_\cA)=\Delta(1_\cA)=0,
\ee
where $1_\cA$ is the identity element in $\cA^0=\bC[y, \ud x]$.
%

For any $L_\infty$-quasi-isomorphism $\ud \varphi^H$ from $(H_{K}(\cA), 0)$
to $(\cA, K)$, we clearly have that $\Delta (\G(\ud t)_{\ud \varphi^H}) =0$. Thus
we have
\be
Q(\G) + \frac{1}{2} \ell^K_2(\G, \G)=0,
\ee
because $K(\G) + \frac{1}{2} \ell^K_2(\G, \G)=Q(\G) +\Delta(\G)+ \frac{1}{2} \ell^K_2(\G, \G)=0$, which is equivalent to $K(e^\G-1)=0$. Then this says that
\be
\Delta Q_\G + Q_\G \Delta = Q_\G^2 =0.
\ee
So we have a cochain complex $({\widehat{SH}} \otimes \cA, \cdot, Q_\G)$ with super-commutative product. 
Note that $Q_\G$ is a derivation of the binary product of ${\widehat{SH}} \otimes \cA$. We tacitly think of $Q_\G$ as \textit{the classical component} of the differential $K_\G=Q_\G +\Delta$; we view $\Delta$ as \textit{the quantum component} of $K_\G$ on
the other hand. The key point in the algorithm of computing $A_{\a\b}{}^\g(\ud t)_\G$ is that we can
use the ideal membership problem based on Gr\"obner basis methods to compute the answer
to a problem that involves only $Q_\G$
and relate it the corresponding problem on $K_\G$, by utilizing the quantum component $\Delta$.

\bel \label{memcon}
Let $\G=\G(\ud t)_{\ud \varphi^H}$ be a versal Maurer-Cartan solution. 
Then there is an algorithm to compute
a finite sequence $ A_{\a\b}^{(m) \g}(\ud t) \in ({\widehat{SH}}\otimes \cA)^0$, where $m =0,1, \cdots, M$ for some positive integer $M$, such that
\be
A_{\a\b}{}^\g (\ud t)_\G= A_{\a\b}^{(0) \g}(\ud t)-A_{\a\b}^{(1) \g}(\ud t)-\cdots -A_{\a\b}^{(M) \g} (\ud t)\in \widehat{SH}.
\ee
where $A_{\a\b}{}^\g (\ud t)_\G $ is in \hodi.
\eel
\begin{proof}
We use the notation $ \G_{\a\b}=\partial_{\a\b} \G(\ud t)$ and $\G_{\alpha}= \partial_{\alpha} \G(\ud t)$.
One can check that $H_{Q_\G}=H_{Q_\G}(\widehat{SH} \otimes \cA)$ is a finitely generated $\widehat{SH} \otimes \cA^0$-module,
whose generators are given by $\{\G_\g :  \g \in I\}$.
 Then, for $\G_{\alpha} \cdot \G_\b \in ({\widehat{SH}}\otimes \cA)^0$, we can find 
$A_{\a\b}^{(0) \g} (\ud t)  \in {\widehat{SH}}\otimes \cA^0$ and $\Lambda_{\a\b}^{(0)}(\ud t)\in ({\widehat{SH}} \otimes \cA)^{-1}$ (this is an ideal membership problem) such that
\eqn\rone{
\G_{\alpha} \cdot \G_\b = \sum_\g A_{\a\b}^{(0) \g} (\ud t) \cdot \G_\g + Q_\G (\Lambda_{\a\b}^{(0)}(\ud t)),
}
 since $Q_\G$ is a derivation of the binary product. Then the relation \rone\ can be rewritten as
\be
\G_{\alpha} \cdot \G_\b =\sum_\g A_{\a\b}^{(0) \g} (\ud t) \cdot \G_\g + K_\G (\Lambda_{\a\b}^{(0)}(\ud t))-\Delta (\Lambda_{\a\b}^{(0)}(\ud t)).
\ee
Let $R^{(1)} = \Delta (\Lambda_{\a\b}^{(0)}(\ud t))$ and we can find $A_{\a\b}^{(1) \g} (\ud t)\in {\widehat{SH}}\otimes \cA^0$ and $\Lambda_{\a\b}^{(1)}(\ud t)\in ({\widehat{SH}} \otimes \cA)^{-1}$ (again by an ideal membership problem) such that
\be
R^{(1)} = \sum_\g A_{\a\b}^{(1) \g} (\ud t) \cdot \G_\g + Q_\G (\Lambda_{\a\b}^{(1)}(\ud t)).
\ee
Set $R^{(2)} = \Delta (\Lambda_{\a\b}^{(1)}(\ud t))$ and we can find $A_{\a\b}^{(2) \g} (\ud t)\in {\widehat{SH}}\otimes \cA^0$ and $\Lambda_{\a\b}^{(2)}(\ud t)\in ({\widehat{SH}} \otimes \cA)^{-1}$ similarly such that
\be
R^{(2)} = \sum_\g A_{\a\b}^{(2) \g} (\ud t) \cdot \G_\g + Q_\G (\Lambda_{\a\b}^{(2)}(\ud t)).
\ee
We can continue this way and observe that this process stops after a finite number of steps.
Then Theorem \ref{diff} guarantees that 
we can choose $\Lambda_{\a\b}^{(M)}(\ud t)$ such that $\Delta(\Lambda_{\a\b}^{(M)}(\ud t))=
\G_{\a\b}$ so that we have 
\be
\G_{\alpha} \cdot \G_\b = \sum_\g \BA_{\a\b}{}^\g (\ud t)_\G \cdot \G_\g- 
\Delta(\Lambda_{\a\b}^{(M)}(\ud t)) + K_\G (\BL_{\a\b}(\ud t)),
\ee
where
\be
A_{\a\b}{}^\g (\ud t)_\G=\BA_{\a\b}{}^\g (\ud t)_\G&=& A_{\a\b}^{(0) \g}(\ud t)-A_{\a\b}^{(1) \g}(\ud t)-\cdots -A_{\a\b}^{(M) \g}(\ud t) \in \widehat{SH} \\
\BL_{\a\b}(\ud t)&=&\Lambda_{\a\b}^{(0)}(\ud t)-\Lambda_{\a\b}^{(1)}(\ud t)-\cdots -\Lambda_{\a\b}^{(M)}(\ud t) \in (\widehat{SH} \otimes \cA)^{-1}
\ee
for some positive integer $M$. Note that this equality is same as \klm, which finishes the proof.
\end{proof}

This enables us to compute a new formal deformation of the Griffiths period integral, i.e. 
the generating power series $\cC_\g(e^{\G(\ud t)_{\ud \varphi^H}}-1)$ attached to $C_{[\g]}$ and a formal deformation data $\ud \varphi^H$. 

\newsec{Appendix } \label{section6}

\subsection{The quantum origin of the Lie algebra representation $\rho_X$} \label{subs6.1}

In this section, we explain how we arrive at the key definition in \quantumrep, the definition of the Lie algebra representation $\rho_X$ attached to a hypersurface $X_G$. An interesting thing is that this has a quantum origin. It is well known that the representation theory of the Heisenberg group plays a crucial role in quantum field theory and quantum mechanics. We will focus on the Lie algebra representation of the Heisenberg Lie algebra
in order to explain our motivation for the definition in \quantumrep.
For each integer $m \geq 1$, we consider the universal enveloping algebra of the Heisenberg Lie algebra $\cH_m$ over $\Bbbk$ as follows:
\be
U(\cH_m) = \Bbbk \langle q^1, \cdots, q^m ; p_1, \cdots, p_m; z\rangle/ I
\ee
where $I$ is an ideal of the free $\Bbbk$-algebra $\Bbbk\langle q^1, \cdots, q^m ; p_1, \cdots, p_m; z \rangle$, generated by 
$$
[q^i, p_j]- z\delta^i_j,\ [q^i, z], \ [p_i,z], \ [q^i,q^j], \text{ and }  [p_i,p_j]
$$ 
for all $i,j=1, \cdots, m.$ Here, $[x,y]=x\cdot y - y \cdot x$ and $\delta^i_j$ is the usual Kronecker delta symbol. Then $(\cH_m, [\cdot, \cdot])$ is a nilpotent Lie algebra whose $\Bbbk$-dimension is $2m+1$.
Let $I_P$ be the left ideal of $U(\cH_m)$ generated by $p_1, \cdots, p_m$, i.e. $U(\cH_m)I_P  \subset I_P$.
Then the Schr\"odinger  representation of $U(\cH_m)$ is given by
\eqn\Schrd{
\rho_{\Sch}: U(\cH_m) \to \End_{\Bbbk}( U(\cH_m)/I_P),  \quad
 f \mapsto \big( g+I_P \to f\cdot g +I_P \big).
}
This celebrated representation has attained much attention from both physicists and mathematicians. This
representation can be used to derive both the Heisenberg matrix formulation and the Schr\"odinger wave differential equation formulation of quantum mechanics through Dirac's transformation theory. It also plays a crucial role in the study of theta functions and modular forms via the oscillator (also called Weil) representation coming from the 
Sch\"odinger representation.
 We have the Schr\"odinger Lie algebra representation $\rho_{\Sch}$ (with the same notation) of $\cH_m$ on the same $\Bbbk$-vector space $U(\cH_m)/I_P$ from \Schrd. Let $P$ be the abelian $\Bbbk$-sub Lie algebra of $\cH_m$ spanned by $p_1, \cdots, p_m$. If we restrict $\rho_{\Sch}$ to $P$, we get
 \eqn\Prep{
 \rho_{\Sch}|_{P}: P \to \End_{\Bbbk} (U(\cH_m)/I_P),
 }
 which is a Lie algebra representation of $P$.
 This Lie algebra representation of $P$ corresponding to $\rho_{\Sch}$ is our starting point to arrive at the definition in \quantumrep. 
 Note that the representation space $U(\cH_m)/I_P$ \textit{does not} have a $\Bbbk$-algebra structure, since $I_P$ is \textit{not} 
 a two-sided ideal.
This is a very important point in a mathematical algebraic formulation of quantum field theory (see the the first author's algebraic formalism of quantum field theory \cite{Pa10} for related issues). But we decided to simplify the quantum picture by introducing the Weyl algebra.\footnote{This has the benefit of dramatically reducing 
the length of our paper, although it hides certain important quantum features of the theory. In later papers, we will pursue how this phenomenon ($I_P$ is only a left $U(\cH_m)$-ideal, not two-sided) is related to understanding period integrals, which seems important in order to connect our theory to the theory of modular forms.}
 The $m$-th Weyl algebra (which is introduced to study the Heisenberg uncertainty principle in quantum mechanics), denoted $A_m$, is the ring of differential operators with polynomial coefficients in $m$ variables. It is generated by $q^1, \cdots, q^m$ and $\pa{q^1}, \cdots, \pa{q^m}$. Then we have
a surjective $\Bbbk$-algebra homomorphism
\be
r: U(\cH_m) \to A_m, 
\ee
defined by sending $z$ to 1, $q^i$ to $q^i$, and $p_i$ to $\pa{q^i}$ for $i=1, \cdots, m$ and extending the map in an obvious way. Note that $z; q^1, \cdots q^m; p_1, \cdots p_m$ are $\Bbbk$-algebra generators of $U(\cH_m)$.
Therefore the $m$-th Weyl algebra is a quotient algebra of $U(\cH_m)$. 
We further project the representation $\rho_{\Sch}$ to $A_m$ to get a Lie algebra representation of $A_m$ on $A_m/r(I_P)$, denoted $\rho_{\Wey}$,
\eqn\Weyr{
\rho_{\Wey}: A_m \to \End_{\Bbbk}(A_m/r(I_P)) \simeq \End_{\Bbbk}(\Bbbk[q^1, \cdots, q^m]).
}
The benefit of working with the Weyl algebra is that $A_m/r(I_P)$ is isomorphic to $\Bbbk[q^1, $ $\cdots, q^m]$ as a $\Bbbk$-vector space and so $A_m/r(I_P)$ has the structure of a commutative 
associative $\Bbbk$-algebra which is inherited from $\Bbbk[q^1, \cdots, q^m]$. Recall that in Definition \ref{Liedef} we assumed the representation space of the Lie algebra representation was a commutative associative $\Bbbk$-algebra.
 \footnote{This assumption is why we encounter $L_{\infty}$-homotopy theory, when we analyze period integrals. If it is not commutative, then we would need a different type of homotopy theory such as $A_\infty$-homotopy theory.}  
Note that $\rho_{\Wey}$ restricted to $r(I_P)$ is isomorphic to the representation on $\Bbbk[q^1, \cdots, q^m]$ obtained by applying the differential operators $\pa{q^1}, \cdots, \pa{q^m}$.
Now let $m=n+2$ and put $y=q^1, x_0=q^2, \cdots, x_n=q^{n+2}$ in order to connect to $\rho_X$, where $G(\underline x)$ is the defining equation of the smooth projective hypersurface $X_G$. Then when  $X_G = \BP^{n}$, the representation $\rho_X$ is isomorphic to $\rho_{\Wey}$. 
 Recall that $A=\Bbbk[y,\underline x] = \Bbbk[y_{-1}, y_0, \cdots, y_n]$. Dirac's transformation theory in quantum mechanics suggests considering
the following deformation of $\rho_{\Wey}:A_m \to \End_{\Bbbk}(A_m/r(I_P))$:
for $F \in A$, consider the formal operators
\be
\rho[i]:=\exp(-F(\underline y))\cdot \pa{y_{i}} \cdot \exp(F(\underline y)) , \quad i=-1, 0, \cdots, n.
\ee
These operators $\rho[i], i=-1, 0, \cdots, n,$ will act on $\exp(-F(\underline y)) \cdot A/r(I_P)\simeq A_m/ r(I_P) \simeq A$ (as $\Bbbk$-vector spaces). Formally, we can write and check
\eqn\Liequantum{
\eqalign{
\rho[i]=& \pa{y_i} + \Big[\pa{y_i}, F\Big] + \frac{1}{2}\Big[\Big[\pa{y_i}, F\Big],F\Big]+ \cdots
\cr
\equiv& \pa{y_i} + \prt{F}{y_i} \pmod{r(I_P)},
}
}
where $F=F(\underline y)$. For any $F\in A=\Bbbk[\underline y]$, define $\rho[F]:r(I_P) \to \End_{\Bbbk}(A_m/r(I_P)) $ via the rule
\be
r(p_i) \mapsto \big( Q+r(I_P) \mapsto \rho[i] \cdot Q +r(I_P)   \big), \quad i=-1, 0, \cdots, n.
\ee
\bep \label{SchLie}
If $F=y\cdot G(\ud x) \in A$, then 
we have a canonical isomorphism between the Lie algebra representations $\rho[F]$, defined above, and $\rho_X$, defined in \quantumrep.
\eep
\begin{proof}
The abelian Lie algebra $\frg$ in the definition of $\rho_X$ is isomorphic to
$r(I_P)$ and it is clear from \Liequantum\ that the natural $\Bbbk$-vector space isomorphism $A_m/r(I_P) \simeq A$ realizes the Lie algebra representation isomorphism between $\rho[F]$ and $\rho_X$.
 \end{proof}


\subsection{The homotopy category of $L_{\infty}$-algebras}\label{subs6.2}

An $L_\infty$-algebra is a generalization of an $\Z$-graded Lie-algebra
such that the graded Jacobi identity is only satisfied up to homotopy. 
An $L_\infty$-algebra is also known as a
 strongly homotopy Lie algebra in \cite{Sta63}, or Sugawara Lie algebra. It  has also appeared, albeit the dual version, in \cite{Su77}. It is also the Lie version of an $A_\infty$-algebra (strongly homotopy associative algebra), which is  the original example of homotopy algebra due to Stasheff. 
In this paper we encounter a variant of $L_\infty$-algebra such that its Lie bracket has degree one.
In other words, a structure of $L_\infty$-algebra on $V$ in our paper is equivalent to that of the usual $L_\infty$-algebra
on $V[1]$, where $V[1]$ means that $V[1]^m = V^{m+1}$ for $m \in \bZ$. We should also note that the usual presentation of $L_\infty$-algebras and $L_\infty$-morphisms via generators and
relations relies on unshuffles, which can be checked to be equivalent to our presentation based on partitions.

 Let $\art_{\Bbbk}$ denote the category of $\bZ$-graded artinian local $\Bbbk$-algebras with residue field $\Bbbk$ and $\widehat{\art_{\Bbbk}}$ be the category of complete $\bZ$-graded noetherian local $\Bbbk$-algebras. Let $R \in \hbox{\it Ob}(\widehat{\art_{\Bbbk}})$ concentrated in degree zero.
For $A \in \hbox{Ob}(\art_{\Bbbk})$, $\mm_A$ denotes the maximal ideal of $A$ which is a nilpotent $\Z$-graded super-commutative and associative $\Bbbk$-algebra without unit. 
Let $V= \bigoplus_{i\in \bZ} V^i$ be a $\bZ$-graded vector space over a field $\Bbbk$ of characteristic 0. If $x \in V^i$, we say that $x$ is a homogeneous element of degree $i$; let $|x|$ be the degree of a homogeneous element of $V$. For each $n \geq 1$ let $S(V)=\bigoplus_{n=0}^\infty S^n(V)$
be the free $\bZ$-graded super-commutative and associative algebra over $\Bbbk$ generated by $V$, which is the quotient algebra of the free tensor algebra $T(V)=\bigoplus_{n=0}^\infty T^n(V)$ by the ideal generated by $x\otimes y - (-1)^{|x||y|}y\otimes x$. Here $T^0(V)=k$ and $T^n(V) = V^{\otimes n}$ for $n \geq 1$.

\begin{definition}[$L_{\infty}$-algebra]
\label{shl}
The triple $V_L=(V,\underline{\ell}, 1_{V})$ is a unital $L_\infty$-algebra over $\Bbbk$  if
$1_V \in V^0$ and $\underline{\ell}=\ell_1,\ell_2,\cdots$ be a family such that

\begin{itemize}
\item $\ell_n \in \Hom(S^n V,V)^1$ for all $n\geq 1$.
\item $\ell_n(v_1,\cdots, v_{n-1}, 1_{V})=0$,  for all $v_1,\cdots, v_{n-1} \in V$, $n\geq 1$. 
\item for any $A \in \hbox{Ob}\left(\art_{\Bbbk}\right)$ and for all $n\geq 1$
\be
\sum_{k=1}^n\frac{1}{(n-k)! k!} {\ell}_{n-k+1}\left({\ell}_k\left( \g,\cdots,  \g\right), \g,\cdots,
 \g\right)=0,
\ee
whenever $\g \in (\mm_A\otimes V)^0$, where 
\be
&&{\ell}_n\big(a_1\otimes v_1, \cdots,  a_n\otimes v_n\big) \\
&&=(-1)^{|a_1|+|a_2|(1+|v_1|) +\cdots + |a_n|(1+|v_1|+\cdots +|v_{n-1}|)}
 a_1\cdots a_n\otimes \ell_n\left(v_1,\cdots, v_n\right).
\ee
\end{itemize}
\end{definition}

Every $L_\infty$-algebra in this paper is assumed to satisfy $\ell_n =0$ for all $n > N$ for some natural number $N$.

\begin{definition} [$L_{\infty}$-morphism]
\label{shlm}
A morphism of unital $L_\infty$-algebras from $V_L$ into $V'_L$ 
is a  family $\underline{\phi}=\phi_1,\phi_2,\cdots$ 
such that
\begin{itemize}
\item $\phi_n \in \Hom (S^n V, V')^0$ for all $n\geq 1$.
\item $\phi_1(1_{V})=1_{V'}$ and $\phi_n(v_1,\cdots, v_{n-1}, 1_{V})=0$, $v_1,\cdots, v_{n-1} \in V$, for all $n\geq 2$. 
\item for any $A \in \hbox{Ob}\left(\art_{\Bbbk}\right)$ and for all $n\geq 1$
\be
&&\sum_{j_1+ j_2 =n}
\frac{1}{j_1! j_2!} {\phi}_{j_1+1}\left( \ell_{j_2}(\g,\cdots,\g), \g,\cdots, \g\right) \\
&&=
\sum_{j_1+\cdots + j_r =n}\frac{1}{r!}\frac{1}{j_1!\cdots j_r!}
 \ell'_r\left({\phi}_{j_1}(\g,\cdots,\g), \cdots, {\phi}_{j_r}(\g,\cdots,\g)\right),
\ee
whenever $\g \in (\mm_A\otimes V)^0$,
where
\be
&&{\phi}_n\big(a_1\otimes v_1, \cdots, a_n\otimes v_n\big)\\
&&=(-1)^{|a_2||v_1| +\cdots + |a_n|(|v_1|+\cdots +|v_{n-1}|)}
 a_1\cdots a_n\otimes \phi_n\left(v_1,\cdots,v_n\right).
\ee
\end{itemize}
\end{definition}

It is convenient to introduce an equivalent presentation of $L_{\infty}$-algebras and morphisms based on partitions $P(n)$ of the set $\{1, 2, \cdots, n\}$, in order to present the proof of Theorem \ref{Desc} regarding the descendant functor. 

 Let us set up some notation related to partitions. A partition $\pi= B_1 \cup B_2\cup \cdots$ of the set $[n]=\{1,2, \cdots, n\}$ is a decomposition of $[n]$ into a pairwise
disjoint non-empty subsets $B_i$, called blocks. Blocks are ordered by the minimum element of each block and each block is ordered by the ordering induced from the ordering of natural numbers. The notation $|\pi|$ means the number of blocks in a partition $\pi$ and $|B|$ means the size of the block $B$. If $k$ and $k'$ belong to the same block in $\pi$, then we use
the notation $k \sim_\pi k'$. Otherwise, we use $k \nsim_\pi k'$. Let $P(n)$ be the set of all partitions of $[n]$.
%

Let $\tau_{i,j}$ be a transposition $(i j)$. We define the Koszul sign $\epsilon (\tau, \{x_i, x_j\}):=(-1)^{|x_i||x_j|}$ for any homogeneous elements $x_i, x_j \in V$. For a permutation $\sigma$ of $[n]$, we decompose $\sigma$ as a product of transpositions 
$\sigma = \prod_k \tau_{i_k,j_k}$. Then define $\epsilon(\sigma, \{x_1, \cdots, x_n \})=\prod_k \epsilon(\tau,\{x_{i_k}, x_{j_k}\})$ for homogeneous elements $x_1, \cdots, x_n \in V$.
%
The Koszul sign $\epsilon(\pi)$ for a partition $\pi = B_1 \cup B_2 \cup \cdots \cup B_{|\pi|}\in P(n)$ is defined by the Koszul sign $\epsilon(\sigma, \{x_1, \cdots, x_n\})$ of the permutation $\sigma$ determined by
\be
x_{B_1} \otimes \cdots \otimes x_{B_{|\pi|}} = x_{\sigma(1)} \otimes \cdots \otimes x_{\sigma(n)},
\ee
where $x_B= x_{j_1} \otimes \cdots \otimes x_{j_{r}}$ if $B=\{j_1, \cdots, j_{r}\}$. Let $|x_B|= |x_{j_1}|+\cdots+ |x_{j_{r}}|$.
Note that $\epsilon(\pi)$ depends on the degrees of $x_1, \cdots, x_n$ but we omit such dependence in the notation for simplicity. Then it can be checked that the following definitions \ref{L2} and \ref{LM2} are equivalent to the above ones; for an arbitrary finite set $\{v_1,\cdots, v_N\}$ of homogeneous elements in $V$,
consider $\Bbbk[[t_1,\cdots, t_N]]$, where $|t_j|= -|v_j|$, $1\leq j \leq N$. Let $J$ be the maximal ideal of $\Bbbk[[t_1,\cdots, t_N]]$,
so that $A:=\Bbbk[[t_1,\cdots, t_N]]/J^{N+1} \in \art_{\Bbbk}$. Let $\g$ be $\sum_{j=1}^N t_j v_j$, which is an element of $(\mm_A\otimes V)^0$.
Definitions \ref{shl} and \ref{shlm} imply, after simple combinatorics, the claimed relations in Definitions \ref{L2} and \ref{LM2} for all $n$ satisfying $1\leq n \leq N$. 
It is clear that the converse is also true.

\bed[$L_{\infty}$-algebra] \label{L2}
An $L_{\infty}$-algebra is a $\bZ$-graded vector space $V=\bigoplus_{m\in \bZ} V^m$ over a field $\Bbbk$ with a family $\underline\ell= \ell_1, \ell_2, \cdots$, where $\ell_k: S^k(V) \to V$ is a linear map
of degree 1 on the super-commutative $k$-th symmetric product $S^k(V)$ for each $k\geq 1$ such that
\eqn\Lalgebra{
\sum_{\substack{\pi \in P(n)\\ |B_i|=n-|\pi|+1}} \epsilon(\pi,i) \ell_{|\pi|}(x_{B_1},
\cdots, x_{B_{i-1}}, \ell(x_{B_i}), x_{B_{i+1}}, \cdots, x_{B_{|\pi|}}) = 0.
}
Here we use the the following notation:
\be
\ell(x_{B})&=&\ell_r(x_{j_1}, \cdots, x_{j_r}) \text{ if } B=\{j_1, \cdots, j_r\}\\
\epsilon(\pi, i) &=& \epsilon(\pi) (-1)^{|x_{B_1}|+\cdots+ |x_{B_{i-1}}|}.
\ee
An $L_{\infty}$-algebra with unity (or a unital $L_\infty$-algebra) is a triple $(V, \underline \ell,1_V)$, where $(V,\underline \ell)$
is an $L_{\infty}$-algebra and $1_V$ is a distinguished element in $V^0$ such that
$\ell_n(x_1, \cdots, x_{n-1}, 1_V)=0$ for all $n\geq 1$ and every $x_1, \cdots, x_{n-1} \in V$.
\eed

An ordinary Lie algebra can be viewed as a $L_\infty$-algebra concentrated in degree -1. It follows immediately for degree reasons that $\ell_1$= 0 for all $i \neq 2$, and that $\ell_2$ satisfies the axioms of a Lie bracket.

\bed[$L_{\infty}$-morphism]\label{LM2}
A morphism from an $L_{\infty}$-algebra $(V,\underline \ell)$ to an $L_{\infty}$-algebra $(V', \underline \ell')$ over $\Bbbk$
is a family $\underline \phi= \phi_1, \phi_2, \cdots$, where $\phi_k: S^k(V) \to V'$ is a
$\Bbbk$-linear map of degree 0 for each $k \geq 1$, such that
$$
\eqalign{
&\sum_{\pi \in P(n)} \epsilon(\pi) \ell'_{|\pi|}(\phi(x_{B_1}), \cdots, \phi(x_{B_{|\pi|}}))
\cr
&= \sum_{\substack{\pi \in P(n)\\ |B_i|=n-|\pi|+1}}\!\!\!\! \epsilon(\pi,i) \phi_{|\pi|}(x_{B_1},
\cdots, x_{B_{i-1}}, \ell(x_{B_i}), x_{B_{i+1}}, \cdots, x_{B_{|\pi|}}). 
}
$$
A morphism of unital $L_{\infty}$-algebras from $(V, \underline \ell, 1_V)$ to $(V', \underline \ell',1_{V'})$ over $\Bbbk$ is a morphism $\underline \phi$ of $L_{\infty}$-algebras such that $\phi_1(1_V)=1_{V'}$ and $\phi_n(x_1, \cdots, x_{n-1}, 1_V)
=0$ for all $n\geq 2$ and every $x_1, \cdots, x_{n-1} \in V$.
\eed

\begin{remark}
If one uses an interval partition of $[n]$ instead of $P(n)$, we can prove that this formalism gives an equivalent definition 
to the usual definition of $A_\infty$-algebras and $A_\infty$-morphisms.
\end{remark}

If we let $\pi =B_1 \cup \cdots B_{i-1} \cup B_i \cup B_{i+1} \cup \cdots \cup B_{|\pi|} \in P(n)$, then the condition $|B_i|=n-|\pi|+1$ in the summation implies that the set $B_1, \cdots, B_{i-1}, B_{i+1}, \cdots, B_n$ are singletons. 
Let $\ell_1=K$. For $n=1$, the relation \Lalgebra\ says that $K^2=0$. For $n=2$, the 
relation \Lalgebra\ says that 
\be
K\ell_2(x_1,x_2) + \ell_2(Kx_1, x_2) + (-1)^{|x_1|}\ell_2(x_1, Kx_2) =0.
\ee
For $n=3$, we have 
\be
\ell_2(\ell_2(x_1,x_2), x_3) 
&&+ (-1)^{|x_1|} \ell_2(x_1, \ell_2(x_2, x_3))
+(-1)^{(|x_1|+1)|x_2|}\ell_2(x_2, \ell_2(x_1,x_3))
\\
=&&
-K\ell_3(x_1,x_2,x_3) 
- \ell_3(Kx_1, x_2, x_3)\\
&&
- (-1)^{|x_1|} \ell_3(x_1, Kx_2, x_3) 
- (-1)^{|x_1|+|x_2|}\ell_3(x_1,x_2,Kx_3).
\ee
Because the vanishing of the left hand side is the graded Jacobi identity for $\ell_2$, we see
that $\ell_2$ fails to satisfy the graded Jacobi identity. Thus the failure
of $\ell_2$ being a graded Lie algebra is measured by the homotopy $\ell_3$.

\bec\label{Lcor}
A $\bZ$-graded vector space over $\Bbbk$ is an $L_{\infty}$-algebra with $\underline \ell =
\underline 0$, which we refer to as a trivial $L_{\infty}$-algebra. A cochain complex $(V,K)$
is an $L_{\infty}$-algebra with $\underline \ell = \ell_1$, where $\ell_1=K$. A differential
graded Lie algebra (DGLA) $(V,K,[ ,])$ is an $L_{\infty}$-algebra with $\underline \ell=\ell_1, \ell_2$, where $\ell_1=K$, and $\ell_2 (\cdot,\cdot)= [\cdot,\cdot]$.
\eec

Let $\ell_1=K$ and $\ell'=K'$. For $n=1$, the relation in Definition \ref{LM2} says that
\be
\phi_1 K= K' \phi_1. 
\ee
 For $n=2$, we have
\be
\phi_1(\ell_2(x_1,x_2))- \ell'_2(\phi_1(x_1), \phi_1(x_2)) 
=K'\phi_2(x_1,x_2)-\phi_2(Kx_1,x_2) -(-1)^{|x_1|} \phi_2(x_1, Kx_2).
\ee
Hence $\phi_1$ is a cochain map from $(V,K)$ to $(V', K')$, which fails to be an
 algebra map. The failure of being an algebra map is measured by the homotopy $\phi_2$. For $n=3$, the above says
 \be
 \phi_1(\ell_3(x_1,x_2,x_3))-\ell_3'(\phi_1(x_1),\phi_1(x_2),\phi_1(x_3)) \\
 +\phi_2(\ell_2(x_1,x_2),x_3) 
 +(-1)^{|x_1|}\phi_2(x_1,\ell_2(x_2,x_3)) 
+ (-1)^{(|x_1|+1)|x_2|} \phi_2(x_2, \ell_2(x_1,x_3))  
\\
- \ell_2'(\phi_2(x_1,x_2),\phi_1(x_3)) 
- \ell_2'(\phi_1(x_1), \phi_2(x_2,x_3))
-(-1)^{|x_1||x_2|}\ell_2'(\phi_1(x_2), \phi_2(x_1,x_3)) 
\\
 =K' \phi_3(x_1,x_2,x_3)-\phi_3(Kx_1,x_2,x_3)
 -(-1)^{|x_1|}\phi_3(x_1,Kx_2,x_3)
 -(-1)^{|x_1|+|x_2|}\phi_3(x_1,x_2,Kx_3).
 \ee

Now we define composition of morphisms.
\bed \label{compl}
The composition of $L_{\infty}$-morphisms $\underline \phi:V\to V'$ and $\underline \phi':V' \to V''$ is defined by
\be
\big(\phi' \bullet \phi \big)_n(x_1,\cdots, x_n) = \sum_{\pi \in P(n)} \epsilon(\pi) \phi'_{|\pi|}\big(\phi(x_{B_1}), \cdots, \phi(x_{B_{|\pi|}})\big),
\ee
for every homogeneous elements $x_1, \cdots, x_n \in V$ and all $n \geq 1$.
\eed

Then it can be checked that unital $L_{\infty}$-algebras over $\Bbbk$ and $L_\infty$-morphisms form a category.

\bed
The cohomology $H$ of the $L_{\infty}$-algebra $(V, \underline \ell)$ is the cohomology of the underlying complex $(V, K=\ell_1)$. An $L_{\infty}$-morphism $\underline \phi$ is a quasi-isomorphism if $\phi_1$ induces an isomorphism on cohomology.
\eed

In fact, any $L_\infty$-algebra can be strictified to give a DGLA, i.e. any $L_\infty$-algebra is $L_\infty$-quasi-isomorphic to a DGLA.

\bed
An $L_{\infty}$-algebra $(V,\underline \ell)$ is called minimal if $\ell_1=0$. 
\eed
We recall the following well-known theorem, sometimes called the homotopy transfer theorem.
See the chapter 10 of \cite{LV} for a detailed discussion on the homotopy transfer theorem.

\bet \label{transfer}
There is the structure of a minimal $L_{\infty}$-algebra $(H, \underline \ell^H=\ell_2^H,
\ell_3^H, \cdots)$ on the cohomology $H$ of an $L_{\infty}$-algebra $(V,\underline \ell)$ over $\Bbbk$ together with an $L_{\infty}$-quasi-isomorphism $\underline \phi$ from $(H, \underline \ell^H)$ to $(V, \underline \ell^K)$. Both the minimal $L_{\infty}$-algebra structure and the $L_{\infty}$-quasi-isomorphism are not unique, while $\ell_2^H$ is defined uniquely.
\eet

\bed \label{smoothformal}
An $L_{\infty}$-algebra $(V, \underline \ell)$ is called {formal} if it is quasi-isomorphic to an $L_\infty$-algebra $(V', \ud 0)$ with the zero $L_\infty$-structure.
\eed

\begin{definition}
\label{shlh}
Two $L_\infty$-morphisms $\underline{\phi}$ and $\underline{\tilde{\phi}}$ of unital $L_\infty$-algebras from $(V_L,\underline \ell, 1_V)$ into $(V_L',\underline \ell', 1_{V'})$ 
are $L_\infty$-homotopic, denoted by $\underline{\phi}\sim_{\infty}\underline{\tilde{\phi}}$, if
there is a polynomial family $\underline{\lambda}(\t)=\lambda_1(\t),\lambda_2(\t),\cdots$ in $\t$, where 
\begin{itemize}
\item $\lambda_n(\t) \in \Hom(S^n V, V')^{-1}$ for all $n\geq 1$.
\item $\lambda_n(\t)(v_1,\cdots, v_{n-1}, 1_{V})=0$, $v_1,\cdots, v_{n-1} \in V$, for all $n\geq 1$,
\end{itemize}
and a polynomial family $\underline{\Phi}(\t)=\Phi_1(\t),\Phi_2(\t),\cdots$ of $L_\infty$-morphisms, where
\begin{itemize}
\item $\Phi_n(\t)\in \Hom (S^n V, V')^{0}$ for all $n\geq 1$,
\item $\Phi_1(\t)(1_V)=1_{V'}$ and $\Phi_n(\t)(v_1,\cdots, v_{n-1}, 1_V)=0$, $v_1,\cdots, v_{n-1} \in V$, for all $n\geq 2$,
\end{itemize}
and $\underline{\Phi}(0)=\underline{\phi}$ and $\underline{\Phi}(1)=\underline{\tilde{\phi}}$,
such that $\ud \Phi$ satisfies the following flow equation 
\be
&&\prt{}{\t}{\Phi}_n(\t)({\g},\cdots,{\g})\\
&&=
\sum_{k=1}^n\sum_{j_1+\cdots + j_r =n-k}\frac{1}{k!}\frac{1}{r!}\frac{1}{j_1!\cdots j_r!}
 \ell'_{r+1} \left({\Phi}_{j_1}(\t)(\g,\cdots,\g), \cdots, {\Phi}_{j_r}(\t)(\g,\cdots,\g),{\lambda}_k(\t)(\g,\cdots,\g)\right)\\
&&+\sum_{j_1+ j_2 =n}
\frac{1}{j_1! j_2!} {\lambda}_{j_1+1}(\t)\left(\g,\cdots, \g, \ell_{j_2}(\g,\cdots,\g)\right)
\ee
for all $n\geq 1$ and $\g \in (\mm_A\otimes V)^0$ whenever $A \in\hbox{\it Ob}(\art_{\Bbbk})$.
\end{definition}

It can be checked that $\sim_\infty$ is an equivalence relation (see 4.5.2 of \cite{Kon03} for another form of the above definition).

\begin{lemma} \label{hoco}
Consider  $L_\infty$-morphisms
$\underline{\phi}:V_L \rightarrow V_L'$ and 
$\underline{\phi}':V_L' \rightarrow V_L''$.
Let $\underline{\tilde{\phi}} \sim_\infty \underline{\phi}$ and $\underline{\tilde{\phi}}' \sim_\infty \underline{\phi}'$.
Then
$\underline{\tilde{\phi}}'\bullet \underline{\tilde{\phi}}\sim_\infty \underline{\phi}'\bullet \underline{\phi}$.
\end{lemma}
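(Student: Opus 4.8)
\textbf{Proof plan for Lemma \ref{hoco}.}

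The plan is to prove this in two stages. First I would establish the simpler compatibility statement that if $\underline{\tilde{\phi}} \sim_\infty \underline{\phi}$ (with the same target $V'_L$), then post-composition with a fixed $\underline{\phi}': V'_L \to V''_L$ preserves the homotopy: $\underline{\phi}' \bullet \underline{\tilde{\phi}} \sim_\infty \underline{\phi}' \bullet \underline{\phi}$. Similarly, pre-composition by a fixed $\underline{\phi}$ preserves homotopy on the left factor: $\underline{\tilde{\phi}}' \bullet \underline{\phi} \sim_\infty \underline{\phi}' \bullet \underline{\phi}$. Granting these two, the full statement follows by the usual two-step argument: $\underline{\tilde{\phi}}' \bullet \underline{\tilde{\phi}} \sim_\infty \underline{\phi}' \bullet \underline{\tilde{\phi}} \sim_\infty \underline{\phi}' \bullet \underline{\phi}$, using that $\sim_\infty$ is an equivalence relation (transitivity).

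For the post-composition step, suppose $\underline{\Phi}(\t)$ is the polynomial family realizing $\underline{\tilde{\phi}} \sim_\infty \underline{\phi}$ with homotopy family $\underline{\lambda}(\t)$, so that $\underline{\Phi}(0)=\underline{\phi}$, $\underline{\Phi}(1)=\underline{\tilde{\phi}}$, and $\underline{\Phi}(\t)$ satisfies the flow equation in Definition \ref{shlh}. I would then define the candidate path $\underline{\Psi}(\t) := \underline{\phi}' \bullet \underline{\Phi}(\t)$ and the candidate homotopy $\underline{\mu}(\t)$ by the natural formula obtained by inserting $\underline{\lambda}(\t)$ once into a composition with $\underline{\phi}'$; concretely, one wants $\mu_n(\t)(\g,\dots,\g)$ to be the sum over ways of grouping the inputs, applying $\underline{\Phi}(\t)$ to all but one group, $\underline{\lambda}(\t)$ to the distinguished group, and then feeding all outputs into a single $\ell'$-bracketed $\phi'$-component. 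Since $\underline{\phi}'$ is an honest $L_\infty$-morphism, $\partial_\t \Psi_n(\t)$ can be computed by the chain rule from $\partial_\t \Phi_n(\t)$; substituting the flow equation for $\underline{\Phi}$ and then invoking the $L_\infty$-morphism relations for $\underline{\phi}'$ (Definition \ref{shlm}) to move brackets around should collapse the result precisely into the flow equation for $\underline{\Psi}$ with homotopy $\underline{\mu}$. This is most cleanly done in the ``dual'' generating-series language: writing $\Phi^{\phi}(\g) = \sum_n \tfrac{1}{n!}\phi_n(\g,\dots,\g)$ etc., the relation $\Phi^{\phi'\bullet\phi}(\g) = \Phi^{\phi'}(\Phi^{\phi}(\g))$ (which is exactly the content of Definition \ref{compl}, cf. the computation in Lemma \ref{morkey}) turns both the flow equation and the composition into a routine calculus-of-one-variable identity via $\mathfrak{a} = \Bbbk[\e]/(\e^2)$, exactly in the style already used repeatedly in subsection \ref{subs3.1}. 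The pre-composition step is entirely analogous: with $\underline{\Phi}'(\t)$ realizing $\underline{\tilde{\phi}}' \sim_\infty \underline{\phi}'$ and homotopy $\underline{\lambda}'(\t)$, set $\underline{\Psi}(\t) := \underline{\Phi}'(\t) \bullet \underline{\phi}$ and transport the homotopy $\underline{\lambda}'(\t)$ through $\underline{\phi}$ by $\lambda_n(\t)(\g,\dots,\g) \mapsto \Lambda'(\t)\big(\Phi^{\phi}(\g)\big)$ in generating-series form; the flow equation for $\underline{\Psi}$ then follows by differentiating the identity $\Phi^{\Phi'(\t)\bullet\phi}(\g) = \Phi^{\Phi'(\t)}(\Phi^{\phi}(\g))$ in $\t$ and using the flow equation for $\underline{\Phi}'(\t)$.

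The main obstacle is bookkeeping rather than conceptual: one must check that the candidate families $\underline{\Psi}(\t)$ and $\underline{\mu}(\t)$ actually satisfy the structural conditions demanded by Definition \ref{shlh} (polynomiality in $\t$, correct degrees $\Psi_n(\t) \in \Hom(S^nV,V'')^0$ and $\mu_n(\t) \in \Hom(S^nV,V'')^{-1}$, unitality $\Psi_n(\t)(\dots,1_V)=0$ and $\mu_n(\t)(\dots,1_V)=0$ and $\Psi_1(\t)(1_V)=1_{V''}$), and that the endpoint conditions $\underline{\Psi}(0) = \underline{\phi}'\bullet\underline{\phi}$, $\underline{\Psi}(1) = \underline{\tilde\phi}'\bullet\underline{\tilde\phi}$ hold. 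These are all immediate from the corresponding properties of $\underline{\Phi}(\t)$, $\underline{\lambda}(\t)$, $\underline{\phi}'$ and the fact that $\bullet$ preserves unitality. The genuinely delicate point — matching the differentiated composition identity term-by-term against the flow equation — is precisely the kind of generating-function manipulation already carried out in the proofs of Lemmas \ref{ls}, \ref{morkey}, and \ref{deshomotopy}, so I would present it at that same level of detail, reducing to the ring of dual numbers and comparing the degree-one-in-$\e$ parts. I expect no new ideas beyond those; the statement is essentially the assertion that $\bullet$ descends to the homotopy category, and its proof is a formal consequence of the generating-series dictionary established in subsection \ref{subs3.1}.
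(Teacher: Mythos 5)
Your argument is correct, and it arrives at the same computational core as the paper's proof, but it is organized differently. The paper proves the lemma in a single step: it forms the diagonal path $\underline{\Phi}''(\t):=\underline{\Phi}'(\t)\bullet\underline{\Phi}(\t)$, deforming both factors simultaneously, and writes down one explicit combined homotopy $\underline{\lambda}''(\t)$ whose two summands are exactly the two homotopies you construct separately — the first summand (push $\underline{\lambda}'(\t)$ forward through $\underline{\Phi}(\t)$ in generating-series form) is your pre-composition step, and the second (insert $\underline{\lambda}(\t)$ once into a composition with $\underline{\Phi}'(\t)$) is your post-composition step. Your route instead factors the deformation as $\underline{\tilde\phi}'\bullet\underline{\tilde\phi}\sim_\infty\underline{\phi}'\bullet\underline{\tilde\phi}\sim_\infty\underline{\phi}'\bullet\underline{\phi}$ and concatenates via transitivity of $\sim_\infty$. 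This is legitimate — the paper asserts that $\sim_\infty$ is an equivalence relation — but note that transitivity of a \emph{polynomial}-path homotopy relation is itself only asserted, not proved, in the text, whereas the paper's single-path construction sidesteps it entirely; that is what the one-step formulation buys. What your decomposition buys is that each of the two flow-equation verifications is simpler (only one factor moves at a time), and the post-composition step isolates cleanly the place where the $L_\infty$-morphism relations for the fixed $\underline{\phi}'$ (Definition \ref{shlm}) must be invoked to rewrite $\Phi^{\phi'}_{\Phi(\t)(\g)}\bigl(L^{\ell'}_{\Phi(\t)(\g)}(\cdot)\bigr)$ in the form $L^{\ell''}_{\Psi(\t)(\g)}(\cdot)$ plus a term absorbed into $\mu(\t)_\g(L^{\ell}(\g))$ — the one genuinely delicate point, which you correctly flag and which the paper likewise leaves as "it can be checked." Either presentation is acceptable; just be sure in your second step to pre-compose with $\underline{\tilde\phi}$ (not $\underline{\phi}$) to match the chain you wrote.
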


\begin{proof}
Let $\underline{\Phi}(\t)$, where
$\underline{\Phi}(0)=\underline{\phi}$ and $\underline{\Phi}(1)=\underline{\tilde{\phi}}$, be a polynomial solution to the flow
equation with a polynomial family $\underline{\lambda}(\t)$. 
Let $\underline{\Phi}'(\t)$, where
$\underline{\Phi}'(0)=\underline{\phi}'$ and $\underline{\Phi}'(1)=\underline{\tilde{\phi}}'$, be a polynomial solution to the flow
equation with polynomial family $\underline{\lambda}'(\t)$. 
Then $\underline{\Phi}''(\t) := \underline{\Phi}'(\t)\bullet\underline{\Phi}(\t)$ is a polynomial family\footnote{When we apply Definition \ref{compl}, ${\Phi}'_m, m\geq 1$ are viewed as $\tau$-multilinear functions. For example, if $\Phi_1(x)(\tau)=A(x)\tau^3 +B(x)$ and $\Phi_1'(x)(\tau)=A'(y) \tau^2 +B'(y)$ for $x \in V, y \in V'$, then 
$$
(\Phi'_1 \circ \Phi_1)(x)(\tau)=A'(A(x))\tau^5 +A'(B(x))\tau^2 +B'(A(x))\tau^3 + B'(B(x)).
$$
} 
in $\Hom(V,V'')^0$
such that $\underline{\Phi}''(0) = \underline{\phi}'\bullet\underline{\phi}$ and 
 $\underline{\Phi}''(1) = \underline{\tilde{\phi}}'\bullet\underline{\tilde{\phi}}$. 
 It can be checked that, for all $n\geq 1$ and $\g \in (\mm_A\otimes V)^0$ whenever $A \in \hbox{\it Ob}(\art_{\Bbbk})$,
\be
&&\prt{}{\t}{\Phi}_n''({\g},\cdots,{\g})\\
&&\sum_{k=1}^n\sum_{j_1+\cdots + j_r =n-k}\frac{1}{k!}\frac{1}{r!}\frac{1}{j_1!\cdots j_r!}
 \ell_{r+1}'' \left({\Phi}_{j_1}''(\g,\cdots,\g), \cdots, {\Phi}_{j_r}''(\g,\cdots,\g),{\lambda}_k''(\g,\cdots,\g)\right)   \\
&&+\sum_{j_1+ j_2 =n}
\frac{1}{j_1! j_2!} {\lambda}_{j_1+1}''\left(\g,\cdots, \g, \ell_{j_2}(\g,\cdots,\g)\right)
\ee
where $\underline{\lambda}''(\t)$ is the polynomial family in $\Hom(V,V'')^{-1}$
given by
\be
&&\lambda_n'' \big(\g,\cdots, \g\big)
=\sum_{j_1+\cdots + j_r=n}
\frac{1}{r!}\frac{1}{j_1!\cdots j_r!}\lambda_r'\Big(
 \Phi_{j_1}(\g,\cdots, \g),\cdots,
\Phi_{j_r}(\g,\cdots, \g)
\Big)\\
&&+\sum_{j_1+\cdots + j_r=n}
\frac{1}{r!}\frac{1}{j_1!\cdots j_r!}{\Phi}_r'\Big(
 \Phi_{j_1}(\g,\cdots, \g),\cdots, \Phi_{j_{r-1}}(\g,\cdots, \g),\lambda_{j_r}(\g,\cdots, \g)\Big).
 \ee
This proves the lemma.
 \end{proof}

The above lemma implies that the homotopy category of unital $L_\infty$-algebras is well-defined, where the morphisms in that
category consists of $L_\infty$-homotopy classes of $L_\infty$-morphisms.

Let $(V,K,[,])$ and $(V',K', [,]')$ be an ordered pair of DGLAs over $\Bbbk$. Then a morphism $f:V
\to V'$ of DGLAs is an $L_{\infty}$-morphism $\underline \phi = \phi_1$  such that $\phi_1=f$.
If $f:V\to V'$ is a DGLA morphism, then $\tilde f= f+K's +s K$ is a cochain map homotopic to $f$ by the cochain homotopy $s$. In this case, there is an $L_{\infty}$-morphism $\underline{\tilde \phi}= \tilde \phi_1, \tilde \phi_2, \cdots $ which is homotopic to $\underline \phi= \phi_1=f$ by an $L_{\infty}$-homotopy $\underline \lambda= \lambda_1, \lambda_2, \cdots$ such that $\lambda_1=s$ and $\tilde \phi_1 = \tilde f$.
Let $\tilde f: V \to V'$ be a cochain map and $[\tilde f]$ be the cochain homotopy class of $\tilde f$. Then there is a representative $f$ of $[\tilde f]$ which is a DGLA morphism if and only if $\tilde f$ can be extended to an $L_{\infty}$-morphism $\underline{\tilde \phi}$, i.e. $\tilde\phi_1=\tilde f$ which is $L_{\infty}$-homotopic to a cochain map.

%
%
%

\end{document}